\documentclass[11pt]{amsart}

\usepackage[foot]{amsaddr}
\usepackage[a4paper,margin=2.65cm]{geometry}
\usepackage[english]{babel}
\usepackage[utf8]{inputenc}
\usepackage{csquotes}
\usepackage[dvipsnames,svgnames,table,x11names]{xcolor}
\usepackage{amsmath,amssymb,amsthm,mathtools,mathrsfs}
\mathtoolsset{showonlyrefs}
\NewEnvironmentCopy{standardbmatrix}{bmatrix}
\usepackage[renew-dots,renew-matrix]{nicematrix}
\usepackage[T1]{fontenc}
\usepackage{textcomp}
\usepackage{newtxtext}
\usepackage{newtxmath}
\usepackage[bb=boondox,cal=boondoxo,scr=boondoxo]{mathalfa}
\usepackage{microtype}
\usepackage{float}
\usepackage{tikz}

\allowdisplaybreaks
\usepackage[
backend=biber,
style=numeric,
giveninits=false,
maxbibnames=99,
minbibnames=99,
maxcitenames=99,
mincitenames=99,
doi=true,
url=false,
isbn=false
]{biblatex}

\DeclareFieldFormat[article]{title}{#1}
\DeclareFieldFormat{journaltitle}{\mkbibemph{#1}}
\DeclareFieldFormat[article]{volume}{\mkbibbold{#1}}
\DeclareFieldFormat{pages}{#1}
\DeclareFieldFormat{eid}{Article number\space #1}
\DeclareFieldFormat{doi}{\textsc{doi}:\space
	\href{https://doi.org/#1}{\nolinkurl{#1}}}

\DeclareBibliographyDriver{article}{%
	\usebibmacro{bibindex}%
	\usebibmacro{begentry}%
	\printnames{author}%
	\setunit{\addperiod\space}%
	\printfield{title}%
	\setunit{\addperiod\space}%
	\printfield{journaltitle}%
	\setunit{\addspace}%
	\printfield{volume}%
	\setunit{\addspace}%
	\printtext[parens]{\printdate}%
	\setunit{\addspace}%
	\iffieldundef{pages}
		{\printfield{eid}}
		{\printfield{pages}}%
	\newunit\newblock
	\iffieldundef{addendum}
		{}
		{\printfield{addendum}%
		 \newunit\newblock
		 \usebibmacro{eprint}}%
	\newunit\newblock
	\printfield{doi}%
	\addperiod%
	\usebibmacro{finentry}%
}
\renewbibmacro*{in:}{}

\usepackage[hyperfootnotes]{hyperref}
\hypersetup{
	colorlinks=true,
	linkcolor=NavyBlue, 
	urlcolor=RoyalPurple,
	citecolor=OliveGreen,
	pdftitle={Positive Bidiagonal Factorizations for Banded Markov Processes},
	pdfauthor={Manuel Ma\~nas},
	pdfsubject={Positive bidiagonal factorizations and spectral theory of banded Markov processes},
	pdfkeywords={positive bidiagonal factorization; total positivity; banded Markov processes; mixed-type multiple orthogonality; Karlin-McGregor representation; matrix continued fractions; Pineiro polynomials; Jacobi-like weights}
}

\theoremstyle{plain}
\newtheorem{theorem}{Theorem}[section]
\newtheorem{proposition}[theorem]{Proposition}
\newtheorem{lemma}[theorem]{Lemma}
\newtheorem{corollary}[theorem]{Corollary}

\newtheoremstyle{definitionstyle}
{6pt}
{12pt}
{\normalfont}
{}
{\bfseries}
{.}
{0.5em}
{}

\theoremstyle{definitionstyle}
\newtheorem{definition}[theorem]{Definition}
\newtheorem{example}[theorem]{Example}

\newtheoremstyle{remarkstyle}
{6pt}
{12pt}
{\normalfont}
{}
{\itshape}
{.}
{0.5em}
{}

\theoremstyle{remarkstyle}
\newtheorem{remark}[theorem]{Remark}

\newcommand{\Nzero}{\mathbb{N}_0}

\newcommand{\one}{\mathbf{1}}
\newcommand{\ind}{\mathbf{1}}
\newcommand{\diag}{\mathrm{diag}}
\newcommand{\e}{\mathrm{e}}
\newcommand{\dd}{\,\mathrm{d}}

\newcommand{\pFq}[5]{{}_{#1}F_{#2}\!\left(\begin{matrix}#3\\[2pt]#4\end{matrix}\,;\,#5\right)}

\title[Banded Markov Processes and Bidiagonal Factorization]{Positive Bidiagonal Factorizations for \\Banded Markov Processes}

\author[M. Ma\~nas]{Manuel Ma\~nas}
\address{Department of Theoretical Physics, Faculty of Physical Sciences,
	Complutense University of Madrid, 28040 Madrid, Spain}
\email{manuel.manas@ucm.es}

\subjclass[2020]{Primary 15B48, 42C05; Secondary 60J10, 60J27, 47B36.}

\keywords{positive bidiagonal factorization; total positivity; banded Markov processes; mixed-type multiple orthogonality; Karlin--McGregor representation; matrix continued fractions; Pi\~neiro polynomials; Jacobi-like weights.}

\date{August 20, 2026}

\begin{document}
	
\begin{abstract}
	An ordered positive bidiagonal factorization (PBF) of a semi-infinite band
	matrix simultaneously encodes total positivity, mixed-type spectral data,
	and elementary Markov transitions.  For arbitrary finite bandwidth, we
	develop Karlin--McGregor formulas for transition probabilities, Green
	kernels, resolvents, potentials, and first-passage transforms without
	reversibility or block symmetrizability.  Rational stochastic PBFs are
	characterized by ordered finite-urn experiments; cyclic reorderings yield
	Darboux intertwinings, while a companion factor-resolved construction and
	a matrix continued fraction determine scalar and level-return laws.
	Grouping states gives finite-phase quasi-birth-and-death processes, and a
	rank obstruction detects failure of block symmetrization.

	For bounded continuous-time generators, uniformization preserves the
	spectral data.  For unbounded exit rates, we prove that a conservative
	generator whose shifted leading truncations all admit scalar PBFs must be
	tridiagonal.  State-dependent uniformization \(Q=V(T-I)\) instead preserves
	arbitrary bandwidth and separates the embedded chain, which controls
	recurrence and hitting, from holding rates and invariant measures.

	The theory is explicit for mixed Pi\~neiro and Jacobi-like systems.  We
	establish strict total positivity of the Pi\~neiro step-line moment matrix,
	determine its exact ordered PBF region and larger structural-band positivity
	regions, and characterize boundedness by cyclic spacings.  In the strict
	generic, no-cancellation Jacobi-like PBF chamber, the canonical mixed
	recurrence is bounded exactly when all cyclic \(b\)-spacings equal
	\(1/q\).  For Jacobi-like beta-convolution weights,
	Gamma-factor cancellations recover Pi\~neiro; for
	\(q\in\{2,3,4\}\), within the generic ordered component, the strict
	conditions give the only open PBF chamber, with additional cancellation
	strata.  Positive harmonic normalization produces bounded stochastic band
	matrices in both families, including unbounded recurrences for which the
	conjugation is not a bounded similarity.  Rational
	\((3,2)\) examples exhibit the factorization, spectral formulas, urns,
	quasi-birth-and-death models, and state-dependent processes.
\end{abstract}
	
	\maketitle
	
	\clearpage
	
	\tableofcontents
	
\section{Introduction}

A positive bidiagonal factorization contains more information than the
entries of its product.  For a semi-infinite matrix with \(p\)
subdiagonals and \(q\) superdiagonals, the prescribed order considered here
is
\begin{equation}
\label{eq:introduction-PBF}
T=L_1\cdots L_pU_q\cdots U_1,
\end{equation}
where the lower and upper factors are bidiagonal and their nontrivial
entries are positive.  The product of the totally nonnegative bidiagonal
factors is totally nonnegative; under the strict band hypotheses used below
it belongs to the banded totally positive class of \cite{BFM4}.  The
individual factors retain, in addition, their order, their Christoffel
connections, and their one-step coefficients, all of which are concealed
after multiplication.  Classical background on total positivity is given
in \cite{Gantmacher,Karlin,Fallat,Pinkus}.

This factor-level information has a direct meaning in multiple
orthogonality.  The mixed-type Favard theorem reconstructs from the banded
recurrence and its initial conditions two left--right families of multiple
orthogonal polynomials and an entrywise positive \(q\times p\) matrix of
measures.  Its shifted-truncation form includes unbounded recurrence
matrices \cite{BFM1,BFM4,BFM_arXiv_2026}.  Thus the PBF supplies the
biorthogonal spectral data needed when scalar orthogonality and
self-adjointness are unavailable.  General background on multiple
orthogonality may be found in
\cite{afm,nikishin_sorokin,Ismail,andrei_walter}.

The same structure also has a probabilistic realization.  When a positive
harmonic vector is available, diagonal Doob normalization changes the PBF
into a product of stochastic bidiagonal matrices.  Each factor is an
elementary death-or-stay or birth-or-stay transition, while the product is a
finite-band Markov kernel.  The stochastic Favard theory of \cite{BFM6}
then gives a non-self-adjoint Karlin--McGregor representation.  In the
classical birth--death case this reduces to a tridiagonal operator, one
family of orthogonal polynomials, and one positive measure
\cite{KarlinMcGregor,KM1957}; for several subdiagonals and
superdiagonals, two mixed-type families are required.

Other established methods do not cover all the situations considered here.
Grouping scalar states into levels gives a quasi-birth-and-death process
(QBD), but classical QBD spectral methods based on positive-definite
matrix-valued orthogonality require a block symmetrization that need not
exist
\cite{DetteEtAlQBD2006,GrunbaumDelaIglesiaQBD2008,DelaIglesiaQBDInvariant2011}.
Related stochastic factorizations and Darboux transformations appear in
\cite{GrunbaumDelaIglesiaQBDDarboux2019,FernandezDelaIglesiaQBD2021}; see
also \cite{ZygmuntNonSymmetric2016}.  In continuous time, global
uniformization \cite{bremaud} applies when the exit rates are bounded.  For
unbounded band matrices, the shifted-truncation Favard theorem of
\cite{BFM_arXiv_2026} controls the spectral recurrence but does not impose
the sign and row-sum conditions of a conservative Markov generator.

The mixed Pi\~neiro and Jacobi-like systems provide explicit models beyond
the tridiagonal case.  The Pi\~neiro step-line moment matrix is Cauchy,
whereas the Jacobi-like moment matrix of \cite{JacobiLaguerreMixed} is a
hypergeometric deformation of it, obtained from Meijer \(G\)-weights or,
equivalently, Mellin convolutions of beta densities.  Their closed formulas
make it possible to examine positivity and stochastic normalization without
restricting the bandwidth.

The central point of this paper is that an ordered PBF is more than a
certificate of total nonnegativity.  After diagonal normalization, the same
factors determine the mixed spectral representation, a sequence of
elementary Markov transitions, and the continued-fraction coefficients that
govern return laws.  This common structure connects scalar banded kernels,
level--phase QBD models, and continuous-time processes without assuming
reversibility or block symmetrizability.  The diagonal normalization is not
merely cosmetic: the Pi\~neiro PBF region contains recurrence matrices that
are unbounded in the original step-line normalization, although their
Doob-normalized band matrices are stochastic and bounded.

\subsection{Main results}

The main contributions are organized as follows.

\begin{enumerate}
	\item \emph{Factor-level probability and spectral theory
	(Sections~2 and~3).}
	Theorem~\ref{thm:finite-urn-correspondence} gives an exact equivalence
	between rational stochastic PBFs and ordered sequences of finite urn
	experiments; real factors have the corresponding Bernoulli
	interpretation.  Corollary~\ref{cor:urn-protocol-spectral} connects the
	same sequence with the mixed-type Karlin--McGregor representation.
	Cyclic reorderings become Darboux intertwinings.  The continued-fraction
	construction of \cite{ManasContinuedFractions2026} is not reproved here:
	its boundary evaluation is combined with the renewal identity to obtain
	the complete scalar first-return law
	\eqref{eq:factor-resolved-first-return-law}.

	\item \emph{Continuous time (Sections~4 and~5).}
	Theorem~\ref{thm:CT-bounded} gives the semigroup and resolvent for bounded
	exit rates.  For unbounded rates,
	Proposition~\ref{pro:scalar-shift-obstruction} proves a sharp obstruction:
	a conservative generator whose shifted leading truncations all admit
	scalar PBFs must be tridiagonal.  By contrast, the state-dependent
	construction \(Q=V(T-I)\) preserves arbitrary fixed bandwidth.  Its
	resolvent, potential kernel, hitting probabilities, recurrence, and
	invariant measures are described in
	Theorems~\ref{thm:local-uniformization-resolvent}--\ref{thm:local-invariant-measure}.
	The endpoint results further give \(1/n\) and \(1/t\) local decay and
	logarithmic divergence of the Green kernel and resolvent.

	\item \emph{QBD models without block symmetrization (Section~6).}
	Proposition~\ref{pro:banded-kernel-QBD} reorganizes every banded kernel as
	a finite-phase QBD while retaining the left--right mixed spectral
	representation.  Corollary~\ref{coro:QBD-rank-obstruction} shows that a
	strictly \((p,q)\)-banded kernel with \(p\ne q\) cannot be made into a
	symmetric block Jacobi matrix by an invertible block-diagonal similarity.
	Proposition~\ref{pro:level-return-CF} gives the matrix continued fraction
	for the return time to a level together with the phase at return.

	\item \emph{Exact positivity and boundedness classifications
	(Sections~7--9).}
	For the mixed Pi\~neiro system,
	Corollary~\ref{cor:MP-moment-matrix-total-positivity} proves strict total
	positivity of the scalar step-line moment matrix, and
	Theorem~\ref{thm:MP-PBF-region} determines the exact PBF region for
	arbitrary \(p,q\), while
	Theorem~\ref{thm:MP-matrix-positive-region} gives larger regions in which
	every entry permitted by the bandwidth is positive.
	Proposition~\ref{prop:MP-boundedness-criterion} then characterizes when
	the Pi\~neiro recurrence itself is a bounded operator.  For the
	Jacobi-like extension, Theorem~\ref{thm:Jacobi-like-PBF-region} incorporates exact
	Gamma-factor cancellations, with complete cancellation recovering
	Pi\~neiro, while
	Proposition~\ref{prop:Jacobi-like-boundedness-criterion} proves that in
	the strict generic chamber its recurrence is bounded exactly when the
	\(b\)-parameters are cyclically equispaced.
	Proposition~\ref{prop:Jacobi-like-large-minor-obstruction} and
		Corollaries~\ref{cor:Jacobi-like-q2}--\ref{cor:Jacobi-like-q4} prove that
		within the generic ordered component the strict ordering conditions
		define the only open PBF region when \(q\in\{2,3,4\}\);
		lower-dimensional cancellation strata also occur.
	Rational \((3,2)\) examples display all factors and show in both families
	how an unbounded positive recurrence becomes a bounded stochastic band
	matrix.  They also give the resulting urn, QBD, and continuous-time
	models.
\end{enumerate}

\subsection{Organization of the paper}

Sections~2--6 develop the general spectral, stochastic, continuous-time,
and QBD constructions.  Sections~7 and~8 treat the mixed Pi\~neiro model,
including its bounded and unbounded applications.  Section~9 gives the
Jacobi-like extension and its positivity classification.  The appendix
derives the Pi\~neiro bidiagonal coefficients directly from the Cauchy
moment matrix.

\section{Spectral and probabilistic preliminaries}
\label{sec:preliminaries}

This section fixes the algebraic and probabilistic notation used throughout
the paper.  Banded matrices and their positive and stochastic bidiagonal
factorizations are introduced first.  The corresponding mixed-type
spectral data and the conventions for discrete- and continuous-time Markov
processes are then recalled.

The rising Pochhammer symbol is denoted by
\[
(z)_0\coloneq1,
\qquad
(z)_n\coloneq\prod_{k=0}^{n-1}(z+k),
\qquad n\ge1.
\]
For a parameter vector
\(\boldsymbol c=(c_1,\ldots,c_\ell)\) and a multi-index
\(\boldsymbol m=(m_1,\ldots,m_\ell)\) of the same length, write
\[
(\boldsymbol c)_n\coloneq\prod_{\rho=1}^{\ell}(c_\rho)_n,
\qquad
(\boldsymbol c)_{\boldsymbol m}
\coloneq\prod_{\rho=1}^{\ell}(c_\rho)_{m_\rho},
\]
with the usual convention that an empty product equals \(1\).
For a polynomial \(f\), the notation \([x^k]f(x)\) denotes the
coefficient of \(x^k\) in \(f(x)\).
For \(\boldsymbol c=(c_1,\ldots,c_r)\) and
\(\boldsymbol d=(d_1,\ldots,d_s)\), the generalized hypergeometric
series is
\begin{equation}
\label{eq:generalized-hypergeometric-definition}
\pFq{r}{s}{\boldsymbol c}{\boldsymbol d}{z}
\coloneq
\sum_{k=0}^{\infty}
\frac{(\boldsymbol c)_k}{(\boldsymbol d)_k}\frac{z^k}{k!};
\end{equation}
see Slater~\cite{slater}. All generalized hypergeometric series used as
polynomial components below terminate.

The Meijer \(G\)-function is used with the Mellin--Barnes convention
\begin{multline}
\label{eq:Markov-Meijer-G-definition}
G_{r,s}^{m,n}
\left(
z\,\middle|\,
\begin{matrix}
a_1,\ldots,a_r\\
b_1,\ldots,b_s
\end{matrix}
\right)
\\
\coloneq
\frac{1}{2\pi\mathrm i}
\int_{\mathcal L}
\frac{
\prod_{j=1}^{m}\Gamma(b_j+u)
\prod_{j=1}^{n}\Gamma(1-a_j-u)
}{
\prod_{j=m+1}^{s}\Gamma(1-b_j-u)
\prod_{j=n+1}^{r}\Gamma(a_j+u)
}
z^{-u}\,\dd u .
\end{multline}
Here \(\mathcal L\) is a Mellin--Barnes contour separating the poles of
the two Gamma products in the numerator; the usual convergence conditions
are understood.  This formula fixes the order of the upper and lower
parameters used below; see Slater~\cite{slater}.

The classical Kamp\'e de F\'eriet series is also used
\cite[Chapter~1]{SrivastavaKarlsson1985}. If the parameter strings
\(\boldsymbol A,\boldsymbol B,\boldsymbol C,\boldsymbol D,\boldsymbol E,\boldsymbol F\) have respective lengths
\(r,s,t,\ell,m,n\), put
\begin{equation}
\label{eq:Markov-KdF-definition}
F_{\ell:m;n}^{r:s;t}
\left[
\begin{array}{c}
\boldsymbol A:\boldsymbol B;\boldsymbol C\\
\boldsymbol D:\boldsymbol E;\boldsymbol F
\end{array}
\middle|x,y
\right]
\coloneq
\sum_{u,\lambda\ge0}
\frac{
(\boldsymbol A)_{u+\lambda}(\boldsymbol B)_u(\boldsymbol C)_\lambda
}{
(\boldsymbol D)_{u+\lambda}(\boldsymbol E)_u(\boldsymbol F)_\lambda
}
\frac{x^u}{u!}\frac{y^\lambda}{\lambda!}.
\end{equation}
Empty parameter strings are allowed. Every Kamp\'e de F\'eriet series
used below terminates, so no convergence issue arises.

\subsection{Banded matrices and bidiagonal factorizations}\label{subsec:sbf-pbf}

Let $T$ be a matrix indexed by $\Nzero$ with finite bandwidth: there exist
integers $p,q\ge 1$ such that \(T_{n,m}=0\) whenever \(m<n-p\) or
\(m>n+q\).
Throughout, \emph{positive} means \emph{strictly positive} (i.e.\ $>0$).

\subsubsection{Positive bidiagonal factorization (PBF)}

A basic structural hypothesis in this paper is that $T$ admits the
factorization \(T=L_1\cdots L_p\,U_q\cdots U_1\), where each $L_i$ is
lower bidiagonal and each $U_j$ is upper bidiagonal. The factorization is
called \emph{positive} if the nonzero entries of every factor are strictly
positive. Concretely, for a lower factor
$L=[\ell_{n,m}]_{n,m\in\Nzero}$ this means that $L$ is supported on the
diagonal and first subdiagonal and satisfies
\(\ell_{n,n-1}>0\) and \(\ell_{n,n}>0\) for \(n\ge1\), with the boundary
convention $\ell_{0,0}>0$. Similarly, an upper factor
$U=[u_{n,m}]_{n,m\in\Nzero}$ satisfies \(u_{n,n+1}>0\) and
\(u_{n,n}>0\) for \(n\ge0\).

When $T$ is tridiagonal (the birth--death benchmark), positivity of its two
off-diagonals gives the familiar irreducible nearest-neighbor structure,
but it does not by itself imply a PBF.  A sufficiently large positive
translation makes a bounded tridiagonal Jacobi matrix positive definite;
the bidiagonal-factorization theory then gives a PBF \cite{BFM1}, and
stochastic normalization gives a stochastic bidiagonal factorization
\cite{BFM6}.
Together with conservativity at the generator level, this recovers the
classical Karlin--McGregor setting.

Positive bidiagonal factorizations have strong variation-diminishing
consequences. The classical background on oscillatory, totally nonnegative,
and totally positive matrices can be found in
\cite{Gantmacher,Karlin,Fallat,Pinkus}. In particular, the banded matrices
arising from the factorizations used here are oscillatory and, more
strongly, belong to the class of banded totally positive matrices introduced
in \cite{BFM4}: all their nontrivial minors are strictly positive.

\subsubsection{Stochastic bidiagonal factorization (SBF)}

A matrix $T$ is called \emph{row-stochastic}, or simply stochastic in what
follows, if it has nonnegative entries and each row sums to $1$. A
\emph{stochastic bidiagonal factorization} (SBF) of a stochastic matrix $T$
is a factorization \(T=L^{(1)}\cdots L^{(p)}\,U^{(q)}\cdots U^{(1)}\),
where every $L^{(\ell)}$ is lower bidiagonal, supported on the diagonal and
the first subdiagonal, every $U^{(j)}$ is upper bidiagonal, supported on the
diagonal and the first superdiagonal, and all the factors are
row-stochastic. The positivity conditions used in this paper are stated
precisely in the following definition.

\begin{definition}[Positive stochastic bidiagonal matrices]\label{def:positive-bidiagonal}
	A lower bidiagonal matrix $L=[\ell_{n,m}]_{n,m\in\Nzero}$ is called \emph{positive stochastic} if it is stochastic, supported on the diagonal and the first subdiagonal, and satisfies
	\(\ell_{n,n-1}>0\) and
	\(\ell_{n,n}=1-\ell_{n,n-1}\in(0,1)\) for \(n\ge1\),
	together with the boundary convention $\ell_{0,0}=1$.
	Similarly, an upper bidiagonal matrix $U=[u_{n,m}]_{n,m\in\Nzero}$ is \emph{positive stochastic} if it is row-stochastic, supported on the diagonal and the first superdiagonal, and satisfies
	\(u_{n,n+1}>0\) and
	\(u_{n,n}=1-u_{n,n+1}\in(0,1)\) for \(n\ge0\).
\end{definition}

\begin{definition}[Positive stochastic bidiagonal factorization]\label{def:positive-sbf}
	A stochastic matrix $T$ admits a \emph{positive stochastic bidiagonal
	factorization} if
	\[
	T=L^{(1)}\cdots L^{(p)}\,U^{(q)}\cdots U^{(1)},
	\]
	where each
	$L^{(\ell)}$ and $U^{(j)}$ is a positive stochastic bidiagonal matrix in
	the sense of Definition~\ref{def:positive-bidiagonal}.
\end{definition}

Finally, a key fact used repeatedly, established in
\cite[Proposition~2.3]{BFM6}, is that, within the banded stochastic setting
considered here, the existence of a positive bidiagonal factorization can
be translated into the existence of a positive \emph{stochastic}
bidiagonal factorization: the factors may be chosen to be stochastic while
preserving positivity.

\subsection{Mixed-type spectral data}\label{subsec:mop-role-ctmc}

The discrete-time theory associates to a banded stochastic matrix admitting
a positive bidiagonal factorization a rectangular, entrywise positive
matrix of measures and two mixed-type polynomial families, yielding an
explicit representation of \((T^k)_{n,m}\). In the bounded-generator
regime, uniformization gives the continuous-time representation by
replacing \(x^k\) with \(\mathrm e^{-\nu t(1-x)}\), while leaving the
spectral measures and polynomial families unchanged.

For unbounded generators, global uniformization fails because no scalar
rate dominates all exits. State-dependent uniformization uses one fixed
banded transition kernel \(T\), applied at a rate
\(v_n\) depending on the current state. The mixed spectral data of \(T\)
then give the potential kernel and recurrence criterion directly. The
discounted resolvent involves the substochastic kernels
\(K_\lambda=\diag(v_n/(\lambda+v_n))T\). Each fixed \(K_\lambda\)
again admits a positive bidiagonal factorization. The kernel depends on
\(\lambda\) because a state-dependent change of time cannot be represented
by applying one scalar function to \(T\).

\subsection{Markov chains and generators}\label{subsec:ctmp-prelim}

Throughout, the state space is the countable set $\Nzero$, and
$(X_t)_{t\ge 0}$ denotes a continuous-time Markov process. The symbol
$\one$ denotes the column vector whose entries are all equal to $1$, with
dimension determined by the context, and
$\one_r$ denotes the vector of ones in $\mathbb R^r$. For an event $E$,
$\ind_E$ denotes its indicator. The symbols \(\Pr_i\) and
\(\operatorname E_i\) denote probability and expectation conditional on the
initial state \(i\); the same convention is used for discrete-time chains.
The transition probabilities are
\(p_{i,j}(t)\coloneq \Pr_i\{X_t=j\}\), for
\(i,j\in\Nzero\). They form the matrices
\(P(t)\coloneq[p_{i,j}(t)]_{i,j\in\Nzero}\). The infinitesimal transition
rates form a conservative generator
\(Q=[q_{i,j}]_{i,j\in\Nzero}\), so that \(q_{i,j}\ge0\) for \(i\ne j\)
and \(q_{i,i}=-\sum_{j\ne i}q_{i,j}\).

Writing \(\lambda_i\coloneq-q_{i,i}\), the bounded regime is
\(\sup_i\lambda_i<\infty\). It is non-explosive and permits global
uniformization. If the exit rates are unbounded, jump times may accumulate;
the unbounded part of the paper therefore imposes non-explosion and works
with the minimal transition function. The global formula and its urn
interpretation are given in
Section~\ref{subsec:bounded-continuous-time-chains}; state-dependent
uniformization is developed in Section~\ref{sec:unbounded}. The
tridiagonal birth--death chain remains the scalar Karlin--McGregor
benchmark, while higher bandwidth leads to the mixed spectral data used
here.
Here and below, for a matrix \(M\), \(M_{A,A}\) denotes the submatrix
whose rows and columns are indexed by \(A\), in the natural order.

\begin{proposition}[Green matrices and first returns to a set]
\label{pro:general-Green-first-return}
Let \(Y=(Y_n)_{n\ge0}\) be a discrete-time Markov chain with transition
matrix \(P\), and let \(A\) be a finite set of states.  For \(0\le z<1\),
write \(G_{A,A}(z)\coloneq
\sum_{n=0}^{\infty}z^n(P^n)_{A,A}\).
If
\(\tau_A^+\coloneq\inf\{n\ge1:Y_n\in A\}\), define
\[
\bigl(F_{A,A}^+(z)\bigr)_{i,j}
\coloneq
\operatorname E_i\!\left[
z^{\tau_A^+}
\ind_{\{\tau_A^+<\infty,\,Y_{\tau_A^+}=j\}}
\right],
\qquad i,j\in A.
\]
Then
\begin{equation}
\label{eq:general-Green-first-return}
F_{A,A}^+(z)=I_A-G_{A,A}(z)^{-1}.
\end{equation}
\end{proposition}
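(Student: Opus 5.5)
The plan is to prove the renewal (first-passage decomposition) identity
\[
G_{A,A}(z)=I_A+F_{A,A}^+(z)\,G_{A,A}(z),\qquad 0\le z<1,
\]
and then invert \(G_{A,A}(z)\).

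First, convergence. For \(0\le z<1\) every entry of \(z^n(P^n)_{A,A}\) is bounded by \(z^n\), so \(G_{A,A}(z)\) converges absolutely. Since \(A\) is finite, it is a well-defined finite matrix with nonnegative entries. Similarly, each entry of \(F_{A,A}^+(z)\) lies in \([0,1]\), and its row sums are at most \(\operatorname E_i[z^{\tau_A^+};\tau_A^+<\infty]\le z<1\).

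Second, the renewal identity. Fix \(i,j\in A\) and \(n\ge1\). On the event \(\{Y_n=j\}\) with \(Y_0=i\), we have \(\tau_A^+\le n\), because \(j\in A\). Decompose according to the value \(k=\tau_A^+\in\{1,\dots,n\}\) and the state \(l=Y_{k}\in A\). The strong Markov property at the stopping time \(\tau_A^+\) gives
\[
(P^n)_{i,j}=\sum_{k=1}^{n}\sum_{l\in A}\Pr_i\{\tau_A^+=k,\,Y_k=l\}\,(P^{n-k})_{l,j}.
\]
Multiply by \(z^n\), sum over \(n\ge1\), and add the \(n=0\) term \(\delta_{i,j}\). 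Interchanging the sums is legitimate since all terms are nonnegative and the series converge. This yields the identity above: the convolution in \(n\) becomes the product of generating functions, and \(\sum_k z^k\Pr_i\{\tau_A^+=k,Y_k=l\}=(F_{A,A}^+(z))_{i,l}\).

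Third, invertibility. Rewrite the identity as \((I_A-F_{A,A}^+(z))G_{A,A}(z)=I_A\). This already shows that the finite square matrix \(G_{A,A}(z)\) has a left inverse, hence is invertible, with \(G_{A,A}(z)^{-1}=I_A-F_{A,A}^+(z)\). Independently, \(I_A-F\) is invertible because \(\|F\|_\infty\le z<1\), so the Neumann series applies. Rearranging gives \eqref{eq:general-Green-first-return}.

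The only delicate step is the careful use of the strong Markov property together with the observation that every visit to \(j\in A\) at a time \(n\ge1\) occurs no earlier than the first return. This ensures the decomposition is exhaustive and disjoint. The restriction \(z<1\) takes care of all summability issues, including the case \(\tau_A^+=\infty\), which contributes nothing.
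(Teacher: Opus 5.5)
Your proposal is correct and follows the same route as the paper. Both arguments use the first-return renewal identity \(G_{A,A}(z)=I_A+F_{A,A}^+(z)G_{A,A}(z)\) and then invert it, using the row-sum bound \(z<1\) on \(F_{A,A}^+(z)\); you simply write out the convolution explicitly and observe that in finite dimensions the left-inverse identity alone already gives invertibility.
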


\begin{proof}
Every visit to \(A\) is either the initial visit at time zero or occurs
after a first positive return to \(A\).  Once that first return has taken
place, the chain starts afresh from the state in which it returned.
Therefore \(G_{A,A}(z)=
I_A+F_{A,A}^+(z)G_{A,A}(z)\).
Every row sum of \(F_{A,A}^+(z)\) is at most \(z<1\), because
\(\tau_A^+\ge1\).  Hence \(I_A-F_{A,A}^+(z)\) is invertible.  This
renewal identity gives
\(G_{A,A}(z)=(I_A-F_{A,A}^+(z))^{-1}\), which proves
\eqref{eq:general-Green-first-return}.
\end{proof}

Proposition~\ref{pro:general-Green-first-return} does not require a banded
matrix or a decomposition into levels.  A continued fraction appears only
when the states can be arranged in successive levels in such a way that one
step cannot jump over a neighboring level.  In that case the same
first-return argument can be repeated one level above, then two levels
above, and so on.  This repeated decomposition is carried out in
Section~\ref{sec:QBD-reblocking}.

\section{Finite urn constructions from positive bidiagonal factorizations}
\label{sec:finite-urn-protocols}

A stochastic bidiagonal factor is already an elementary Markov experiment.
A lower factor allows only death or stay, whereas an upper factor allows only
birth or stay.  Their ordered product therefore resolves one wide-band
transition into a finite sequence of one-sided nearest-neighbor decisions.
The purpose of this section is to show that, for rational probabilities,
this interpretation is equivalent to an ordered sequence of finite
two-color urns. This equivalence depends only on the factorization, not on
a particular family of multiple orthogonal polynomials.

Urn descriptions of Markov dynamics have classical antecedents in
Bernoulli's mixing problem \cite{Bernoulli1769} and in the Ehrenfest model
\cite{Ehrenfest1907}.  In the latter, one of a fixed number of balls is
selected and transferred between two urns, so the number of balls in one urn
makes a two-sided nearest-neighbor move $i\to i\pm1$. The first construction
is different.  The Markov state is not an urn occupancy: at each stage the
current state selects one urn from a state-dependent bank, and the color
drawn decides between a one-step move and staying put. A second, grouped
model is then constructed in which a primary urn stores the state as its
number of blue balls at the beginning and end of every cycle. This grouped
model can be implemented either by a binary decision tree using the
minimum possible number of draws or by a decision tree with a fixed
number of draws. Jacobi--Pi\~neiro random walks were studied in \cite{JP}.
A related urn model was constructed by Gr\"unbaum and de la Iglesia
\cite{GrunbaumDelaIglesia}. Their model uses a stochastic LU factorization
whose lower factor has two subdiagonals.  In the Jacobi--Pi\~neiro PBF
region, that lower factor is the product of the two lower-bidiagonal factors
of the recurrence matrix \cite{BFM2023}; the general construction below
makes both the successive binary experiments and the two grouped
experiments explicit, and extends them from \((2,1)\) to arbitrary
\((p,q)\). Related stochastic factorizations into elementary
pure-birth and pure-death kernels appear in \cite{BFM5}.

\subsection{Equivalence between finite urns and rational PBFs}
\label{subsec:finite-urn-correspondence}

The next theorem makes the correspondence precise in both directions and
also identifies the smallest finite urn realizing each rational elementary
experiment.
Its stochastic-normalization step is the rational case of
\cite[Proposition~2.3]{BFM6}.  The diagonal-conjugation argument is the
same. In addition, rational factors make every intermediate diagonal
rational, which permits a
realization by finite urns with equiprobable balls.

\begin{theorem}[Finite-urn/PBF equivalence]
\label{thm:finite-urn-correspondence}
	Fix an integer $d\ge1$ and an ordered type word
	\(\boldsymbol\varepsilon=
	\begin{bNiceMatrix}\varepsilon_1&\Cdots&\varepsilon_d\end{bNiceMatrix}\), with
	$\varepsilon_r\in\{-,+\}$. For a row-stochastic matrix $P$ on
	$\Nzero$, the following statements are equivalent.
	\begin{enumerate}
		\renewcommand{\labelenumi}{\textup{(\roman{enumi})}}
		\item The matrix $P$ admits a rational positive bidiagonal
		factorization of the prescribed types,
		\begin{equation}
		\label{eq:rational-PBF}
		P=M_1M_2\cdots M_d,
		\end{equation}
		where $M_r$ is lower bidiagonal when $\varepsilon_r=-$, upper
		bidiagonal when $\varepsilon_r=+$, and all its nonzero entries are
		positive rational numbers.

		\item One step of the Markov chain with kernel $P$ is realized by a
		fixed ordered sequence of $d$ state-dependent finite two-color urn
		experiments of types $\boldsymbol\varepsilon$. At stage $r$, the
		current state $n$ selects an urn $\mathscr U_{r,n}$. A red draw
		leaves $n$ unchanged, while a blue draw sends $n$ to $n-1$ if
		$\varepsilon_r=-$, and to $n+1$ if $\varepsilon_r=+$. Both
		colors occur with positive probability at every non-boundary state;
		a lower experiment at $n=0$ stays at $0$.
	\end{enumerate}
	Moreover, in the first description the factors can be converted, without
	changing their order or their product, into a rational positive
	stochastic bidiagonal factorization
	\begin{equation}
	\label{eq:rational-SBF}
	P=\widehat M_1\widehat M_2\cdots\widehat M_d.
	\end{equation}
	If a move probability is written as $a/c$ in lowest terms, the urn with
	$a$ blue balls and $c-a$ red balls has the smallest possible total
	number of equiprobable balls. This construction is determined by the
	fixed ordered factorization, although the matrix \(P\) may admit other
	ordered factorizations. For the standard $(p,q)$-PBF, the type word consists of $p$
	lower stages followed by $q$ upper stages.
\end{theorem}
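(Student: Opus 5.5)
The proof has three parts. First we convert the rational PBF into a rational positive stochastic factorization by diagonal conjugation. Then we read each stochastic factor as a bank of urns, which gives (i)\(\Rightarrow\)(ii) and the minimality claim. Finally we reverse the construction for (ii)\(\Rightarrow\)(i).

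For the stochastic normalization, let \(P=M_1\cdots M_d\) with rational positive factors. We define vectors backwards: \(h^{(d+1)}\coloneq\one\), and \(h^{(r)}\coloneq M_rh^{(r+1)}\) for \(r=d,\dots,1\). Each \(M_r\) is bidiagonal with positive nonzero entries, so every \(h^{(r)}\) is a well-defined vector with strictly positive rational entries; each entry is a finite sum of products of positive rationals. Moreover \(h^{(1)}=P\one=\one\) because \(P\) is stochastic. Put \(D_r\coloneq\diag(h^{(r)})\) and
\[
\widehat M_r\coloneq D_r^{-1}M_rD_{r+1}.
\]
Then \(\widehat M_r\one=D_r^{-1}M_rh^{(r+1)}=\one\), so each \(\widehat M_r\) is row-stochastic. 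It has the same bidiagonal support and positivity pattern as \(M_r\), and its entries are rational. The product telescopes to \(D_1^{-1}PD_{d+1}=P\), since \(D_1=D_{d+1}=I\). This is exactly the diagonal-conjugation argument of \cite[Proposition~2.3]{BFM6}, and rationality is preserved at every step. The boundary convention is automatic: for a lower factor, row \(0\) has the single entry \(\ell_{0,0}\), so stochasticity forces \(\widehat\ell_{0,0}=1\). For \(n\ge1\), both entries of row \(n\) are positive, so each lies in \((0,1)\). The same holds for upper factors at every \(n\ge0\).

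For (i)\(\Rightarrow\)(ii), we use the stochastic factors. The move probability of \(\widehat M_r\) from state \(n\) is its off-diagonal entry in row \(n\) (toward \(n-1\) or \(n+1\) according to \(\varepsilon_r\)). Write it in lowest terms as \(a_{r,n}/c_{r,n}\), and let the urn \(\mathscr U_{r,n}\) contain \(a_{r,n}\) blue and \(c_{r,n}-a_{r,n}\) red balls. Both colors are present whenever the move probability lies strictly in \((0,1)\), which holds at every non-boundary state. The boundary lower stage at \(n=0\) has move probability \(0\), so it is an all-red urn, or simply no draw. The draws at the \(d\) stages are made successively, each from the urn selected by the current state. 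By the Markov property of the successive draws, the probability of going from \(n\) to \(m\) after all stages is
\[
\sum_{n=k_1,k_2,\dots,k_{d+1}=m}\ \prod_r(\widehat M_r)_{k_r,k_{r+1}}=(\widehat M_1\cdots\widehat M_d)_{n,m}=P_{n,m}.
\]

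For minimality, note that with \(N\) equiprobable balls every color probability is an integer multiple of \(1/N\). So \(a/c\in(1/N)\mathbb Z\) with \(\gcd(a,c)=1\) forces \(c\mid N\), and the lowest-terms urn with \(c\) balls is the smallest possible.

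For (ii)\(\Rightarrow\)(i), we define \(\widehat M_r\) by placing the blue-draw probability of \(\mathscr U_{r,n}\) on the appropriate off-diagonal of row \(n\) and the red-draw probability on the diagonal. These probabilities are rational because the urns are finite with equiprobable balls. They are positive by the hypothesis that both colors occur at every non-boundary state, and the lower boundary at \(n=0\) stays put, giving \(\widehat\ell_{0,0}=1\). Composing the stages gives \(P=\widehat M_1\cdots\widehat M_d\), which is a rational PBF of type \(\boldsymbol\varepsilon\).

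The remaining remarks are immediate. The whole construction uses only the fixed ordered list of factors, so it depends on that factorization and not on \(P\) alone. The standard \((p,q)\) case is the word consisting of \(p\) minus signs followed by \(q\) plus signs. The only step needing care is the normalization: we must check that each \(h^{(r)}\) is strictly positive, so that the conjugation is legitimate, and that the boundary row of a lower factor becomes exactly \(1\). This is where I expect the main, though mild, obstacle to lie. Positivity follows from the support pattern, since the diagonal entry of each row is positive.
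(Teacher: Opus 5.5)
Your proposal is correct and follows essentially the same route as the paper: you build positive rational intermediate vectors backwards from $P\one=\one$, conjugate diagonally to stochastic factors whose product telescopes back to $P$, realize each factor by lowest-terms urns indexed by stage and state, and obtain the converse by reading the urn probabilities as stochastic bidiagonal factors. The differences are cosmetic, namely your index shift ($h^{(d+1)}=\one$ in place of $v^{[d]}=\one$) and your slightly more explicit divisibility argument ($c\mid N$) for the minimality of each urn.
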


\begin{proof}
	Assume first that \eqref{eq:rational-PBF} holds.  Use the
	stochastic-normalization construction of
	\cite[Proposition~2.3]{BFM6}. The short proof is included because it also
	shows that all normalized entries are rational. Set
	\(v^{[d]}=\one\) and define positive vectors recursively by
	\(v^{[r-1]}=M_rv^{[r]}\),
	\(r\in\{d,d-1,\ldots,1\}\).
	Every component of every \(v^{[r]}\) is positive and rational. Moreover,
	\(v^{[0]}=M_1\cdots M_d\one=P\one=\one=v^{[d]}\).
	Let \(D_r=\diag(v^{[r]})\) and put
	\begin{equation}
	\label{eq:rational-stochastic-refinement}
	\widehat M_r=D_{r-1}^{-1}M_rD_r.
	\end{equation}
	Then
	\(\widehat M_r\one=D_{r-1}^{-1}M_rv^{[r]}
	=D_{r-1}^{-1}v^{[r-1]}=\one\), so each \(\widehat M_r\) is
	stochastic. Positive diagonal scaling
	preserves its lower or upper bidiagonal support and positivity, and
	rationality follows from \eqref{eq:rational-stochastic-refinement}.  The
	intermediate diagonal matrices telescope, while \(D_0=D_d=I\), so
	\(\widehat M_1\cdots\widehat M_d=D_0^{-1}PD_d=P\).

	It remains to realize a factor.  If \(\widehat M_r\) is lower bidiagonal,
	write its off-diagonal probability in row \(n\ge1\) in lowest terms as
	\((\widehat M_r)_{n,n-1}=a_{r,n}/c_{r,n}\), where
	\(0<a_{r,n}<c_{r,n}\).
	Fill an urn with \(a_{r,n}\) blue balls and
	\(c_{r,n}-a_{r,n}\) red balls.  A blue draw sends \(n\) to \(n-1\),
	whereas a red draw leaves the state at \(n\); at the boundary \(n=0\)
	the state stays at \(0\).  For an
	upper factor, use the same construction with
	\((\widehat M_r)_{n,n+1}=a_{r,n}/c_{r,n}\), interpreting blue as
	\(n\to n+1\).  The urn used at stage \(r\) is selected according to the
	current intermediate state.  Thus the construction uses a bank of urns
	\(\mathscr U_{r,n}\), indexed both by the stage \(r\) and by the state
	\(n\) entering that stage, rather than one fixed physical urn.  Exactly one
	ball is drawn at each stage; the urn compositions are unchanged between
	cycles.  Conditional multiplication of the \(d\) binary
	probabilities gives precisely the matrix product
	\eqref{eq:rational-SBF}.

	Conversely, suppose that the sequence of urn experiments in
	item~\textup{(ii)} is given.
	Its stage
	$r$ defines a rational row-stochastic bidiagonal matrix
	$\widehat M_r$: its two nonzero row entries are exactly the blue and
	red draw probabilities of $\mathscr U_{r,n}$. The factor is lower or
	upper according to $\varepsilon_r$, and it is positive away from the
	forced lower boundary. Conditioning successively on the intermediate
	states shows that the transition kernel of the complete sequence is
	$P=\widehat M_1\cdots\widehat M_d$. Thus the urn sequence gives the
	rational positive stochastic bidiagonal factorization in item~\textup{(i)}.
	The
	minimality assertion follows because a rational probability $a/c$ in
	lowest terms cannot be represented by an urn with fewer than $c$
	equiprobable balls.
\end{proof}

In this ordered realization, the state \(n\) is an index of the Markov
chain, not a number of balls.  After each draw, the updated state selects
the urn used at the next stage.  The grouped realization of
Theorem~\ref{thm:state-encoding-grouped-urns} instead gives a model in
which the number of blue balls is the state. Rationality is what replaces weighted
trials by finite urns with equiprobable balls; the reduced fraction gives
the smallest urn for each elementary experiment.

\subsection{First-return probabilities from the continued fraction of the bidiagonal factors}
\label{subsec:factor-resolved-return-CF}

The companion paper \cite{ManasContinuedFractions2026} constructs and
analyses a matrix continued fraction whose coefficients are the individual
lower and upper bidiagonal factors, rather than only the entries of their
product. That construction is not reproduced here. Instead, it is applied
to the first-return law of a general banded Markov chain.

Let \(M=p+q\), let
\(T=\widehat M_1\widehat M_2\cdots\widehat M_M\)
be a positive stochastic bidiagonal factorization, and let
\(\Phi_0^T(\tau)\) denote the boundary matrix continued fraction defined
from these \(M\) ordered factors in
\cite{ManasContinuedFractions2026}.  Let \(Y=(Y_k)_{k\ge0}\) be the chain
with kernel \(T\), started from \(0\), and set
\(\rho_0^+\coloneq\inf\{k\ge1:Y_k=0\}\).
Write \(g_0(z)\coloneq
\sum_{k=0}^{\infty}(T^k)_{0,0}z^k\).
\mbox{Also write \(f_0(z)\coloneq
\operatorname E_0[
z^{\rho_0^+}\ind_{\{\rho_0^+<\infty\}}]\).}
The boundary evaluation of the continued fraction proved in the 
paper \cite{ManasContinuedFractions2026} gives
\begin{equation}
\label{eq:factor-resolved-return-Green}
g_0(\tau^M)
=
e_1^{\mathsf T}\Phi_0^T(\tau)e_1,
\qquad 0\le\tau<1,
\end{equation}
where \(e_1\) is the first coordinate vector of \(\mathbb R^M\).
Applying the scalar renewal identity of
Proposition~\ref{pro:general-Green-first-return} now yields
\begin{equation}
\label{eq:factor-resolved-first-return-law}
f_0(z)
=
1-\frac{1}{g_0(z)}
=
1-\frac{1}{
e_1^{\mathsf T}\Phi_0^T(z^{1/M})e_1},
\qquad 0\le z<1.
\end{equation}
After the renewal transform in
\eqref{eq:factor-resolved-first-return-law}, the resulting Taylor
coefficients are the probabilities
\(\Pr_0\{\rho_0^+=k\}\).

The result used from the companion paper is the boundary evaluation
\eqref{eq:factor-resolved-return-Green}.  The renewal step leading to
\eqref{eq:factor-resolved-first-return-law} is proved here and is the
probabilistic application needed in this paper.

The finite convergents constructed in
\cite{ManasContinuedFractions2026} therefore give successive
approximations to the complete first-return distribution.  For rational
factor probabilities, every convergent is a matrix of rational functions,
so its coefficients can be computed exactly.  This probabilistic
interpretation is the only part of that continued-fraction construction
used in the present paper. A concrete
evaluation for the mixed Pi\~neiro chain is given below.  Related matrix
continued fractions for banded path models are studied in
\cite{KimLopezGarciaProkhorov2025}.

\subsection{Spectral consequence of the urn/SBF correspondence}
\label{subsec:urn-spectral-representation}

Theorem~\ref{thm:finite-urn-correspondence} gives the equivalence, in
the rational case, between the ordered finite-urn sequence and the rational
positive stochastic bidiagonal factorization; arbitrary real probabilities
give the corresponding Bernoulli interpretation. Consequently, the
transition matrix of the ordered sequence satisfies the hypotheses of the
bounded spectral Favard theorem of \cite{BFM1}, in its stochastic form
\cite[Theorem~2.19]{BFM6}. The result is a Karlin--McGregor representation
through mixed-type multiple orthogonality.

\begin{corollary}[Karlin--McGregor representation for successive urn experiments]
\label{cor:urn-protocol-spectral}
	Consider a fixed ordered sequence on $\Nzero$ consisting of $p$
	death-or-stay binary experiments followed by $q$ birth-or-stay binary
	experiments, with strictly positive move and stay probabilities away from
	the boundary. Let $(Y_k)_{k\in\Nzero}$ be the discrete-time Markov chain
	whose transition kernel $T$ is one complete cycle of these experiments.
	Then there exist two families of mixed-type multiple orthogonal
	polynomials
	\(\bigl\{A_m^{(a)}\bigr\}_{m\in\Nzero,\,1\le a\le p}\),
	\(\bigl\{B_n^{(b)}\bigr\}_{n\in\Nzero,\,1\le b\le q}\),
	and an entrywise positive $q\times p$ matrix of finite Borel measures
	$\Psi=[\psi_{b,a}]$, supported on $[0,1]$, normalized by
	\(A_0^{(a)}(x)=\delta_{a,1}\) and
	\(B_0^{(b)}(x)=\delta_{b,1}\),
	such that
	\begin{equation}
	\label{eq:DT-KM}
		\Pr_n\{Y_k=m\}=(T^k)_{n,m}
		=
		\sum_{a=1}^{p}\sum_{b=1}^{q}
		\int_0^1
		B_n^{(b)}(x)x^kA_m^{(a)}(x)\,\dd\psi_{b,a}(x),
		\qquad k\in\Nzero.
	\end{equation}
	Consequently, the family of all transition probabilities of the urn chain
	is represented by one time-independent matrix of measures and the two
	mixed-type multiple orthogonal polynomial families. Rationality is unnecessary for this
	spectral conclusion; it is needed only to realize every binary experiment
	by an actual finite urn containing equiprobable balls.
\end{corollary}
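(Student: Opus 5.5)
The plan is to reduce the corollary to the stochastic form of the bounded spectral Favard theorem, \cite[Theorem~2.19]{BFM6}, by checking that its hypotheses hold for the one-cycle kernel \(T\).

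\emph{Step 1: the kernel as a matrix product.} I would first write \(T\) as a product of the stage matrices. Each death-or-stay experiment at stage \(\ell\) defines a matrix \(L^{(\ell)}\). Its row \(n\ge1\) has entries \((L^{(\ell)})_{n,n-1}\in(0,1)\) and \((L^{(\ell)})_{n,n}=1-(L^{(\ell)})_{n,n-1}\), and its row \(0\) is \(e_0^{\mathsf T}\). This is a positive stochastic lower bidiagonal matrix in the sense of Definition~\ref{def:positive-bidiagonal}. Likewise each birth-or-stay stage gives a positive stochastic upper bidiagonal matrix \(U^{(j)}\).

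Conditioning successively on the intermediate states, exactly as in the converse half of the proof of Theorem~\ref{thm:finite-urn-correspondence}, gives
\[
T=L^{(1)}\cdots L^{(p)}U^{(q)}\cdots U^{(1)}.
\]
This uses the same argument with real probabilities in place of rational ones. Hence \(T\) carries a positive stochastic bidiagonal factorization (Definition~\ref{def:positive-sbf}).

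\emph{Step 2: boundedness and banded structure.} Each factor is stochastic, so its \(\ell^\infty\) operator norm is \(1\). Therefore \(T\) is a bounded, row-stochastic, \((p,q)\)-banded matrix with entries in \([0,1]\).

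The extreme diagonals are \(T_{n,n-p}=\prod_\ell (L^{(\ell)})_{\cdot,\cdot-1}\) along the forced descending path, for \(n\ge p\). Similarly \(T_{n,n+q}=\prod_j (U^{(j)})_{\cdot,\cdot+1}\). Both are strictly positive, so \(T\) is genuinely \((p,q)\)-banded. By the total-positivity remarks after the PBF definition, \(T\) lies in the banded totally positive class of \cite{BFM4}.

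\emph{Step 3: apply the Favard theorem and fix normalizations.} These are exactly the hypotheses of \cite[Theorem~2.19]{BFM6}. I would invoke it to obtain the mixed-type families \(A_m^{(a)}\) and \(B_n^{(b)}\) with the stated initial normalizations, together with an entrywise positive \(q\times p\) matrix of measures \(\Psi\) giving the biorthogonality \(\sum_{a,b}\int B_n^{(b)}A_m^{(a)}\,\dd\psi_{b,a}=\delta_{n,m}\).

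Then \(T\) acts as multiplication by \(x\) on the \(B\)-family, \(xB=TB\). Iterating gives \(x^kB=T^kB\). Pairing against \(A_m\) and using biorthogonality yields \eqref{eq:DT-KM}.

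Support in \([0,1]\) should follow because \(T\) is stochastic and totally nonnegative. Its spectrum, and hence the support of the measures, lies in \([0,\|T\|]=[0,1]\); in \cite{BFM1,BFM6} the support is contained in \([0,\|T\|]\) for PBF matrices.

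\emph{Main obstacle.} The delicate point is matching the hypotheses precisely. The cited theorem may be stated for a PBF of \(T\) or of a shifted matrix, and I must make sure the ordering \(L\cdots L\,U\cdots U\) and the boundary row \(\ell_{0,0}=1\) conform to its conventions.

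The boundary convention is harmless, since a positive scaling as in \cite[Proposition~2.3]{BFM6} keeps the form. The final remark in the statement is immediate: rationality is used only in Theorem~\ref{thm:finite-urn-correspondence} to build actual urns, and it plays no role in Steps 1–3.
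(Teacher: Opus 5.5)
Your proposal is correct and follows the paper's proof. The stage kernels are positive stochastic bidiagonal matrices, so their ordered product is a positive stochastic bidiagonal factorization of \(T\). The bounded Favard theorem of \cite{BFM1}, in its stochastic form \cite[Theorem~2.19]{BFM6}, then gives the two polynomial families, the positive matrix of measures on \([0,1]\), and \eqref{eq:DT-KM}.

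Your extra checks are harmless elaborations: boundedness, positivity of the extreme diagonals, and the derivation of \eqref{eq:DT-KM} from \(x^kB=T^kB\) and biorthogonality. One small correction: \(T_{n,n-p}\) and \(T_{n,n+q}\) also include the stay probabilities of the opposite-type factors, which are positive anyway.
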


\begin{proof}
	Each stage kernel is a positive stochastic lower or upper bidiagonal
	matrix, so their ordered product is a positive stochastic bidiagonal
	factorization of $T$. The bounded spectral Favard theorem \cite{BFM1} and
	its Markov specialization \cite[Theorem~2.19]{BFM6} give the two
	mixed-type polynomial families, the entrywise positive matrix of measures,
	and formula \eqref{eq:DT-KM}.
\end{proof}

Thus the successive urn experiments display the individual bidiagonal
factors, while the Favard data represent all transition probabilities of
the same chain simultaneously.

\subsection{A grouped urn model in which the ball count is the state}
\label{subsec:state-encoding-grouped-urns}

The successive experiments above are Markovian after every elementary
step, but the state is only an index selecting an urn from a bank. A model
in which the number of blue balls records the state is now given. The lower
factors are grouped into one downward experiment and the upper factors into
one upward experiment. A
primary urn \(A\) records the state as its number of blue balls at the
observation times. Two implementations of the grouped model are provided: a
binary decision chain with the minimum number of nodes and a fixed-depth
branch-dependent binary tree. The former uses the minimum number of binary
decision nodes; the latter keeps
the displacement equal to the number of blue outcomes and contains the
Jacobi--Pi\~neiro experiment of Gr\"unbaum and de la Iglesia as an exact
special case. Both require positivity only of the two grouped one-sided
kernels, so they can remain available after positivity of an individual
bidiagonal factor is lost. Before each draw, the urn is refilled with a
composition determined by the current state and, in the tree version, by
the preceding colors. Thus this is not an autonomous urn with one fixed
composition evolving without external refilling.

\begin{theorem}[Grouped urns with the state recorded by the blue-ball count]
\label{thm:state-encoding-grouped-urns}
	Let a row-stochastic matrix \(P\) admit a one-sided factorization into
	two rational row-stochastic matrices,
	\(P=P_{\downarrow}P_{\uparrow}\),
	where \(P_{\downarrow}\) is lower triangular with
	\((P_{\downarrow})_{n,n-d}>0\) precisely for
	\(0\le d\le\min\{p,n\}\), and \(P_{\uparrow}\) is upper triangular with
	\((P_{\uparrow})_{m,m+e}>0\) precisely for \(0\le e\le q\).
	For \(0\le d\le r_n\coloneq\min\{p,n\}\) and
	\(0\le e\le q\), define
	\(\lambda_{n,d}\coloneq(P_{\downarrow})_{n,n-d}\) and
	\(\rho_{m,e}\coloneq(P_{\uparrow})_{m,m+e}\).
	Then one transition of \(P\) admits each of the following two finite
	two-color urn realizations:
	\begin{enumerate}
		\renewcommand{\labelenumi}{\textup{(\alph{enumi})}}
		\item a pair of binary decision chains with the minimum number of
		nodes, using exactly
		\(r_n\) decision nodes to sample the lower displacement from row \(n\)
		and exactly \(q\) decision nodes to sample the upper displacement from
		row \(m\);
		\item a fixed-length pair of branch-dependent trees, with \(r_n\)
		lower draws and \(q\) upper draws, in which the corresponding
		displacement is the number of blue outcomes.
	\end{enumerate}
	In both constructions, the blue-ball count records the state:
	if the primary urn \(A\) contains exactly \(n\) blue balls at the start of
	the cycle, it contains exactly \(n-d+e\) blue balls at the end, where
	the lower tree samples \(d\) with
	probabilities \(\lambda_{n,d}\) and the upper tree, entered at
	\(m=n-d\), samples \(e\) with probabilities \(\rho_{m,e}\).
	Every decision node is realizable by a finite two-color urn. Hence the
	state of the Markov chain is exactly the number of blue balls in \(A\) at
	the observation times. Among binary decision trees with the same positive
	terminal outcomes, the \(r_n\) lower nodes and the \(q\) upper nodes in
	\textup{(a)} are minimal.

	In particular, every rational positive stochastic bidiagonal
	factorization in the standard order satisfies these hypotheses after
	grouping: \(P_{\downarrow}=\widehat L_1\cdots\widehat L_p\) and
	\(P_{\uparrow}=\widehat U_q\cdots\widehat U_1\).
\end{theorem}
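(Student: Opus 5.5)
The proof splits the transition into its two grouped halves. By conditioning on the intermediate state \(m=n-d\), it suffices to realize, for each fixed row, a finite probability vector with rational positive entries by binary urn decisions. The upper half is handled by the same argument entered at \(m\). The identity \(P=P_\downarrow P_\uparrow\) then gives the one-step kernel, since \(\Pr(n\to n-d+e)=\sum\lambda_{n,d}\rho_{n-d,e}\).

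For \textup{(a)}, I will use the sequential stopping chain. At node \(k=0,1,\dots,r_n-1\), having not yet stopped, draw from an urn whose blue probability is the conditional hazard
\[
h_{n,k}=\frac{\lambda_{n,k}}{\sum_{j\ge k}\lambda_{n,j}} .
\]
Blue means stop with \(d=k\); red means continue, and surviving all \(r_n\) nodes gives \(d=r_n\). Each \(h_{n,k}\in(0,1)\) is rational because all \(\lambda_{n,j}>0\) are rational. Writing \(h_{n,k}=a/c\) in lowest terms, the node is realized by an urn with \(a\) blue and \(c-a\) red balls, as in Theorem~\ref{thm:finite-urn-correspondence}. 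Multiplying the hazards telescopes to \(\lambda_{n,d}\).

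Minimality follows from a counting argument. A binary tree with \(N\) internal nodes has \(N+1\) leaves, and there are \(r_n+1\) positive terminal outcomes, so \(N\ge r_n\). The same bound gives \(q\) nodes for the upper chain.

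For \textup{(b)}, I will use a fixed-depth tree in which the number of blue outcomes equals the displacement. After \(k\) draws with \(s\) blues so far, the next draw is blue with the conditional probability that the final count exceeds \(s\). The subtlety is that a branch-dependent tree must reproduce a prescribed law on the count. The simplest choice is an ordered scheme in which a red draw freezes the count: all subsequent draws are forced red, by an urn with no blue balls. Then \(d\) equals the number of leading blues, the blue probability at step \(k\) is \(\sum_{j>k}\lambda_{n,j}/\sum_{j\ge k}\lambda_{n,j}\), and the tree has exactly \(r_n\) draws. This is just the complement-coded chain of \textup{(a)} padded to fixed length, so rationality and realizability carry over. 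To recover the Gr\"unbaum--de la Iglesia experiment as a special case, one can remark that when \(P_\downarrow=\widehat L_1\cdots\widehat L_p\), the literal successive bidiagonal experiments form another such tree, with \(r_n\) draws and displacement equal to the number of blues. The boundary \(n<p\) is handled by truncating to \(r_n\) stages, which is where the lower bidiagonal factors are forced to stay.

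The state encoding is bookkeeping. Keep a primary urn \(A\) with \(n\) blue balls. After the lower experiment, remove \(d\) blue balls from \(A\), which is possible because \(d\le n\). After the upper experiment, add \(e\) blue balls. The auxiliary decision urns are refilled from \((n,\text{history})\), which is permitted by the statement.

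For the final claim, I take rational factors from Theorem~\ref{thm:finite-urn-correspondence}. Products of lower (respectively upper) positive stochastic bidiagonal matrices are stochastic and triangular with bandwidth \(\min\{p,n\}\) (respectively \(q\)). I expect the main obstacle to be showing that every entry in this band is \emph{strictly} positive. I will argue this by counting lattice paths. Entry \((n,n-d)\) is a sum over choices of which \(d\) of the \(p\) stages move down, each path having a positive weight. At least one path exists whenever \(d\le\min\{p,n\}\): take the first \(d\) stages as moves and the rest as stays, which never uses the forced \(\ell_{0,0}=1\) while still above \(0\). Rationality is preserved because the entries are polynomial in rationals.
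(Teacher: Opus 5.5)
Your proof is correct. For (a), the minimality count, and the blue-ball bookkeeping it follows the paper's argument: stopping probabilities $\lambda_{n,d}/\Lambda^-_{n,d}$ with $\Lambda^-_{n,d}=\sum_{j\ge d}\lambda_{n,j}$, and ``$I$ internal nodes give at most $I+1$ leaves''. Removing and adding balls in $A$ is equivalent to the paper's resetting of $A$.

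The real difference is in (b). The paper assigns positive mass to \emph{every} word $\omega\in\{R,B\}^{r_n}$, namely $w_n^-(\omega)=\lambda_{n,|\omega|_B}/\binom{r_n}{|\omega|_B}$, and uses the subtree ratios $W_n^-(\sigma B)/W_n^-(\sigma)$ as node probabilities. Your frozen scheme is the same subtree-ratio mechanism with all mass concentrated on the words $B^dR^{r_n-d}$. Your choice is shorter and reuses the hazards of (a). However, it makes (b) a padded copy of (a), whose nodes after the first red are degenerate all-red urns. The paper's choice makes every node a genuine experiment with both colors present, so it gives a realization genuinely distinct from (a). The statement as written tolerates your degenerate nodes (the paper itself pads with deterministic draws when $n<p$), so this concerns what the construction exhibits, not its validity.

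You also prove the closing ``in particular'' clause by a path count, which the paper's proof does not spell out. Your path, with all $d$ downward (or $e$ upward) moves first and stays afterwards, does give strict positivity on the grouped bands.

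Your aside on the literal factor experiments should be dropped or corrected, although the theorem does not depend on it. Those experiments use $p$ draws, not $r_n$. When $n<p$, the forced stays occur only after the state reaches $0$, at branch-dependent stages: for $n=1$, $p=2$ the single downward move can happen at either stage. Truncating to the first $r_n$ stages therefore loses paths; the paper instead appends $p-r_n$ deterministic draws to an $r_n$-draw tree. Moreover, the paper states explicitly that the ordered realization $\widehat L_1\widehat L_2\widehat U_1$ is \emph{different} from the Gr\"unbaum--de la Iglesia $A/B/R$ experiment. That experiment arises from a particular choice of branch probabilities in the fixed-length tree, not from the successive bidiagonal stages.
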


\begin{proof}
	The hypotheses on the two grouped stochastic factors give
	\(\lambda_{n,d}>0\) for \(0\le d\le r_n\), with
	\(\sum_{d=0}^{r_n}\lambda_{n,d}=1\),
	with \(\lambda_{0,0}=1\) when \(n=0\). Similarly,
	\(\rho_{m,e}>0\) for \(0\le e\le q\) and
	\(\sum_{e=0}^{q}\rho_{m,e}=1\). All these numbers are rational.

	First consider the minimal-node realization. Define the lower tail
	probabilities
	\(\Lambda^-_{n,d}\coloneq\sum_{j=d}^{r_n}\lambda_{n,j}\) for
	\(0\le d\le r_n\).
	At node \(d\in\{0,\ldots,r_n-1\}\), conditional on having reached that
	node, use a two-color urn whose stopping and continuation probabilities
	are \(\lambda_{n,d}/\Lambda^-_{n,d}\) and
	\(\Lambda^-_{n,d+1}/\Lambda^-_{n,d}\), respectively.
	A stopping color sets \(D=d\); if all \(r_n\) nodes return
	the continuation color, set \(D=r_n\).  Since
	\(\Lambda^-_{n,0}=1\), telescoping gives, for \(d<r_n\),
	\[
	\Pr\{D=d\}
	=
	\prod_{i=0}^{d-1}
	\frac{\Lambda^-_{n,i+1}}{\Lambda^-_{n,i}}
	\frac{\lambda_{n,d}}{\Lambda^-_{n,d}}
	=\lambda_{n,d},
	\]
	and the same telescoping gives \(\Pr\{D=r_n\}=\lambda_{n,r_n}\).
	All node probabilities are positive and rational.

	For the upper displacement put
	\(\Lambda^+_{m,e}\coloneq\sum_{j=e}^{q}\rho_{m,j}\) for
	\(0\le e\le q\),
	and use at node \(e\in\{0,\ldots,q-1\}\) the stopping and continuation
	probabilities \(\rho_{m,e}/\Lambda^+_{m,e}\) and
	\(\Lambda^+_{m,e+1}/\Lambda^+_{m,e}\).
	Thus \(\Pr\{E=e\mid n-D=m\}=\rho_{m,e}\).  This construction uses
	exactly \(r_n\) lower and \(q\) upper binary decision nodes. A binary
	decision tree with \(s+1\) positive terminal outcomes has at least \(s\)
	internal nodes, because a binary tree with \(I\) internal nodes has at most
	\(I+1\) leaves. Hence both node counts are minimal.

	Next consider the fixed-length color-count realization. For the lower
	tree let
	\(\Omega_n^-=\{R,B\}^{r_n}\), and denote by \(|\omega|_B\) the number of
	blue symbols in a word \(\omega\). Assign to each leaf the positive
	rational mass
	\(w_n^-(\omega):=\lambda_{n,|\omega|_B}/
	\binom{r_n}{|\omega|_B}\).
	For every proper prefix \(\sigma\), put
	\[
	W_n^-(\sigma)
	\coloneq
	\sum_{\substack{\omega\in\Omega_n^-\\\omega\text{ extends }\sigma}}
	w_n^-(\omega).
	\]
	The urn at that node has conditional blue and red probabilities
	\(W_n^-(\sigma B)/W_n^-(\sigma)\) and
	\(W_n^-(\sigma R)/W_n^-(\sigma)\),
	respectively. Both are positive rational numbers and sum to one, so their
	reduced common denominator gives a finite two-color urn. The chain rule
	shows that the probability of reaching a leaf \(\omega\) is
	\(w_n^-(\omega)\). Since there are \(\binom{r_n}{d}\) words with \(d\)
	blue symbols,
	\[
	\Pr\{D=d\}
	=
	\sum_{\substack{\omega\in\Omega_n^-\\|\omega|_B=d}}
	w_n^-(\omega)
	=\lambda_{n,d}.
	\]
	If \(n<p\), one may add \(p-r_n\) deterministic draws that leave the
	state unchanged; these do not alter the grouped kernel.

	The upper tree is constructed in exactly the same way on
	\(\Omega_m^+=\{R,B\}^{q}\), now assigning the leaf mass
	\(w_m^+(\omega):=\rho_{m,|\omega|_B}/\binom{q}{|\omega|_B}\).
	The conditional probabilities obtained from the corresponding subtree
	masses are rational and give finite urns, and summing the leaves with
	\(e\) blue symbols yields
	\(\Pr\{E=e\mid n-D=m\}=\rho_{m,e}\).

	For either realization, at the beginning of a grouped experiment all
	balls used to record the state are removed, and the primary and auxiliary
	urns are filled with the finite compositions prescribed by the relevant
	decision nodes. At the end of the lower experiment, \(A\) is reset to contain
	\(n-D\) blue balls; at the end of the upper experiment it is reset to contain
	\(n-D+E\) blue balls. Therefore
	\[
	\Pr\{n\longmapsto k\}
	=
	\sum_{d=0}^{r_n}
	\lambda_{n,d}
	\rho_{n-d,k-n+d}
	=(P_{\downarrow}P_{\uparrow})_{n,k}
	=P_{n,k},
	\]
	where terms outside the upper support are zero. This also proves the
	claimed relation between the state and the blue-ball count.
\end{proof}

\begin{remark}[What the grouped model determines]
\label{rem:grouped-model-information-loss}
	The displacement laws recover the two grouped kernels exactly:
	\((P_{\downarrow})_{n,n-d}=\Pr\{D=d\mid n\}\) and
	\((P_{\uparrow})_{m,m+e}=\Pr\{E=e\mid m\}\).
	They do not, in general, recover individual bidiagonal factors
	\(\widehat L_1,\ldots,\widehat L_p\) and
	\(\widehat U_q,\ldots,\widehat U_1\).  Such a recovery is possible from a
	fixed-length tree only when, at every stage, the conditional color
	probability depends solely on the current scalar intermediate state and
	the stage, rather than on the complete branch history. The corresponding
	rows then define stochastic bidiagonal factors. Without this
	condition, different branch trees and different decompositions into
	binary experiments can have the same grouped kernels. Thus the successive
	urn experiments are equivalent to the SBF, whereas the grouped model is equivalent
	only to the one-sided factorization
	\(P=P_{\downarrow}P_{\uparrow}\).
\end{remark}

For \(p=2\) and \(q=1\), the grouped construction recovers the stochastic
\(P_LP_U\) factorization of the Jacobi--Pi\~neiro chain.  Choosing the
two-level branch probabilities of Gr\"unbaum and de la Iglesia
\cite{GrunbaumDelaIglesia} gives exactly their published \(A/B/R\) urn
experiment, whereas the ordered factorization
\(\widehat L_1\widehat L_2\widehat U_1\) gives a different realization of
the same Markov kernel, with the intermediate state updated after every
draw.

When the grouped factors in
Theorem~\ref{thm:state-encoding-grouped-urns} come from a positive
stochastic bidiagonal factorization, the ordered and grouped urn
realizations have the spectral representation of
Corollary~\ref{cor:urn-protocol-spectral}.  A grouped urn model by itself,
however, does not imply a positive bidiagonal factorization; outside the
PBF region no positive-measure spectral conclusion is asserted.

\subsection{Darboux transformations as cyclic reorderings of the urn experiments}
\label{subsec:urn-Darboux}

\begin{proposition}[Darboux transforms as reordered urn cycles]
\label{pro:Darboux-reordered-urns}
	Let \(P=\widehat M_1\cdots\widehat M_d\)
	be the rational stochastic bidiagonal factorization in
	Theorem~\ref{thm:finite-urn-correspondence}. Fix
	\(r\in\{1,\ldots,d-1\}\) and set
	\(K_r=\widehat M_1\cdots\widehat M_r\) and
	\(R_r=\widehat M_{r+1}\cdots\widehat M_d\).
	The cyclic Darboux transform
	\begin{equation}
	\label{eq:Darboux-reordered-kernel}
	\widetilde P_r
	\coloneq
	R_rK_r
	=
	\widehat M_{r+1}\cdots\widehat M_d
	\widehat M_1\cdots\widehat M_r
	\end{equation}
	is again a rational stochastic matrix expressed as a product of positive
	stochastic bidiagonal factors.  Its finite-urn model is obtained from that of \(P\) by
	starting the ordered urn cycle at stage \(r+1\).  Moreover,
	\begin{equation}
	\label{eq:Darboux-Markov-intertwinings}
	PK_r=K_r\widetilde P_r,
	\qquad
	R_rP=\widetilde P_rR_r.
	\end{equation}
\end{proposition}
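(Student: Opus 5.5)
The plan is to argue directly from associativity of matrix products, with the urn statement read off from the proof of Theorem~\ref{thm:finite-urn-correspondence}. All products here are of banded (bidiagonal) semi-infinite matrices, so every entry of every product is a finite sum. Associativity therefore holds without convergence issues, and I will use it freely.

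First I will establish the structural claims about \(\widetilde P_r\). Each \(\widehat M_s\) is a rational positive stochastic bidiagonal matrix by Theorem~\ref{thm:finite-urn-correspondence}. Hence \(R_rK_r\) is, by definition, a product of the same \(d\) positive stochastic bidiagonal factors, only in a different order. Stochasticity of the product follows from \(\widehat M_s\one=\one\) for each \(s\): applying this repeatedly gives \(\widetilde P_r\one=\one\). Nonnegativity is immediate. Rationality holds because each entry is a finite sum of products of rational entries. The type word becomes the cyclic shift \((\varepsilon_{r+1},\ldots,\varepsilon_d,\varepsilon_1,\ldots,\varepsilon_r)\).

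Next comes the urn description. I will apply the direction (i)\(\Rightarrow\)(ii) of Theorem~\ref{thm:finite-urn-correspondence} to this reordered factorization. The factors are already stochastic, so no renormalization is needed and the diagonal vectors \(v^{[r]}\) are all \(\one\). The stage-\(s\) urns \(\mathscr U_{s,n}\) are therefore exactly the same banks as for \(P\), now used in the order \(s=r+1,\ldots,d,1,\ldots,r\). This is the statement that the cycle starts at stage \(r+1\). The point worth flagging is the boundary: a lower experiment at \(n=0\) is forced to stay, and this convention is intrinsic to each factor, so it is independent of where the cycle starts. In the same way, positivity of both colors at non-boundary states is inherited factor by factor.

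Finally I will verify the intertwinings. Using \(P=K_rR_r\), associativity gives
\(PK_r=K_rR_rK_r=K_r\widetilde P_r\), and likewise
\(R_rP=R_rK_rR_r=\widetilde P_rR_r\).

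I do not expect a real obstacle. The only care needed is to note that associativity of these semi-infinite products is legitimate because all the matrices are banded, and to check that reordering the factors does not break the boundary convention \(\ell_{0,0}=1\) of the lower factors. That convention lives entirely inside each factor, so reordering cannot affect it.
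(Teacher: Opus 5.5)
Your proposal is correct and follows essentially the same route as the paper. The paper checks stochasticity, positivity and rationality under the cyclic reordering, obtains the urn model by running the same binary experiments starting at stage \(r+1\), and derives both intertwinings from \(P=K_rR_r\) by associativity. Your additional remarks are valid refinements rather than a different approach: banded products involve only finite sums, all \(v^{[r]}=\one\) because the factors are already stochastic, and the boundary convention is intrinsic to each factor.
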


\begin{proof}
	Products of stochastic matrices are stochastic, and positivity and
	rationality of every elementary bidiagonal factor are unchanged by the
	cyclic permutation.  The urn statement follows by executing the same
	binary experiments in the order displayed in
	\eqref{eq:Darboux-reordered-kernel}. Finally, using \(P=K_rR_r\),
	one obtains \(PK_r=K_rR_rK_r=K_r\widetilde P_r\) and
	\(R_rP=R_rK_rR_r=\widetilde P_rR_r\).
\end{proof}

\section{Bounded continuous-time Markov processes}
\label{sec:bounded-ctmc}

This section develops the continuous-time theory when the exit rates are
uniformly bounded. Global uniformization gives the semigroup and resolvent
from the discrete mixed spectral representation and the sequence of urn
experiments. It also gives formulas for first passage, recurrence,
ergodicity, and long-time asymptotics at the spectral endpoint.

\subsection{Global uniformization and spectral representation}
\label{subsec:bounded-continuous-time-chains}

The discrete-time spectral data have already been recorded in
Corollary~\ref{cor:urn-protocol-spectral}: the urn/SBF correspondence gives
the mixed-type polynomial families, the entrywise positive matrix of
measures, and formula \eqref{eq:DT-KM}. Throughout the paper, matrix entries
are written in the Markov convention: the first index is the initial state
(row) and the second index is the final state (column). Thus the row index is
paired with the $B$-family and the column index with the $A$-family. In the
classical birth--death case $p=q=1$, formula \eqref{eq:DT-KM} reduces to the
scalar Karlin--McGregor representation \cite{KarlinMcGregor,KM1957}.

\medskip

For continuous time, let
$Q=[q_{n,m}]_{n,m\in\Nzero}$ be a conservative $Q$-matrix (generator) for a
continuous-time Markov chain (CTMC) on $\Nzero$, meaning that
\(q_{n,m}\ge0\) for \(m\ne n\) and
\(\sum_{m\in\Nzero}q_{n,m}=0\) for \(n\in\Nzero\).
The exit rate from $n$ is
\(\lambda_n\coloneq-q_{n,n}=\sum_{m\ne n}q_{n,m}\).
The generator $Q$ is called bounded if
\(\Lambda\coloneq\sup_{n\in\Nzero}\lambda_n<\infty\).
This assumption implies non-explosion and ensures that the transition
semigroup \(P(t)\coloneq\e^{tQ}\), \(t\ge0\),
is well defined as a conservative stochastic semigroup. As in discrete
time, $Q$ is assumed to be banded: there exists a fixed finite range such
that $q_{n,m}=0$ whenever $|n-m|$ exceeds that range.

\medskip

Assume that $Q$ is bounded and fix $\nu>0$ with $\nu\ge \Lambda$. Define
\begin{equation}\label{eq:uniform-T}
	T\coloneq I+\frac{1}{\nu}Q.
\end{equation}
Then $T$ is stochastic and has the same bandwidth as $Q$. Indeed, for $m\ne n$ one has $T_{n,m}=\nu^{-1}q_{n,m}\ge 0$, while
\(T_{n,n}=1+\nu^{-1}q_{n,n}=1-\lambda_n/\nu\ge0\) and
\(\sum_{m\in\Nzero}T_{n,m}=1\).
Uniformization (randomization) yields \cite{bremaud}
\begin{equation}\label{eq:uniformization}
	\e^{tQ}
	=
	\sum_{k=0}^{\infty}\e^{-\nu t}\frac{(\nu t)^k}{k!}\,T^k,
	\qquad t\ge 0.
\end{equation}
Equivalently, if $(N_t)_{t\ge 0}$ is a Poisson process with rate $\nu$, independent of a discrete-time chain $(Y_k)_{k\in\Nzero}$ with transition matrix $T$, then $X_t\coloneq Y_{N_t}$ has generator $Q=\nu(T-I)$ and semigroup \eqref{eq:uniformization}.
Thus the same identity may be read in two directions. Starting from \(Q\)
and constructing \(T=I+\nu^{-1}Q\) is called \emph{uniformization};
starting from \(T\) and running the discrete chain at the times of an
independent Poisson process is called \emph{Poissonization}.

When \(T\) has a rational PBF,
Combining standard uniformization with
Theorem~\ref{thm:finite-urn-correspondence} gives a continuous-time urn
model. An
exponential clock of rate \(\nu\) means that its waiting time \(E_\nu\)
satisfies
\(\Pr\{E_\nu>t\}=\e^{-\nu t}\) for \(t\ge0\), and hence
\(\operatorname E[E_\nu]=1/\nu\). At each ring, one complete urn cycle is
performed, and the process remains in the resulting state until the next
ring. Thus
the clock determines the event times and the urn cycle determines the next
state. Cycles that return
the same state are virtual jumps; after thinning them, the holding time at
\(n\) is exponential with rate \(\nu(1-T_{n,n})=-q_{n,n}\).

\begin{remark}[One clock for the complete cycle versus separate factor clocks]
\label{rem:poissonized-urn-cycle}
	For \(T=\widehat M_1\cdots\widehat M_d\), one Poisson clock triggers the
	whole ordered urn cycle. Independent clocks for the factors generally
	produce a different generator,
	\(\sum_r\nu_r(\widehat M_r-I)\), rather than
	\(\nu(\widehat M_1\cdots\widehat M_d-I)\).
\end{remark}

The Darboux identities remain valid after Poissonization. With the notation
of Proposition~\ref{pro:Darboux-reordered-urns}, put
\(Q=\nu(P-I)\) and
\(\widetilde Q_r=\nu(\widetilde P_r-I)\).
Then
\begin{equation}
\label{eq:Darboux-generator-intertwinings}
QK_r=K_r\widetilde Q_r,
\qquad
R_rQ=\widetilde Q_rR_r,
\end{equation}
and hence
\(\e^{tQ}K_r=K_r\e^{t\widetilde Q_r}\) and
\(R_r\e^{tQ}=\e^{t\widetilde Q_r}R_r\).
Thus the continuous-time Darboux transform is represented by the same
Poisson clock and the cyclically reordered urn experiment.

\medskip

The resolvent and its probabilistic interpretation will also be used. For
$\lambda>0$, define \(R(\lambda)\coloneq(\lambda I-Q)^{-1}\).
In the bounded-generator regime (and, more generally, for $C_0$-semigroups), the resolvent is the Laplace transform of the semigroup:
\begin{equation}\label{eq:Laplace-resolvent-prelim}
	R(\lambda)=\int_{0}^{\infty}\e^{-\lambda t}P(t)\,\dd t,
	\qquad \lambda>0,
\end{equation}
where the integral is understood entrywise (equivalently, in operator norm in the bounded case).
In parallel, define the discounted potential (discounted Green) kernel
\begin{equation}\label{eq:discounted-potential-def}
	U^{(\lambda)}_{n,m}\coloneq \int_{0}^{\infty}\e^{-\lambda t}\,(P(t))_{n,m}\,\dd t,
	\qquad \lambda>0.
\end{equation}
Comparing \eqref{eq:Laplace-resolvent-prelim} with \eqref{eq:discounted-potential-def} shows that, entrywise,
\begin{equation}\label{eq:UequalsR}
	U^{(\lambda)}_{n,m}=\bigl(R(\lambda)\bigr)_{n,m},
	\qquad n,m\in\Nzero.
\end{equation}

Here is a short proof of \eqref{eq:Laplace-resolvent-prelim} under boundedness.
Set
\(\widetilde{R}(\lambda)\coloneq
\int_{0}^{\infty}\e^{-\lambda t}P(t)\,\dd t\).
Since $Q$ is bounded, differentiation under the integral is legitimate and
\[
\frac{\mathrm{d}}{\mathrm{d}t}\bigl(\e^{-\lambda t}P(t)\bigr)
=
-\lambda\e^{-\lambda t}P(t)+\e^{-\lambda t}P(t)Q
=
-\e^{-\lambda t}P(t)(\lambda I-Q).
\]
Integrating from $0$ to $\infty$ and using $\e^{-\lambda t}P(t)\to 0$ as $t\to\infty$ (because $\lambda>0$) gives
\[
\widetilde{R}(\lambda)(\lambda I-Q)
=
-\int_{0}^{\infty}\frac{\mathrm{d}}{\mathrm{d}t}\bigl(\e^{-\lambda t}P(t)\bigr)\dd t
=
P(0)
=
I.
\]
Similarly, one checks $(\lambda I-Q)\widetilde{R}(\lambda)=I$, hence $\widetilde{R}(\lambda)=R(\lambda)$.

\begin{remark}[Probabilistic meaning of $U^{(\lambda)}$]\label{rem:prob-meaning-discount}
	Let $\tau_\lambda$ be an exponential time with parameter $\lambda$, independent of the chain.
	Then, for all $n,m\in\Nzero$,
	\(U^{(\lambda)}_{n,m}=
	\operatorname{E}_n\!\left[\int_{0}^{\tau_\lambda}\ind_{\{X_t=m\}}\,\dd t\right]\).
	Equivalently, $U^{(\lambda)}$ is the Green kernel of the chain killed at an independent exponential time with rate $\lambda$.
\end{remark}

\medskip

Assume now that the uniformized kernel $T$ admits a positive stochastic bidiagonal factorization, so that Corollary~\ref{cor:urn-protocol-spectral} applies. Inserting the discrete-time spectral representation into the Poisson mixture \eqref{eq:uniformization} yields a closed integral formula in continuous time. Laplace transforming in time then produces the resolvent/discounted-potential representation.

\begin{theorem}[Continuous-time spectral representation, bounded case]\label{thm:CT-bounded}
	Let $Q$ be a bounded banded generator, fix $\nu\ge \sup_n(-q_{n,n})$, and let $T$ be defined by \eqref{eq:uniform-T}.
	Assume that $T$ admits a positive stochastic bidiagonal factorization, and let $\Psi$, $A^{(a)}_m$, $B^{(b)}_n$ be the associated spectral data from Corollary~\ref{cor:urn-protocol-spectral}.
	Then, for all $t\ge 0$ and all $n,m\in\Nzero$,
	\begin{equation}\label{eq:CT-KM}
		(\e^{tQ})_{n,m}
		=
		\sum_{a=1}^{p}\sum_{b=1}^{q}
		\int_{0}^{1}
		B^{(b)}_n(x)\,\e^{-\nu t(1-x)}\,A^{(a)}_m(x)\,\dd\psi_{b,a}(x).
	\end{equation}
	Moreover, for every $\lambda>0$,
	\begin{equation}\label{eq:resolvent-spectral}
		U^{(\lambda)}_{n,m}=\bigl(R(\lambda)\bigr)_{n,m}
		=
		\sum_{a=1}^{p}\sum_{b=1}^{q}
		\int_{0}^{1}
		B_n^{(b)}(x)\,\frac{1}{\lambda+\nu(1-x)}\,A_m^{(a)}(x)\,\dd\psi_{b,a}(x).
	\end{equation}
\end{theorem}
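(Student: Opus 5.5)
The plan is to insert the discrete-time representation \eqref{eq:DT-KM} into the uniformization series \eqref{eq:uniformization} and then exchange summation and integration.

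Since $T=I+\nu^{-1}Q$ is stochastic and admits a positive stochastic bidiagonal factorization, Corollary~\ref{cor:urn-protocol-spectral} supplies $\Psi$, $A^{(a)}_m$ and $B^{(b)}_n$ such that $(T^k)_{n,m}$ is given by \eqref{eq:DT-KM} for every $k\in\Nzero$. Entrywise, \eqref{eq:uniformization} reads
\[
(\e^{tQ})_{n,m}=\sum_{k\ge0}\e^{-\nu t}\frac{(\nu t)^k}{k!}\,(T^k)_{n,m}.
\]
Substituting the spectral formula reduces \eqref{eq:CT-KM} to
\[
\sum_{k\ge0}\e^{-\nu t}\frac{(\nu t)^k}{k!}\,x^k=\e^{-\nu t(1-x)},
\]
applied inside each of the finitely many integrals $\int_0^1 B^{(b)}_n(x)\,(\cdot)\,A^{(a)}_m(x)\,\dd\psi_{b,a}(x)$.

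The interchange of the $k$-sum and the integral is the only point needing care, and it is the main obstacle. Each $\psi_{b,a}$ is a finite positive measure on $[0,1]$, and the polynomials $B_n^{(b)}$ and $A_m^{(a)}$ are bounded there by some constant $C_{n,m}$. Because $|x|\le1$ on the support, the integrand of the $k$-th term is bounded by $C_{n,m}\,\e^{-\nu t}(\nu t)^k/k!$. Summing over $k$ gives the integrable bound $C_{n,m}$. Fubini--Tonelli, or dominated convergence for the partial sums, therefore justifies the exchange, and \eqref{eq:CT-KM} follows. Note that positivity of the measures matters here, while the signs of the polynomials do not, since only absolute bounds are used.

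For \eqref{eq:resolvent-spectral}, start from \eqref{eq:UequalsR} and the Laplace representation \eqref{eq:Laplace-resolvent-prelim}, which the paper has already verified in the bounded case. Write
\[
U^{(\lambda)}_{n,m}=\int_0^\infty \e^{-\lambda t}(\e^{tQ})_{n,m}\,\dd t,
\]
and insert \eqref{eq:CT-KM}. Exchange the $t$-integral with the $x$-integral using the same bound: the integrand is dominated by $C_{n,m}\e^{-\lambda t}$, since $\e^{-\nu t(1-x)}\le1$ on $[0,1]$, and this is integrable on $[0,\infty)\times[0,1]$ with respect to $\dd t\otimes|\dd\psi_{b,a}|$. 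The inner integral is
\[
\int_0^\infty \e^{-(\lambda+\nu(1-x))t}\,\dd t=\frac{1}{\lambda+\nu(1-x)},
\]
which is finite because $\lambda>0$ and $1-x\ge0$. This yields \eqref{eq:resolvent-spectral}.

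As a consistency check, one can expand $(\lambda I-Q)^{-1}=(\lambda+\nu)^{-1}\sum_k\bigl(\tfrac{\nu}{\lambda+\nu}\bigr)^kT^k$ and obtain the same formula from the geometric series in $x$.
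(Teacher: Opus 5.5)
Your proposal is correct and follows essentially the same route as the paper. You substitute the discrete formula \eqref{eq:DT-KM} into the Poisson mixture, justify the interchange by Fubini (the polynomial product is bounded on $[0,1]$ and the Poisson weights sum to one), and then Laplace transform with domination by $\e^{-\lambda t}$. Your Neumann-series check via $(\lambda+\nu)^{-1}\sum_k\bigl(\tfrac{\nu}{\lambda+\nu}\bigr)^kT^k$ is a valid extra confirmation but is not needed.
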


\begin{proof}
	Fix $t\ge 0$ and $n,m\in\Nzero$. By \eqref{eq:uniformization},
\(	(\e^{tQ})_{n,m}
	=
	\sum_{k=0}^{\infty}\e^{-\nu t}\frac{(\nu t)^k}{k!}\,(T^k)_{n,m}\).
	By Corollary~\ref{cor:urn-protocol-spectral}, for each $k\in\Nzero$,
	\[
	(T^k)_{n,m}
	=
	\sum_{a=1}^{p}\sum_{b=1}^{q}
	\int_{0}^{1}
	B_n^{(b)}(x)\,x^{k}\,A_m^{(a)}(x)\,\dd\psi_{b,a}(x).
	\]
	Insert this identity. Since $\psi_{b,a}$ is a finite positive measure, the exchange of the $k$-sum with the $x$-integral is justified by Fubini's theorem with domination. Indeed, for fixed $n,m,a,b$ the polynomial product $B_n^{(b)}A_m^{(a)}$ is bounded on $[0,1]$, and
	\[
	\sum_{k=0}^{\infty}\e^{-\nu t}\frac{(\nu t)^k}{k!}|x|^{k}
	\le
	\sum_{k=0}^{\infty}\e^{-\nu t}\frac{(\nu t)^k}{k!}=1,
	\qquad x\in[0,1].
	\]
	Therefore
	\[
	(\e^{tQ})_{n,m}
	=
	\sum_{a=1}^{p}\sum_{b=1}^{q}
	\int_{0}^{1}
	B_n^{(b)}(x)
	\Biggl(
	\sum_{k=0}^{\infty}\e^{-\nu t}\frac{(\nu t)^k}{k!}x^{k}
	\Biggr)
	A_m^{(a)}(x)\,\dd\psi_{b,a}(x).
	\]
	For $x\in[0,1]$,
	\[
	\sum_{k=0}^{\infty}\e^{-\nu t}\frac{(\nu t)^k}{k!}x^{k}
	=
	\e^{-\nu t}\sum_{k=0}^{\infty}\frac{(\nu tx)^k}{k!}
	=
	\e^{-\nu t}\e^{\nu tx}
	=
	\e^{-\nu t(1-x)}.
	\]
	This gives \eqref{eq:CT-KM}.
	For $\lambda>0$, combine \eqref{eq:UequalsR} with \eqref{eq:Laplace-resolvent-prelim} and write
	\(\bigl(R(\lambda)\bigr)_{n,m}
	=\int_0^\infty \e^{-\lambda t}(\e^{tQ})_{n,m}\,\dd t\).
	Insert \eqref{eq:CT-KM} and apply Fubini with domination. The kernel is bounded by $\e^{-\lambda t}$ on $[0,1]$, while $B_n^{(b)}A_m^{(a)}$ is bounded and $\psi_{b,a}$ has finite total variation. Hence
	\(\int_{0}^{\infty}\e^{-\lambda t}\e^{-\nu t(1-x)}\,\dd t
	=1/(\lambda+\nu(1-x))\).
	This yields \eqref{eq:resolvent-spectral}.
\end{proof}

When $T$ is the transition kernel of one complete ordered sequence of urn
experiments in Corollary~\ref{cor:urn-protocol-spectral}, formula
\eqref{eq:CT-KM} is
the Karlin--McGregor representation of its continuous-time Poissonization.
Thus continuous time changes the scalar spectral kernel from $x^k$ to
$\e^{-\nu t(1-x)}$, but it introduces neither new mixed-type polynomials nor
new spectral measures.

\subsection{Spectral and probabilistic consequences: recurrence and long-time behavior}
\label{subsec:bounded-potential-long-time}

The continuous-time spectral representation of
Theorem~\ref{thm:CT-bounded} also yields qualitative information about the
Markov process.  First, the non-discounted potential kernel is obtained as
a boundary value of the resolvent, and resolvent entries are related to
Laplace transforms of first-passage times.  The positive stochastic
bidiagonal factorization and uniformization are then used to establish
irreducibility and to characterize recurrence, transience, and positive
recurrence.  In
spectral terms, recurrence is governed by the behavior of
\(\psi_{1,1}\) near \(x=1\), whereas ergodicity is determined by the
presence of a mass point at \(x=1\), corresponding to the generator
eigenvalue \(0\).  This also provides explicit formulas for the stationary
distribution. Finally, when the spectral matrix has a power-law density at
the endpoint, the same representation gives precise discrete- and
continuous-time local limits and, in the critical case, first-return
asymptotics.

\begin{corollary}[Non-discounted potentials]\label{cor:non-discounted-potential}
	Whenever the non-discounted potential kernel
	\(U_{n,m}\coloneq\int_0^\infty(P(t))_{n,m}\,\dd t\)
	is finite, one has $U_{n,m}=\lim_{\lambda\downarrow 0}U^{(\lambda)}_{n,m}$. Hence its spectral representation is the following improper limit of \eqref{eq:resolvent-spectral}:
	\[
	U_{n,m}
	=
	\sum_{a=1}^{p}\sum_{b=1}^{q}
	\int_{0}^{1}
	B_n^{(b)}(x)\,\frac{1}{\nu(1-x)}\,A_m^{(a)}(x)\,\dd\psi_{b,a}(x),
	\]
	whenever these integrals are absolutely convergent. For the scalar positive measure $\psi_{1,1}$ used in the recurrence criterion, the corresponding diagonal formula follows by monotone convergence.
\end{corollary}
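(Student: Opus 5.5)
The plan is to prove the two assertions separately: first the probabilistic identity \(U_{n,m}=\lim_{\lambda\downarrow0}U^{(\lambda)}_{n,m}\), then the passage to the limit inside the spectral integral \eqref{eq:resolvent-spectral}.

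\textbf{Step 1: the probabilistic limit.} Since \((P(t))_{n,m}\ge0\), the integrand \(\e^{-\lambda t}(P(t))_{n,m}\) increases pointwise to \((P(t))_{n,m}\) as \(\lambda\downarrow0\). Monotone convergence in \(t\) gives
\[
U^{(\lambda)}_{n,m}\uparrow U_{n,m}.
\]
This holds in \([0,\infty]\), so the identity is valid whenever \(U_{n,m}\) is finite, and also in the infinite case. By \eqref{eq:UequalsR}, each \(U^{(\lambda)}_{n,m}\) is the resolvent entry given by \eqref{eq:resolvent-spectral}.

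\textbf{Step 2: the limit inside the integral.} The kernel \(k_\lambda(x)=1/(\lambda+\nu(1-x))\) increases, as \(\lambda\downarrow0\), to \(1/(\nu(1-x))\) for every \(x\in[0,1)\). Assume the integrals
\[
\int_0^1\frac{|B_n^{(b)}A_m^{(a)}|}{\nu(1-x)}\,\dd\psi_{b,a}
\]
converge absolutely. Then \(|B_n^{(b)}A_m^{(a)}|\,k_\lambda\) is dominated by this integrable function for all \(\lambda>0\). Dominated convergence applied term by term, over the finitely many pairs \((a,b)\), yields the displayed formula.

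The point \(x=1\) needs care. If \(\psi_{b,a}\) has an atom at \(1\), absolute convergence of the integral forces \(B_n^{(b)}(1)A_m^{(a)}(1)\,\psi_{b,a}(\{1\})=0\), with the convention \(0\cdot\infty=0\). So the atom contributes nothing in the limit, consistent with Step 1. Since the paper states the formula as an improper limit, it suffices to interpret it as the limit of \eqref{eq:resolvent-spectral}, which Step 1 identifies with \(U_{n,m}\).

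\textbf{Step 3: the diagonal scalar case.} Take \(n=m=0\), where \(A_0^{(a)}=\delta_{a,1}\) and \(B_0^{(b)}=\delta_{b,1}\). Then \eqref{eq:resolvent-spectral} reduces to
\[
U^{(\lambda)}_{0,0}=\int_0^1 k_\lambda\,\dd\psi_{1,1},
\]
with a nonnegative integrand increasing in \(\lambda\downarrow0\). Monotone convergence gives
\[
U_{0,0}=\int_0^1\frac{\dd\psi_{1,1}(x)}{\nu(1-x)}
\]
in \([0,\infty]\), without any absolute-convergence hypothesis.

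\textbf{Main obstacle.} The only delicate point is the sign indefiniteness of \(B_n^{(b)}A_m^{(a)}\) off the diagonal. It prevents monotone convergence there, which is why absolute convergence is imposed. That hypothesis makes dominated convergence available, so no further obstacle is expected.
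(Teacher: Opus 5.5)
Your argument is correct and is the one the paper indicates; the corollary is stated without a separate proof, and its own wording already points to monotone convergence in $t$ for $U^{(\lambda)}_{n,m}\uparrow U_{n,m}$, passage to the limit in \eqref{eq:resolvent-spectral} under absolute convergence (your dominated convergence), and monotone convergence for the positive measure $\psi_{1,1}$ at $n=m=0$. You also treat a possible atom of $\psi_{b,a}$ at $x=1$ explicitly, which the paper leaves implicit, and that treatment is correct.
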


\medskip

It is also useful to record the resolvent interpretation of first-passage
Laplace transforms; this part is independent of the spectral representation.
For $B\subset\Nzero$, let
\(\tau_B\coloneq\inf\{t\ge0:X_t\in B\}\) and
\(\tau_j\coloneq\tau_{\{j\}}\).

\begin{proposition}[Resolvent and first-passage Laplace transforms]\label{pro:bounded-resolvent-hitting}
	Fix $\lambda>0$ and $i,j\in\Nzero$. Then
	\begin{equation}\label{eq:bounded-resolvent-hitting-factorization}
		\bigl(R(\lambda)\bigr)_{i,j}
		=
		\operatorname{E}_i\!\left[\mathrm e^{-\lambda \tau_j}\,\ind_{\{\tau_j<\infty\}}\right]\,
		\bigl(R(\lambda)\bigr)_{j,j},
	\end{equation}
	hence
	\begin{equation}\label{eq:bounded-Laplace-hitting-time}
		\operatorname{E}_i\!\left[\mathrm e^{-\lambda \tau_j}\,\ind_{\{\tau_j<\infty\}}\right]
		=
		\frac{\bigl(R(\lambda)\bigr)_{i,j}}{\bigl(R(\lambda)\bigr)_{j,j}}.
	\end{equation}
\end{proposition}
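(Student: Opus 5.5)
The plan is to use the probabilistic meaning of the resolvent from Remark~\ref{rem:prob-meaning-discount} together with the strong Markov property at the hitting time \(\tau_j\). By \eqref{eq:UequalsR},
\[
\bigl(R(\lambda)\bigr)_{i,j}=\operatorname E_i\!\left[\int_0^\infty \mathrm e^{-\lambda t}\ind_{\{X_t=j\}}\,\dd t\right].
\]
Since the bounded generator gives a non-explosive process with right-continuous piecewise-constant paths, \(\tau_j\) is a stopping time. Moreover, \(X_{\tau_j}=j\) on \(\{\tau_j<\infty\}\), because the set \(\{j\}\) is hit at a jump time and is then occupied.

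The first step is to split the time integral at \(\tau_j\). On \([0,\tau_j)\) the indicator \(\ind_{\{X_t=j\}}\) vanishes by definition of \(\tau_j\), so only \(t\ge\tau_j\) contributes, and only on the event \(\{\tau_j<\infty\}\). Substituting \(t=\tau_j+s\) gives
\[
\int_{\tau_j}^\infty \mathrm e^{-\lambda t}\ind_{\{X_t=j\}}\,\dd t
=\mathrm e^{-\lambda\tau_j}\int_0^\infty \mathrm e^{-\lambda s}\ind_{\{X_{\tau_j+s}=j\}}\,\dd s .
\]

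The second step is to condition on \(\mathcal F_{\tau_j}\) and apply the strong Markov property. The conditional expectation of the remaining integral equals \(\operatorname E_j\bigl[\int_0^\infty \mathrm e^{-\lambda s}\ind_{\{X_s=j\}}\,\dd s\bigr]=\bigl(R(\lambda)\bigr)_{j,j}\). The factor \(\mathrm e^{-\lambda\tau_j}\ind_{\{\tau_j<\infty\}}\) is \(\mathcal F_{\tau_j}\)-measurable and can be pulled out. All quantities are nonnegative, so Tonelli's theorem justifies every exchange of expectation and integral. This yields \eqref{eq:bounded-resolvent-hitting-factorization}.

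For \eqref{eq:bounded-Laplace-hitting-time} it remains to divide by \(\bigl(R(\lambda)\bigr)_{j,j}\), so I must show this quantity is positive. Started at \(j\), the process stays at \(j\) for an exponential holding time with rate \(\lambda_j\le\Lambda\). Hence \(\bigl(R(\lambda)\bigr)_{j,j}\ge \operatorname E_j\bigl[\int_0^{E}\mathrm e^{-\lambda s}\,\dd s\bigr]=1/(\lambda+\lambda_j)>0\), where \(E\) is that holding time. If \(\lambda_j=0\), then \(j\) is absorbing and the value is \(1/\lambda>0\).

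The main point needing care is the strong Markov property for the minimal chain at \(\tau_j\). This is standard here because the bounded rates allow the explicit construction \(X_t=Y_{N_t}\) by uniformization. Alternatively, one can argue entirely in discrete time: decompose at the first uniformized step \(k\) at which \(Y_k=j\), and use independence of the Poisson clock. This reduces the claim to the discrete strong Markov property, in the spirit of the renewal argument of Proposition~\ref{pro:general-Green-first-return}.
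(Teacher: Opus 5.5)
Your proposal is correct and follows essentially the same route as the paper: split the discounted occupation integral at $\tau_j$, shift time by $\tau_j$, and use the strong Markov property to identify the remaining expectation with $\bigl(R(\lambda)\bigr)_{j,j}$. Your explicit lower bound $\bigl(R(\lambda)\bigr)_{j,j}\ge 1/(\lambda+\lambda_j)>0$ fills in the positivity needed for the division, which the paper simply asserts.
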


\begin{proof}
	By definition,
	\(\bigl(R(\lambda)\bigr)_{i,j}
	=
	\operatorname{E}_i\!\left[\int_{0}^{\infty}\mathrm e^{-\lambda t}\,\ind_{\{X_t=j\}}\,\dd t\right]\).
	Since $\ind_{\{X_t=j\}}=0$ for $t<\tau_j$, splitting at $\tau_j$ yields
	\[
	\bigl(R(\lambda)\bigr)_{i,j}
	=
	\operatorname{E}_i\!\left[\ind_{\{\tau_j<\infty\}}
	\int_{\tau_j}^{\infty}\mathrm e^{-\lambda t}\,\ind_{\{X_t=j\}}\,\dd t\right].
	\]
	By the strong Markov property at $\tau_j$ and the change of variables $t=\tau_j+u$,
	\[
	\int_{\tau_j}^{\infty}\mathrm e^{-\lambda t}\,\ind_{\{X_t=j\}}\,\dd t
	=
	\mathrm e^{-\lambda \tau_j}
	\int_{0}^{\infty}\mathrm e^{-\lambda u}\,\ind_{\{X_{\tau_j+u}=j\}}\,\dd u.
	\]
	Conditioning on $\{\tau_j<\infty\}$, the process $(X_{\tau_j+u})_{u\ge 0}$ has the law of the chain started from $j$.
	Therefore the conditional expectation of the last integral equals $\bigl(R(\lambda)\bigr)_{j,j}$, which gives
	\eqref{eq:bounded-resolvent-hitting-factorization}. Dividing by $\bigl(R(\lambda)\bigr)_{j,j}>0$ yields
	\eqref{eq:bounded-Laplace-hitting-time}.
\end{proof}

\medskip

Irreducibility and the recurrence classification in the bounded case are
recorded next.

\begin{proposition}[Irreducibility]\label{pro:CT-irreducible}
	Assume the hypotheses of Theorem~\ref{thm:CT-bounded}. Then $T$ is irreducible. Consequently, the CTMC semigroup $P(t)=\e^{tQ}$ is irreducible.
\end{proposition}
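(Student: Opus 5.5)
The plan is to read irreducibility of \(T\) directly from the positive stochastic bidiagonal factorization \(T=\widehat L_1\cdots\widehat L_p\,\widehat U_q\cdots\widehat U_1\), and then transfer it to the semigroup through the Poisson mixture \eqref{eq:uniformization}.

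\emph{Irreducibility of \(T\).} The key observation is that every factor has a strictly positive diagonal. By Definition~\ref{def:positive-bidiagonal}, \(\ell_{n,n}\in(0,1)\) for \(n\ge1\), \(\ell_{0,0}=1\), and \(u_{n,n}\in(0,1)\) for \(n\ge0\). Because all entries are nonnegative, for any entries \(i_0,\dots,i_M\) we have
\[
T_{i_0,i_M}\ge\prod_r(\widehat M_r)_{i_{r-1},i_r},
\]
so any path through the factors with positive entries gives a lower bound.

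For \(n\ge0\), take the path that stays put through all lower factors and through \(\widehat U_q,\dots,\widehat U_2\), then moves \(n\to n+1\) in \(\widehat U_1\). This gives \(T_{n,n+1}>0\). For \(n\ge1\), move \(n\to n-1\) in \(\widehat L_1\) and stay put in all remaining factors. This gives \(T_{n,n-1}>0\).

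Hence the directed graph of \(T\) contains every nearest-neighbour edge in both directions. Chaining these edges, for any \(i,j\) there is a \(k=|i-j|\) with \((T^k)_{i,j}>0\). So \(T\) is irreducible, and in fact aperiodic, since also \(T_{n,n}>0\) by staying in every factor.

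\emph{Transfer to continuous time.} By \eqref{eq:uniformization}, for every \(t>0\),
\[
(\e^{tQ})_{i,j}\ge \e^{-\nu t}\frac{(\nu t)^k}{k!}(T^k)_{i,j}>0 .
\]
All terms of the series are nonnegative, so the single term with \(k=|i-j|\) bounds the entry from below. Thus \(P(t)\) is entrywise positive for all \(t>0\), which is irreducibility of the CTMC in the strongest sense.

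Equivalently, \(Q=\nu(T-I)\) has positive off-diagonal entries \(q_{n,n\pm1}\). The jump graph is therefore connected, and this alone would already suffice.

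The only point needing care is the boundary: the lower factors have \(\ell_{0,0}=1\) and no move from \(0\). The chosen paths use a downward move only from states \(n\ge1\), so this causes no difficulty. No obstacle beyond this bookkeeping is expected.
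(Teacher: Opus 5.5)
Your proposal is correct and follows essentially the same route as the paper: positivity of the diagonal and off-diagonal entries of every bidiagonal factor yields positive-probability paths between any two states, and the single term $\e^{-\nu t}(\nu t)^k(T^k)_{i,j}/k!$ of the uniformization series \eqref{eq:uniformization} then gives $(P(t))_{i,j}>0$. Your explicit verification that $T_{n,n\pm1}>0$, by staying put in every factor except one, is a slightly cleaner rendering of the paper's ``composing the upper (resp.\ lower) factors'' step.
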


\begin{proof}
	Since $T$ admits a positive stochastic bidiagonal factorization, there exist lower and upper bidiagonal stochastic matrices
	\(L^{(1)},\dots,L^{(p)}\) and \(U^{(1)},\dots,U^{(q)}\) such that
	\(T=L^{(1)}\cdots L^{(p)}U^{(q)}\cdots U^{(1)}\).
	Each lower factor has positive nontrivial subdiagonal and positive nontrivial diagonal entries, and similarly each upper factor has positive nontrivial superdiagonal and positive nontrivial diagonal entries. Hence in each lower factor every state $n\ge 1$ has positive probability to move to $n-1$ and a positive probability to stay at $n$, while in each upper factor every state $n\ge 0$ has positive probability to move to $n+1$ and a positive probability to stay at $n$.
	
	Fix $i,j\in\Nzero$. If $j\ge i$, then composing the upper factors yields a path from $i$ to $j$ with positive probability in a finite number of steps. If $j\le i$, the analogous statement holds using the lower factors. Therefore there exists $k\ge 1$ such that $(T^k)_{i,j}>0$, so $j$ is accessible from $i$; by symmetry, also $i$ is accessible from $j$, hence $T$ is irreducible.
	
	By uniformization \eqref{eq:uniformization}, for every $t>0$,
	\((P(t))_{i,j}
	=
	\sum_{\ell=0}^{\infty}\e^{-\nu t}\frac{(\nu t)^\ell}{\ell!}\,(T^\ell)_{i,j}\).
	Choosing $k$ such that $(T^k)_{i,j}>0$ gives
	\[
	(P(t))_{i,j}\ge \e^{-\nu t}\frac{(\nu t)^k}{k!}\,(T^k)_{i,j}>0,
	\]
	so the CTMC is irreducible.
\end{proof}

\medskip

Uniformization shows that recurrence/transience and positive recurrence are
shared by the CTMC and its uniformized chain. The criteria are recalled in
terms of the spectral data of $T$, which, by
Theorem~\ref{thm:CT-bounded}, also control the large-time behavior of the
continuous-time kernel \eqref{eq:CT-KM}.

\begin{theorem}[Recurrence criterion]\label{thm:CT-recurrence-psi}
	Assume the hypotheses of Theorem~\ref{thm:CT-bounded}. Then the CTMC generated by $Q$ is recurrent if and only if
	\[
	\int_0^{1}\frac{\dd\psi_{1,1}(x)}{1-x}=+\infty.
	\]
	Otherwise, it is transient.
\end{theorem}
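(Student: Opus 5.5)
The plan is to reduce recurrence of the CTMC to recurrence of the uniformized chain at state \(0\), and then to read off the Green function of state \(0\) from the spectral representation. By Proposition~\ref{pro:CT-irreducible} both \(T\) and \(P(t)\) are irreducible, so recurrence is a class property and it suffices to test state \(0\). The CTMC is recurrent if and only if \(U_{0,0}=\int_0^\infty (P(t))_{0,0}\,\dd t=+\infty\). By \eqref{eq:uniformization} and Tonelli,
\[
\int_0^\infty (P(t))_{0,0}\,\dd t=\sum_{k\ge0}(T^k)_{0,0}\int_0^\infty \e^{-\nu t}\frac{(\nu t)^k}{k!}\,\dd t=\frac1\nu\sum_{k\ge0}(T^k)_{0,0}.
\]
Hence CTMC recurrence is equivalent to recurrence of the discrete chain \(Y\) with kernel \(T\). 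This is the standard fact that uniformization preserves recurrence, since each Poisson ring contributes an \(\operatorname{Exp}(\nu)\) holding time.

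Next, evaluate \(\sum_k (T^k)_{0,0}\) spectrally. By the normalizations \(A_0^{(a)}=\delta_{a,1}\) and \(B_0^{(b)}=\delta_{b,1}\) in Corollary~\ref{cor:urn-protocol-spectral}, formula \eqref{eq:DT-KM} collapses at \(n=m=0\) to
\[
(T^k)_{0,0}=\int_0^1 x^k\,\dd\psi_{1,1}(x),
\]
with \(\psi_{1,1}\) a finite positive measure on \([0,1]\). Summing over \(k\) and using monotone convergence, because all terms are nonnegative, gives
\[
\sum_{k\ge0}(T^k)_{0,0}=\int_0^1\sum_{k\ge0}x^k\,\dd\psi_{1,1}(x)=\int_0^1\frac{\dd\psi_{1,1}(x)}{1-x},
\]
where the integrand is \(+\infty\) at \(x=1\). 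This is consistent with Corollary~\ref{cor:non-discounted-potential}. Alternatively, one can take \(\lambda\downarrow0\) in \eqref{eq:resolvent-spectral} with \(n=m=0\): the integrand \(1/(\lambda+\nu(1-x))\) increases to \(1/(\nu(1-x))\), so monotone convergence gives \(U_{0,0}=\nu^{-1}\int_0^1\dd\psi_{1,1}/(1-x)\) directly.

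Combining the two steps, the CTMC is recurrent exactly when this integral diverges, and transient otherwise, since for an irreducible chain the dichotomy is complete.

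The main subtlety is the treatment of a possible atom of \(\psi_{1,1}\) at \(x=1\). There the integral is \(+\infty\), and this is correct because such an atom means \((T^k)_{0,0}\) does not tend to \(0\), so the chain is positive recurrent. Monotone convergence handles this case automatically, provided the convention \(1/(1-x)=+\infty\) at \(x=1\) is stated explicitly. The only other point to check is that the \((1,1)\) entry is genuinely a positive measure, which is guaranteed by the entrywise positivity of \(\Psi\).
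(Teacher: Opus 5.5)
Your proposal is correct and follows the paper's proof: irreducibility reduces the question to state $0$, and uniformization with Tonelli gives $\int_0^\infty (P(t))_{0,0}\,\dd t=\nu^{-1}\sum_k (T^k)_{0,0}$. The only difference is in the last step. The paper cites the discrete-time recurrence criterion from \cite{BFM6}. You instead derive it directly from \eqref{eq:DT-KM} at $n=m=0$ by monotone convergence, stating explicitly the convention for an atom of $\psi_{1,1}$ at $x=1$.
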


\begin{proof}
	By Proposition~\ref{pro:CT-irreducible}, recurrence/transience is a class property, hence it suffices to decide it for a fixed state, say $0$.
	Let $(Y_k)_{k\ge 0}$ be the discrete-time chain with transition matrix $T$. Uniformization gives, for every $t\ge0$,
	\[
	(P(t))_{0,0}
	=
	\sum_{k=0}^{\infty}\e^{-\nu t}\frac{(\nu t)^k}{k!}(T^k)_{0,0}.
	\]
	All terms are nonnegative, so Tonelli may be applied to this probabilistic series. Hence the continuous-time Green function satisfies
	\[
	\int_0^\infty (P(t))_{0,0}\,\dd t
	=
	\frac1\nu\sum_{k=0}^{\infty}(T^k)_{0,0}.
	\]
	Thus the CTMC is recurrent if and only if the uniformized discrete-time chain is recurrent. The integral condition is precisely the discrete-time recurrence criterion for $T$ in the PBF setting (see \cite{BFM6}), using the normalization $A_0^{(a)}=\delta_{a,1}$ and $B_0^{(b)}=\delta_{b,1}$.
\end{proof}

\begin{theorem}[Ergodicity and stationary distribution]\label{thm:bounded-stationary}
	Assume the hypotheses of Theorem~\ref{thm:CT-bounded}. Then the CTMC generated by $Q$ is ergodic (equivalently, positive recurrent) if and only if $1$ is a mass point of $\psi_{1,1}$, that is,
	\(m_{b,a}\coloneq\psi_{b,a}(\{1\})\)
	satisfies $m_{1,1}>0$. Since every $\psi_{b,a}$ is positive, all the other boundary masses $m_{b,a}$ are automatically nonnegative.
	In that case, for any fixed reference state $n_0\in\Nzero$, the stationary distribution $\pi$ is given by
	\[
	\pi_m
	=
	\sum_{a=1}^p\sum_{b=1}^q B_{n_0}^{(b)}(1)\,m_{b,a}\,A_m^{(a)}(1),
	\qquad m\in\Nzero.
	\]
	Equivalently, the same expression with $n_0$ replaced by any $n\in\Nzero$ is independent of $n$. Moreover, $P(t)$ converges to equilibrium with limit $\lim_{t\to\infty}(P(t))_{n,m}=\pi_m$.
\end{theorem}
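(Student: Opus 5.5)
The plan is to reduce everything to the uniformized chain \(T\), in the same way as in the proof of Theorem~\ref{thm:CT-recurrence-psi}, and then read off the limit from the spectral formula \eqref{eq:CT-KM} by dominated convergence.

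\emph{Step 1: positive recurrence transfers between \(Q\) and \(T\).} By Proposition~\ref{pro:CT-irreducible} both the CTMC and \(T\) are irreducible. Since \(Q=\nu(T-I)\), a probability vector satisfies \(\pi Q=0\) if and only if \(\pi T=\pi\). For bounded \(Q\), the irreducible non-explosive CTMC is positive recurrent exactly when such a stationary probability vector exists, and the same holds for \(T\). Hence the CTMC is ergodic if and only if \(T\) is positive recurrent, and the two chains have the same stationary law. Moreover, every diagonal entry \(T_{n,n}\) is positive: a product of positive stochastic bidiagonal factors has positive diagonal, since staying put at every stage has positive probability. So \(T\) is aperiodic and \((T^k)_{n,m}\to\pi_m\).

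\emph{Step 2: the spectral characterization.} Insert \eqref{eq:DT-KM} and let \(k\to\infty\). On \([0,1]\) one has \(x^k\to\ind_{\{1\}}(x)\), and \(|x^k|\le1\). Each \(B_n^{(b)}A_m^{(a)}\) is bounded and each \(\psi_{b,a}\) is finite, so dominated convergence gives
\[
\lim_{k\to\infty}(T^k)_{n,m}=\sum_{a,b}B_n^{(b)}(1)\,m_{b,a}\,A_m^{(a)}(1).
\]
For \(n=m=0\) the normalization \(A_0^{(a)}=\delta_{a,1}\), \(B_0^{(b)}=\delta_{b,1}\) reduces this limit to \(m_{1,1}\). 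An irreducible aperiodic chain has \(\lim_k(T^k)_{0,0}>0\) if and only if it is positive recurrent. Therefore ergodicity is equivalent to \(m_{1,1}>0\), and in that case the limit above equals \(\pi_m\) for every \(n\). This gives both the stated formula and its independence of \(n_0\). The positivity of the other masses \(m_{b,a}\) is immediate from positivity of the measures.

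\emph{Step 3: continuous-time convergence.} In \eqref{eq:CT-KM} the kernel \(\e^{-\nu t(1-x)}\) is bounded by \(1\) on \([0,1]\) and tends to \(\ind_{\{1\}}(x)\) as \(t\to\infty\). Dominated convergence then gives \((P(t))_{n,m}\to\sum_{a,b}B_n^{(b)}(1)m_{b,a}A_m^{(a)}(1)=\pi_m\). This step does not even need aperiodicity.

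\emph{Main obstacle.} The delicate point is the equivalence "positive recurrent \(\iff\lim(T^k)_{0,0}>0\)". It requires aperiodicity, which is why I would record the positive diagonal of \(T\) explicitly. One should also check that the \(n\)-independence is consistent with the spectral data rather than assumed; it follows from the uniqueness of limits of \((T^k)_{n,m}\) for an irreducible aperiodic positive recurrent chain. An alternative route avoids periodicity altogether: work with the CTMC directly, using \(\int\e^{-\nu t(1-x)}\to\ind_{\{1\}}\) and the continuous-time fact \(p_{00}(t)\to1/(\lambda_0\operatorname E_0[\text{return time}])\), which is positive if and only if the chain is positive recurrent.
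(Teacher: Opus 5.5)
Your proof is correct. It differs from the paper's at one substantive point.

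\emph{What is shared.} The paper transfers ergodicity from $Q$ to $T$ exactly as you do, through $\pi Q=0\iff\pi T=\pi$. It also uses dominated convergence in \eqref{eq:CT-KM} to identify $\lim_{t\to\infty}(P(t))_{n,m}$ with the boundary-mass expression, which is your Step~3. It then invokes the continuous-time ergodic theorem to conclude that this limit is $\pi_m$ and hence independent of $n$.

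\emph{Where you differ.} The key equivalence ``$T$ ergodic $\iff$ $\psi_{1,1}$ has an atom at $1$'' is simply cited from \cite{BFM6} in the paper. You prove it instead. Dominated convergence in \eqref{eq:DT-KM} at $n=m=0$, together with the normalization $A_0^{(a)}=\delta_{a,1}$, $B_0^{(b)}=\delta_{b,1}$, gives $\lim_k(T^k)_{0,0}=m_{1,1}$. The ergodic theorem for irreducible aperiodic chains then turns this into ``positive recurrent $\iff m_{1,1}>0$''. Aperiodicity comes from the positive diagonal of a product of positive stochastic bidiagonal factors, a fact the paper never records.

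\emph{What each buys.} Your version makes the result self-contained and shows why $\psi_{1,1}$, rather than another entry, governs ergodicity. The cost is the extra aperiodicity check. The alternative in your last paragraph is the cleanest of all. The paper's own dominated-convergence step at $n=m=0$ already gives $\lim_{t\to\infty}(P(t))_{0,0}=m_{1,1}$. For an irreducible non-explosive CTMC this limit is positive exactly under positive recurrence. So neither periodicity nor an external citation is needed.
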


\begin{proof}
	Since $Q=\nu(T-I)$, a probability row vector $\pi$ satisfies $\pi Q=0$ if and only if $\pi T=\pi$. Thus the CTMC and the uniformized chain have exactly the same stationary distributions, and the CTMC is ergodic if and only if $T$ is ergodic.
	
	In the PBF setting, ergodicity of $T$ is equivalent to the presence of a mass point at $x=1$ in $\psi_{1,1}$, and the corresponding boundary masses $m_{b,a}$ determine the stationary distribution (see \cite{BFM6}). This yields the stated criterion.
	
	To identify the limit from \eqref{eq:CT-KM}, decompose $\psi_{b,a}=\psi_{b,a}\!\restriction_{[0,1)}+m_{b,a}\delta_1$ and insert into \eqref{eq:CT-KM}. The contribution of the mass point at $x=1$ is time-independent, since $\e^{-\nu t(1-1)}=1$, and equals
	\[
	\sum_{a=1}^p\sum_{b=1}^q B_n^{(b)}(1)\,m_{b,a}\,A_m^{(a)}(1).
	\]
	For $x\in[0,1)$ one has $\e^{-\nu t(1-x)}\to 0$ as $t\to\infty$, and dominated convergence shows that the $[0,1)$ contribution vanishes. Therefore
	\[
	\lim_{t\to\infty}(P(t))_{n,m}
	=
	\sum_{a=1}^p\sum_{b=1}^q B_n^{(b)}(1)\,m_{b,a}\,A_m^{(a)}(1).
	\]
	Since the left-hand side converges to the stationary distribution, the right-hand side is independent of the initial state $n$ and equals $\pi_m$. Taking $n=n_0$ gives the announced formula.
\end{proof}

The preceding criteria distinguish recurrence, transience, and positive
recurrence.  When the spectral measures are absolutely continuous near
\(x=1\), their endpoint behavior also determines the precise rate at
which the transition probabilities decay.  The number \(\delta\) in the
next theorem is only the exponent describing this endpoint behavior; it
is not one of the parameters of a particular polynomial family.

\begin{theorem}[Spectral endpoint asymptotics]
\label{thm:bounded-endpoint-asymptotics}
Assume the hypotheses of Theorem~\ref{thm:CT-bounded}, and suppose that
none of the measures \(\psi_{b,a}\) has an atom at \(x=1\). Assume,
moreover, that there exist \(\varepsilon>0\), an endpoint exponent
\(\delta>-1\), and constants \(\omega_{b,a}\ge0\) such that, on
\((1-\varepsilon,1)\),
\[
\dd\psi_{b,a}(x)=w_{b,a}(x)\,\dd x,
\qquad
\lim_{x\uparrow1}
\frac{w_{b,a}(x)}{(1-x)^\delta}
=\omega_{b,a}.
\]
For \(n,m\in\Nzero\), put
\begin{equation}
\label{eq:bounded-endpoint-coefficient}
c_{n,m}
\coloneq
\sum_{a=1}^{p}\sum_{b=1}^{q}
B_n^{(b)}(1)\,\omega_{b,a}\,A_m^{(a)}(1).
\end{equation}
Then
\begin{align}
\label{eq:bounded-discrete-endpoint-limit}
\lim_{k\to\infty}
k^{\delta+1}(T^k)_{n,m}
&=
\Gamma(\delta+1)c_{n,m},
\\
\label{eq:bounded-continuous-endpoint-limit}
\lim_{t\to\infty}
t^{\delta+1}(\e^{tQ})_{n,m}
&=
\frac{\Gamma(\delta+1)}{\nu^{\delta+1}}c_{n,m}.
\end{align}
In particular, the continuous-time and discrete-time kernels have the
same endpoint exponent.
\end{theorem}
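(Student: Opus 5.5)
The plan is to start from the discrete representation \eqref{eq:DT-KM} of Corollary~\ref{cor:urn-protocol-spectral} and from the continuous one \eqref{eq:CT-KM} of Theorem~\ref{thm:CT-bounded}. For fixed \(n,m\), each term has the form \(\int_0^1 f(x)\,x^k\,\dd\psi_{b,a}(x)\) or \(\int_0^1 f(x)\,\e^{-\nu t(1-x)}\,\dd\psi_{b,a}(x)\), where \(f=B_n^{(b)}A_m^{(a)}\) is a polynomial and hence continuous on \([0,1]\). Since there are finitely many pairs \((a,b)\), it suffices to prove the scalar Abelian limits
\[
\lim_{k\to\infty}k^{\delta+1}\int_0^1 f(x)x^k\,\dd\psi_{b,a}(x)=\Gamma(\delta+1)f(1)\omega_{b,a},
\]
together with the analogous continuous-time limit carrying the factor \(\nu^{-\delta-1}\). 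Summing these over \(a,b\) then gives \(c_{n,m}\) as defined in \eqref{eq:bounded-endpoint-coefficient}.

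For each term, split \([0,1]\) into \([0,1-\varepsilon']\) and \((1-\varepsilon',1)\) with \(0<\varepsilon'\le\varepsilon\). On the first piece the integrand is bounded by \(\|f\|_\infty(1-\varepsilon')^k\) times the finite total mass of \(\psi_{b,a}\). Multiplied by \(k^{\delta+1}\), this still tends to \(0\); the same holds for \(\e^{-\nu t\varepsilon'}\) against \(t^{\delta+1}\). On \([1-\varepsilon',1)\) the measure is absolutely continuous, and the absence of an atom at \(1\) means the point \(x=1\) contributes nothing. The substitution \(x=1-s/k\) (resp.\ \(1-x=s/(\nu t)\)) turns the local integral into
\[
k^{\delta+1}\int_0^{k\varepsilon'} f(1-s/k)\,w_{b,a}(1-s/k)\,(1-s/k)^k\,\frac{\dd s}{k}
=\int_0^{k\varepsilon'} f(1-s/k)\,\frac{w_{b,a}(1-s/k)}{(s/k)^\delta}\,s^\delta(1-s/k)^k\,\dd s .
\]
Pointwise the integrand tends to \(f(1)\omega_{b,a}s^\delta\e^{-s}\), and \(\int_0^\infty s^\delta\e^{-s}\,\dd s=\Gamma(\delta+1)\). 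In the continuous case one obtains \(\e^{-s}\) exactly, with the extra factor \(\nu^{-\delta-1}\).

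The main step is justifying dominated convergence. I would choose \(\varepsilon'\) so that \(w_{b,a}(x)\le(\omega_{b,a}+1)(1-x)^\delta\) on \((1-\varepsilon',1)\), which the limit hypothesis permits. Then use \((1-s/k)^k\le\e^{-s}\) for \(0\le s\le k\) to dominate the integrand by \(\|f\|_\infty(\omega_{b,a}+1)s^\delta\e^{-s}\). This is integrable because \(\delta>-1\). The continuous case needs no inequality, since the kernel is already \(\e^{-s}\). The case \(\omega_{b,a}=0\) is covered by the same argument and yields a zero limit.

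A remark relating the two kernels is optional. One could also derive \eqref{eq:bounded-continuous-endpoint-limit} from \eqref{eq:bounded-discrete-endpoint-limit} through the Poisson mixture \eqref{eq:uniformization}, using concentration of \(N_t\) around \(\nu t\). The direct spectral computation above is cleaner, however, and treats both limits uniformly, so I would present that one. The final sentence of the statement is then immediate, since both limits carry the same exponent \(\delta+1\).
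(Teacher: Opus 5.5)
Your proposal is correct and follows essentially the same route as the paper. Both arguments split off \([0,1-\varepsilon]\) as exponentially small, shrink \(\varepsilon\) so that \(w_{b,a}(x)\le C(1-x)^\delta\) near \(1\), rescale by \(u=k(1-x)\) (respectively \(u=\nu t(1-x)\)), dominate by a multiple of \(u^\delta\e^{-u}\), and apply dominated convergence. Your inequality \((1-s/k)^k\le\e^{-s}\) simply makes explicit the domination step the paper states briefly.
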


\begin{proof}
	Insert the spectral formula \eqref{eq:DT-KM}.  On
	\([0,1-\varepsilon]\), the contribution is exponentially small in \(k\).
	On \((1-\varepsilon,1)\), the polynomial product
	\(B_n^{(b)}(x)A_m^{(a)}(x)\) is continuous at \(1\). After decreasing
	\(\varepsilon\), if necessary, the endpoint assumption also gives
	\(w_{b,a}(x)\le C_{b,a}(1-x)^\delta\) on this interval. Under the change
	of variables \(u=k(1-x)\), the rescaled integrand is therefore dominated
	by a constant multiple of \(\e^{-u}u^\delta\), which is integrable because
	\(\delta>-1\). Dominated convergence now gives
\[
\lim_{k\to\infty}
k^{\delta+1}
\int_{1-\varepsilon}^{1}
B_n^{(b)}(x)A_m^{(a)}(x)x^k
w_{b,a}(x)\,\dd x
=
B_n^{(b)}(1)A_m^{(a)}(1)\omega_{b,a}
\int_0^\infty \e^{-u}u^\delta\,\dd u.
\]
The last integral is \(\Gamma(\delta+1)\).  Summing over \(a,b\)
proves \eqref{eq:bounded-discrete-endpoint-limit}.

For continuous time, insert \eqref{eq:CT-KM}. The contribution of
\([0,1-\varepsilon]\) is exponentially small in \(t\), and the change of
variables \(u=\nu t(1-x)\) gives
\[
\lim_{t\to\infty}
t^{\delta+1}
\int_{1-\varepsilon}^{1}
B_n^{(b)}(x)A_m^{(a)}(x)
\e^{-\nu t(1-x)}w_{b,a}(x)\,\dd x
=
\frac{\Gamma(\delta+1)}{\nu^{\delta+1}}
B_n^{(b)}(1)A_m^{(a)}(1)\omega_{b,a}.
\]
This proves \eqref{eq:bounded-continuous-endpoint-limit}.
\end{proof}

\begin{corollary}[Classification and first returns at a regular endpoint]
\label{coro:bounded-endpoint-classification}
Under the assumptions of
Theorem~\ref{thm:bounded-endpoint-asymptotics}, suppose that
\(\omega_{1,1}>0\).  The discrete-time chain with kernel \(T\), and its
continuous-time Poissonization, are null recurrent when
\(-1<\delta\le0\), and transient when \(\delta>0\).

In the critical case \(\delta=0\), one has, whenever \(c_{n,m}>0\),
\begin{align}
\label{eq:bounded-critical-Green}
\sum_{k=0}^{\infty}z^k(T^k)_{n,m}
&\sim
c_{n,m}\log\frac{1}{1-z},
&&z\uparrow1,
\\
\label{eq:bounded-critical-resolvent}
\bigl(R(\lambda)\bigr)_{n,m}
&\sim
\frac{c_{n,m}}{\nu}\log\frac{\nu}{\lambda},
&&\lambda\downarrow0.
\end{align}
If
\(\tau_m^+\coloneq\inf\{k\ge1:Y_k=m\}\) is the first positive return
time of the discrete-time chain to \(m\), and \(c_{m,m}>0\), then
\begin{equation}
\label{eq:bounded-critical-return-tail}
\Pr_m\{\tau_m^+>k\}
\sim
\frac{1}{c_{m,m}\log k}.
\end{equation}
In particular, \(\operatorname E_m[\tau_m^+]=+\infty\).
\end{corollary}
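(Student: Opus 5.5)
The plan is to work from the diagonal entry \(g_0\)-type Green functions, using Theorem~\ref{thm:bounded-endpoint-asymptotics} and the spectral formula \eqref{eq:DT-KM} summed over \(k\).

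\emph{Classification.} Set \(n=m=0\). The normalization \(A_0^{(a)}=\delta_{a,1}\), \(B_0^{(b)}=\delta_{b,1}\) gives \(c_{0,0}=\omega_{1,1}>0\). Recurrence is decided by Theorem~\ref{thm:CT-recurrence-psi}: near \(1\), \(\dd\psi_{1,1}/(1-x)\sim\omega_{1,1}(1-x)^{\delta-1}\dd x\). This integral diverges exactly when \(\delta\le0\), so the chain is recurrent iff \(\delta\le 0\), and the same holds for the Poissonization. Positive recurrence is excluded by Theorem~\ref{thm:bounded-stationary}, since \(\psi_{1,1}\) has no atom at \(1\). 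Alternatively, \((T^k)_{0,0}\to0\) by \eqref{eq:bounded-discrete-endpoint-limit}, and a positive-recurrent chain would have a positive Cesàro limit. Hence the chain is null recurrent for \(-1<\delta\le0\) and transient for \(\delta>0\).

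\emph{Critical Green and resolvent asymptotics.} Take \(\delta=0\). Summing \eqref{eq:DT-KM} against \(z^k\) (Tonelli on the positive pieces, or Fubini for \(z<1\) fixed, since \(\sum z^kx^k\le(1-z)^{-1}\)) gives
\[
\sum_k z^k(T^k)_{n,m}=\sum_{a,b}\int_0^1 B_n^{(b)}(x)A_m^{(a)}(x)\frac{\dd\psi_{b,a}(x)}{1-zx}.
\]
I split the integral at \(1-\varepsilon\). The part on \([0,1-\varepsilon]\) stays bounded as \(z\uparrow1\). On \((1-\varepsilon,1)\), I write \(B_n^{(b)}A_m^{(a)}w_{b,a}=B_n^{(b)}(1)A_m^{(a)}(1)\omega_{b,a}+o(1)\) as \(x\uparrow1\). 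Then \(\int_{1-\varepsilon}^1\dd x/(1-zx)=z^{-1}\log\frac{1-z+z\varepsilon}{1-z}\sim\log\frac1{1-z}\). The \(o(1)\) error contributes \(o(\log\frac1{1-z})\): choose \(\varepsilon\) small enough that the error is \(<\eta\), then let \(\eta\to0\). This gives \eqref{eq:bounded-critical-Green}. The same argument applied to \eqref{eq:resolvent-spectral}, with kernel \(1/(\lambda+\nu(1-x))\) whose integral near \(1\) is \(\nu^{-1}\log\frac{\lambda+\nu\varepsilon}{\lambda}\), gives \eqref{eq:bounded-critical-resolvent}. Alternatively, one could deduce it from the discrete case via \(R(\lambda)=\frac1{\lambda+\nu}\sum_k(\frac{\nu}{\lambda+\nu})^kT^k\), with \(1-z=\lambda/(\lambda+\nu)\). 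Since \(c_{n,m}\) may have either sign termwise, I will use absolute bounds on the error terms rather than monotonicity.

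\emph{Return tail.} By Proposition~\ref{pro:general-Green-first-return} with \(A=\{m\}\), \(1-f_m(z)=1/g_m(z)\sim 1/(c_{m,m}\log\frac1{1-z})\). Recurrence gives \(f_m(1)=1\), so
\[
\sum_k \Pr_m\{\tau_m^+>k\}z^k=\frac{1-f_m(z)}{1-z}\sim\frac{1}{1-z}\,\frac{1}{c_{m,m}\log\frac1{1-z}}.
\]
The coefficients \(\Pr_m\{\tau_m^+>k\}\) are nonincreasing, so the monotone-density form of Karamata's Tauberian theorem applies. Here \(\ell(y)=1/(c_{m,m}\log y)\) is slowly varying, and the theorem yields \eqref{eq:bounded-critical-return-tail}. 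Summing the tail then gives \(\operatorname E_m[\tau_m^+]=\sum_k\Pr_m\{\tau_m^+>k\}=\infty\), consistent with null recurrence.

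The main obstacle is this Tauberian step. I have to check that the index-\(1\) regular variation with a logarithmic slowly varying factor, together with monotone coefficients, is exactly the hypothesis of Karamata's theorem (for example, Feller XIII.5, Theorem 5). A secondary point is making the \(o(\log)\) error control uniform in the signed combination over \(a,b\).
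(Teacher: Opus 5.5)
Your proposal is correct. The classification step matches the paper: you use the recurrence integral for \(\psi_{1,1}\) near \(x=1\), then exclude positive recurrence by the absence of an atom. Your Ces\`aro alternative based on \((T^k)_{0,0}\to0\) is also valid. The return-tail step also matches: you use the renewal identity and then the monotone Hardy--Littlewood--Karamata theorem with index \(\rho=1\) and slowly varying factor \(1/(c_{m,m}\log y)\).

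The only genuine difference is in the critical Green and resolvent asymptotics. The paper obtains them as Abelian consequences of the local limits \(k(T^k)_{n,m}\to c_{n,m}\) and \(t(\mathrm{e}^{tQ})_{n,m}\to c_{n,m}/\nu\), already proved in Theorem~\ref{thm:bounded-endpoint-asymptotics}. You instead go back to the spectral integrals with kernels \((1-zx)^{-1}\) and \((\lambda+\nu(1-x))^{-1}\), and extract the logarithm directly from the endpoint density by the \(\varepsilon\)--\(\eta\) split. Your route is self-contained and does not need an Abelian lemma. The paper's route is shorter because it reuses the local limit. Neither route needs positivity of individual terms, since the Abelian direction only requires \(c_{n,m}\neq0\), so your concern about signs is handled either way. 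Your alternative derivation of the resolvent from the discrete Green function via \(R(\lambda)=(\lambda+\nu)^{-1}\sum_k\bigl(\nu/(\lambda+\nu)\bigr)^kT^k\) is also correct.

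One small remark: you do not need recurrence, \(f_m(1)=1\), for the identity \(\sum_k\Pr_m\{\tau_m^+>k\}z^k=(1-f_m(z))/(1-z)\). It holds for any possibly defective \(\tau_m^+\).
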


\begin{proof}
The normalization
\(A_0^{(a)}=\delta_{a,1}\) and
\(B_0^{(b)}=\delta_{b,1}\) gives
\(c_{0,0}=\omega_{1,1}>0\).
Near \(x=1\), the recurrence integral in
Theorem~\ref{thm:CT-recurrence-psi} behaves as
\(\omega_{1,1}\int_0^\varepsilon y^{\delta-1}\,\dd y\).
It diverges exactly when \(\delta\le0\).  Since there is no atom at
\(x=1\), Theorem~\ref{thm:bounded-stationary} excludes positive
recurrence, proving the classification.

When \(\delta=0\),
\eqref{eq:bounded-discrete-endpoint-limit} gives
\((T^k)_{n,m}\sim c_{n,m}/k\).  Its Abelian generating-function
form is \eqref{eq:bounded-critical-Green}; the continuous-time limit in
\eqref{eq:bounded-continuous-endpoint-limit}, followed by Laplace
transformation, gives \eqref{eq:bounded-critical-resolvent}.
Finally, if
\(F_m(z)=\operatorname E_m[z^{\tau_m^+}]\), the renewal identity is
\[
\sum_{k=0}^{\infty}z^k(T^k)_{m,m}
=\frac{1}{1-F_m(z)}.
\]
Consequently,
\(1-F_m(z)\sim
1/(c_{m,m}\log(1/(1-z)))\).
Moreover,
\((1-F_m(z))/(1-z)=
\sum_{k=0}^{\infty}\Pr_m\{\tau_m^+>k\}z^k\).
Since the coefficients form a nonincreasing sequence, the monotone
Hardy--Littlewood--Karamata Tauberian theorem gives
\eqref{eq:bounded-critical-return-tail}; see
\cite[Chapter~XIII, Section~5]{feller2}.
\end{proof}

\section{Unbounded generators and state-dependent uniformization}
\label{sec:unbounded}

Throughout this section, \(Q=[q_{n,m}]_{n,m\in\Nzero}\) is a conservative
banded \(Q\)-matrix on \(\Nzero\). Thus \(q_{n,m}\ge0\) for \(m\ne n\),
\(q_{n,n}=-\sum_{m\ne n}q_{n,m}\), and the exit rate from \(n\) is
\(\lambda_n\coloneq-q_{n,n}\). The generator is called unbounded when
\(\sup_n\lambda_n=+\infty\).

For a bounded generator, one scalar rate dominates all the exit rates and
global uniformization reduces the continuous-time process to one fixed
stochastic kernel. This reduction is unavailable when the exit rates are
unbounded. A rate depending on the current state is used instead. Before
that construction is developed, the next proposition explains
why scalar shifts of all leading truncations cannot provide a wide-band
unbounded Markov model.  In \cite{BFM_arXiv_2026}, every
truncation of a general unbounded banded matrix may instead use its own
\(N\)-dependent shift \(s_N\), leading to an unbounded Favard theorem.
The proposition below does not contradict that result: it identifies the
additional restriction produced when the matrix is required to be a conservative
Markov generator.

\subsection{The obstruction to scalar shifted factorizations}

For \(N\in\Nzero\), let \(Q_N\) denote the leading principal submatrix of
\(Q\), indexed by \(\{0,\ldots,N\}\). It is the generator of the process
killed upon its first exit from that set and satisfies
\begin{equation}
\label{eq:QN-subgenerator}
\sum_{j=0}^{N}(Q_N)_{i,j}
=-\sum_{j>N}q_{i,j}\le0,
\qquad 0\le i\le N.
\end{equation}

\begin{proposition}[Scalar-shift obstruction]
\label{pro:scalar-shift-obstruction}
Suppose that the exit rates of \(Q\) are unbounded. Assume that, for every
\(N\), there exists \(c_N>0\) such that
\(Q_N+c_NI_{N+1}\) admits a positive bidiagonal factorization. Then
\[
q_{n,m}=0\qquad\text{whenever}\qquad |n-m|\ge2.
\]
Consequently, an unbounded conservative generator satisfying the
scalar-shift hypothesis is necessarily a birth--death generator.
\end{proposition}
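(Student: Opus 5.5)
The plan is to use only total nonnegativity of the shifted truncations and play two elementary minor inequalities against each other. A product of bidiagonal matrices with nonnegative entries is totally nonnegative; this is the Cauchy--Binet argument recalled in the introduction. Hence, for every \(N\), the matrix \(Q_N+c_NI_{N+1}\) is totally nonnegative. Its \(1\times1\) minors give \(q_{k,k}+c_N\ge0\) for all \(0\le k\le N\), that is,
\[
c_N\ge \max_{0\le k\le N}\lambda_k .
\]
Because the exit rates are unbounded, the right-hand side tends to \(+\infty\) as \(N\to\infty\). Therefore \(c_N\to+\infty\).

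Next suppose, for contradiction, that \(q_{n,m}>0\) for some \(m\ge n+2\); the case \(m\le n-2\) is symmetric and is handled by transposing the argument below. Choose any intermediate index \(k\) with \(n<k<m\). For every \(N\ge m\), the entries of \(Q_N\) indexed by \(\{n,k,m\}\) coincide with those of \(Q\). I will look at the \(2\times2\) minor of \(Q_N+c_NI\) with rows \(\{n,k\}\) and columns \(\{k,m\}\). The row and column indices are increasing, so this is a legitimate minor, and its lower-left entry is the shifted diagonal entry \(q_{k,k}+c_N\). Total nonnegativity gives
\[
q_{n,k}\,q_{k,m}-q_{n,m}\bigl(q_{k,k}+c_N\bigr)\ge0,
\]
and hence, since \(q_{n,m}>0\),
\[
c_N\le \lambda_k+\frac{q_{n,k}\,q_{k,m}}{q_{n,m}} .
\]
The right-hand side is a finite constant independent of \(N\). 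Thus \((c_N)_{N\ge m}\) is bounded, which contradicts \(c_N\to\infty\). For \(m\le n-2\), the same bound comes from the minor with rows \(\{m,k\}\) and columns \(\{n,k\}\)... more simply, with rows \(\{k,n\}\) and columns \(\{m,k\}\), where \(m<k<n\). Its upper-right entry is \(q_{k,k}+c_N\), and it yields
\[
q_{k,m}\,q_{n,k}-q_{n,m}\bigl(q_{k,k}+c_N\bigr)\ge0 .
\]
This again bounds \(c_N\) and gives the same contradiction. Hence \(q_{n,m}=0\) whenever \(|n-m|\ge2\), so \(Q\) is tridiagonal; conservativity then makes it a birth--death generator.

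The only delicate point is choosing a minor in which the large shift \(c_N\) enters with a negative sign. The diagonal minors only give lower bounds on \(c_N\). The off-diagonal minor above places the shifted diagonal entry in an anti-diagonal position, and that placement is what caps the shift. I will check carefully that this minor really is a contiguous-order minor (rows \(n<k\), columns \(k<m\)), so that total nonnegativity applies. I will also check that no strict positivity of the other band entries is needed: if \(q_{n,k}\) or \(q_{k,m}\) vanishes, the inequality only becomes stronger.
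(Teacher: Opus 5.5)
Your proposal is correct and follows essentially the same route as the paper. Both use total nonnegativity to force \(c_N\ge\max_{k\le N}\lambda_k\to\infty\), and then bound \(c_N\) with the \(2\times2\) minor that puts the shifted diagonal entry \(c_N-\lambda_k\) in the anti-diagonal position, in both the upper and lower cases. The only differences are that the paper fixes \(k=n+1\) and uses strict positivity of the diagonal; neither change affects the argument.
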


\begin{proof}
Put \(A_N\coloneq Q_N+c_NI_{N+1}\). Every nonnegative bidiagonal matrix is
totally nonnegative, and products of totally nonnegative matrices are
totally nonnegative by the Cauchy--Binet formula. Hence every minor of
\(A_N\) is nonnegative.

The diagonal entries of a positive bidiagonal product are positive.
Therefore
\[
c_N-\lambda_i=(A_N)_{i,i}>0,
\qquad 0\le i\le N.
\]
It follows that \(c_N>\max_{0\le i\le N}\lambda_i\). Since the exit rates
are unbounded, \(c_N\to+\infty\).

Fix \(n\ge0\) and \(k\ge2\), and take \(N\ge n+k\). Consider the minor of
\(A_N\) formed by rows \(n,n+1\) and columns \(n+1,n+k\). Total
nonnegativity gives
\[
0\le
\begin{vmatrix}
q_{n,n+1}&q_{n,n+k}\\
c_N-\lambda_{n+1}&q_{n+1,n+k}
\end{vmatrix}
=q_{n,n+1}q_{n+1,n+k}
-q_{n,n+k}(c_N-\lambda_{n+1}).
\]
All entries of \(Q\) appearing here are fixed when \(n\) and \(k\) are
fixed, whereas \(c_N\to+\infty\). Hence \(q_{n,n+k}=0\).

The lower-band argument is the reflected one. Total nonnegativity of the
minor formed by rows \(n+1,n+k\) and columns \(n,n+1\) gives
\[
0\le
\begin{vmatrix}
q_{n+1,n}&c_N-\lambda_{n+1}\\
q_{n+k,n}&q_{n+k,n+1}
\end{vmatrix}
=q_{n+1,n}q_{n+k,n+1}
-(c_N-\lambda_{n+1})q_{n+k,n}.
\]
Letting \(N\to\infty\) yields \(q_{n+k,n}=0\). Thus all diagonals beyond
the first sub- and superdiagonal vanish.
\end{proof}

\begin{remark}[Scope of the obstruction]
\label{rem:scalar-shift-scope}
Proposition~\ref{pro:scalar-shift-obstruction} is specific to conservative
generators with unbounded exit rates. The shifted-truncation Favard theorem
for general unbounded banded matrices \cite{BFM_arXiv_2026} remains
applicable outside the
Markov-generator sign and row-sum constraints. Within the present
probabilistic setting, however, the proposition shows that a different
construction is required to allow jumps of sizes
\(-p,\ldots,q\) with \(p>1\) or \(q>1\).
\end{remark}

\subsection{Local uniformization and state-dependent urn clocks}

\begin{definition}[State-dependent SBF uniformization]
\label{def:state-dependent-SBF-uniformization}
A conservative generator \(Q\) is called \emph{state-dependently
SBF-uniformizable} if there exist a positive diagonal matrix
\(V=\diag(v_0,v_1,\ldots)\), with \(\inf_n v_n>0\), and a banded
stochastic matrix \(T\) admitting a positive stochastic bidiagonal
factorization such that
\begin{equation}
\label{eq:local-uniformization-Q}
Q=V(T-I).
\end{equation}
The number \(v_n\) is the local clock rate at state \(n\). Equivalently,
\(q_{n,m}=v_nt_{n,m}\) for \(m\ne n\) and
\(\lambda_n=v_n(1-t_{n,n})\le v_n\).
\end{definition}

Starting instead from a conservative generator \(Q\), any choice
\(v_n\ge\lambda_n\) gives a stochastic matrix
\(T=I+V^{-1}Q\). Definition~\ref{def:state-dependent-SBF-uniformization}
selects those choices for which this single embedded kernel has a positive
stochastic bidiagonal factorization. The strict inequality
\(v_n>\lambda_n\) gives a positive probability of a virtual stay at a
local-clock event.

\begin{proposition}[Local-clock and urn realization]
\label{pro:local-clock-urn-realization}
Suppose that \(Q\) satisfies
\eqref{eq:local-uniformization-Q}, where
\[
T=\widehat L_1\cdots\widehat L_p
\widehat U_q\cdots\widehat U_1
\]
is a positive stochastic bidiagonal factorization. Then the minimal
\(Q\)-process has the following realization. At state \(n\), a clock of
rate \(v_n\) is run. When it rings, the ordered sequence of \(p\)
death-or-stay experiments and \(q\) birth-or-stay experiments associated
with the displayed factorization is executed once. The state at the end of
the complete sequence is the next state of the embedded event chain.

If the factor entries are rational, every experiment can be implemented by
a finite two-color urn. If the local-clock construction is non-explosive,
the resulting process is conservative. Moreover, positivity of the
nontrivial bidiagonal entries of the factors makes both the discrete event chain and the
continuous-time process irreducible.
\end{proposition}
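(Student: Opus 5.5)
The plan is to identify the jump-chain/holding-time description of the minimal \(Q\)-process with the local-clock construction. First I would recall the standard construction of the minimal process. At state \(n\) with \(\lambda_n>0\), it waits an exponential time of rate \(\lambda_n\) and then jumps to \(m\ne n\) with probability \(q_{n,m}/\lambda_n\). The processes are then concatenated up to the explosion time and sent to a cemetery afterwards.

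The local-clock process instead runs a rate-\(v_n\) clock and, at each ring, moves according to the row \(t_{n,\cdot}\) of \(T\). By Definition~\ref{def:state-dependent-SBF-uniformization}, \(t_{n,n}=1-\lambda_n/v_n\) and \(t_{n,m}=q_{n,m}/v_n\). I would thin the virtual stays. The number of rings until the first real jump is geometric with success probability \(1-t_{n,n}=\lambda_n/v_n\). A geometric sum of i.i.d.\ \(\mathrm{Exp}(v_n)\) variables is \(\mathrm{Exp}(\lambda_n)\), for example by comparing Laplace transforms. Conditionally on a real jump, the target is \(m\) with probability \(t_{n,m}/(1-t_{n,n})=q_{n,m}/\lambda_n\), and it is independent of the holding time. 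Hence the sequence of real jumps and holding times has exactly the law of the jump chain and holding times of the minimal process. Both processes are defined only up to the accumulation of real jumps, so their laws coincide as minimal processes. The degenerate case \(\lambda_n=0\), where \(n\) is absorbing, is consistent with this. The same case is in fact excluded later by irreducibility.

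For the ring step, I would invoke the probabilistic reading of the factorization. Conditioning successively on the intermediate states shows that executing \(\widehat L_1,\dots,\widehat L_p,\widehat U_q,\dots,\widehat U_1\) in order produces one transition with kernel \(T\). This is the same argument as in the proof of Theorem~\ref{thm:finite-urn-correspondence}. For rational entries, that theorem also supplies the finite two-color urns with minimal ball counts.

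For conservativity, if the local-clock process is non-explosive, then the minimal process it realizes has \(\sum_m P_{n,m}(t)=1\). This is the standard equivalence between non-explosion and honesty of the minimal transition function. Moreover, a real jump accumulation under the local clock forces an accumulation of rings, so the two notions of explosion agree.

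For irreducibility, the argument of Proposition~\ref{pro:CT-irreducible} gives \((T^k)_{i,j}>0\) for some \(k\). One composes upper factors when \(j\ge i\) and lower factors when \(j\le i\), using the positive stay entries of the other factors. Hence the embedded event chain is irreducible. For the continuous-time process, the finite path \(i\to\cdots\to j\) of positive \(T\)-transitions can be realized within time \(t\), by requiring finitely many exponential rings with \(\inf_n v_n>0\) to occur in \([0,t]\). This has positive probability and does not involve explosion, so \(P_{i,j}(t)>0\). The main point needing care is the identification with the \emph{minimal} process when explosion is possible, which is handled by working only up to the explosion time.
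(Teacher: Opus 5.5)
Your proof is correct, but it identifies the process by a different route from the paper. The paper argues infinitesimally. At state \(n\) the clock rings with probability \(v_n\dd t+o(\dd t)\), and a ring moves according to \(T\). Hence the off-diagonal rates are \(v_nt_{n,m}=q_{n,m}\) and the diagonal rate is \(v_n(t_{n,n}-1)=q_{n,n}\). The urns come from Theorem~\ref{thm:finite-urn-correspondence}, and irreducibility is read off from the communication graph of \(T\), which multiplication by the positive \(v_n\) does not change. You instead thin the virtual stays: a geometric number of \(\mathrm{Exp}(v_n)\) rings with success probability \(\lambda_n/v_n\) gives an \(\mathrm{Exp}(\lambda_n)\) holding time and an independent target law \(q_{n,m}/\lambda_n\). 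You then match the jump chain and holding times with those of the minimal process. The paper's computation is shorter, but it only matches the \(Q\)-matrix, which for unbounded rates does not by itself say which \(Q\)-process is obtained. Your pathwise identification pins down the minimal process directly. It also gives explicit arguments for conservativity and for \(P_{i,j}(t)>0\), which the paper treats in a sentence. Two small remarks. First, for the laws to coincide on the same time interval you also need the converse comparison: rings cannot accumulate unless genuine jumps do, because between consecutive genuine jumps the number of rings is geometric, hence a.s.\ finite. This is the argument used in Proposition~\ref{pro:canonical-unbounded-local-clock}; you only state that a jump accumulation forces a ring accumulation. Second, \(\inf_n v_n>0\) is not needed in the irreducibility step, since positivity and finiteness of the finitely many rates along the chosen path suffice.
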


\begin{proof}
During a time interval of length \(\dd t\), the local clock rings with
probability \(v_n\dd t+o(\dd t)\). Conditional on a ring, the complete urn
sequence has transition kernel \(T\). Therefore, for \(m\ne n\), the
transition rate from \(n\) to \(m\) is \(v_nt_{n,m}=q_{n,m}\), while the
diagonal rate is \(v_n(t_{n,n}-1)=q_{n,n}\). This proves that the
construction has generator \(Q\).

The ordered binary and finite-urn realizations follow from
Theorem~\ref{thm:finite-urn-correspondence}. A positive stochastic
bidiagonal factorization gives positive nearest-neighbor paths in both
directions, as in Proposition~\ref{pro:CT-irreducible}; multiplication by
the positive rates \(v_n\) does not change the communication graph.
\end{proof}

The following construction gives non-explosive unbounded examples of every
bandwidth with exit rate \(n+1\).

\begin{proposition}[Non-explosive local-clock construction with linear exit rates]
\label{pro:canonical-unbounded-local-clock}
Let \(T=[t_{n,m}]_{n,m\in\Nzero}\) be an irreducible stochastic
\((p,q)\)-band matrix with \(t_{n,n}<1\) for every \(n\). Define
\begin{equation}
\label{eq:canonical-local-rates}
v_n\coloneq\frac{n+1}{1-t_{n,n}},
\qquad
V\coloneq\diag(v_0,v_1,\ldots),
\qquad
Q^{\langle V\rangle}\coloneq V(T-I).
\end{equation}
Then \(Q^{\langle V\rangle}\) is an irreducible non-explosive
\((p,q)\)-band generator whose exit rate from \(n\) is exactly \(n+1\).
In particular, it is unbounded. If \(T\) admits a positive stochastic
bidiagonal factorization, this generator has the local urn realization of
Proposition~\ref{pro:local-clock-urn-realization}.
\end{proposition}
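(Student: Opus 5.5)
First verify the algebraic structure of \(Q^{\langle V\rangle}\), then obtain non-explosion from a Reuter-type criterion for birth processes with linearly growing rates. Since \(t_{n,n}<1\), every \(v_n\) is finite and positive. In fact \(v_n\ge n+1\ge1\), so \(\inf_n v_n>0\). For \(m\ne n\) one has \(q_{n,m}=v_nt_{n,m}\ge0\). Moreover \(q_{n,n}=-v_n(1-t_{n,n})=-(n+1)\), and the row sums vanish because \(T\one=\one\). Hence \(Q^{\langle V\rangle}\) is a conservative \(Q\)-matrix with exit rate exactly \(n+1\), and in particular it is unbounded. Left multiplication by the positive diagonal \(V\) does not change the zero pattern off the diagonal. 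Therefore \(Q^{\langle V\rangle}\) has exactly the \((p,q)\)-band support of \(T\) away from the diagonal, and the same communication graph, so it inherits irreducibility from \(T\). The final assertion, the urn realization, then follows directly from Proposition~\ref{pro:local-clock-urn-realization} applied with this \(V\).

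The substantive step is non-explosion. I would use the embedded jump chain. Let \(J_k\) be the state after the \(k\)-th genuine jump, and \(S_k\) the holding times. Conditionally on the jump chain, the \(S_k\) are independent exponentials with rates \(\lambda_{J_k}=J_k+1\). Explosion occurs exactly when \(\sum_k S_k<\infty\). Since the band has width \(q\), each genuine jump raises the state by at most \(q\), so \(J_k\le J_0+qk\). Consequently
\[
\sum_k \frac{1}{\lambda_{J_k}}\ \ge\ \sum_k\frac{1}{J_0+qk+1}=\infty
\]
holds pathwise. A standard lemma says that if \(\sum_k 1/\lambda_{J_k}=\infty\) almost surely, then \(\sum_k S_k=\infty\) almost surely. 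The proof is short: conditionally on the jump chain,
\[
\operatorname E\Bigl[\e^{-\sum_k S_k}\Bigr]=\prod_k\frac{\lambda_{J_k}}{1+\lambda_{J_k}}=0,
\]
because \(\sum_k 1/(1+\lambda_{J_k})=\infty\). This yields non-explosion from every initial state, so the minimal process is honest, that is, conservative.

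An alternative I would keep in reserve is the Lyapunov function \(f(n)=n+1\) with Reuter's criterion \(Qf\le cf\). Here
\[
(Qf)(n)=v_n\sum_m t_{n,m}(m-n)\le v_n q\,(1-t_{n,n})=q(n+1),
\]
which gives the same conclusion. This route is cleaner for citation purposes, since it directly uses the linear exit rate together with the bounded jump size.

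The main obstacle I expect is only bookkeeping. The embedded chain may include virtual stays when one uses \(T\) directly, so I will work with genuine jumps, whose number of upward steps is still at most \(q\) per jump. I also need to justify that the holding-time rate is \(n+1\) and not \(v_n\). Both points are handled by the computation \(\lambda_n=v_n(1-t_{n,n})=n+1\) already made.
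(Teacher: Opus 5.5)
Your proposal is correct. Your reserve argument is exactly the paper's proof. The paper takes \(F(n)=n+1\), bounds \((Q^{\langle V\rangle}F)(n)\le qv_n\sum_{m>n}t_{n,m}\le qv_n(1-t_{n,n})=qF(n)\), and invokes the standard Lyapunov non-explosion criterion.

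Your primary route is genuinely different:
\begin{itemize}
\item You pass to the jump chain of genuine jumps.
\item You use the bound \(q\) on upward jumps, together with the fact that \(\lambda_n=n+1\) is nondecreasing, to get \(\sum_k 1/\lambda_{J_k}=\infty\) pathwise.
\item You then prove the standard dichotomy for sums of independent exponentials directly, via \(\operatorname E[\e^{-\sum_k S_k}\mid J]=\prod_k\lambda_{J_k}/(1+\lambda_{J_k})=0\).
\end{itemize}
This needs no external criterion and shows concretely why linear exit rates cannot cause explosion: bounded upward jumps let the rate grow at most linearly in the number of jumps. The two routes use slightly different hypotheses. Your argument needs a nondecreasing majorant \(g\) of the exit rates with \(\sum_k 1/g(n_0+qk)=\infty\). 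The drift argument needs only \(\lambda_n=\mathrm O(n)\), with no monotonicity, and is easier to reuse for other choices of \(V\). Both suffice here.

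One point should be made explicit, and the paper adds it. The local-clock realization in Proposition~\ref{pro:local-clock-urn-realization} also contains virtual events, not only genuine jumps. At each visit to \(n\), the number of virtual stays before a genuine departure is geometric with success probability \(1-t_{n,n}>0\), hence finite almost surely. So non-explosion of the genuine-jump process implies that the urn-clock events themselves cannot accumulate in finite time. That is what the conservativity clause of that proposition actually needs. Your closing remark about ``bookkeeping'' points in this direction but does not state it.
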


\begin{proof}
The off-diagonal entries of \(Q^{\langle V\rangle}\) are
\(v_nt_{n,m}\ge0\), and its rows sum to zero. Its exit rate is
\[
v_n(1-t_{n,n})=n+1.
\]
Thus the generator is unbounded and has the same communication graph and
the same bandwidth as \(T\).

For non-explosion, take the Lyapunov function \(F(n)=n+1\). Since an
upward transition has size at most \(q\), while downward transitions only
decrease \(F\),
\[
\begin{aligned}
(Q^{\langle V\rangle}F)(n)
&=v_n\sum_m t_{n,m}(m-n)\\
&\le qv_n\sum_{m>n}t_{n,m}
\le qv_n(1-t_{n,n})
=q(n+1)=qF(n).
\end{aligned}
\]
The standard Lyapunov non-explosion criterion for countable-state
continuous-time chains now proves conservativity of the minimal process;
see, for example, \cite{bremaud}.
At every visit to \(n\), the number of virtual stays before a genuine
departure is geometric with success probability \(1-t_{n,n}>0\), and is
therefore finite almost surely.  Since genuine departures cannot
accumulate in finite time and every \(v_n\) is finite, the local urn-clock
realization cannot accumulate virtual events either.
\end{proof}

\subsection{Resolvents and discounted urn cycles}

State-dependent uniformization does not express the semigroup as a scalar
Poisson mixture of the powers of \(T\), because the clock rate changes
with the state. It does, however, give an exact renewal expansion for every
resolvent.

For \(\lambda>0\), put
\begin{equation}
\label{eq:local-resolvent-kernel}
D_\lambda
\coloneq\diag\left(\frac{v_n}{\lambda+v_n}\right)_{n\ge0},
\qquad
W_\lambda
\coloneq\diag\left(\frac{1}{\lambda+v_n}\right)_{n\ge0},
\qquad
K_\lambda\coloneq D_\lambda T.
\end{equation}
The matrix \(K_\lambda\) is banded, nonnegative, and substochastic; its
\(n\)-th row sums to \(v_n/(\lambda+v_n)<1\).

\begin{theorem}[Resolvent under state-dependent uniformization]
\label{thm:local-uniformization-resolvent}
Assume that the local-clock process generated by
\(Q=V(T-I)\) is non-explosive. For every \(\lambda>0\), its minimal
resolvent satisfies, entrywise,
\begin{equation}
\label{eq:local-resolvent-factorization}
R_Q(\lambda)
\coloneq(\lambda I-Q)^{-1}
=(I-K_\lambda)^{-1}W_\lambda
=\sum_{k=0}^{\infty}K_\lambda^kW_\lambda.
\end{equation}
In particular,
\begin{equation}
\label{eq:local-resolvent-entry}
\bigl(R_Q(\lambda)\bigr)_{n,m}
=\frac{1}{\lambda+v_m}
\sum_{k=0}^{\infty}(K_\lambda^k)_{n,m}.
\end{equation}
\end{theorem}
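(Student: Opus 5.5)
The plan is to prove \eqref{eq:local-resolvent-factorization} probabilistically, via a first-event decomposition of the local-clock realization of Proposition~\ref{pro:local-clock-urn-realization}. Afterwards I will check that the resulting series is the minimal solution of the resolvent equation, so that it coincides with the minimal resolvent.

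\emph{Step 1 (probabilistic representation).} I will realize the minimal \(Q\)-process as follows: the embedded event chain \((Y_k)_{k\ge0}\) has kernel \(T\), and conditionally on the \(Y\)-path the holding times \(E_k\) at successive local-clock events are independent exponentials with rates \(v_{Y_k}\). This includes virtual stays. By non-explosion, \(\sum_k E_k=\infty\) almost surely, so \(X_t=Y_k\) on \([S_k,S_{k+1})\), where \(S_k=E_0+\cdots+E_{k-1}\). I will then compute
\[
(R_Q(\lambda))_{n,m}=\operatorname E_n\!\int_0^\infty \e^{-\lambda t}\ind_{\{X_t=m\}}\dd t
=\sum_{k\ge0}\operatorname E_n\!\left[\ind_{\{Y_k=m\}}\e^{-\lambda S_k}\int_0^{E_k}\e^{-\lambda u}\dd u\right],
\]
where Tonelli justifies the interchange because all terms are nonnegative. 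Conditioning on \(Y\) gives \(\operatorname E[\e^{-\lambda E_j}\mid Y]=v_{Y_j}/(\lambda+v_{Y_j})\) and \(\operatorname E[\int_0^{E_k}\e^{-\lambda u}\dd u\mid Y]=1/(\lambda+v_{Y_k})\). Summing over paths, the \(k\)-th term equals \(\sum_{\text{paths}}\prod_{j<k}\frac{v_{y_j}}{\lambda+v_{y_j}}t_{y_j,y_{j+1}}\cdot\frac{1}{\lambda+v_m}=(K_\lambda^kW_\lambda)_{n,m}\). This yields \eqref{eq:local-resolvent-entry} and the series form. The series converges, because its row sums are bounded by \(\sum_k\operatorname E_n[\e^{-\lambda S_k}(1-\e^{-\lambda E_k})]/\lambda\le 1/\lambda\), which telescopes.

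\emph{Step 2 (algebraic identification).} Since \(\|K_\lambda\|_\infty\le \sup_n v_n/(\lambda+v_n)\le1\), possibly with supremum equal to \(1\), I will not invert \(I-K_\lambda\) by a norm argument. Instead I will interpret \((I-K_\lambda)^{-1}\) as the minimal nonnegative inverse \(\sum_k K_\lambda^k\). This series is entrywise finite by Step 1, because \((\sum_kK_\lambda^k)_{n,m}=(\lambda+v_m)(R_Q(\lambda))_{n,m}\). It satisfies \((I-K_\lambda)\sum_kK_\lambda^k=\sum_kK_\lambda^k(I-K_\lambda)=I\), and the entrywise products involved are finite sums because \(K_\lambda\) is banded. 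Next, \(\lambda I-Q=\lambda I+V-VT=W_\lambda^{-1}(I-K_\lambda)\), so \((\lambda I-Q)\sum_kK_\lambda^kW_\lambda=W_\lambda^{-1}W_\lambda=I\) formally. The left identity \(\sum_kK_\lambda^kW_\lambda(\lambda I-Q)=\sum_kK_\lambda^k(I-K_\lambda)=I\) also holds, with only banded finite sums on the right. This identifies the expression as a two-sided inverse in the entrywise sense.

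\emph{Main obstacle.} The delicate point is the meaning of \((\lambda I-Q)^{-1}\) for an unbounded \(Q\) on \(\Nzero\). Two-sided entrywise inverses need not be unique in general, and without non-explosion the minimal resolvent is only the smallest nonnegative solution. I would therefore define \(R_Q(\lambda)\) as the minimal resolvent, that is, the Laplace transform of the minimal transition function, and use Step 1 as the actual proof. Non-explosion enters exactly once: it ensures that the events \(\{S_k\le t<S_{k+1}\}\) exhaust time, so the path decomposition covers all of \([0,\infty)\). Step 2 is then a consistency check, and it also shows that \(\sum_kK_\lambda^kW_\lambda\) is the minimal nonnegative solution of the backward equation \(\lambda R-QR=I\), by the standard Feller iteration.
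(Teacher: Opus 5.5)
Your proposal is correct and follows the same route as the paper. Both arguments use the factorization \(\lambda I-Q=(\lambda I+V)(I-K_\lambda)\) and a first-event decomposition in which each local-clock event that beats the discount contributes \(D_\lambda\) followed by a \(T\)-step and the final holding interval contributes \(W_\lambda\); monotone convergence (your Tonelli) together with non-explosion then identifies the sum with the resolvent. Your Step 2, which reads \((I-K_\lambda)^{-1}\) as the minimal nonnegative inverse \(\sum_k K_\lambda^k\) because \(\|K_\lambda\|_\infty\) may equal \(1\) and checks the two-sided inverse entrywise, makes explicit what the paper leaves implicit.
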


\begin{proof}
The algebraic identity behind the formula is
\[
\lambda I-Q
=\lambda I+V-VT
=(\lambda I+V)(I-D_\lambda T)
=(\lambda I+V)(I-K_\lambda).
\]
The entrywise inverse follows from first-event decomposition. At state
\(n\), the local event clock of rate \(v_n\)
competes with the exponential discount clock of rate \(\lambda\). The
local event occurs first with probability
\(v_n/(\lambda+v_n)\), which gives the factor \(D_\lambda\); conditional
on survival, the next state is selected according to \(T\). The expected
discounted time spent before the first of the two clocks rings is
\((\lambda+v_n)^{-1}\), which gives \(W_\lambda\).

After \(k\) local events occurring before killing, the contribution to the discounted
occupation kernel is \(K_\lambda^kW_\lambda\). All its entries are
nonnegative, so monotone convergence allows summation over \(k\). The
result is the minimal resolvent. Non-explosion makes the minimal process
conservative and identifies this resolvent with the resolvent of its
transition semigroup.
\end{proof}

For each fixed value of \(\lambda\), the same kernel also has a positive
bidiagonal factorization.

\begin{proposition}[Positive bidiagonal factorization of the killed kernel]
\label{pro:local-kernel-PBF}
Write the positive stochastic bidiagonal factorization of \(T\) as
\[
T=M_1\cdots M_d,
\qquad d=p+q,
\]
where \(M_1,\ldots,M_p\) are the lower factors and
\(M_{p+1},\ldots,M_d\) are the upper factors in the prescribed order.
For every fixed \(\lambda>0\), the substochastic kernel \(K_\lambda\)
admits the positive bidiagonal factorization
\begin{equation}
\label{eq:local-kernel-PBF}
K_\lambda
=(D_\lambda M_1D_\lambda^{-1})\cdots
(D_\lambda M_{d-1}D_\lambda^{-1})(D_\lambda M_d).
\end{equation}
It has the same ordered lower/upper type as \(T\).
\end{proposition}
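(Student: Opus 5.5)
The plan is to verify the displayed identity by telescoping, and then to check that each conjugated factor is still a positive bidiagonal matrix of the original type.

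\emph{Step 1: the product.} Since \(D_\lambda\) is a positive diagonal matrix with entries \(v_n/(\lambda+v_n)\in(0,1)\), it is invertible, and \(D_\lambda^{-1}\) is again positive diagonal. In the product on the right of \eqref{eq:local-kernel-PBF}, each inner pair \(D_\lambda^{-1}D_\lambda\) cancels. What remains is
\[
D_\lambda M_1M_2\cdots M_d=D_\lambda T=K_\lambda,
\]
by the definition \eqref{eq:local-resolvent-kernel}. Products of bidiagonal matrices with diagonal matrices are column-finite, so every entrywise product is a finite sum and the associativity used here is legitimate.

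\emph{Step 2: support and positivity of each factor.} For \(r<d\) the factor is \(D_\lambda M_rD_\lambda^{-1}\). Its entries are
\[
(D_\lambda M_rD_\lambda^{-1})_{n,m}=\frac{v_n/(\lambda+v_n)}{v_m/(\lambda+v_m)}\,(M_r)_{n,m}.
\]
This has exactly the same support as \(M_r\): the diagonal plus the first sub- or superdiagonal. Each nonzero entry is multiplied by a strictly positive number, so it stays positive.

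The last factor is \(D_\lambda M_d\), with entries \((M_d)_{n,m}\,v_n/(\lambda+v_n)\). It is again upper bidiagonal with positive nontrivial entries. The boundary convention \(\ell_{0,0}>0\) for lower factors is also preserved.

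Since \(M_1,\dots,M_p\) remain lower and \(M_{p+1},\dots,M_d\) remain upper, the ordered type is unchanged. This gives a PBF in the sense of Section~\ref{subsec:sbf-pbf}, though not a stochastic one.

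\emph{Remarks.} The factors are no longer row-stochastic, and only the last one is substochastic. This is harmless, because the statement asks only for a positive bidiagonal factorization. If a stochastic-type normalization were wanted, one could instead move the diagonal to the right as in the proof of Theorem~\ref{thm:finite-urn-correspondence}, but that is not needed here.

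The only mildly delicate point, and the main thing to get right, is placing the uncancelled diagonal on the last factor. Placing it instead as \(D_\lambda M_1\) on the first factor, with no conjugations, works equally well. Either choice simply requires writing \(D_\lambda\) so that the telescoping is exact.
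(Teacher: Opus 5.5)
Your proposal is correct and follows the paper's proof: the inner pairs \(D_\lambda^{-1}D_\lambda\) telescope to leave \(D_\lambda M_1\cdots M_d=D_\lambda T=K_\lambda\), and positive diagonal conjugation, or left multiplication for the last factor, preserves the bidiagonal support, the lower/upper type, and the positivity of every nontrivial entry. The only point you omit is the paper's remark that \(\inf_n v_n>0\) makes \(D_\lambda^{-1}\) bounded, so the conjugated factors stay uniformly bounded; this does not affect the sign-and-support statement itself, and your alternative \((D_\lambda M_1)M_2\cdots M_d\) is equally valid and even keeps \(M_2,\ldots,M_d\) stochastic.
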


\begin{proof}
Since \(\inf_n v_n>0\), both \(D_\lambda\) and
\(D_\lambda^{-1}\) are bounded positive diagonal matrices. Diagonal
conjugation preserves bidiagonality and positivity of all nontrivial
bidiagonal entries.
The last factor \(D_\lambda M_d\) is again positive upper bidiagonal.
The diagonal matrices between consecutive factors cancel, leaving
\(D_\lambda M_1\cdots M_d=D_\lambda T=K_\lambda\).
\end{proof}

Consequently, the bounded Favard theorem \cite{BFM1} applies separately to
\(K_\lambda\). Denote its mixed polynomial families and positive matrix of
measures by
\[
A_m^{(a,\lambda)},\qquad
B_n^{(b,\lambda)},\qquad
\Psi^{(\lambda)}
=\bigl(\psi_{b,a}^{(\lambda)}\bigr)_{
1\le b\le q,\,1\le a\le p}.
\]
Combining its spectral formula with
\eqref{eq:local-resolvent-entry} gives the following positive
representation of each resolvent entry:
\begin{equation}
\label{eq:local-resolvent-spectral-Abel}
\bigl(R_Q(\lambda)\bigr)_{n,m}
=\frac{1}{\lambda+v_m}
\lim_{r\uparrow1}
\sum_{a=1}^{p}\sum_{b=1}^{q}
\int_{0}^{1}
\frac{
B_n^{(b,\lambda)}(x)A_m^{(a,\lambda)}(x)}
{1-rx}\,
\dd\psi_{b,a}^{(\lambda)}(x).
\end{equation}
The Abel parameter \(r\) permits the geometric summation; the
limit exists entrywise by
\eqref{eq:local-resolvent-entry} and monotone convergence at the matrix
level.

Formula~\eqref{eq:local-resolvent-spectral-Abel} is an exact positive
representation for each \(\lambda>0\), but its kernel, polynomials, and
measures depend on \(\lambda\); it is not one time-independent
diagonalization of \(Q\).

\subsection{The fixed spectral data: potentials, hitting, and recurrence}

Although the discounted kernels depend on \(\lambda\), the
non-discounted potential is controlled by the single stochastic matrix
\(T\). Let
\[
G_T(n,m)\coloneq\sum_{k=0}^{\infty}(T^k)_{n,m},
\qquad
U_Q(n,m)\coloneq
\int_0^\infty\Pr_n\{X_t=m\}\,\dd t,
\]
allowing the value \(+\infty\).

\begin{theorem}[Potential kernel and the fixed spectral matrix]
\label{thm:local-potential-spectral}
Assume that \(Q=V(T-I)\) is non-explosive. Then
\begin{equation}
\label{eq:local-potential-Green}
U_Q(n,m)=\frac{G_T(n,m)}{v_m}.
\end{equation}
Suppose in addition that \(T\) admits a positive stochastic bidiagonal
factorization, and let \(A_m^{(a)},B_n^{(b)}\), and
\(\Psi=[\psi_{b,a}]\) be the fixed spectral data of \(T\) from
Corollary~\ref{cor:urn-protocol-spectral}. Then
\begin{equation}
\label{eq:local-potential-spectral-Abel}
U_Q(n,m)
=\frac1{v_m}
\lim_{r\uparrow1}
\sum_{a=1}^{p}\sum_{b=1}^{q}
\int_0^1
\frac{B_n^{(b)}(x)A_m^{(a)}(x)}
{1-rx}\,
\dd\psi_{b,a}(x).
\end{equation}
If the limiting integrals are absolutely convergent, the Abel limit may be
replaced by \(r=1\).
\end{theorem}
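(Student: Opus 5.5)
The plan is to obtain \eqref{eq:local-potential-Green} as the \(\lambda\downarrow0\) limit of Theorem~\ref{thm:local-uniformization-resolvent}. Then \eqref{eq:local-potential-spectral-Abel} follows by inserting the fixed discrete-time spectral representation \eqref{eq:DT-KM} into an Abel-summed Green function.

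\emph{First identity.} By Tonelli, \(U_Q(n,m)=\lim_{\lambda\downarrow0}\int_0^\infty \e^{-\lambda t}\Pr_n\{X_t=m\}\,\dd t\). This is monotone convergence, with \(\e^{-\lambda t}\uparrow1\). Non-explosion identifies the inner integral with \(\bigl(R_Q(\lambda)\bigr)_{n,m}\), and \eqref{eq:local-resolvent-entry} gives
\[
\bigl(R_Q(\lambda)\bigr)_{n,m}=\frac{1}{\lambda+v_m}\sum_{k\ge0}(K_\lambda^k)_{n,m}.
\]
Since \(K_\lambda=D_\lambda T\) with \(0\le (D_\lambda)_{jj}=v_j/(\lambda+v_j)\uparrow1\) as \(\lambda\downarrow0\), every path weight
\[
(K_\lambda^k)_{n,m}=\sum_{\text{paths}}\prod_{i}\frac{v_{j_i}}{\lambda+v_{j_i}}\,t_{j_i,j_{i+1}}
\]
increases monotonically to the corresponding weight of \(T^k\). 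Monotone convergence over both the path sums and \(k\) then gives \(\sum_k(K_\lambda^k)_{n,m}\uparrow G_T(n,m)\), allowing \(+\infty\). Together with \(1/(\lambda+v_m)\to1/v_m\), this yields \eqref{eq:local-potential-Green}.

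Alternatively, and more probabilistically, I would argue directly. The embedded event chain of the local-clock realization in Proposition~\ref{pro:local-clock-urn-realization}, including virtual stays, is the \(T\)-chain \(Y\). Each visit of \(Y\) to \(m\) contributes an independent \(\mathrm{Exp}(v_m)\) holding time of mean \(1/v_m\). Non-explosion ensures that the time axis is exhausted by these holding intervals, so Wald/Tonelli gives \(U_Q(n,m)=G_T(n,m)/v_m\). I will mention this as the interpretation but use the resolvent limit as the proof.

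\emph{Spectral formula.} For \(0\le r<1\), Abel's theorem for nonnegative series gives \(G_T(n,m)=\lim_{r\uparrow1}\sum_k r^k(T^k)_{n,m}\), again allowing \(+\infty\). Inserting \eqref{eq:DT-KM} for each \(k\), I exchange the sum over \(k\) with the finite sum over \((a,b)\) and the integral. Fubini with domination justifies this: \(B_n^{(b)}A_m^{(a)}\) is bounded on \([0,1]\), each \(\psi_{b,a}\) is finite, and \(\sum_k r^k|x|^k\le(1-r)^{-1}\). The geometric sum is \((1-rx)^{-1}\), and dividing by \(v_m\) gives \eqref{eq:local-potential-spectral-Abel}. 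For the last assertion, suppose \(\int_0^1|B_n^{(b)}A_m^{(a)}|(1-x)^{-1}\dd\psi_{b,a}<\infty\). Since \((1-rx)^{-1}\le(1-x)^{-1}\) on \([0,1]\), dominated convergence lets \(r\uparrow1\) inside each integral. An atom at \(x=1\) would make that integral infinite, so absolute convergence already excludes it.

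The main obstacle I expect is the interchange of limits in the first step. The resolvent formula holds for each \(\lambda>0\), but the \(\lambda\to0\) limit must be taken through an infinite sum whose kernel depends on \(\lambda\). Monotonicity in \(\lambda\) of every path weight, together with nonnegativity, is what makes this work without any boundedness of \(V\). The hypothesis \(\inf_n v_n>0\) is used only through Theorem~\ref{thm:local-uniformization-resolvent}. In the spectral step the integrand \(B_n^{(b)}A_m^{(a)}\) may change sign, so the individual integrals need not converge monotonically. I therefore keep the Abel limit outside the integral and rely only on the nonnegativity of the total \(\sum_k r^k(T^k)_{n,m}\).
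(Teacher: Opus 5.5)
Your proposal is correct and follows the paper's proof. The paper derives \eqref{eq:local-potential-Green} from the first-event decomposition, where each visit of the event chain to \(m\) costs mean time \(1/v_m\); it notes that the identity equivalently follows from \eqref{eq:local-resolvent-entry} by monotone convergence as \(\lambda\downarrow0\), the route you make primary, and then sums \eqref{eq:DT-KM} against \(r^k\) and lets \(r\uparrow1\) using nonnegativity of the entries. Your monotonicity of each path weight in \(\lambda\), the Fubini bound \(\sum_k r^kx^k\le(1-r)^{-1}\), and the domination \((1-rx)^{-1}\le(1-x)^{-1}\) for the final assertion supply details the paper leaves implicit.
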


\begin{proof}
Set \(\lambda=0\) in the first-event decomposition used in
Theorem~\ref{thm:local-uniformization-resolvent}. Each visit of the event
chain to \(m\) is followed by a mean local-clock interval \(1/v_m\).
Summing over all event-chain visits gives
\eqref{eq:local-potential-Green}. Equivalently, the same identity follows
by monotone convergence from \eqref{eq:local-resolvent-entry} as
\(\lambda\downarrow0\).

For \(0<r<1\), sum the discrete spectral representation
\eqref{eq:DT-KM} with weights \(r^k\). The geometric series gives
\[
\sum_{k=0}^{\infty}r^k(T^k)_{n,m}
=
\sum_{a=1}^{p}\sum_{b=1}^{q}
\int_0^1
\frac{B_n^{(b)}(x)A_m^{(a)}(x)}
{1-rx}\,
\dd\psi_{b,a}(x).
\]
Letting \(r\uparrow1\) and using monotone convergence on the nonnegative
matrix entries yields \(G_T(n,m)\). Multiplication by \(1/v_m\) proves
\eqref{eq:local-potential-spectral-Abel}. Absolute convergence permits
passage to \(r=1\) inside each integral.
\end{proof}

For the reference state \(0\), the normalization
\(A_0^{(a)}=\delta_{a,1}\) and \(B_0^{(b)}=\delta_{b,1}\) removes all
cross terms. Hence
\begin{equation}
\label{eq:local-origin-potential}
U_Q(0,0)
=\frac1{v_0}
\int_0^1\frac{\dd\psi_{1,1}(x)}{1-x},
\end{equation}
where the integral is allowed to be infinite. Thus the recurrence
criterion depends on the same positive measure even though the clock rate
depends on the state.

\begin{theorem}[Recurrence, hitting times, and local rates]
\label{thm:local-recurrence-hitting}
Assume that \(T\) is irreducible and that the local-clock process
\(Q=V(T-I)\) is non-explosive.

\begin{enumerate}
\item The continuous-time process is recurrent if and only if the event
chain with kernel \(T\) is recurrent. If \(T\) has a positive stochastic
bidiagonal factorization, these equivalent conditions are characterized by
\[
\int_0^1\frac{\dd\psi_{1,1}(x)}{1-x}=+\infty.
\]

\item For \(i,j\in\Nzero\), the eventual hitting probability is independent
of the local rates:
\[
\Pr_i\{\tau_j<\infty\}
=\Pr_i\{\text{the event chain ever reaches }j\}.
\]
In the transient case,
\[
\Pr_i\{\tau_j<\infty\}
=\frac{U_Q(i,j)}{U_Q(j,j)}
=\frac{G_T(i,j)}{G_T(j,j)}.
\]

\item For every \(\lambda>0\),
\begin{equation}
\label{eq:local-Laplace-hitting-time}
\operatorname E_i\!\left[
\e^{-\lambda\tau_j}\ind_{\{\tau_j<\infty\}}
\right]
=
\frac{\bigl(R_Q(\lambda)\bigr)_{i,j}}
{\bigl(R_Q(\lambda)\bigr)_{j,j}}.
\end{equation}
Consequently, whenever the corresponding moments are finite, they are
obtained by differentiating this quotient at \(\lambda=0\).
\end{enumerate}
\end{theorem}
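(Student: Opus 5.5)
The approach is to view the non-explosive local-clock process as a time change of its embedded event chain and prove the three items in order. By Proposition~\ref{pro:local-clock-urn-realization}, the process \(X\) is obtained by running the chain \(Y\) with kernel \(T\). Each step of \(Y\) is taken after an independent exponential holding time of rate \(v_{Y_k}\), and virtual stays are allowed. Non-explosion ensures that these event times tend to infinity, so every event of \(Y\) is realized at some finite time of \(X\). Hence the set of states visited by \(X\) on \([0,\infty)\) equals the set of states visited by \(Y\). This path-level identification is the main tool for items (1) and (2).

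For item (1), irreducibility of \(T\) carries over to \(X\), since multiplying by the positive rates \(v_n\) does not change the communication graph. Recurrence of a state is therefore a class property in both settings. I would argue with the potential kernel: by \eqref{eq:local-potential-Green}, \(U_Q(0,0)=G_T(0,0)/v_0\), and \(v_0\in(0,\infty)\), so \(U_Q(0,0)=\infty\) if and only if \(G_T(0,0)=\infty\). The first condition is recurrence of \(X\) by the standard Green-function criterion for non-explosive CTMCs. The second is recurrence of \(Y\) by the discrete criterion. When \(T\) has a positive SBF, \eqref{eq:local-origin-potential} gives \(G_T(0,0)=\int_0^1\dd\psi_{1,1}(x)/(1-x)\), which yields the integral criterion.

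For item (2), the first claim follows from the path identification: \(\{\tau_j<\infty\}\) coincides almost surely with the event that \(Y\) ever visits \(j\), and this event depends only on \(T\). For the quotient formula, I would apply the strong Markov property at \(\tau_j\), as in the proof of Proposition~\ref{pro:bounded-resolvent-hitting}, with \(\lambda=0\). This gives \(U_Q(i,j)=\Pr_i\{\tau_j<\infty\}U_Q(j,j)\). In the transient case \(0<U_Q(j,j)<\infty\), so we may divide. Positivity holds because the first holding time at \(j\) has mean \(1/v_j>0\), and finiteness holds because transience is a class property. The same argument for \(Y\), or simply \eqref{eq:local-potential-Green}, turns the quotient into \(G_T(i,j)/G_T(j,j)\), since the factors \(1/v_j\) cancel.

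For item (3), I would repeat the proof of Proposition~\ref{pro:bounded-resolvent-hitting} verbatim. That proof used only the occupation-time representation of the resolvent and the strong Markov property at \(\tau_j\), not boundedness. Here the occupation-time representation holds for the minimal resolvent, which by Theorem~\ref{thm:local-uniformization-resolvent} and non-explosion equals \(R_Q(\lambda)\). The denominator \((R_Q(\lambda))_{j,j}\ge1/(\lambda+v_j)>0\), by \eqref{eq:local-resolvent-entry} with \(k=0\). The moment statement then follows from the standard differentiation of Laplace transforms at \(0^+\), via monotone convergence, whenever those moments are finite.

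The main obstacle is justifying the path identification and the strong Markov property for the minimal process with unbounded rates. Non-explosion is exactly what rules out a hitting event of \(Y\) being pushed beyond the explosion time, so I would state that step carefully and handle the remaining steps briefly.
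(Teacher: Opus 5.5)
Your proposal is correct and follows essentially the same argument as the paper: the path correspondence between the non-explosive local-clock process and its event chain gives recurrence and the hitting events, the strong Markov property at \(\tau_j\) (first undiscounted, then with the factor \(\e^{-\lambda t}\)) gives the two quotient formulas, and \eqref{eq:local-potential-Green} cancels the factor \(1/v_j\). The only variation is in item (1), where you pass through \(U_Q(0,0)=G_T(0,0)/v_0\) and the Green-function criteria on both sides, whereas the paper reads the recurrence equivalence directly off the path correspondence; both routes are valid.
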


\begin{proof}
The continuous path and the event chain visit exactly the same sequence of
states, apart from virtual stays. Non-explosion ensures that this
correspondence persists for all physical times. This proves the equivalence
of recurrence and the equality of eventual hitting events. The spectral
criterion follows from \eqref{eq:local-origin-potential}.

In the transient case, split the total occupation time of \(j\) at the
first hitting time \(\tau_j\) and apply the strong Markov property. This
gives \(U_Q(i,j)=\Pr_i\{\tau_j<\infty\}U_Q(j,j)\). Substitution of
\eqref{eq:local-potential-Green} cancels the common factor \(1/v_j\) and
gives the ratio of the two discrete Green entries.

The same argument with the discount factor
\(\e^{-\lambda t}\) gives
\[
\bigl(R_Q(\lambda)\bigr)_{i,j}
=
\operatorname E_i\!\left[
\e^{-\lambda\tau_j}\ind_{\{\tau_j<\infty\}}
\right]
\bigl(R_Q(\lambda)\bigr)_{j,j},
\]
which proves \eqref{eq:local-Laplace-hitting-time}.
\end{proof}

\begin{theorem}[Invariant measures and positive recurrence]
\label{thm:local-invariant-measure}
Let \(Q=V(T-I)\), where \(T\) is irreducible. A positive row vector
\(\mu\) is invariant for \(T\) if and only if the row vector
\(\pi=\mu V^{-1}\), given by
\begin{equation}
\label{eq:local-invariant-measure}
\pi_n=\frac{\mu_n}{v_n},
\end{equation}
is invariant for \(Q\).

Assume in addition that the local-clock process is non-explosive and
recurrent. Then it is positive recurrent if and only if
\begin{equation}
\label{eq:local-positive-recurrence}
Z_V\coloneq\sum_{n=0}^{\infty}\frac{\mu_n}{v_n}<\infty,
\end{equation}
where \(\mu\) is the invariant measure of the recurrent event chain,
unique up to a positive scalar multiple. In that case the stationary
distribution is
\[
\overline\pi_n
=\frac{\mu_n/v_n}{Z_V}.
\]
If \(Z_V=+\infty\), the continuous-time process is null recurrent.
\end{theorem}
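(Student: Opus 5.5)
The first assertion is purely algebraic, and I would prove it before any probabilistic input. Since \(V\) is a positive diagonal matrix, for a positive row vector \(\pi\) the identity
\[
\pi Q=\pi V(T-I)=(\pi V)T-\pi V
\]
shows that \(\pi Q=0\) holds exactly when \(\mu\coloneq\pi V\) satisfies \(\mu T=\mu\). Each row of \(T\) has at most \(p+q+1\) nonzero entries, so every column sum in \(\mu T\) and \(\pi Q\) is finite. Hence no convergence issue arises for the row-vector products. Positivity is preserved in both directions, and this gives the bijection \(\pi_n=\mu_n/v_n\).

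For the positive-recurrence criterion I would use the standard theory of irreducible, recurrent, non-explosive continuous-time chains on a countable space. The minimal process is irreducible, since \(T\) is irreducible and the rates \(v_n\) are positive, exactly as in Proposition~\ref{pro:local-clock-urn-realization}. Recurrence of the process is equivalent to recurrence of the event chain, by Theorem~\ref{thm:local-recurrence-hitting}(1). The recurrent event chain \(T\) then has an invariant measure \(\mu\) that is unique up to scalars, namely \(\mu_n=\operatorname E_0[\#\text{visits to }n\text{ before return to }0]\). For a recurrent non-explosive irreducible \(Q\)-process, the invariant measure is also unique up to scalars and is given by the expected occupation times during an excursion from \(0\). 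I would verify this directly rather than only citing it. Each event-chain visit to \(n\) is followed by an exponential holding interval of mean \(1/v_n\), which is independent of the event chain. Applying Wald's identity within one excursion of the event chain from \(0\), the expected time spent at \(n\) during a continuous-time excursion equals \(\mu_n/v_n\). The occupation measure \(\mu V^{-1}\) is therefore the unique invariant measure of \(Q\), in agreement with the algebraic bijection above.

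The process is positive recurrent exactly when the expected return time to \(0\) is finite. That expected return time is the total mass of the excursion occupation measure, \(\sum_n\mu_n/v_n=Z_V\), with \(\mu_0=1\) in this normalization. Finite \(Z_V\) gives the normalized stationary distribution \(\overline\pi_n=(\mu_n/v_n)/Z_V\). Infinite \(Z_V\) gives an infinite mean return time in a recurrent chain, which is null recurrence. The virtual stays need a brief check: at a local-clock event the event chain may stay put, so several consecutive event visits to \(n\) make up one holding period. This does not matter, because the excursion occupation count of \(n\) already includes virtual visits, and each such visit still contributes mean \(1/v_n\).

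I expect the main obstacle to be the identification of the \(Q\)-invariant measure with the excursion occupation measure. In particular one must show that the invariant measure of a recurrent non-explosive minimal process is unique, so that \(Z_V<\infty\) is both necessary and sufficient. I would cite the standard result that an irreducible recurrent \(Q\)-matrix has a unique invariant measure up to scalars, given by the expected occupation times during an excursion (Norris, Theorem 3.5.2, or Br\'emaud). Non-explosion is exactly the hypothesis needed there, and the rest of the argument is then bookkeeping.
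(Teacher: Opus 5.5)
Your proposal is correct and follows the paper's route: the identity \(\pi Q=(\pi V)T-\pi V\) gives the bijection, and then uniqueness of the invariant measure of an irreducible recurrent process, together with the finite-mass criterion, gives the rest; your excursion/occupation computation only makes explicit the standard fact the paper cites. One harmless imprecision: an event-chain excursion from \(0\) may be a single virtual stay, so \(Z_V\) (with \(\mu_0=1\)) is the mean length of that regeneration cycle, whereas the usual continuous-time return time (first return after leaving \(0\)) has mean \(Z_V/(1-t_{0,0})\); since irreducibility forces \(t_{0,0}<1\), the equivalence is unaffected.
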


\begin{proof}
The identities
\[
(\mu V^{-1})Q
=\mu(T-I),
\qquad
(\pi V)T-\pi V
=\pi Q
\]
prove the two implications. For an irreducible recurrent
countable-state process, a positive invariant measure is unique up to
scale, and positive recurrence is equivalent to its having finite total
mass. Formula~\eqref{eq:local-positive-recurrence} is precisely that total
mass.
\end{proof}

\section{Grouping banded chains and generators as quasi-birth-and-death processes}
\label{sec:QBD-reblocking}

The construction in this section groups consecutive scalar states into
levels, with the position inside a level used as the phase. It does not
depend on boundedness or on the choice of clock. This grouping writes a
scalar banded kernel as a level-dependent block-tridiagonal process. The
mixed spectral representation remains valid even when the block matrix
cannot be symmetrized. The same grouping applies to a discrete-time
kernel, its bounded Poissonization, and the generators and resolvents of
the state-dependent processes in the preceding section.

A quasi-birth-and-death (QBD) chain is a two-index Markov chain whose states
are described by a level and one of finitely many phases, and whose level is
skip-free: in one step it can decrease by one, remain unchanged, or
increase by one. Equivalently, its transition matrix is block
tridiagonal. Standard matrix-analytic methods for QBD processes are
developed in \cite{LatoucheRamaswami1999}. Their relation with
matrix-valued orthogonal polynomials and explicit spectral formulas was
studied, among other places, in
\cite{GrunbaumDelaIglesiaQBD2008}; stochastic block factorizations,
Darboux transformations, and urn models were considered in
\cite{GrunbaumDelaIglesiaQBDDarboux2019}. QBD processes constructed from
multivariate orthogonal polynomials, including both discrete- and
continuous-time examples, appear in
\cite{FernandezDelaIglesiaQBD2021}.

Block tridiagonality alone does not, however, place a QBD within the usual
positive-definite matrix-valued orthogonality theory. Write its diagonal,
up-level, and down-level blocks as \(D_K,U_K,L_K\), respectively. That
theory requires positive definite matrices \(\Pi_K\) satisfying
\begin{equation}
\label{eq:QBD-block-symmetrizer}
	\Pi_KD_K=D_K^{\mathsf T}\Pi_K,
	\qquad
	\Pi_KU_K=L_{K+1}^{\mathsf T}\Pi_{K+1},
	\qquad K\ge0.
\end{equation}
Equivalently, if \(\Pi_K=R_K^{\mathsf T}R_K\), the block diagonal
similarity with diagonal blocks \(R_K\) transforms the QBD matrix into a
symmetric block Jacobi matrix. Conditions
\eqref{eq:QBD-block-symmetrizer} are explicit in the matrix-valued
orthogonal-polynomial approach to QBDs
\cite{DetteEtAlQBD2006,DelaIglesiaQBDInvariant2011}; they are not a
consequence of stochasticity or of equal lower and upper bandwidths.
Non-symmetric matrices of measures provide a different extension of
matrix orthogonality \cite{ZygmuntNonSymmetric2016}.

The mixed-type construction used here instead provides two
polynomial families and a \(q\times p\) matrix whose entries are positive
measures. Even when \(p=q\), entrywise positivity does not assert that this
matrix of measures is symmetric or positive definite in the Hermitian
matrix sense. Consequently, the mixed spectral formulas do not require
\eqref{eq:QBD-block-symmetrizer}.

The following proposition connects that theory with the scalar banded
kernels considered here. The map is a bijective relabelling, not a
lumping: the phase specifies the position of the original scalar state
inside its block.

\begin{proposition}[Grouping consecutive states and the mixed spectral representation]
\label{pro:banded-kernel-QBD}
	Let \(T=[T_{n,m}]_{n,m\in\Nzero}\) be a stochastic matrix of bandwidth
	\((p,q)\), so that \(T_{n,m}=0\) whenever
	\(m-n\notin\{-p,\ldots,q\}\). Set \(r\coloneq\max\{p,q\}\) and
	write \(n=rK+i\), where \(K\in\Nzero\) and
	\(i\in\{0,\ldots,r-1\}\).
	Let \((X_\ell)_{\ell\in\Nzero}\) be the discrete-time Markov chain on
	\(\Nzero\) with transition matrix \(T\); thus
	\((X_\ell)_{\ell\in\Nzero}\) is a stochastic process satisfying
	\(\Pr\{X_{\ell+1}=m\mid X_\ell=n\}=T_{n,m}\) for
	\(n,m,\ell\in\Nzero\).
Define the two-component process
	\begin{equation}
	Z_\ell\coloneq
	\begin{bNiceMatrix}K_\ell&I_\ell\end{bNiceMatrix}
	\coloneq
	\begin{bNiceMatrix}
	\left\lfloor\frac{X_\ell}{r}\right\rfloor&
	X_\ell-r\left\lfloor\frac{X_\ell}{r}\right\rfloor
	\end{bNiceMatrix}.
	\end{equation}
	Thus \(Z_\ell\) takes values in the semi-infinite strip
	\[\mathcal E_r\coloneq\Nzero\times\{0,\ldots,r-1\}.\]
	Under the bijection \(n\leftrightarrow(K,i)\), \(Z_\ell\) is a
	level-dependent QBD with \(r\) phases and transition probabilities
	\[\Pr\{Z_{\ell+1}=(L,j)\mid Z_\ell=(K,i)\}=T_{rK+i,rL+j}.\]
	More precisely, put \(c_s(n):=T_{n,n+s}\), with \(c_s(n):=0\) if
	\(s\notin\{-p,\ldots,q\}\) or \(n+s<0\).
	For \(\varepsilon\in\{-1,0,1\}\), define the \(r\times r\) block
	\begin{equation}
	\label{eq:QBD-blocks-from-band}
	\left(\mathsf T_K^{\varepsilon}\right)_{i,j}
	\coloneq
	c_{\varepsilon r+j-i}(rK+i),
	\qquad 0\le i,j\le r-1.
	\end{equation}
	Then \(\mathsf T_0^-=0\) and the blocked matrix is
	\begin{equation}
	\label{eq:QBD-block-tridiagonal}
	T=
	\begin{bNiceMatrix}[cell-space-limits=4pt,margin=5pt]
	\mathsf T_0^0 & \mathsf T_0^+ & 0_{r\times r} &
	\Cdots[shorten-start=2pt,shorten-end=-20pt] & \phantom{0}  \\[-5pt]
	\mathsf T_1^- & \mathsf T_1^0 & \mathsf T_1^+ & \Ddots[shorten-start=-0pt,shorten-end=5pt] &  \phantom{0} \\
	0_{r\times r} & \mathsf T_2^- & \mathsf T_2^0 & \mathsf T_2^+ &  \\
	\Vdots[shorten-start=0pt,shorten-end=-15pt] &
	\Ddots[shorten-start=0pt,shorten-end=-35pt] &
	\Ddots[shorten-start=0pt,shorten-end=-32pt] &
	\Ddots[shorten-start=0pt,shorten-end=-30pt] &	\Ddots[shorten-start=0pt,shorten-end=-0pt] &
\\
	\phantom{0} & \phantom{\mathsf T_2^-} &
	\phantom{\mathsf T_2^0} & \phantom{\mathsf T_2^+} & \phantom{0}
	\end{bNiceMatrix}.
	\end{equation}
	Each block row is stochastic in the sense that
	\((\mathsf T_K^-+\mathsf T_K^0+\mathsf T_K^+)\one_r=\one_r\).
	The QBD is level-homogeneous precisely when the three blocks in
	\eqref{eq:QBD-blocks-from-band} are independent of \(K\); away from the
	boundary, this amounts to \(T_{n+r,m+r}=T_{n,m}\).

	Suppose, in addition, that \(T\) satisfies the hypotheses of
	Corollary~\ref{cor:urn-protocol-spectral}. Define the \(r\)-component columns
	\begin{equation}
	\label{eq:QBD-blocked-polynomials}
	\boldsymbol B_K^{(b)}(x)
	\coloneq
	\begin{bNiceMatrix}
	B_{rK}^{(b)}(x)\\
	\Vdots\\
	B_{rK+r-1}^{(b)}(x)
	\end{bNiceMatrix},
	\qquad
	\boldsymbol A_L^{(a)}(x)
	\coloneq
	\begin{bNiceMatrix}
	A_{rL}^{(a)}(x)\\
	\Vdots\\
	A_{rL+r-1}^{(a)}(x)
	\end{bNiceMatrix}.
	\end{equation}
	For every \(s\in\Nzero\), the \(r\times r\) block of \(s\)-step
	transition probabilities from level \(K\) to level \(L\) is
	\begin{equation}
	\label{eq:QBD-block-spectral-discrete}
	\left(T^s\right)_{K,L}
	=
	\sum_{a=1}^{p}\sum_{b=1}^{q}
	\int_0^1
	\boldsymbol B_K^{(b)}(x)
	\left(\boldsymbol A_L^{(a)}(x)\right)^{\mathsf T}
	x^s\,\dd\psi_{b,a}(x).
	\end{equation}
	If \(Q=\nu(T-I)\), the corresponding continuous-time QBD has the block
	semigroup representation
	\begin{equation}
	\label{eq:QBD-block-spectral-continuous}
	\left(\e^{tQ}\right)_{K,L}
	=
	\sum_{a=1}^{p}\sum_{b=1}^{q}
	\int_0^1
	\boldsymbol B_K^{(b)}(x)
	\left(\boldsymbol A_L^{(a)}(x)\right)^{\mathsf T}
	\e^{-\nu t(1-x)}\,\dd\psi_{b,a}(x),
	\qquad
	Q=\nu(T-I).
	\end{equation}
\end{proposition}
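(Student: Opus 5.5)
The argument is essentially bookkeeping under the bijection \(n=rK+i\), followed by reading off the scalar spectral formulas blockwise. First, since \(Z_\ell\) is a deterministic bijective function of \(X_\ell\), it is a Markov chain on \(\mathcal E_r\). Its transition probabilities are \(\Pr\{Z_{\ell+1}=(L,j)\mid Z_\ell=(K,i)\}=T_{rK+i,rL+j}\); no lumping condition is needed.

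Next I would identify the block structure. For \(n=rK+i\) and \(m=rL+j\), the difference is \(m-n=r(L-K)+(j-i)\), with \(|j-i|\le r-1\). If \(|L-K|\ge2\), then \(|m-n|\ge 2r-(r-1)=r+1>\max\{p,q\}\), so \(T_{n,m}=0\). This gives the block tridiagonal form \eqref{eq:QBD-block-tridiagonal}. Writing \(\varepsilon=L-K\in\{-1,0,1\}\), the entry is \(T_{n,n+\varepsilon r+j-i}=c_{\varepsilon r+j-i}(rK+i)\), which is exactly \eqref{eq:QBD-blocks-from-band}. The convention \(c_s(n)=0\) when \(n+s<0\) yields \(\mathsf T_0^-=0\), because level \(-1\) does not exist. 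Block-row stochasticity is the scalar row sum \(\sum_m T_{n,m}=1\) restricted to the columns of levels \(K-1,K,K+1\), which contain the whole support of row \(n\). Level homogeneity means that \(c_{s}(rK+i)\) is independent of \(K\) for the relevant \(s\), i.e. \(T_{n+r,m+r}=T_{n,m}\) away from the boundary.

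For the spectral part, I would note that \((T^s)_{K,L}\) is the \(r\times r\) submatrix with entries \((T^s)_{rK+i,rL+j}\). Corollary~\ref{cor:urn-protocol-spectral} gives each such entry as \(\sum_{a,b}\int_0^1 B^{(b)}_{rK+i}(x)x^sA^{(a)}_{rL+j}(x)\,\dd\psi_{b,a}(x)\). Since \(\psi_{b,a}\) is scalar, these entries assemble into the outer product \(\boldsymbol B_K^{(b)}(\boldsymbol A_L^{(a)})^{\mathsf T}\), which proves \eqref{eq:QBD-block-spectral-discrete}. The continuous-time formula \eqref{eq:QBD-block-spectral-continuous} follows in the same way, entrywise from Theorem~\ref{thm:CT-bounded}. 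Here \(T\) is stochastic, so \(Q=\nu(T-I)\) is bounded with exit rates at most \(\nu\), and \(T=I+\nu^{-1}Q\) is the uniformized kernel.

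The only step needing care is the index range check \(2r-(r-1)>\max\{p,q\}\), together with the boundary convention at level \(0\); everything else is entrywise transcription.
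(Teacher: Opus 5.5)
Your proposal is correct and follows the paper's proof essentially step by step. Both use the displacement bound $r+1>\max\{p,q\}$ for non-adjacent levels, both identify the blocks through the displacement $\varepsilon r+j-i$, and both collect the scalar identities of Corollary~\ref{cor:urn-protocol-spectral} and Theorem~\ref{thm:CT-bounded} entrywise into $r\times r$ blocks; your remark that $\nu\ge\sup_n(-q_{n,n})$ holds automatically because $T$ is stochastic makes explicit a hypothesis of Theorem~\ref{thm:CT-bounded} that the paper leaves implicit.
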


\begin{proof}
	Every state has a unique representation \(n=rK+i\). If \(L\ge K+2\),
	the smallest possible scalar displacement from level \(K\) to level \(L\)
	is \(r(K+2)-(rK+r-1)=r+1>q\).
	If \(L\le K-2\), the largest possible displacement is
	\((r(K-2)+r-1)-rK=-r-1<-p\).
	Thus transitions between non-neighboring levels vanish. For
	\(L=K+\varepsilon\), the displacement from \(rK+i\) to \(rL+j\) is
	\(\varepsilon r+j-i\), which gives
	\eqref{eq:QBD-blocks-from-band}. Stochasticity is unchanged because the
	map \(n\leftrightarrow(K,i)\) is a bijection. Finally,
	\eqref{eq:QBD-block-spectral-discrete} is obtained by collecting the
	\(r^2\) scalar identities \eqref{eq:DT-KM} with
	\(n=rK+i\) and \(m=rL+j\). The continuous-time formula follows in the
	same way from \eqref{eq:CT-KM}.
\end{proof}

The continued-fraction construction of
\cite{ManasContinuedFractions2026} is now applied to the
block-tridiagonal kernel obtained above.  The construction is not
redeveloped here.  What matters for the
QBD is the resulting matrix generating function for the
first return to a level, including the return phase.

Let \(T^{[K]}\) be the transition matrix obtained by retaining the levels
\(K,K+1,\ldots\); a transition below \(K\) stops the chain.  Define the
Green matrix at the first retained level by
\begin{equation}
\label{eq:level-tail-Green-definition}
\mathcal G_K^\uparrow(z)
\coloneq
\left[
\left(I-zT^{[K]}\right)^{-1}
\right]_{K,K},
\qquad 0\le z<1.
\end{equation}
Thus its \((i,j)\)-entry counts, with the factor \(z^n\), all \(n\)-step
paths which start in phase \(i\) of level \(K\), never go below \(K\), and
end in phase \(j\) of that level.

\begin{proposition}[QBD return law from the continued fraction]
\label{pro:level-return-CF}
The QBD specialization of the continued-fraction construction in
\cite{ManasContinuedFractions2026}, applied to the blocks
\(\mathsf T_K^-\), \(\mathsf T_K^0\), and \(\mathsf T_K^+\) gives, for
every \(K\ge0\),
\begin{equation}
\label{eq:level-return-CF-recursion}
\mathcal G_K^\uparrow(z)
=
\left[
I_r-z\mathsf T_K^0
-z^2\mathsf T_K^+
\mathcal G_{K+1}^\uparrow(z)
\mathsf T_{K+1}^-
\right]^{-1}.
\end{equation}
Let \(\mathcal F_K^{\uparrow,+}(z)\) be the matrix generating function of
the first positive return to level \(K\), with paths stopped if they go
below \(K\).  Then
\begin{equation}
\label{eq:level-first-return-CF}
\mathcal F_K^{\uparrow,+}(z)
=
z\mathsf T_K^0
+z^2\mathsf T_K^+
\mathcal G_{K+1}^\uparrow(z)
\mathsf T_{K+1}^-
=
I_r-\left(\mathcal G_K^\uparrow(z)\right)^{-1}.
\end{equation}
In particular, the continued fraction determines the return probabilities
and the return phases.
\end{proposition}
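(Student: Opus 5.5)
The argument is a first-passage (last-exit) decomposition of paths confined to levels \(\ge K\). It works directly on the block-tridiagonal matrix \eqref{eq:QBD-block-tridiagonal}, so the companion-paper construction enters only as the resulting recursion. Fix \(K\) and \(0\le z<1\). Since \(T^{[K]}\) is substochastic, the Neumann series \(\sum_n z^n(T^{[K]})^n\) converges entrywise and \(\|zT^{[K]}\|_\infty\le z<1\). Hence \(\mathcal G_K^\uparrow(z)\) is the path sum in \eqref{eq:level-tail-Green-definition}, all of whose terms are nonnegative, so rearrangements are legitimate by Tonelli.

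\textbf{Step 1: decompose a path at its returns to level \(K\).} A path that starts and ends in level \(K\) and never goes below \(K\) is either the trivial path of length zero, or it begins with a first excursion that returns to level \(K\) for the first time at some step \(\ge1\), followed by an arbitrary such path. By block-tridiagonality, the first step out of \((K,i)\) has only two possibilities.
\begin{itemize}
\item It stays in level \(K\), which contributes \(z\mathsf T_K^0\).
\item It goes up to level \(K+1\) via \(\mathsf T_K^+\). The path must then come back down, and since it is skip-free it re-enters level \(K\) only through a step \(\mathsf T_{K+1}^-\) taken from level \(K+1\). Between these two steps it is an arbitrary path from level \(K+1\) to level \(K+1\) that never goes below \(K+1\). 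Summing over such paths gives \(\mathcal G_{K+1}^\uparrow(z)\), so this case contributes \(z^2\mathsf T_K^+\mathcal G_{K+1}^\uparrow(z)\mathsf T_{K+1}^-\).
\end{itemize}
The phase bookkeeping is automatic because each block carries the phase indices. This yields the first identity in \eqref{eq:level-first-return-CF}. Paths that drop below \(K\) are killed by the definition of \(T^{[K]}\) and are therefore excluded.

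\textbf{Step 2: renewal identity.} Concatenating at the first return gives \(\mathcal G_K^\uparrow=I_r+\mathcal F_K^{\uparrow,+}\mathcal G_K^\uparrow\). This is the matrix renewal identity of Proposition~\ref{pro:general-Green-first-return}, applied to the killed chain with \(A\) the \(r\) states of level \(K\); killing only makes the process substochastic, and the renewal argument does not use stochasticity. Each row sum of \(\mathcal F_K^{\uparrow,+}(z)\) is at most \(z<1\), because every return takes at least one step. Therefore \(I_r-\mathcal F_K^{\uparrow,+}(z)\) is invertible, which gives \(\mathcal F_K^{\uparrow,+}=I_r-(\mathcal G_K^\uparrow)^{-1}\), and substituting Step 1 gives \eqref{eq:level-return-CF-recursion}.

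\textbf{Step 3: identification with the continued fraction.} Iterating \eqref{eq:level-return-CF-recursion} expresses \(\mathcal G_K^\uparrow\) as a matrix continued fraction in the blocks \(\mathsf T_K^-\), \(\mathsf T_K^0\), \(\mathsf T_K^+\). I would check that it is exactly the QBD specialization of \cite{ManasContinuedFractions2026}, and that its finite convergents, obtained by truncating at level \(N\) (i.e. setting \(\mathcal G_{N+1}^\uparrow:=0\)), converge. The convergents are the path sums restricted to levels \(K,\dots,N\), so they increase monotonically in \(N\) and converge to \(\mathcal G_K^\uparrow\) by monotone convergence. The Taylor coefficients of \(\mathcal F_K^{\uparrow,+}\) then give the return probabilities, entry by entry in the return phase.

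\textbf{Expected obstacle.} The main point of care is Step 1's claim that an upward excursion re-enters level \(K\) exactly once, through \(\mathsf T_{K+1}^-\) preceded by a level-\((K+1)\) Green factor. This requires decomposing at the \emph{last} visit to level \(K+1\) before the descent and checking that the intermediate segment never goes below \(K+1\). Skip-freeness, as established in Proposition~\ref{pro:banded-kernel-QBD}, guarantees both facts.
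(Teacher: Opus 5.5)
Your argument is correct. Its first two steps are the same as the paper's: the first-step decomposition of a first return to level \(K\), and the matrix renewal identity \(\mathcal G_K^\uparrow=I_r+\mathcal F_K^{\uparrow,+}\mathcal G_K^\uparrow\). In the decomposition, the return is either one step inside level \(K\), or an up-step, followed by an arbitrary tail path counted by \(\mathcal G_{K+1}^\uparrow\), followed by a down-step through \(\mathsf T_{K+1}^-\).

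The difference lies in how you obtain the recursion \eqref{eq:level-return-CF-recursion}. The paper proves it independently, as an operator identity. It splits \(I-zT^{[K]}\) into the level-\(K\) block and the tail, observes that the corner of the inverse of the tail operator is \(\mathcal G_{K+1}^\uparrow(z)\), and takes the Schur complement. The probabilistic identities are then proved separately. You instead obtain the recursion as a corollary, by inverting the renewal identity and substituting your path formula for \(\mathcal F_K^{\uparrow,+}\).

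Your route has three advantages. It makes the Schur-complement step redundant. It gets invertibility of the bracket directly from the bound \(\le z<1\) on the row sums of \(\mathcal F_K^{\uparrow,+}(z)\). It never needs a block-inversion formula for an infinite matrix, since every manipulation is a rearrangement of nonnegative path weights.

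The paper's route has its own advantages. It uses only the invertibility of \(I-zT^{[K]}\) and of its tail, not the sign structure of the blocks. It also displays the mechanism by which the matrix continued fraction of the companion paper arises, namely iterated Schur complements.

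Your handling of the finite convergents via nested path classes matches the paper's remark after the proposition.

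The obstacle you flag is milder than you suggest, and no last-exit argument is needed. The middle segment is simply everything between the up-step and the first step into level \(K\). It may revisit level \(K+1\) any number of times, and \(\mathcal G_{K+1}^\uparrow\) already counts all such paths.
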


\begin{proof}
The specialization can be verified directly.  Decompose the state space of
\(T^{[K]}\) into level \(K\) and the tail \(K+1,K+2,\ldots\).
In \(I-zT^{[K]}\), the diagonal block belonging to level \(K\) is
\(I_r-z\mathsf T_K^0\).  The only block from level \(K\) to the tail is
\(-z\mathsf T_K^+\), and the only block returning from the tail to level
\(K\) is \(-z\mathsf T_{K+1}^-\).  The relevant corner of the inverse of
the tail operator is, by definition,
\[
\left[\left(I-zT^{[K+1]}\right)^{-1}\right]_{K+1,K+1}
=\mathcal G_{K+1}^\uparrow(z).
\]
Taking the Schur complement of the tail therefore gives
\[
\left(\mathcal G_K^\uparrow(z)\right)^{-1}
=
I_r-z\mathsf T_K^0
-z^2\mathsf T_K^+
\mathcal G_{K+1}^\uparrow(z)
\mathsf T_{K+1}^-,
\]
which proves \eqref{eq:level-return-CF-recursion}.

For the first positive return to level \(K\), there are exactly two
possibilities.  The chain either remains in level \(K\) for one step,
contributing \(z\mathsf T_K^0\), or it moves to level \(K+1\), follows an
arbitrary path in the retained tail, and returns from level \(K+1\) to
level \(K\).  The latter paths contribute
\[
z^2\mathsf T_K^+
\mathcal G_{K+1}^\uparrow(z)
\mathsf T_{K+1}^-.
\]
This proves the first equality in
\eqref{eq:level-first-return-CF}.  Finally, the matrix renewal identity
\(
\mathcal G_K^\uparrow
=I_r+\mathcal F_K^{\uparrow,+}\mathcal G_K^\uparrow
\)
gives the second equality.
\end{proof}

The resulting nested expression is the matrix continued fraction
\begin{equation}
\label{eq:level-return-CF-expanded}
\mathcal G_K^\uparrow(z)
=
\left[
I_r-z\mathsf T_K^0
-z^2\mathsf T_K^+
\left[
I_r-z\mathsf T_{K+1}^0
-z^2\mathsf T_{K+1}^+
\left[\cdots\right]^{-1}
\mathsf T_{K+2}^-
\right]^{-1}
\mathsf T_{K+1}^-
\right]^{-1}.
\end{equation}
The order of the matrix products in this expression is essential; its
construction and convergence are part of
\cite{ManasContinuedFractions2026}.

The finite fractions of \cite{ManasContinuedFractions2026} also have a
direct probabilistic meaning in the present QBD.  If only the levels
\(K,\ldots,N\) are retained, start with
\(\mathcal G_N^{[K,N]}(z)=
\left(I_r-z\mathsf T_N^0\right)^{-1}\)
and evaluate \eqref{eq:level-return-CF-recursion} backwards.  The result is
the Green matrix of the chain stopped when it leaves those levels.  These
finite Green matrices sum the weights of nested classes of paths; as the
upper level increases, the classes exhaust all paths which never go below
\(K\).  Hence they converge entry by entry to
\(\mathcal G_K^\uparrow(z)\) for \(0\le z<1\).  This is the direct QBD
interpretation of the convergence theorem in
\cite{ManasContinuedFractions2026}.

Let \(Q\) be a stable conservative, non-explosive block-tridiagonal
\(Q\)-matrix, and let \(Q^{[K]}\) be the minimal generator killed upon
leaving the levels \(K,K+1,\ldots\). For \(\lambda>0\), define
\[
\mathcal R_K^\uparrow(\lambda)
\coloneq
\int_0^\infty e^{-\lambda t}
\bigl[P_t^{[K]}\bigr]_{K,K}\,\dd t.
\]
Then
\begin{equation}
\label{eq:level-resolvent-CF-recursion}
\mathcal R_K^\uparrow(\lambda)
=
\left[
\lambda I_r-\mathsf Q_K^0
-\mathsf Q_K^+
\mathcal R_{K+1}^\uparrow(\lambda)
\mathsf Q_{K+1}^-
\right]^{-1},
\qquad \lambda>0.
\end{equation}
Indeed, the finite process killed on leaving the levels
\(K,\ldots,M\) satisfies this identity by an ordinary finite-dimensional
Schur complement. Its killed resolvents increase entrywise to the minimal
tail resolvents as \(M\to\infty\); passing to the limit proves
\eqref{eq:level-resolvent-CF-recursion}, without invoking an unspecified
infinite-dimensional realization. This is the continuous-time resolvent
counterpart of the discrete-time continued fraction.

\begin{remark}[The two continued fractions use different data]
\label{rem:coarse-versus-factor-resolved-CF}
The fraction in this subsection starts from the transition kernel \(T\) and
therefore uses the three combined level blocks
\(\mathsf T_K^-\), \(\mathsf T_K^0\), and \(\mathsf T_K^+\).  The
other fraction in \cite{ManasContinuedFractions2026} uses more information:
its coefficients are the \(p+q\) individual death-or-stay and
birth-or-stay factors. The QBD application
computes returns and arrival phases by levels; the general application in
Subsection~\ref{subsec:factor-resolved-return-CF} computes the first-return
law of the original scalar chain directly.
\end{remark}

To display the distinction from ordinary matrix orthogonality, collect the
columns in \eqref{eq:QBD-blocked-polynomials} as
\begin{align*}
\mathbb B_K(x)
&\coloneq
\begin{bNiceMatrix}
\boldsymbol B_K^{(1)}(x)&\cdots&\boldsymbol B_K^{(q)}(x)
\end{bNiceMatrix}
\in\mathbb R^{r\times q},
\\
\mathbb A_L(x)
&\coloneq
\begin{bNiceMatrix}
\boldsymbol A_L^{(1)}(x)&\cdots&\boldsymbol A_L^{(p)}(x)
\end{bNiceMatrix}
\in\mathbb R^{r\times p}.
\end{align*}
Then \eqref{eq:QBD-block-spectral-discrete} is the rectangular mixed
formula
\begin{equation}
\label{eq:QBD-compact-mixed-spectral}
	\left(T^s\right)_{K,L}
	=
	\int_0^1
	\mathbb B_K(x)\,x^s\,\dd\Psi(x)\,
	\mathbb A_L(x)^{\mathsf T},
\end{equation}
where the integral is understood entrywise. If \(p=q=r\), all three
matrices in the integrand are square, but in general
\(\mathbb A_K\ne\mathbb B_K\) and
\(\dd\Psi\ne\dd\Psi^{\mathsf T}\). Thus squareness does not turn
\eqref{eq:QBD-compact-mixed-spectral} into the usual self-adjoint
matrix-valued Karlin--McGregor formula.

\begin{corollary}[Rank obstruction to block symmetrization]
\label{coro:QBD-rank-obstruction}
	Assume that \(T\) is \((p,q)\)-banded and that its two outer diagonals
	are nowhere zero,
	\[
	T_{n,n+q}\ne0,\qquad T_{n+p,n}\ne0,\qquad n\ge0,
	\]
	and use blocks of size
	\(r=\max\{p,q\}\). Then
	\[
	\operatorname{rank}\mathsf T_K^+=q,
	\qquad
	\operatorname{rank}\mathsf T_{K+1}^-=p,
	\qquad K\ge0.
	\]
	Consequently, if \(p\ne q\), no invertible block diagonal similarity can
	transform \(T\) into a symmetric block Jacobi matrix.
\end{corollary}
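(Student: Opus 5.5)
We read the ranks directly off the block formula \eqref{eq:QBD-blocks-from-band}. For \(\varepsilon=+1\) the entry \((\mathsf T_K^+)_{i,j}=c_{r+j-i}(rK+i)\) is nonzero only if \(r+j-i\le q\), that is, \(j\le i+q-r\). So \(\mathsf T_K^+\) is supported on and below the diagonal \(j=i+q-r\), and its nonzero rows are \(i\ge r-q\), which gives exactly \(q\) rows. Along that outer diagonal the entries are \(c_q(rK+i)=T_{rK+i,rK+i+q}\ne0\). The \(q\times q\) submatrix with rows \(i=r-q,\ldots,r-1\) and columns \(j=0,\ldots,q-1\) is therefore lower triangular with nonzero diagonal, so it is invertible. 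All other rows vanish, so \(\operatorname{rank}\mathsf T_K^+=q\).

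The lower block is handled symmetrically. For \(\varepsilon=-1\) we have \((\mathsf T_{K+1}^-)_{i,j}=c_{-r+j-i}(r(K+1)+i)\), which is nonzero only if \(j-i\ge r-p\). The nonzero columns are \(j\ge r-p\), and the outer diagonal \(j=i+r-p\) carries \(T_{n,n-p}\ne0\) (boundary effects cannot occur because \(K+1\ge1\)). The \(p\times p\) submatrix with rows \(0,\ldots,p-1\) and columns \(r-p,\ldots,r-1\) is upper triangular with nonzero diagonal, so \(\operatorname{rank}\mathsf T_{K+1}^-=p\).

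For the obstruction, suppose an invertible block diagonal \(S=\diag(S_0,S_1,\ldots)\) makes \(J=STS^{-1}\) symmetric. The off-diagonal blocks of \(J\) are \(S_K\mathsf T_K^+S_{K+1}^{-1}\) and \(S_{K+1}\mathsf T_{K+1}^-S_K^{-1}\), and symmetry requires the first to be the transpose of the second. Multiplication by invertible matrices and transposition both preserve rank, so this would force \(q=\operatorname{rank}\mathsf T_K^+=\operatorname{rank}\mathsf T_{K+1}^-=p\), contradicting \(p\ne q\). The same argument disposes of the symmetrizer formulation \eqref{eq:QBD-block-symmetrizer}: there \(\Pi_K\mathsf T_K^+=(\mathsf T_{K+1}^-)^{\mathsf T}\Pi_{K+1}\) with both \(\Pi\) invertible, so the ranks would again have to agree.

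The main obstacle is the index bookkeeping: making sure the staircase submatrix is genuinely triangular with the outer-band entries on its diagonal, and checking that no row index falls outside \(0,\ldots,r-1\). This is the step where \(r=\max\{p,q\}\) is used.
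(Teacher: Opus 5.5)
Your proposal is correct and follows essentially the same route as the paper: you exhibit the lower triangular \(q\times q\) submatrix (rows \(r-q,\ldots,r-1\), columns \(0,\ldots,q-1\)) of \(\mathsf T_K^+\) and the upper triangular \(p\times p\) submatrix (rows \(0,\ldots,p-1\), columns \(r-p,\ldots,r-1\)) of \(\mathsf T_{K+1}^-\), each with outer-band entries on the diagonal, and then compare the ranks of the transposed off-diagonal blocks. Your explicit index bookkeeping, including the check that \(r(K+1)+i\ge p\) so the subdiagonal hypothesis applies, and your remark that the same rank argument handles the symmetrizer identities \(\Pi_K\mathsf T_K^+=(\mathsf T_{K+1}^-)^{\mathsf T}\Pi_{K+1}\), are harmless refinements of the paper's argument.
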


\begin{proof}
	In \(\mathsf T_K^+\), the rows \(r-q,\ldots,r-1\) and columns
	\(0,\ldots,q-1\) form a lower triangular \(q\times q\) submatrix whose
	diagonal consists of entries from the \(q\)-th superdiagonal of \(T\);
	all remaining independent rows or columns vanish. Hence its rank is
	\(q\). Similarly, the rows \(0,\ldots,p-1\) and columns
	\(r-p,\ldots,r-1\) of \(\mathsf T_{K+1}^-\) form an upper triangular
	\(p\times p\) submatrix with nonzero diagonal from the \(p\)-th
	subdiagonal, and its rank is \(p\).

	If a block diagonal similarity made \(T\) symmetric, its
	\((K,K+1)\) and \((K+1,K)\) blocks would be transposes and therefore
	would have equal rank. Invertible left and right multiplications preserve
	rank, so this would force \(p=q\).
\end{proof}

When \(p=q\), the rank obstruction disappears but block symmetrizability
is still not automatic: one must solve the full matrix identities
\eqref{eq:QBD-block-symmetrizer}. The PBF hypothesis supplies the mixed
representation \eqref{eq:QBD-compact-mixed-spectral}, not those additional
self-adjointness identities. In the exceptional case in which a positive
block symmetrizer exists, a block-diagonal similarity reduces the representation to the
standard positive-definite matrix-valued setting. Otherwise the two-sided
mixed formula is the relevant spectral object.

The state of the QBD is specified by a level \(K\) and a phase \(i\), the
position within that level. Ordering the pairs level by level as
\((0,0),(0,1),\ldots,(0,r-1),(1,0),(1,1),\ldots,(1,r-1),\ldots\)
gives the block-tridiagonal representation
\eqref{eq:QBD-block-tridiagonal} of the transition matrix of
\(Z_\ell=\begin{bNiceMatrix}K_\ell&I_\ell\end{bNiceMatrix}\). The second
coordinate is finite, so the state space is a semi-infinite strip rather
than the full quadrant \(\Nzero^2\).

The word ``birth'' in this QBD description refers to an increase of the
\emph{level}, not necessarily to a scalar jump by \(+1\); analogously, a
QBD death decreases the level by one. The phase is the residue of the
scalar state modulo \(r\). Formula \eqref{eq:QBD-blocks-from-band} states
which scalar diagonals form each of the three QBD blocks; no
additional symmetry is introduced by this relabelling.

This grouping is applied explicitly to the rational mixed Pi\~neiro example
with \((p,q)=(3,2)\) in
Subsection~\ref{subsec:MP-QBD}: grouping three consecutive scalar states
produces a three-phase level-dependent QBD. Corollary
\ref{coro:QBD-rank-obstruction} proves in advance that this QBD is not
block symmetrizable. Its five ordered binary urn experiments realize every
block transition, while
\eqref{eq:QBD-block-spectral-discrete} and
\eqref{eq:QBD-block-spectral-continuous} provide its discrete- and
continuous-time spectral descriptions.

\section{The mixed Pi\~neiro system: spectral construction and positivity}
\label{sec:mixed-Pineiro-model}

The mixed Pi\~neiro system is the main explicit example of the
general \((p,q)\)-band theory and will serve as the reference case for the
broader Jacobi-like family introduced in
Section~\ref{sec:Jacobi-like-Markov}. The general
Christoffel--Gauss--Borel bidiagonal
construction is developed in \cite{BFM_Bidiagonal_2026}.  In the present
case the scalar step-line moment matrix is a Cauchy matrix, so its
Gauss--Borel factorization gives a direct construction of the mixed forms,
their biorthogonality, the recurrence matrix, and its bidiagonal factors.
The terminating hypergeometric forms and the corresponding general
multi-index formulas are developed in \cite{PineiroMixed2026}. The factor
entries used here are derived directly from the Cauchy formulas in
Appendix~\ref{app:MP-bidiagonal-derivation}, so the construction required
in this paper is self-contained. This
section determines the exact region of positivity of the prescribed
bidiagonal factorization, separates it from the larger structural-band
nonnegativity region, and ends with the stochastic normalization and its
mixed spectral representation. The probabilistic models built from this
kernel are treated in Section~\ref{sec:mixed-Pineiro-Markov}.

\subsection{Explicit mixed forms and bidiagonal factorization}
\label{subsec:MP-factorization}

\subsubsection{Cauchy moment matrix and step-line forms}

Fix integers $p,q\ge1$. Let
\(\boldsymbol\alpha=(\alpha_1,\dots,\alpha_p)\) and
\(\boldsymbol\beta=(\beta_1,\dots,\beta_q)\),
and consider the mixed
Pi\~neiro matrix of measures on $(0,1)$
\begin{equation}\label{eq:MP-matrix-measure}
\dd \boldsymbol\mu^{\mathrm P}(x)
=
\left[x^{\alpha_i+\beta_j}\right]_{
\substack{j\in\{1,\ldots,q\}\\ i\in\{1,\ldots,p\}}}
\dd x.
\end{equation}
The matrix measure is locally integrable precisely when
\(\alpha_i+\beta_j>-1\) for every pair \(i,j\). In the ordered region
introduced below, this reduces to \(\alpha_1+\beta_1>-1\).

\begin{definition}[Admissible Pi\~neiro parameter domain]
\label{def:MP-admissible-domain}
Let \(\mathcal A^{\mathrm P}_{p,q}\) be the set of pairs
\((\boldsymbol\alpha,\boldsymbol\beta)\in\mathbb R^{p+q}\) satisfying
\begin{equation}
\label{eq:MP-admissible-domain}
\begin{aligned}
\alpha_i&>-1 &&(1\le i\le p),
&\qquad \beta_j&>-1 &&(1\le j\le q),\\
\alpha_i+\beta_j&>-1
&& (1\le i\le p,\ 1\le j\le q),\\
\alpha_i-\alpha_k&\notin\mathbb Z &&(i\ne k),
&\qquad \beta_j-\beta_h&\notin\mathbb Z &&(j\ne h).
\end{aligned}
\end{equation}
\end{definition}

The sum inequalities ensure the integrability of all matrix entries. The
nonintegrality conditions will ensure that the two cyclic exponent
sequences used below have no repeated terms. The
corresponding step-line multiplication matrix is denoted by $T^{\mathrm P}$.
It is the recurrence matrix for multiplication by $x$ in the mixed-type
Pi\~neiro system and has bandwidth $(p,q)$, that is,
\((T^{\mathrm P})_{N,M}=0\) if
\(M-N\notin\{-p,\ldots,q\}\).
Thus, for $(p,q)=(3,2)$, one obtains a six-diagonal matrix, with possible
jumps of sizes $-3,-2,-1,0,+1,+2$ after stochastic normalization.

To specify both families on the step line, write
\begin{equation}\label{eq:MP-step-quotients}
\begin{aligned}
N&=pe_N+i_N-1=qd_N+s_N-1,\\
e_N,d_N&\in\mathbb N_0,
&i_N&\in\{1,\ldots,p\},
&s_N&\in\{1,\ldots,q\}.
\end{aligned}
\end{equation}
To avoid any collision with the parameter vector $\boldsymbol\beta$, denote by $\boldsymbol\nu_d(R)$ the standard step-line multi-index of length $d$ and norm $R$: if \(R=d w_R+r_R\), with
\(r_R\in\{0,\ldots,d-1\}\), then
\[
(\boldsymbol\nu_d(R))_i=
\begin{cases}
w_R+1,& i\le r_R,\\
w_R,& i> r_R,
\end{cases}
\qquad i\in\{1,\ldots,d\}.
\]
Thus, in the present notation, the type-I step-line multi-indices are
\begin{equation}\label{eq:MP-type-I-step-indices}
 \boldsymbol n_N=\boldsymbol\nu_p(N+1),
 \qquad
 \boldsymbol m_N=\boldsymbol\nu_q(N),
 \qquad
 n_{N,i}=(\boldsymbol n_N)_i,
 \qquad
 m_{N,j}=(\boldsymbol m_N)_j.
\end{equation}
In particular, \(m_{N,s_N}=d_N\). The dual type-II step-line indices are
\begin{equation}\label{eq:MP-type-II-step-indices}
 \widetilde{\boldsymbol n}_N=\boldsymbol\nu_p(N),
 \qquad
 \widetilde{\boldsymbol m}_N=\boldsymbol\nu_q(N+1),
 \qquad
 \widetilde n_{N,i}=(\widetilde{\boldsymbol n}_N)_i,
 \qquad
 \widetilde m_{N,j}=(\widetilde{\boldsymbol m}_N)_j.
\end{equation}
Thus \(|\boldsymbol n_N|=|\boldsymbol m_N|+1\),
\(|\widetilde{\boldsymbol m}_N|=|\widetilde{\boldsymbol n}_N|+1\), and
\(\widetilde m_{N,s_N}=d_N+1\).

The two polynomial arrays and the two scalar mixed forms can now be written
without leaving any orthogonal object implicit. Put
\begin{align}
\boldsymbol A_N^{\mathrm P}(x)
&\coloneq
\begin{bNiceMatrix}
A_N^{(1),\mathrm P}(x)\\
\Vdots\\
A_N^{(p),\mathrm P}(x)
\end{bNiceMatrix},
&
A_N^{\mathrm P}(x)
&\coloneq
\sum_{i=1}^{p}A_N^{(i),\mathrm P}(x)x^{\alpha_i},
\label{eq:MP-explicit-type-I-form}
\\
\boldsymbol B_N^{\mathrm P}(x)
&\coloneq
\begin{bNiceMatrix}
B_N^{(1),\mathrm P}(x)&\Cdots&B_N^{(q),\mathrm P}(x)
\end{bNiceMatrix},
&
B_N^{\mathrm P}(x)
&\coloneq
\sum_{j=1}^{q}B_N^{(j),\mathrm P}(x)x^{\beta_j}.
\label{eq:MP-explicit-type-II-form}
\end{align}
A polynomial component occurs in the corresponding mixed form only when
its degree bound is nonnegative. Such a component is called active; thus
\(A_N^{(i),\mathrm P}\) is active when \(n_{N,i}\ge1\), and
\(B_N^{(j),\mathrm P}\) is active when
\(\widetilde m_{N,j}\ge1\).

\begin{proposition}[Direct Cauchy construction of the step-line forms]
\label{pro:MP-Cauchy-step-line-construction}
Let \((\boldsymbol\alpha,\boldsymbol\beta)\in\mathcal A^{\mathrm P}_{p,q}\), and use
the step-line quotients in \eqref{eq:MP-step-quotients}. Define
\begin{equation}\label{eq:MP-Cauchy-exponent-sequences}
\xi_N\coloneq\beta_{s_N}+d_N,
\qquad
\eta_M\coloneq\alpha_{i_M}+e_M+1,
\end{equation}
and let \(\mathscr M\) be the scalar moment matrix
\begin{equation}\label{eq:MP-step-line-Cauchy-moment-matrix}
\begin{aligned}
\mathscr M_{N,M}
&\coloneq
\int_0^1 x^{d_N}\,\dd\mu_{s_N,i_M}^{\mathrm P}(x)\,x^{e_M}
\\
&=
\int_0^1x^{d_N+\beta_{s_N}}x^{e_M+\alpha_{i_M}}\,\dd x
=\frac{1}{\xi_N+\eta_M}.
\end{aligned}
\end{equation}
Then every leading principal minor of \(\mathscr M\) is nonzero, and
there is a unique factorization
\(\mathscr M=L_{\mathscr M}U_{\mathscr M}\), where
\(L_{\mathscr M}\) is unit lower triangular and
\(U_{\mathscr M}\) is upper triangular. For \(0\le K\le N\),
\begin{equation}\label{eq:MP-Cauchy-lower-factor}
(L_{\mathscr M})_{N,K}
=
\frac{\prod_{r=0}^{K-1}(\xi_N-\xi_r)}
     {\prod_{r=0}^{K-1}(\xi_K-\xi_r)}
\frac{\prod_{r=0}^{K}(\xi_K+\eta_r)}
     {\prod_{r=0}^{K}(\xi_N+\eta_r)},
\end{equation}
and the inverse factors have the product formulas
\begin{equation}\label{eq:MP-Cauchy-lower-inverse}
(L_{\mathscr M}^{-1})_{N,K}
=
\frac{\prod_{r=0}^{N-1}(\xi_K+\eta_r)}
     {\prod_{r=0}^{N-1}(\xi_N+\eta_r)}
\frac{\prod_{r=0}^{N-1}(\xi_N-\xi_r)}
     {\prod_{\substack{0\le r\le N\\r\ne K}}
      (\xi_K-\xi_r)}
\end{equation}
and
\begin{equation}\label{eq:MP-Cauchy-upper-inverse-transpose}
(U_{\mathscr M}^{-\mathsf T})_{N,K}
=
\frac{\prod_{r=0}^{N}(\xi_N+\eta_r)}
     {\prod_{r=0}^{N-1}(\xi_N-\xi_r)}
\frac{\prod_{r=0}^{N-1}(\xi_r+\eta_K)}
     {\prod_{\substack{0\le r\le N\\r\ne K}}
      (\eta_K-\eta_r)}.
\end{equation}
Empty products are understood as \(1\). The two step-line mixed forms are
obtained directly from these inverse factors:
\begin{equation}\label{eq:MP-Cauchy-direct-step-line-forms}
\begin{aligned}
B_N^{\mathrm P}(x)
&=
\sum_{K=0}^{N}(L_{\mathscr M}^{-1})_{N,K}x^{\xi_K},
&
A_N^{\mathrm P}(x)
&=
\sum_{K=0}^{N}(U_{\mathscr M}^{-\mathsf T})_{N,K}
x^{\eta_K-1}.
\end{aligned}
\end{equation}
In particular, the first formula gives the monic type-II normalization,
the second gives the normalized type-I form, and
\begin{equation}\label{eq:MP-Cauchy-direct-biorthogonality}
\int_0^1B_N^{\mathrm P}(x)A_M^{\mathrm P}(x)\,\dd x
=\delta_{N,M}.
\end{equation}
Finally, if \((\Lambda_r)_{N,M}=\delta_{M,N+r}\), then
\begin{equation}\label{eq:MP-Cauchy-direct-recurrence-matrix}
T^{\mathrm P}
=L_{\mathscr M}^{-1}\Lambda_qL_{\mathscr M}
=U_{\mathscr M}\Lambda_p^{\mathsf T}U_{\mathscr M}^{-1}.
\end{equation}
\end{proposition}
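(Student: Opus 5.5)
The proof will rest on classical Cauchy-matrix algebra, organized in three stages: the moment computation and $LU$ formulas, identification of the forms and biorthogonality, and the recurrence matrix.

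\textbf{Moment matrix and $LU$ factors.} The integral \eqref{eq:MP-step-line-Cauchy-moment-matrix} is immediate, because $\xi_N+\eta_M=\beta_{s_N}+\alpha_{i_M}+d_N+e_M+1>0$ on $\mathcal A^{\mathrm P}_{p,q}$. The nonintegrality conditions make the $\xi_N$ pairwise distinct, and likewise the $\eta_M$. Indeed, $\xi_N=\xi_{N'}$ forces $s_N=s_{N'}$ and hence $d_N=d_{N'}$, since $\beta_j-\beta_h\notin\mathbb Z$.

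The leading principal minors are then given by the Cauchy determinant
\[
\det[\mathscr M]_{0..K}=\frac{\prod_{r<s\le K}(\xi_s-\xi_r)(\eta_s-\eta_r)}{\prod_{r,s\le K}(\xi_r+\eta_s)}\ne0 .
\]
This yields the unique $LU$ factorization, with
\[
(L_{\mathscr M})_{N,K}=\frac{\det\mathscr M[\{0..K-1,N\},\{0..K\}]}{\det\mathscr M[0..K]} .
\]
Evaluating both minors by the Cauchy formula and cancelling gives \eqref{eq:MP-Cauchy-lower-factor}.

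For the inverse factors I will not invert directly. Instead I will verify the claimed expressions by a partial-fraction (Lagrange interpolation) identity. Row $N$ of $L_{\mathscr M}^{-1}\mathscr M$ must vanish in columns $M<N$, so I form the rational function
\[
R_N(z)=\frac{\prod_{r=0}^{N-1}(z-\xi_r)}{\prod_{r=0}^{N}(z+\eta_r)}\quad\text{(up to a constant)},
\]
which vanishes at $z=\xi_0,\dots,\xi_{N-1}$. Expanding it in partial fractions, or equivalently expanding in the variable $\xi$, gives coefficients $c_{N,K}$ with
\[
\sum_K c_{N,K}\,\frac{1}{\xi_K+\eta_M}=0\quad\text{for } M<N .
\]
The plan is therefore the following. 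First, exhibit the rational function whose residues at $-\eta_K$ give \eqref{eq:MP-Cauchy-upper-inverse-transpose}, and the dual one, in the variable $-\eta$, vanishing at the $-\eta_r$, whose residues at $\xi_K$ give \eqref{eq:MP-Cauchy-lower-inverse}. Second, check triangularity of $L_{\mathscr M}^{-1}\mathscr M$ and $\mathscr M U_{\mathscr M}^{-1}$ via the residue theorem. Third, fix normalizations: unit diagonal for $L^{-1}$, and for $U^{-\mathsf T}$ the condition $(L^{-1}\mathscr M U^{-1})=I$, read off from the diagonal entry through one further residue evaluation.

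\textbf{Forms and biorthogonality.} Define $B_N^{\mathrm P}$ and $A_N^{\mathrm P}$ by \eqref{eq:MP-Cauchy-direct-step-line-forms}. Since $x^{\xi_K}=x^{d_K}x^{\beta_{s_K}}$ and $x^{\eta_K-1}=x^{e_K}x^{\alpha_{i_K}}$, regrouping by $s_K$ and $i_K$ shows that these are mixed forms with polynomial components whose degrees match the step-line multi-indices \eqref{eq:MP-type-I-step-indices} and \eqref{eq:MP-type-II-step-indices}. Biorthogonality follows from the computation
\[
\int_0^1 B_N^{\mathrm P}A_M^{\mathrm P}\,\dd x=(L^{-1}\mathscr M U^{-1})_{N,M}=\delta_{N,M} .
\]
The monic normalization is $(L^{-1})_{N,N}=1$.

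\textbf{Recurrence matrix.} Multiplication by $x$ sends $x^{\xi_K}$ to $x^{\xi_{K+q}}$, because $s_{K+q}=s_K$ and $d_{K+q}=d_K+1$. Writing $\mathbf X=(x^{\xi_K})_K$, this reads $x\mathbf X=\Lambda_q\mathbf X$. Hence
\[
x\,L^{-1}\mathbf X=L^{-1}\Lambda_q L\,(L^{-1}\mathbf X),
\]
which gives $T^{\mathrm P}=L^{-1}\Lambda_qL$. Similarly, $x\cdot x^{\eta_K-1}=x^{\eta_{K+p}-1}$, and the identity $L^{-1}\Lambda_q L=U\Lambda_p^{\mathsf T}U^{-1}$ follows from the shift-invariance $\mathscr M\Lambda_p^{\mathsf T}\!{}^{\mathsf T}$. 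Concretely, $\Lambda_q\mathscr M=\mathscr M\Lambda_p^{\mathsf T}$, because both sides have entries $\int x^{\xi_N+\eta_M}\dd x$. Substituting $\mathscr M=LU$ then gives the two-sided expression \eqref{eq:MP-Cauchy-direct-recurrence-matrix}.

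I expect the main obstacle to be the bookkeeping of the inverse-factor normalizations, in particular matching the exact prefactors in \eqref{eq:MP-Cauchy-upper-inverse-transpose}. The derivation in Appendix~\ref{app:MP-bidiagonal-derivation} may be invoked for consistency checks.
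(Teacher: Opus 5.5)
Your proposal is correct and follows the paper's proof essentially step by step. It uses Cauchy determinants for the leading minors and for $L_{\mathscr M}$, partial-fraction (residue) identities with zeros at the orthogonality nodes to identify the rows of $L_{\mathscr M}^{-1}$ and $U_{\mathscr M}^{-\mathsf T}$, the Gram computation $L_{\mathscr M}^{-1}\mathscr M U_{\mathscr M}^{-1}=I$ for biorthogonality, and the shift relation $x\,x^{\xi_K}=x^{\xi_{K+q}}$ to get $T^{\mathrm P}=L_{\mathscr M}^{-1}\Lambda_qL_{\mathscr M}$.

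The deviations are minor. You obtain the second expression for $T^{\mathrm P}$ from $\Lambda_q\mathscr M=\mathscr M\Lambda_p^{\mathsf T}$, whereas the paper uses the left recurrence for $\mathbf A^{\mathrm P}$ and transposes; your route is equally valid, since every product involved is a finite sum by triangularity. Also, your first displayed $R_N$, with zeros at the $\xi_r$ and poles at $-\eta_r$, actually produces the rows of $U_{\mathscr M}^{-\mathsf T}$, not the $L_{\mathscr M}^{-1}$ condition written right after it; your subsequent plan already assigns the two rational functions correctly.
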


\begin{proof}
The matrix measure in \eqref{eq:MP-matrix-measure} has \(q\) rows and
\(p\) columns. The scalar index \(N\) lists successively each of its
\(q\) rows multiplied by \(1,x,x^2,\ldots\), while \(M\) lists each of
its \(p\) columns with the same successive monomial multipliers. This is
exactly the integral in
\eqref{eq:MP-step-line-Cauchy-moment-matrix}.

For \(n\ge0\), let \(\mathscr M^{[n]}\) be the leading square block with
indices \(0,\ldots,n\). The Cauchy determinant identity gives
\begin{equation}\label{eq:MP-Cauchy-determinant}
\det\mathscr M^{[n]}
=
\frac{
\prod_{0\le r<s\le n}(\xi_s-\xi_r)
\prod_{0\le r<s\le n}(\eta_s-\eta_r)
}{
\prod_{r,s=0}^{n}(\xi_r+\eta_s)
}.
\end{equation}
This form of the identity is symmetric in the two exponent sequences:
transposing the Cauchy matrix interchanges \(\xi\) and \(\eta\), the two
Vandermonde products exchange places, and the denominator is unchanged.
This symmetry of the determinant does not assert that \(\mathscr M\)
itself is symmetric. The latter would require
\(\xi_N-\eta_N\) to be independent of \(N\), which does not hold for
general mixed Pi\~neiro parameters. The extra \(1\) in \(\eta_M\) is
likewise only a convention: it comes from
\(\int_0^1x^\gamma\,\dd x=(\gamma+1)^{-1}\) and could instead be
assigned to the \(\xi\)-sequence.

Admissibility makes the \(\xi\)'s pairwise distinct and the \(\eta\)'s
pairwise distinct, because equality between two terms would give an
integral difference between two beta parameters or between two alpha
parameters. Moreover, every \(\xi_N+\eta_M\) is positive. Hence all the
leading principal minors are nonzero, which proves the existence and
uniqueness of this factorization. Formula
\eqref{eq:MP-Cauchy-lower-factor} follows by applying
\eqref{eq:MP-Cauchy-determinant} to the elimination minors.

The inverse formulas and their polynomial meaning are verified next. Put
\(c_{N,K}=(L_{\mathscr M}^{-1})_{N,K}\), with the right-hand side given
by \eqref{eq:MP-Cauchy-lower-inverse}. Partial fractions give
\begin{equation*}
\sum_{K=0}^{N}\frac{c_{N,K}}{z+\xi_K}
=
\frac{\prod_{r=0}^{N-1}(\xi_N-\xi_r)}
     {\prod_{r=0}^{N-1}(\xi_N+\eta_r)}
\frac{\prod_{r=0}^{N-1}(z-\eta_r)}
     {\prod_{r=0}^{N}(z+\xi_r)}.
\end{equation*}
The coefficient \(c_{N,N}\) is \(1\), and the right-hand side vanishes
at \(z=\eta_0,\ldots,\eta_{N-1}\). Consequently the first expression
in \eqref{eq:MP-Cauchy-direct-step-line-forms} is monic in the last basis
element \(x^{\xi_N}\) and is orthogonal to
\(x^{\eta_0-1},\ldots,x^{\eta_{N-1}-1}\). This identifies its
coefficient row with row \(N\) of \(L_{\mathscr M}^{-1}\).

Similarly, let \(d_{N,K}\) denote the right-hand side of
\eqref{eq:MP-Cauchy-upper-inverse-transpose}. A second partial-fraction
identity is
\begin{equation*}
\sum_{K=0}^{N}\frac{d_{N,K}}{z+\eta_K}
=
\frac{\prod_{r=0}^{N}(\xi_N+\eta_r)}
     {\prod_{r=0}^{N-1}(\xi_N-\xi_r)}
\frac{\prod_{r=0}^{N-1}(z-\xi_r)}
     {\prod_{r=0}^{N}(z+\eta_r)}.
\end{equation*}
Its values at \(z=\xi_0,\ldots,\xi_N\) are respectively
\(0,\ldots,0,1\). Therefore the second expression in
\eqref{eq:MP-Cauchy-direct-step-line-forms} satisfies the type-I
orthogonality conditions and its normalization at the last row moment is
\(1\). Hence \(d_{N,K}=(U_{\mathscr M}^{-\mathsf T})_{N,K}\).

Regrouping the exponents
\(\xi_K=\beta_{s_K}+d_K\) and
\(\eta_K-1=\alpha_{i_K}+e_K\) in
\eqref{eq:MP-Cauchy-direct-step-line-forms} gives, respectively, the
type-II components in \eqref{eq:MP-explicit-type-II-form} and the type-I
components in \eqref{eq:MP-explicit-type-I-form}. If
\(\mathbf B^{\mathrm P}=(B_N^{\mathrm P})_{N\ge0}\) and
\(\mathbf A^{\mathrm P}=(A_N^{\mathrm P})_{N\ge0}\), then
\begin{equation*}
\int_0^1
\mathbf B^{\mathrm P}(x)\mathbf A^{\mathrm P}(x)^{\mathsf T}\,\dd x
=
L_{\mathscr M}^{-1}\mathscr M U_{\mathscr M}^{-1}
=I.
\end{equation*}
This proves \eqref{eq:MP-Cauchy-direct-biorthogonality}.

Finally,
\(x\boldsymbol\chi_\beta=\Lambda_q\boldsymbol\chi_\beta\) and
\(x\boldsymbol\chi_\alpha=\Lambda_p\boldsymbol\chi_\alpha\), where
\(\boldsymbol\chi_\beta=(x^{\xi_N})_{N\ge0}\) and
\(\boldsymbol\chi_\alpha=(x^{\eta_N-1})_{N\ge0}\). Conjugating the
first shift relation gives
\(x\mathbf B^{\mathrm P}
=L_{\mathscr M}^{-1}\Lambda_qL_{\mathscr M}\mathbf B^{\mathrm P}\),
whereas the second gives
\(x\mathbf A^{\mathrm P}
=U_{\mathscr M}^{-\mathsf T}\Lambda_pU_{\mathscr M}^{\mathsf T}
\mathbf A^{\mathrm P}\).
The coefficient matrices in the right and left recurrences are
\(T^{\mathrm P}\) and \((T^{\mathrm P})^{\mathsf T}\), respectively.
The first equality therefore gives
\(T^{\mathrm P}=L_{\mathscr M}^{-1}\Lambda_qL_{\mathscr M}\), while
transposing the second gives
\(T^{\mathrm P}=U_{\mathscr M}\Lambda_p^{\mathsf T}
U_{\mathscr M}^{-1}\). This proves
\eqref{eq:MP-Cauchy-direct-recurrence-matrix}.
\end{proof}

\begin{corollary}[Total positivity of the Pi\~neiro moment matrix]
\label{cor:MP-moment-matrix-total-positivity}
On the ordered cyclic region
\(\mathcal R^{\mathrm P,+}_{p,q}\), the scalar step-line moment matrix
\(\mathscr M\) in \eqref{eq:MP-step-line-Cauchy-moment-matrix} is strictly
totally positive.
\end{corollary}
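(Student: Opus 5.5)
The plan is to reduce strict total positivity of \(\mathscr M\) to the Cauchy determinant identity \eqref{eq:MP-Cauchy-determinant}, applied not only to leading principal blocks but to arbitrary submatrices. The key structural input is that on the ordered cyclic region \(\mathcal R^{\mathrm P,+}_{p,q}\) both exponent sequences \eqref{eq:MP-Cauchy-exponent-sequences} are strictly increasing.

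\emph{Step 1: monotonicity of \(\xi\) and \(\eta\).} I expect \(\mathcal R^{\mathrm P,+}_{p,q}\) to be the region of admissible parameters satisfying the cyclic orderings
\[
\beta_1<\beta_2<\cdots<\beta_q<\beta_1+1,
\qquad
\alpha_1<\alpha_2<\cdots<\alpha_p<\alpha_1+1 .
\]
I will check from \eqref{eq:MP-step-quotients} that this gives \(\xi_{N+1}>\xi_N\) for all \(N\).
\begin{itemize}
\item Inside a cycle, \(d_{N+1}=d_N\) and \(s_{N+1}=s_N+1\), so \(\xi_{N+1}-\xi_N=\beta_{s_N+1}-\beta_{s_N}>0\).
\item At a cycle boundary, \(s_N=q\), \(s_{N+1}=1\) and \(d_{N+1}=d_N+1\), so \(\xi_{N+1}-\xi_N=\beta_1+1-\beta_q>0\).
\end{itemize}
The same argument with \(e_M\) and \(i_M\) shows that \(\eta_M\) is strictly increasing. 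In addition, \(\xi_N+\eta_M\ge\xi_0+\eta_0=\alpha_1+\beta_1+1>0\), using the integrability condition in Definition~\ref{def:MP-admissible-domain}.

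\emph{Step 2: every minor is a Cauchy determinant.} Fix row indices \(N_1<\cdots<N_k\) and column indices \(M_1<\cdots<M_k\). The corresponding submatrix of \(\mathscr M\) is again a Cauchy matrix \(\bigl[1/(x_r+y_s)\bigr]\), with \(x_r=\xi_{N_r}\) and \(y_s=\eta_{M_s}\). The identity \eqref{eq:MP-Cauchy-determinant} is purely algebraic and holds for any two finite sequences. Hence the minor equals
\[
\frac{\prod_{r<s}(\xi_{N_s}-\xi_{N_r})\prod_{r<s}(\eta_{M_s}-\eta_{M_r})}{\prod_{r,s}(\xi_{N_r}+\eta_{M_s})}.
\]
By Step 1, every numerator factor is positive because the selected indices are increasing and both sequences are strictly increasing. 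Every denominator factor is positive because all sums \(\xi+\eta\) are positive. So every minor of every order is strictly positive, which is strict total positivity.

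\emph{Main obstacle.} The algebra is routine; the only delicate point is confirming that the definition of \(\mathcal R^{\mathrm P,+}_{p,q}\) really forces strict monotonicity of \emph{both} cyclic sequences, including across cycle boundaries. That is where the condition \(\beta_q<\beta_1+1\) (resp.\ \(\alpha_p<\alpha_1+1\)) is used, and where nonintegrality of parameter differences rules out ties. If the region were described instead through an ordering of the interleaved exponent sequences, I would verify monotonicity directly from that description. Only the sign of each Vandermonde factor matters, so weak monotonicity would not suffice, and distinctness must come from admissibility.
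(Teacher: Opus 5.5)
Your proposal is correct and is the same argument the paper gives: on \(\mathcal R^{\mathrm P,+}_{p,q}\) both cyclic exponent sequences \((\xi_N)\) and \((\eta_M)\) are strictly increasing. Each minor with increasing row and column indices is then a Cauchy determinant with two positive Vandermonde factors in the numerator and a positive denominator (strictness already comes from the strict cyclic inequalities, so nonintegrality is not needed).
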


\begin{proof}
In this region the affine cyclic inequalities make both exponent sequences
\((\xi_N)_{N\ge0}\) and \((\eta_N)_{N\ge0}\) strictly increasing.  Every
minor with row indices \(N_1<\cdots<N_k\) and column indices
\(M_1<\cdots<M_k\) is a Cauchy determinant.  Its numerator is the product
of the two positive Vandermonde determinants in
\((\xi_{N_i})_{i=1}^k\) and \((\eta_{M_j})_{j=1}^k\), and its denominator
is \(\prod_{i,j}(\xi_{N_i}+\eta_{M_j})>0\).  Hence every finite minor is
strictly positive.
\end{proof}

The preceding proposition is the primary construction of the step-line
forms in this paper. For later parameter calculations it is useful to
regroup its scalar coefficients according to their alpha or beta component.
The following corollary records that regrouping; it is not a second
construction of the mixed forms.

\subsubsection{Componentwise finite formulas}

\begin{corollary}[Componentwise Cauchy formulas]
\label{coro:MP-componentwise-Cauchy-formulas}
Let \((\boldsymbol\alpha,\boldsymbol\beta)\in\mathcal A^{\mathrm P}_{p,q}\), and use
the step-line indices introduced above.

For the type-I normalization define
\[
C_N^{A}
\coloneq
\frac{
\prod_{i=1}^{p}
(\alpha_i+\beta_{s_N}+d_N+1)_{n_{N,i}}
}{
\prod_{j=1}^{q}
(\beta_j-\beta_{s_N}-d_N)_{m_{N,j}}
}.
\]
For every active component \(i\) and
\(0\le k\le n_{N,i}-1\), put
\begin{equation}
\label{eq:MP-explicit-A-coefficients}
a_{N,i,k}
\coloneq
C_N^{A}
\frac{
\prod_{j=1}^{q}
(\alpha_i+\beta_j+k+1)_{m_{N,j}}
}{
\prod_{\substack{h=1\\h\ne i}}^{p}
(\alpha_h-\alpha_i-k)_{n_{N,h}}\,
k!\,(n_{N,i}-k-1)!\,(-1)^k
}.
\end{equation}
Then
\begin{equation}
\label{eq:MP-explicit-A-components}
A_N^{(i),\mathrm P}(x)
=
\sum_{k=0}^{n_{N,i}-1}a_{N,i,k}x^k,
\qquad
A_N^{(i),\mathrm P}\equiv0
\quad\hbox{if }n_{N,i}=0,
\end{equation}
and it is normalized by
\(\int_0^1x^{d_N+\beta_{s_N}}A_N^{\mathrm P}(x)\,\dd x=1\).

The type-II coefficients are equally explicit. For every active component
\(j\) and \(0\le k\le\widetilde m_{N,j}-1\), put
\begin{equation}
\label{eq:MP-raw-B-coefficients}
c_{N,j,k}
\coloneq
\frac{
\prod_{i=1}^{p}
(\alpha_i+\beta_j+k+1)_{\widetilde n_{N,i}}
}{
\prod_{\substack{h=1\\h\ne j}}^{q}
(\beta_h-\beta_j-k)_{\widetilde m_{N,h}}\,
k!\,(\widetilde m_{N,j}-k-1)!\,(-1)^k
}.
\end{equation}
The contour-normalized components are
\(\widehat B_N^{(j),\mathrm P}(x)
=\sum_{k=0}^{\widetilde m_{N,j}-1}c_{N,j,k}x^k\), and the
leading coefficient of the \(s_N\)-th component is
\[
\widehat\kappa_N
\coloneq
c_{N,s_N,d_N}
=
\frac{
\prod_{i=1}^{p}
(\alpha_i+\beta_{s_N}+d_N+1)_{\widetilde n_{N,i}}
}{
\prod_{\substack{h=1\\h\ne s_N}}^{q}
(\beta_h-\beta_{s_N}-d_N)_{\widetilde m_{N,h}}\,
d_N!\,(-1)^{d_N}
}.
\]
The normalization that makes this leading coefficient equal to one is therefore
\begin{equation}
\label{eq:MP-explicit-B-components}
B_N^{(j),\mathrm P}(x)
=
\frac{1}{\widehat\kappa_N}
\sum_{k=0}^{\widetilde m_{N,j}-1}c_{N,j,k}x^k,
\qquad
B_N^{(j),\mathrm P}\equiv0
\quad\hbox{if }\widetilde m_{N,j}=0.
\end{equation}
In particular,
\begin{equation}
\label{eq:MP-h-explicit-alpha-beta}
h_N^{\mathrm P}
\coloneq B_N^{\mathrm P}(1)
=
\frac{1}{\widehat\kappa_N}
\sum_{\substack{1\le j\le q\\\widetilde m_{N,j}\ge1}}
\sum_{k=0}^{\widetilde m_{N,j}-1}c_{N,j,k}.
\end{equation}
\end{corollary}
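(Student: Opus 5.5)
The plan is to obtain every formula by regrouping the scalar coefficients of Proposition~\ref{pro:MP-Cauchy-step-line-construction}. No new orthogonality argument is needed.

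\textbf{Type-I forms.} Start from \(A_N^{\mathrm P}(x)=\sum_{K=0}^{N}(U_{\mathscr M}^{-\mathsf T})_{N,K}x^{\eta_K-1}\) with \(\eta_K-1=\alpha_{i_K}+e_K\). Fix an alpha index \(i\). The indices \(K\le N\) with \(i_K=i\) are exactly \(K=pk+i-1\) for \(0\le k\le n_{N,i}-1\), because \(\boldsymbol n_N=\boldsymbol\nu_p(N+1)\) counts how many of \(0,\ldots,N\) have residue \(i\). Hence the coefficient of \(x^k\) in \(A_N^{(i),\mathrm P}\) is \(d_{N,K}\) at \(K=pk+i-1\). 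If \(n_{N,i}=0\) there is no such \(K\), which gives the stated vanishing. It then remains to rewrite \eqref{eq:MP-Cauchy-upper-inverse-transpose} as Pochhammer products.

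\textbf{Converting the products.} I would group each product over \(r\) by the residue classes of \(r\).
\begin{itemize}
\item Since \(\xi_r=\beta_{s_r}+d_r\), the product \(\prod_{r=0}^{N-1}(\xi_r+\eta_K)\) splits over \(j\) as \(\prod_{j=1}^q(\alpha_i+\beta_j+k+1)_{m_{N,j}}\), using \(\boldsymbol m_N=\boldsymbol\nu_q(N)\).
\item The product \(\prod_{r\le N,r\ne K}(\eta_K-\eta_r)\) splits over \(h\). For \(h\ne i\) it gives \(\prod_{h\ne i}(\alpha_i-\alpha_h+k-\ell)\) over \(\ell<n_{N,h}\), which equals \((-1)^{n_{N,h}}(\alpha_h-\alpha_i-k)_{n_{N,h}}\). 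For \(h=i\) it gives \(\prod_{\ell\ne k}(k-\ell)=(-1)^{n_{N,i}-1-k}k!\,(n_{N,i}-k-1)!\).
\item The \(K\)-independent prefactor \(\prod_{r\le N}(\xi_N+\eta_r)/\prod_{r<N}(\xi_N-\xi_r)\) becomes \(\prod_i(\alpha_i+\beta_{s_N}+d_N+1)_{n_{N,i}}\) divided by \(\prod_j\) of the differences \((\beta_{s_N}+d_N-\beta_j-\ell)\), \(\ell<m_{N,j}\). These differences equal \((-1)^{m_{N,j}}(\beta_j-\beta_{s_N}-d_N)_{m_{N,j}}\); for \(j=s_N\) they become \((-d_N)_{d_N}\) up to sign, which is nonzero.
\end{itemize}

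\textbf{Signs.} The main bookkeeping obstacle is checking that all the signs \((-1)^{\sum_h n_{N,h}}\), \((-1)^{\sum_j m_{N,j}}\) and \((-1)^{n_{N,i}-1-k}\) collapse to the single \((-1)^k\) in \eqref{eq:MP-explicit-A-coefficients}. This uses \(|\boldsymbol n_N|=N+1\) and \(|\boldsymbol m_N|=N\), so the two parities combine to give \((-1)^{k}\) exactly. The normalization \(\int_0^1x^{\xi_N}A_N^{\mathrm P}\,\dd x=1\) is the value \(1\) at \(z=\xi_N\) established in the proof of the proposition.

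\textbf{Type-II forms.} These are handled the same way from \eqref{eq:MP-Cauchy-lower-inverse}. Grouping \(K=qk+j-1\) with \(0\le k\le\widetilde m_{N,j}-1\) uses \(\widetilde{\boldsymbol m}_N=\boldsymbol\nu_q(N+1)\) and \(\widetilde{\boldsymbol n}_N=\boldsymbol\nu_p(N)\) for the product \(\prod_{r<N}(\xi_K+\eta_r)\).
\begin{itemize}
\item The \(K\)-dependent part, namely \(\prod_{r<N}(\xi_K+\eta_r)\) divided by \(\prod_{r\ne K}(\xi_K-\xi_r)\), equals \(c_{N,j,k}\) up to the same sign mechanism.
\item The \(K\)-independent factor \(\prod_{r<N}(\xi_N-\xi_r)/\prod_{r<N}(\xi_N+\eta_r)\) is precisely the one that makes the coefficient at \(K=N\) equal to \(1\).
\end{itemize}
It is therefore simplest to observe that \((L^{-1}_{\mathscr M})_{N,K}=c_{N,j,k}/c_{N,s_N,d_N}\), since both sides are proportional to \(c_{N,j,k}\) and both equal \(1\) at \(K=N\), that is, at \((j,k)=(s_N,d_N)\). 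This gives \eqref{eq:MP-explicit-B-components} with \(\widehat\kappa_N=c_{N,s_N,d_N}\). Evaluating at \(x=1\) then yields \eqref{eq:MP-h-explicit-alpha-beta}.
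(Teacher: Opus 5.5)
Your proposal is correct and follows the paper's own argument. Like the paper, you regroup the scalar Cauchy entries \eqref{eq:MP-Cauchy-upper-inverse-transpose} and \eqref{eq:MP-Cauchy-lower-inverse} by the residue classes \(K=pk+i-1\) and \(K=qk+j-1\), rewrite the products as Pochhammer symbols, and fix the type-II scale by the monic coefficient at \(K=N\). Your explicit parity count, using \(|\boldsymbol n_N|=N+1\) and \(|\boldsymbol m_N|=N\), correctly yields the single factor \((-1)^k\), a step the paper leaves implicit.
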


\begin{proof}
Every scalar index has unique representations
\(K=qk+j-1\), with \(1\le j\le q\), and
\(K=pk+i-1\), with \(1\le i\le p\). Accordingly,
\(\xi_K=\beta_j+k\) in the first representation and
\(\eta_K-1=\alpha_i+k\) in the second. Substitution in
\eqref{eq:MP-Cauchy-lower-inverse} and
\eqref{eq:MP-Cauchy-upper-inverse-transpose}, followed by collecting the
terms with the same base exponent, gives
\eqref{eq:MP-explicit-A-coefficients}--
\eqref{eq:MP-explicit-B-components}. In the type-II formula, the
coefficient corresponding to \(K=N\) is
\(c_{N,s_N,d_N}/\widehat\kappa_N=1\), which gives the monic
normalization. The type-I normalization follows from
\eqref{eq:MP-Cauchy-direct-step-line-forms}, and evaluation of the type-II
form at \(x=1\) gives \eqref{eq:MP-h-explicit-alpha-beta}.
\end{proof}

The component formulas satisfy the two mixed orthogonality conditions and
the biorthogonality relation
\eqref{eq:MP-Cauchy-direct-biorthogonality} because they are obtained from
Proposition~\ref{pro:MP-Cauchy-step-line-construction}. Expanding the two
scalar forms also shows that the spectral matrix
of measures is exactly the original Pi\~neiro matrix
\begin{equation}
\label{eq:MP-spectral-measure-explicit}
\dd\psi_{j,i}^{\mathrm P}(x)
=\dd\mu_{j,i}^{\mathrm P}(x)
=x^{\alpha_i+\beta_j}\,\dd x,
\qquad
1\le i\le p,
\quad
1\le j\le q.
\end{equation}
Every component in
\eqref{eq:MP-explicit-A-components} and
\eqref{eq:MP-explicit-B-components} is a finite polynomial. Rewriting the
ratios of consecutive coefficients gives terminating
\({}_{p+q}F_{p+q-1}\) representations. Those hypergeometric forms, and
their extension beyond the step line, are developed in
\cite{PineiroMixed2026}; they are not needed for the Cauchy construction
above. Thus the polynomials, the endpoint values, and the spectral measures
used here are finite expressions in \(\boldsymbol\alpha\) and \(\boldsymbol\beta\).

\subsubsection{Recurrence matrix and bidiagonal factors}

The Cauchy factorization already gives \(T^{\mathrm P}\) through
\eqref{eq:MP-Cauchy-direct-recurrence-matrix}. For the urn interpretation
and the positivity of the individual factors, this recurrence matrix is
also factored into one-step Christoffel factors. Applying the
elementary Christoffel connections to the Cauchy forms gives
\begin{equation}\label{eq:MP-PBF}
T^{\mathrm P}
=
L_1^{\mathrm P}\cdots L_p^{\mathrm P}
U_q^{\mathrm P}\cdots U_1^{\mathrm P},
\end{equation}
where the \(L_k^{\mathrm P}\) are lower bidiagonal and the
\(U_k^{\mathrm P}\) are upper bidiagonal. Positivity of their nontrivial
entries is a separate property, established below on an explicit parameter
region.

The factor entries in simplified form are as follows.
Extend the parameters affinely by
\(\alpha_{mp+i}=\alpha_i+m\) and
\(\beta_{mq+j}=\beta_j+m\), where \(m\in\mathbb N_0\),
\(i\in\{1,\ldots,p\}\), and \(j\in\{1,\ldots,q\}\).
\begin{definition}[Affine cyclic shifts]
\label{def:MP-affine-cyclic-shifts}
For \(r,s\in\Nzero\), define
\begin{equation}
\label{eq:MP-affine-cyclic-shifts}
\boldsymbol\alpha^{[r]}
\coloneq(\alpha_{r+1},\ldots,\alpha_{r+p}),
\qquad
\boldsymbol\beta^{[s]}
\coloneq(\beta_{s+1},\ldots,\beta_{s+q}),
\end{equation}
where the entries on the right-hand side are understood through the affine
extensions above. These vectors are called the affine cyclic shifts of
\(\boldsymbol\alpha\) and \(\boldsymbol\beta\), respectively.
\end{definition}

Thus the shifts occurring in the factor entries are absorbed in the indices
of \(\alpha\) and \(\beta\). For \(a\in\{1,\ldots,p\}\), the
subdiagonal entry of the lower factor \(L_a^{\mathrm P}\) is
\begin{multline}\label{eq:MP-lower-Pochhammer-simplified}
\ell_{a,N}^{\mathrm P}
=
-
\frac{
\prod_{j=1}^{q}
(\beta_j-\beta_{s_{N+1}}-d_{N+1})_{d_{N+1}}
\prod_{h=1}^{s_{N+1}-1}(\beta_h-\beta_{s_{N+1}})
}{
\prod_{j=1}^{q}
(\beta_j-\beta_{s_N}-d_N)_{d_N}
\prod_{h=1}^{s_N-1}(\beta_h-\beta_{s_N})
}
\\[2mm]
{}
\times
\frac{
\prod_{i=1}^{p}
(\alpha_{a+i}+\beta_{s_N}+d_N+1)_{n_{N,i}}
}{
\prod_{i=1}^{p}
(\alpha_{a-1+i}+\beta_{s_{N+1}}+d_{N+1}+1)_{n_{N+1,i}}
}
\\[2mm]
{}
\times
\frac{
\alpha_{a+i_N}-\alpha_a+e_N
}{
\alpha_{a+i_N}+e_N+\beta_{s_N}+d_N+1
}.
\end{multline}
For \(b\in\{1,\ldots,q\}\), the diagonal entry of the upper factor
\(U_b^{\mathrm P}\) is
\begin{multline}\label{eq:MP-upper-Pochhammer-simplified}
u_{b,N}^{\mathrm P}
=
\prod_{i=1}^{p}
\frac{
(\alpha_i+\beta_{b-1+s_N}+d_N+1)_{n_{N,i}}
}{
(\alpha_i+\beta_{b+s_N}+d_N+1)_{n_{N,i}}
}
\prod_{j=1}^{q}
\frac{
(\beta_{b+j}-\beta_{b+s_N}-d_N)_{m_{N,j}}
}{
(\beta_{b-1+j}-\beta_{b-1+s_N}-d_N)_{m_{N,j}}
}
\\[2mm]
{}
\times
\prod_{j=1}^{q}
\frac{
(\alpha_{i_N}+n_{N,i_N}+\beta_{b-1+j})_{m_{N,j}}
}{
(\alpha_{i_N}+n_{N,i_N}+\beta_{b+j})_{m_{N,j}}
}.
\end{multline}
Equations~\eqref{eq:MP-lower-Pochhammer-simplified} and
\eqref{eq:MP-upper-Pochhammer-simplified} give the successive lower and
upper bidiagonal Christoffel factors of the Cauchy recurrence matrix.
Appendix~\ref{app:MP-bidiagonal-derivation} derives both formulas from
\eqref{eq:MP-explicit-A-coefficients}, including the cancellations of the
Pochhammer, factorial, and sign factors. No auxiliary leading coefficient
is left unevaluated.

The bidiagonal factors in \eqref{eq:MP-PBF} are therefore
\begin{align}
\label{eq:MP-lower-bidiagonal-factors}
(L_a^{\mathrm P})_{N,N}&=1,
&
(L_a^{\mathrm P})_{N+1,N}&=\ell_{a,N}^{\mathrm P},
&&
 a\in\{1,\ldots,p\},
\\
\label{eq:MP-upper-bidiagonal-factors}
(U_b^{\mathrm P})_{N,N}&=u_{b,N}^{\mathrm P},
&
(U_b^{\mathrm P})_{N,N+1}&=1,
&&
 b\in\{1,\ldots,q\},
\end{align}
with all remaining entries zero. Thus every nontrivial bidiagonal coefficient
is a simplified finite Pochhammer product.

\subsubsection{Grouped Gauss--Borel factors}

The individual Christoffel factors can also be grouped into the genuine
Gaussian \(LU\) factors of the recurrence matrix.  The following formulas
compute those two factors entry by entry and will be used below for the
grouped urn model in which the ball count records the state.

\begin{proposition}[Explicit grouped Gauss--Borel factorization]
\label{pro:MP-explicit-grouped-LU}
Put
\begin{equation}
\mathcal L^{\mathrm P}
\coloneq L_1^{\mathrm P}\cdots L_p^{\mathrm P},
\qquad
\mathcal U^{\mathrm P}
\coloneq U_q^{\mathrm P}\cdots U_1^{\mathrm P}.
\end{equation}
For \(0\le d\le\min\{p,N\}\) and \(0\le e\le q\), define
\begin{align}
\mathsf L^{\mathrm P}_{N,d}
&\coloneq
(\mathcal L^{\mathrm P})_{N,N-d}
=
\sum_{1\le a_1<\cdots<a_d\le p}
\prod_{r=1}^{d}\ell^{\mathrm P}_{a_r,N-r},
\label{eq:MP-grouped-lower-explicit}
\\[2mm]
\mathsf U^{\mathrm P}_{N,e}
&\coloneq
(\mathcal U^{\mathrm P})_{N,N+e}
=
\sum_{\substack{\varepsilon\in\{0,1\}^{q}\\
|\varepsilon|=e}}
\prod_{\substack{1\le b\le q\\\varepsilon_b=0}}
u^{\mathrm P}_{b,
N+\sum_{c=b+1}^{q}\varepsilon_c}.
\label{eq:MP-grouped-upper-explicit}
\end{align}
For \(d=0\), the sum in
\eqref{eq:MP-grouped-lower-explicit} consists of one empty product, and
empty products in \eqref{eq:MP-grouped-upper-explicit} are also \(1\).
The two extreme upper diagonals simplify to
\begin{equation}
\label{eq:MP-grouped-upper-extreme}
\begin{aligned}
\mathsf U^{\mathrm P}_{N,q}
&=1,
&
\mathsf U^{\mathrm P}_{N,0}
&=
\prod_{i=1}^{p}
\frac{\alpha_i+\beta_{s_N}+d_N+1}
{\alpha_i+\beta_{s_N}+d_N+n_{N,i}+1}
\prod_{j=1}^{q}
\frac{\alpha_{i_N}+n_{N,i_N}+\beta_j}
{\alpha_{i_N}+n_{N,i_N}+\beta_j+m_{N,j}}.
\end{aligned}
\end{equation}
All other entries of \(\mathcal L^{\mathrm P}\) and
\(\mathcal U^{\mathrm P}\) vanish.  Then
\begin{equation}
\label{eq:MP-explicit-grouped-LU}
T^{\mathrm P}=\mathcal L^{\mathrm P}\mathcal U^{\mathrm P}.
\end{equation}
Moreover, this is the unique Gauss--Borel factorization of
\(T^{\mathrm P}\) with unit lower factor.  After substituting
\eqref{eq:MP-lower-Pochhammer-simplified} and
\eqref{eq:MP-upper-Pochhammer-simplified},
\eqref{eq:MP-grouped-lower-explicit}--
\eqref{eq:MP-grouped-upper-explicit} become finite path sums whose
summands depend only on \(\boldsymbol\alpha\) and \(\boldsymbol\beta\), with no
unevaluated matrix product. Within an individual summand, consecutive
selected lower factors, and consecutive diagonal choices in the upper
product, telescope in the underlying leading-coefficient quotients.
\end{proposition}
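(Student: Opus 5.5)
The plan is to obtain every assertion by multiplying out the bidiagonal factors of \eqref{eq:MP-PBF}. Equation~\eqref{eq:MP-explicit-grouped-LU} then follows by grouping, since \(T^{\mathrm P}=(L_1^{\mathrm P}\cdots L_p^{\mathrm P})(U_q^{\mathrm P}\cdots U_1^{\mathrm P})\) by associativity. The content lies in the entry formulas and in uniqueness.

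For the lower product, each \(L_a^{\mathrm P}\) is unit lower bidiagonal with \((L_a^{\mathrm P})_{N+1,N}=\ell_{a,N}^{\mathrm P}\). I expand \((L_1^{\mathrm P}\cdots L_p^{\mathrm P})_{N,N-d}\) as a sum over lattice paths. Reading the factors from left to right, the path starts in row \(N\), and at each factor \(a\) it either stays (weight \(1\)) or descends one step. A descent from row \(N-r+1\) to row \(N-r\) carries the weight \(\ell_{a,N-r}^{\mathrm P}\). The \(r\)-th descent therefore occurs at some factor \(a_r\), and \(a_1<\cdots<a_d\) because each factor can be used at most once. This gives \eqref{eq:MP-grouped-lower-explicit}. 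The bandwidth \(\le\min\{p,N\}\) is automatic, and the diagonal equals \(1\), so the lower factor is unit.

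The upper product \(U_q^{\mathrm P}\cdots U_1^{\mathrm P}\) is expanded in the same way. The choice at \(U_b\) is either a superdiagonal step \(\varepsilon_b=1\) (weight \(1\)) or a diagonal step \(\varepsilon_b=0\), whose weight is \(u_{b,\cdot}^{\mathrm P}\) evaluated at the current row. That row is \(N\) plus the number of up-steps already taken in \(U_q,\ldots,U_{b+1}\), which gives \eqref{eq:MP-grouped-upper-explicit}.

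The extreme diagonals come next. For \(e=q\) only the all-ones path contributes, with empty product, so \(\mathsf U_{N,q}^{\mathrm P}=1\). For \(e=0\) the value is \(\prod_{b=1}^q u_{b,N}^{\mathrm P}\). I will insert \eqref{eq:MP-upper-Pochhammer-simplified} and telescope in \(b\): each ratio has the form \(f(b-1)/f(b)\), with the \(\alpha\)-Pochhammer factors, the \(\beta\)-difference factors and the last \(\alpha_{i_N}\)-block all depending on \(b\) only through the affine shift. The product therefore collapses to \(f(0)/f(q)\). The shift by \(q\) turns \(\beta_{q+s_N}\) into \(\beta_{s_N}+1\) and \(\beta_{q+j}\) into \(\beta_j+1\). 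I then expect the \(\beta\)-difference Pochhammers to cancel exactly, because \((x)_m\) is invariant under shifting both \(\beta\)'s by one. The \(\alpha\)-Pochhammers \((c)_n/(c+1)_n\) reduce to \(c/(c+n)\) with \(c=\alpha_i+\beta_{s_N}+d_N+1\), and the last block reduces to \((\alpha_{i_N}+n_{N,i_N}+\beta_j)/(\cdots+m_{N,j})\). This is exactly \eqref{eq:MP-grouped-upper-extreme}. This bookkeeping with affine indices, and in particular checking that the boundary index \(s_N\) is handled consistently when \(b+s_N>q\), is the step I expect to be the main obstacle.

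For uniqueness I use that \(\mathcal U^{\mathrm P}\) is upper triangular with diagonal \(\mathsf U_{N,0}^{\mathrm P}\). On the admissible domain this is a finite nonzero product, because all the factors \(c\) and \(c+n\) are positive. Every leading principal truncation of \(T^{\mathrm P}\) therefore factors as the truncation of \(\mathcal L^{\mathrm P}\) times the truncation of \(\mathcal U^{\mathrm P}\), since triangularity makes truncation commute with the product. Its leading minors are thus nonzero, and the standard LU uniqueness argument with a unit lower factor applies at each size.

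Finally, substituting the explicit \(\ell^{\mathrm P}\) and \(u^{\mathrm P}\) gives finite path sums in \(\boldsymbol\alpha,\boldsymbol\beta\). The telescoping remark follows because the leading-coefficient quotients in \eqref{eq:MP-lower-Pochhammer-simplified} are ratios between the values at \(N\) and \(N+1\). Products of such ratios at consecutive rows \(N-1,N-2,\ldots\) therefore cancel in adjacent pairs, and the same holds for consecutive diagonal choices in the upper product, as in the \(e=0\) computation.
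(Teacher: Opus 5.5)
Your proposal is correct and follows essentially the same route as the paper: lattice-path expansion of the ordered lower and upper bidiagonal products, the $e=q$ diagonal from the all-superdiagonal path, the $e=0$ diagonal by telescoping $\prod_{b=1}^{q}u^{\mathrm P}_{b,N}$ and closing with $\beta_{q+j}=\beta_j+1$, and uniqueness from the nonvanishing diagonal of $\mathcal U^{\mathrm P}$ via leading principal truncations. The obstacle you anticipate at $b+s_N>q$ does not arise, because \eqref{eq:MP-upper-Pochhammer-simplified} is already written with affinely extended indices. The telescoping $f(b-1)/f(b)$ is therefore purely formal and valid for every $b\ge1$, with every intermediate factor nonzero on the admissible domain.
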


\begin{proof}
In the ordered product of lower factors, a contribution from row \(N\) to
column \(N-d\) is obtained by choosing exactly \(d\) factors in which the
subdiagonal entry is used.  If these factors are
\(a_1<\cdots<a_d\), the state before the \(r\)-th downward move is
\(N-r+1\), and the corresponding coefficient is
\(\ell^{\mathrm P}_{a_r,N-r}\).  Summing the path weights gives
\eqref{eq:MP-grouped-lower-explicit}.

The upper factors act in the order \(q,q-1,\ldots,1\).  Let
\(\varepsilon_b=1\) when the superdiagonal of \(U_b^{\mathrm P}\) is
used and let \(\varepsilon_b=0\) when its diagonal is used.  On entering
\(U_b^{\mathrm P}\), the state has already increased by
\(\sum_{c=b+1}^{q}\varepsilon_c\).  Hence every stay contributes the
factor displayed in \eqref{eq:MP-grouped-upper-explicit}, whereas every
upward move contributes \(1\).  Summing over the binary words with \(e\)
moves proves that formula.  If \(e=q\), every factor contributes its
superdiagonal entry, which gives \(\mathsf U^{\mathrm P}_{N,q}=1\).
If \(e=0\), the state remains \(N\) throughout and
\(\mathsf U^{\mathrm P}_{N,0}=\prod_{b=1}^{q}
u^{\mathrm P}_{b,N}\).  Substitution of
\eqref{eq:MP-upper-Pochhammer-simplified} makes the products over \(b\)
telescope; the affine rule \(\beta_{q+j}=\beta_j+1\) then gives the
second identity in \eqref{eq:MP-grouped-upper-extreme}.
Equation \eqref{eq:MP-explicit-grouped-LU}
now follows from \eqref{eq:MP-PBF}.

The matrix \(\mathcal L^{\mathrm P}\) is unit lower triangular and
\(\mathcal U^{\mathrm P}\) is upper triangular, with
\(\mathsf U^{\mathrm P}_{N,0}\ne0\) on the admissible nonsingular
parameter domain by \eqref{eq:MP-grouped-upper-extreme}. Therefore every
leading principal minor of \(T^{\mathrm P}\) is nonzero, and the normalized
Gaussian \(LU\) factorization is unique.  In particular, the grouped
factors are intrinsic Gauss--Borel factors of the recurrence matrix, not
an arbitrary grouping chosen for the probabilistic interpretation.
\end{proof}

\subsection{Positive bidiagonal factorization}
\label{subsec:MP-PBF-positivity}

The parameter region where the prescribed factorization
\eqref{eq:MP-PBF} is positive is determined first. Here positivity means
that every nontrivial
entry of every lower and upper bidiagonal factor is strictly positive.

\begin{definition}[PBF-positivity set]
\label{def:MP-PBF-positivity-set}
Define
\begin{equation}
\mathcal P^{\mathrm P}_{p,q}
\coloneq\left\{(\boldsymbol\alpha,\boldsymbol\beta)\in\mathcal A^{\mathrm P}_{p,q}:
\ell_{a,N}^{\mathrm P}>0,\ u_{b,N}^{\mathrm P}>0
\text{ for all }a,b,N\right\}.
\end{equation}
The ordered cyclic PBF region is
\begin{equation}\label{eq:MP-PBF-region}
\mathcal R^{\mathrm P,+}_{p,q}:=
\left\{(\boldsymbol\alpha,\boldsymbol\beta):
\begin{aligned}
-1&<\alpha_1<\alpha_2<\cdots<\alpha_p<\alpha_1+1,\\
-1&<\beta_1<\beta_2<\cdots<\beta_q<\beta_1+1,\\
&\alpha_1+\beta_1>-1
\end{aligned}\right\}.
\end{equation}
\end{definition}

The factor entries involve affine cyclic shifts of the parameter vectors,
not only the original vectors. Thus it is not enough to impose the ordered
cyclic inequalities on \((\boldsymbol\alpha,\boldsymbol\beta)\): one must prove that they
persist under every shift occurring in the formulas. This stability is
relevant because, as the step-line index \(N\) varies, the different lower
bidiagonal factors \(L_a^{\mathrm P}\) and upper bidiagonal factors
\(U_b^{\mathrm P}\) involve different shifted parameter vectors. The following
lemma proves precisely the uniform property needed to apply the same sign
argument to every factor and every \(N\).

\begin{lemma}[Stability of the ordered cyclic inequalities]
\label{lem:MP-affine-shift-stability}
Let
\((\boldsymbol\alpha,\boldsymbol\beta)\in\mathcal R^{\mathrm P,+}_{p,q}\). For
\(r,s\in\Nzero\), let
\(\boldsymbol a=\boldsymbol\alpha^{[r]}=(a_1,\ldots,a_p)\) and
\(\boldsymbol b=\boldsymbol\beta^{[s]}=(b_1,\ldots,b_q)\) be the affine cyclic
shifts defined in Definition~\ref{def:MP-affine-cyclic-shifts}, equivalently
by \eqref{eq:MP-affine-cyclic-shifts}. Then
\begin{equation*}
a_1<\cdots<a_p<a_1+1,
\qquad
b_1<\cdots<b_q<b_1+1,
\qquad
a_1+b_1>-1.
\end{equation*}
Moreover,
\(\mathcal R^{\mathrm P,+}_{p,q}\subseteq
\mathcal A^{\mathrm P}_{p,q}\), and none of the denominators in
\eqref{eq:MP-lower-Pochhammer-simplified} and
\eqref{eq:MP-upper-Pochhammer-simplified} vanishes on
\(\mathcal R^{\mathrm P,+}_{p,q}\).
\end{lemma}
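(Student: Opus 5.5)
The plan is to first settle the ordering claims for a single shift by one and then iterate, using the affine extension rule \(\alpha_{mp+i}=\alpha_i+m\). For \(r=1\), the shifted vector is \(\boldsymbol a=(\alpha_2,\ldots,\alpha_p,\alpha_1+1)\). The inequalities \(\alpha_2<\cdots<\alpha_p\) are inherited directly. The relation \(\alpha_p<\alpha_1+1\) is exactly the original cyclic bound. The new cyclic bound \(\alpha_1+1<\alpha_2+1\) follows from \(\alpha_1<\alpha_2\). If \(p=1\), the conditions reduce to the trivial \(a_1<a_1+1\). Since the shifted vector again lies in the same ordered cyclic configuration, induction on \(r\) gives the claim for every \(\boldsymbol\alpha^{[r]}\); equivalently, \(\boldsymbol\alpha^{[r]}\) is \(\boldsymbol\alpha^{[r\bmod p]}\) translated by \(\lfloor r/p\rfloor\). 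The same argument applies to \(\boldsymbol\beta^{[s]}\). For the sum condition, I would observe that the first entry \(a_1=\alpha_{r+1}\) is nondecreasing in \(r\), and indeed \(\alpha_{r+1}\ge\alpha_1\). Likewise \(b_1\ge\beta_1\), so \(a_1+b_1\ge\alpha_1+\beta_1>-1\).

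Next I would check the inclusion in \(\mathcal A^{\mathrm P}_{p,q}\). Positivity-type bounds come from \(\alpha_i\ge\alpha_1>-1\), \(\beta_j\ge\beta_1>-1\), and \(\alpha_i+\beta_j\ge\alpha_1+\beta_1>-1\). For non-integrality, \(0<\alpha_i-\alpha_k<1\) whenever \(k<i\), because \(\alpha_1<\alpha_k<\alpha_i<\alpha_1+1\). The same holds for the \(\beta\)'s.

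The main work, and the step I expect to be the obstacle, is verifying that no denominator in \eqref{eq:MP-lower-Pochhammer-simplified} and \eqref{eq:MP-upper-Pochhammer-simplified} vanishes. I would sort the denominator factors into two types.

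\emph{Sum-type factors.} These are Pochhammer symbols of the form \((\alpha_{\cdot}+\beta_{\cdot}+d+1)_n\) or \((\alpha_{i_N}+n_{N,i_N}+\beta_{\cdot})_m\), together with the last factor \(\alpha_{a+i_N}+e_N+\beta_{s_N}+d_N+1\). Each argument has the form (shifted \(\alpha\)) \(+\) (shifted \(\beta\)) \(+\) (nonnegative integer) \(+\,c\), where \(c=1\) in the first and third cases and \(c\ge0\) in the second. Using the stability just proved, each shifted entry is at least \(\alpha_1\) or \(\beta_1\), so every such factor exceeds \(\alpha_1+\beta_1+1>0\) in the first and third cases. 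In the middle case one uses \(n_{N,i_N}\ge1\), which holds since \(i_N\le\) the number of indices raised, so the argument is again larger than \(\alpha_1+\beta_1+1\).

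\emph{Difference-type factors.} These are \((\beta_j-\beta_{s_N}-d_N)_{d_N}\), \((\beta_{b-1+j}-\beta_{b-1+s_N}-d_N)_{m_{N,j}}\), and \(\beta_h-\beta_{s_N}\) for \(h<s_N\). They vanish only if some difference of two extended \(\beta\)'s is an integer. By the affine extension, any difference \(\beta_u-\beta_v\) equals \(\beta_{j'}-\beta_{h'}\) plus an integer. So it is an integer only when \(j'=h'\), that is, when \(u\equiv v\pmod q\). The task is then to check, from the index ranges, that the Pochhammer ranges never produce zero in that congruent case. For example, when \(j=s_N\), \((-d_N)_{d_N}=(-1)^{d_N}d_N!\ne0\), because the range \(-d_N,\ldots,-1\) avoids \(0\). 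For the term with \(m_{N,j}\), the step-line index satisfies \(m_{N,s_N}=d_N\), so again the range stops at \(-1\). The products \(\prod_{h<s_N}\) exclude \(h=s_N\). I would carry out this bookkeeping carefully for both \(N\) and \(N+1\). This is the delicate point.
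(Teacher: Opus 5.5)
Your proposal is correct and follows essentially the same route as the paper. The steps match: ordering of the shifted vectors (you use one-step induction where the paper writes \(r=k_\alpha p+\rho\) and checks the junction inequality directly), then the bounds \(a_1\ge\alpha_1\), \(b_1\ge\beta_1\), and finally a split of the denominators into positive sum-type factors and difference-type factors that cannot vanish because distinct cyclic \(\beta\)-differences are nonintegral. In one respect you are more careful than the paper: you handle explicitly the congruent case \(j=s_N\), where the initial argument \(-d_N\) is an integer but the length is exactly \(m_{N,s_N}=d_N\), so \((-d_N)_{d_N}=(-1)^{d_N}d_N!\ne0\), whereas the paper simply asserts that every difference-type initial argument is nonintegral.
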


\begin{proof}
Write
\(r=k_\alpha p+\rho\) and \(s=k_\beta q+\sigma\), where
\(k_\alpha,k_\beta\in\Nzero\), \(0\le\rho<p\), and
\(0\le\sigma<q\). By Definition~\ref{def:MP-affine-cyclic-shifts},
\begin{align*}
\boldsymbol a
&=(\alpha_{\rho+1}+k_\alpha,\ldots,\alpha_p+k_\alpha,
\alpha_1+k_\alpha+1,\ldots,\alpha_\rho+k_\alpha+1),\\
\boldsymbol b
&=(\beta_{\sigma+1}+k_\beta,\ldots,\beta_q+k_\beta,
\beta_1+k_\beta+1,\ldots,\beta_\sigma+k_\beta+1),
\end{align*}
where the final sequence in \(\boldsymbol a\) is omitted when \(\rho=0\), and
the final sequence in \(\boldsymbol b\) is omitted when \(\sigma=0\). Within
each displayed sequence the strict inequalities follow from the original
ordering. If \(\rho>0\), the comparison between the two consecutive
sequences in \(\boldsymbol a\) is
\(\alpha_p+k_\alpha<\alpha_1+k_\alpha+1\), which follows from
\(\alpha_p<\alpha_1+1\). If \(\sigma>0\), the corresponding comparison
in \(\boldsymbol b\) is
\(\beta_q+k_\beta<\beta_1+k_\beta+1\), which follows from
\(\beta_q<\beta_1+1\). Hence both shifted vectors are strictly ordered.

It remains to compare the last and first entries. If \(\rho=0\), then
\(a_p=\alpha_p+k_\alpha<\alpha_1+k_\alpha+1=a_1+1\). If
\(1\le\rho<p\), then
\(a_p=\alpha_\rho+k_\alpha+1<
\alpha_{\rho+1}+k_\alpha+1=a_1+1\). Similarly, if \(\sigma=0\), then
\(b_q=\beta_q+k_\beta<\beta_1+k_\beta+1=b_1+1\), whereas if
\(1\le\sigma<q\), then
\(b_q=\beta_\sigma+k_\beta+1<
\beta_{\sigma+1}+k_\beta+1=b_1+1\). Finally,
\(a_1=\alpha_{\rho+1}+k_\alpha\ge\alpha_1\) and
\(b_1=\beta_{\sigma+1}+k_\beta\ge\beta_1\), so
\(a_1+b_1\ge\alpha_1+\beta_1>-1\).

The defining inequalities of
\(\mathcal R^{\mathrm P,+}_{p,q}\) give
\(\alpha_i>-1\), \(\beta_j>-1\), and
\(\alpha_i+\beta_j\ge\alpha_1+\beta_1>-1\) for every \(i,j\).
Every nonzero difference between two entries of the same parameter vector
has absolute value smaller than one and is therefore nonintegral. A
difference between shifted entries is such a difference plus an integer,
so it remains nonintegral. A finite Pochhammer symbol \((z)_n\) can vanish
only when \(z\) is a nonpositive integer. In the denominators of the factor
entries, every initial argument containing a difference is nonintegral,
whereas every initial argument containing a sum has the form
\(a_i+b_j+1+t\), with
\(t\in\Nzero\), and is positive. Hence all displayed denominators are
nonzero, which also proves
\(\mathcal R^{\mathrm P,+}_{p,q}\subseteq
\mathcal A^{\mathrm P}_{p,q}\).
\end{proof}

The following theorem gives the PBF condition used by the stochastic
construction below.
\begin{theorem}[Exact ordered cyclic PBF region]\label{thm:MP-PBF-region}
For every \(p,q\ge1\),
\begin{equation}\label{eq:MP-exact-PBF-region}
\mathcal R^{\mathrm P,+}_{p,q}
=\mathcal P^{\mathrm P}_{p,q}.
\end{equation}
\end{theorem}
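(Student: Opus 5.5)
The proof splits into the two inclusions. For \(\mathcal R^{\mathrm P,+}_{p,q}\subseteq\mathcal P^{\mathrm P}_{p,q}\) I would first invoke Lemma~\ref{lem:MP-affine-shift-stability}. It places the region inside \(\mathcal A^{\mathrm P}_{p,q}\) and guarantees that every shifted pair \((\boldsymbol\alpha^{[r]},\boldsymbol\beta^{[s]})\) is again strictly ordered, cyclically bounded, and has \(a_1+b_1>-1\). The sign of each entry in \eqref{eq:MP-lower-Pochhammer-simplified} and \eqref{eq:MP-upper-Pochhammer-simplified} is then read off factor by factor.

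\emph{Sums.} Every Pochhammer symbol whose initial argument is a sum \(\alpha_\cdot+\beta_\cdot+1+t\), or \(\alpha_{i_N}+n_{N,i_N}+\beta_\cdot\), is positive.

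\emph{Differences.} A Pochhammer symbol \((\beta_j-\beta_{s}-d)_{d}\) has a sign fixed by counting its negative factors. Since \(0<|\beta_j-\beta_s|<1\) for \(j\ne s\), the number of negative factors is exactly \(d\) when \(j<s\) and \(d\) or \(d-1\) when \(j>s\), independently of the precise values. The same holds for the products \(\prod_{h<s}(\beta_h-\beta_s)\).

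\emph{Bookkeeping.} I would therefore show that the total sign of the \(\beta\)-difference quotient in \(\ell^{\mathrm P}_{a,N}\) is \((-1)\) times the sign of \(\alpha_{a+i_N}-\alpha_a+e_N\). Combined with the leading minus, this makes \(\ell^{\mathrm P}_{a,N}>0\). Here \(\alpha_{a+i_N}-\alpha_a+e_N>0\) follows from the ordering of the affine extension, because \(a+i_N>a\) and \(e_N\ge0\). For \(u^{\mathrm P}_{b,N}\), the ratio of difference Pochhammers compares \(\boldsymbol\beta^{[b]}\) with \(\boldsymbol\beta^{[b-1]}\). Both are ordered by the lemma, so numerator and denominator contain the same number of negative factors and the ratio is positive. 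All sum factors are positive.

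For the reverse inclusion, take \((\boldsymbol\alpha,\boldsymbol\beta)\in\mathcal P^{\mathrm P}_{p,q}\). Admissibility already gives \(\alpha_i,\beta_j>-1\) and \(\alpha_1+\beta_1>-1\), so it remains to prove the two ordered cyclic chains. I would test positivity on small indices \(N\), where most Pochhammer symbols are empty.

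\emph{Ordering of \(\boldsymbol\alpha\).} For \(N<p\) we have \(e_N=0\) and \(d_N\) small, so \(\ell^{\mathrm P}_{a,N}\) reduces to a short explicit ratio whose sign is that of \(-(\alpha_{a+i_N}-\alpha_a)\) times a product of \(\beta\)-differences already controlled. Running over \(a\) and \(N\) gives \(\alpha_{a}<\alpha_{a+k}\) for \(1\le k\le p\) with \(a+k\le p\). The cases \(a+k>p\) give the cyclic inequalities \(\alpha_{a+k-p}+1>\alpha_a\), in particular \(\alpha_p<\alpha_1+1\).

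\emph{Ordering of \(\boldsymbol\beta\).} I would use the upper entries \(u^{\mathrm P}_{b,N}\) with small \(N\) (\(d_N=0\), \(n_{N,i}\in\{0,1\}\)). There the difference quotient collapses to ratios such as \((\beta_{b+j}-\beta_{b+s_N})/(\beta_{b-1+j}-\beta_{b-1+s_N})\). Positivity of these ratios, propagated by induction in \(b\), forces the \(\beta\)'s to lie in the same cyclic order as \(\beta_1<\dots<\beta_q<\beta_1+1\). One must also exclude the possibility that all relevant differences flip sign simultaneously; this is ruled out by the cyclic wrap \(\beta_{q+1}=\beta_1+1\), since a reversed order cannot be compatible with the shift by \(+1\).

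The main obstacle is the necessity direction, specifically in isolating enough sign conditions from finitely many explicit entries. If the small-\(N\) specializations prove too entangled, my fallback is to use the grouped extreme entry \(\mathsf U^{\mathrm P}_{N,0}\) in \eqref{eq:MP-grouped-upper-extreme}. I would also use the observation that positivity of every factor makes \(T^{\mathrm P}\) and the moment matrix positive in the relevant minors. Then, through the Cauchy determinant \eqref{eq:MP-Cauchy-determinant}, positivity of consecutive \(2\times2\) leading minors forces \(\xi_N\) and \(\eta_N\) to be increasing. Monotonicity of these two sequences is exactly equivalent to the two cyclic chains defining \(\mathcal R^{\mathrm P,+}_{p,q}\).
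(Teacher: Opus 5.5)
Your overall plan matches the paper's proof. The forward inclusion reads off signs factor by factor after Lemma~\ref{lem:MP-affine-shift-stability}. The reverse inclusion specializes \(\ell^{\mathrm P}_{a,N}\) and \(u^{\mathrm P}_{b,N}\) at small \(N\) and uses the affine wrap to fix a global sign.

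\textbf{Circularity in the reverse direction.} As written, your necessity argument is circular. Your \(\boldsymbol\alpha\)-step uses ``\(\beta\)-differences already controlled'', but the \(\boldsymbol\beta\)-order is only derived in the following step. Write \(\Delta^\alpha_a=\alpha_{a+1}-\alpha_a\) and \(\Delta^\beta_b=\beta_{b+1}-\beta_b\), with affine indices. Then at \(N=0\)
\[
\ell^{\mathrm P}_{a,0}=\frac{\Delta^\alpha_a\,\Delta^\beta_1}{(\alpha_a+\beta_2+1)(\alpha_{a+1}+\beta_2+1)},
\]
so positivity only says that every \(\Delta^\alpha_a\) has the sign of \(\Delta^\beta_1\). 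For \(1\le N<p\) the \(\beta\)-factor \(E_{N+1}/E_N\) is a product of several \(\beta\)-differences whose sign is unknown until the \(\beta\)-order is established. You already have the tools to repair this, in either of two ways.
\begin{enumerate}
\item Reverse the order of your two steps. At \(N=1\) one has \(u^{\mathrm P}_{b,1}=D_b\,\Delta^\beta_{b+1}/\Delta^\beta_b\), and \(D_b>0\) follows from admissibility alone, with no \(\alpha\)-order used. So all \(\Delta^\beta_b\) share one sign, and your wrap identity \(\sum_{b=1}^q\Delta^\beta_b=1\) makes that sign positive. Then \(\ell^{\mathrm P}_{a,0}>0\) gives \(\Delta^\alpha_a>0\).
\item Follow the paper. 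Apply the wrap identity \(\sum_{a=1}^p\Delta^\alpha_a=1\) directly at \(N=0\); this yields \(\Delta^\alpha_a>0\) and \(\Delta^\beta_1>0\) at once. Then propagate \(\Delta^\beta_1>0\) through \(u^{\mathrm P}_{b,1}\).
\end{enumerate}
Only \(N=0\) and \(N=1\) are needed, since consecutive increments already give the full cyclic chains.

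\textbf{Negative-factor count in the forward direction.} Your count ``\(d\) or \(d-1\) when \(j>s\)'' is wrong, and it would leave the parity undetermined. If \(|\beta_j-\beta_s|<1\), every factor of \((\beta_j-\beta_s-d)_d\) is at most \(\beta_j-\beta_s-1<0\), so there are exactly \(d\) negative factors. With this correction, for any affine shift \(\boldsymbol b\) of an ordered \(\boldsymbol\beta\), the product \(E_N(\boldsymbol b)=\prod_j(b_j-b_{s_N}-d_N)_{m_{N,j}}\) has exactly \((d_N+1)(s_N-1)+d_N+d_N(q-s_N)=N\) negative factors. This is what gives \(\operatorname{sgn}\bigl(E_{N+1}(\boldsymbol\beta)/E_N(\boldsymbol\beta)\bigr)=-1\) in \(\ell^{\mathrm P}_{a,N}\). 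It also gives a positive quotient \(E_N(\boldsymbol\beta^{[b]})/E_N(\boldsymbol\beta^{[b-1]})\) in \(u^{\mathrm P}_{b,N}\). The \(\beta\)-quotient does not involve \(\boldsymbol\alpha\), so there is no need to state its sign relative to \(\alpha_{a+i_N}-\alpha_a+e_N\).

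\textbf{The fallback argument.} Your fallback via the Cauchy determinant is unsupported. Positivity of the bidiagonal factors gives total nonnegativity of \(T^{\mathrm P}\), not positivity of minors of \(\mathscr M\). The factor entries are quotients of leading minors of shifted moment matrices, so they only yield sign relations among those minors.
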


\begin{proof}
First consider
\(\mathcal R^{\mathrm P,+}_{p,q}\subseteq
\mathcal P^{\mathrm P}_{p,q}\).
Let
\((\boldsymbol\alpha,\boldsymbol\beta)\in\mathcal R^{\mathrm P,+}_{p,q}\). By
Lemma~\ref{lem:MP-affine-shift-stability}, all affine cyclic shifts
occurring in the explicit factor-entry formulas
\eqref{eq:MP-lower-Pochhammer-simplified} and
\eqref{eq:MP-upper-Pochhammer-simplified} satisfy the same strict cyclic
inequalities, and all the entries displayed there are well defined. To prove that
\((\boldsymbol\alpha,\boldsymbol\beta)\in\mathcal P^{\mathrm P}_{p,q}\), it is enough to show
that the displayed quantities \(\ell_{a,N}^{\mathrm P}\) and
\(u_{h,N}^{\mathrm P}\) are positive for every
\(1\le a\le p\), \(1\le h\le q\), and \(N\ge0\). In both formulas, all
Pochhammer symbols involving sums of alpha
and beta parameters are manifestly positive. The linear alpha-difference
factor in \eqref{eq:MP-lower-Pochhammer-simplified} will follow directly
from the strict affine cyclic ordering of the alpha parameters. Thus the
only signs that require a separate product count come from the quotients
involving differences of beta parameters. Those quotients will be rewritten
as ratios of quantities \(E_N\), whose signs can be determined directly.

Here \(L_a^{\mathrm P}\) and \(U_h^{\mathrm P}\) are the individual lower
and upper bidiagonal factors in \eqref{eq:MP-PBF}; they should not be
confused with the grouped Gaussian factors
\(\mathcal L^{\mathrm P}=L_1^{\mathrm P}\cdots L_p^{\mathrm P}\) and
\(\mathcal U^{\mathrm P}=U_q^{\mathrm P}\cdots U_1^{\mathrm P}\).

In the definition below, \(\boldsymbol b\) has the same meaning as in
Lemma~\ref{lem:MP-affine-shift-stability}: it
denotes a generic affine cyclic shift
\(\boldsymbol b=\boldsymbol\beta^{[k]}=(\beta_{k+1},\ldots,\beta_{k+q})\) of
\(\boldsymbol\beta\). A sign formula valid for every \(k\) is established
first. It will then be used with \(k=0\) for the lower bidiagonal factors
\(L_a^{\mathrm P}\), and with \(k=h\) and \(k=h-1\) for the upper
bidiagonal factors \(U_h^{\mathrm P}\). Recall from
\eqref{eq:MP-type-I-step-indices} that, for \(1\le j\le q\),
\begin{equation*}
m_{N,j}=
\begin{cases}
d_N+1,&j<s_N,\\
d_N,&j\ge s_N,
\end{cases}
\qquad N=qd_N+s_N-1.
\end{equation*}
For any affine cyclic shift
\(\boldsymbol b=(b_1,\ldots,b_q)=\boldsymbol\beta^{[k]}\), define
\begin{equation}\label{eq:MP-beta-sign-product}
E_N(\boldsymbol b)
\coloneq
\prod_{j=1}^q
(b_j-b_{s_N}-d_N)_{m_{N,j}}.
\end{equation}
Its sign is determined one Pochhammer symbol at a time. Since
\(b_1<\cdots<b_q<b_1+1\), two entries of \(\boldsymbol b\) differ by a
number of absolute value strictly smaller than one.

Suppose first that \(j<s_N\). Then
\(-1<b_j-b_{s_N}<0\) and \(m_{N,j}=d_N+1\). Expanding the corresponding
Pochhammer symbol gives
\begin{equation*}
(b_j-b_{s_N}-d_N)_{d_N+1}
=
\prod_{r=0}^{d_N}
(b_j-b_{s_N}-d_N+r).
\end{equation*}
Its largest factor, obtained for \(r=d_N\), is
\(b_j-b_{s_N}<0\). Therefore all its \(d_N+1\) factors are strictly
negative.

For \(j=s_N\), one has \(m_{N,s_N}=d_N\), and the corresponding symbol
reduces to
\begin{equation*}
(b_{s_N}-b_{s_N}-d_N)_{d_N}
=(-d_N)_{d_N}
=(-d_N)(-d_N+1)\cdots(-1).
\end{equation*}
It consequently contributes exactly \(d_N\) negative factors. When
\(d_N=0\), this is the empty product and contributes none.

Finally, suppose that \(j>s_N\). In this case
\(0<b_j-b_{s_N}<1\) and \(m_{N,j}=d_N\). Thus
\begin{equation*}
(b_j-b_{s_N}-d_N)_{d_N}
=
\prod_{r=0}^{d_N-1}
(b_j-b_{s_N}-d_N+r).
\end{equation*}
If \(d_N>0\), its largest factor is
\(b_j-b_{s_N}-1\), which lies strictly between \(-1\) and \(0\).
Hence all its \(d_N\) factors are strictly negative; for \(d_N=0\), the
product is again empty.

There are \(s_N-1\) indices of the first kind, one central index, and
\(q-s_N\) indices of the third kind. The total number of negative
factors in \(E_N(\boldsymbol b)\) is therefore
\begin{equation*}
(d_N+1)(s_N-1)+d_N+d_N(q-s_N)
=qd_N+s_N-1=N.
\end{equation*}
The strict inequalities used above also show directly that none of these
factors vanishes. Therefore
\begin{equation}\label{eq:MP-beta-sign}
\operatorname{sgn}E_N(\boldsymbol b)=(-1)^N.
\end{equation}

This sign computation is now applied to the subdiagonal entries
\(\ell_{a,N}^{\mathrm P}\) of the lower bidiagonal factors
\(L_a^{\mathrm P}\). Take \(k=0\) in
\eqref{eq:MP-beta-sign-product}. Since
\(\boldsymbol\beta^{[0]}=\boldsymbol\beta\), this means that \(b_j=\beta_j\) for every
\(j\), and hence
\(E_N(\boldsymbol\beta)
=
\prod_{j=1}^{q}
(\beta_j-\beta_{s_N}-d_N)_{m_{N,j}}\).
For \(j<s_N\), use \(m_{N,j}=d_N+1\) and the elementary identity
\((z)_{d_N+1}=(z)_{d_N}(z+d_N)\). With
\(z=\beta_j-\beta_{s_N}-d_N\), this gives
\begin{equation*}
(\beta_j-\beta_{s_N}-d_N)_{m_{N,j}}
=
(\beta_j-\beta_{s_N}-d_N)_{d_N}
(\beta_j-\beta_{s_N}).
\end{equation*}
For \(j\ge s_N\), the definition gives \(m_{N,j}=d_N\). Separating the indices
\(j<s_N\) from the remaining indices therefore yields
\begin{equation*}
E_N(\boldsymbol\beta)
=
\prod_{j=1}^{q}
(\beta_j-\beta_{s_N}-d_N)_{d_N}
\prod_{j=1}^{s_N-1}(\beta_j-\beta_{s_N}).
\end{equation*}
This is exactly the denominator of the first quotient in
\eqref{eq:MP-lower-Pochhammer-simplified}. Replacing \(N\) by \(N+1\) in
the same identity shows that the numerator of that quotient is
\(E_{N+1}(\boldsymbol\beta)\). Thus the quotient is
\(E_{N+1}(\boldsymbol\beta)/E_N(\boldsymbol\beta)\). By
\eqref{eq:MP-beta-sign}, its sign is
\((-1)^{N+1}/(-1)^N=-1\), and the explicit minus sign preceding that
quotient changes it to \(+1\).

All the remaining signs in
\eqref{eq:MP-lower-Pochhammer-simplified} are positive. Every Pochhammer
symbol involving a sum of an alpha and a beta parameter has a positive
initial argument by Lemma~\ref{lem:MP-affine-shift-stability}. Furthermore,
the affine extension of the ordered alpha parameters is strictly
increasing. Since \(i_N\ge1\), this gives
\(\alpha_{a+i_N}>\alpha_a\), and hence
\(\alpha_{a+i_N}-\alpha_a+e_N>0\). The final denominator is also of the
positive sum type. It follows that
\(\ell_{a,N}^{\mathrm P}>0\).

Next consider the diagonal entries \(u_{h,N}^{\mathrm P}\) of the upper
bidiagonal factors \(U_h^{\mathrm P}\). Formula
\eqref{eq:MP-upper-Pochhammer-simplified} contains three quotients of
products. Only the middle one, which involves differences between beta
parameters, requires a sign computation.

First take \(k=h\) in \eqref{eq:MP-beta-sign-product}. The \(j\)-th
component of \(\boldsymbol\beta^{[h]}\) is \(\beta_{h+j}\), and its
\(s_N\)-th component is \(\beta_{h+s_N}\). Therefore
\begin{equation*}
E_N(\boldsymbol\beta^{[h]})
=
\prod_{j=1}^{q}
(\beta_{h+j}-\beta_{h+s_N}-d_N)_{m_{N,j}}.
\end{equation*}
Taking instead \(k=h-1\), which is nonnegative because \(h\ge1\), gives
\begin{equation*}
E_N(\boldsymbol\beta^{[h-1]})
=
\prod_{j=1}^{q}
(\beta_{h-1+j}-\beta_{h-1+s_N}-d_N)_{m_{N,j}}.
\end{equation*}
Consequently, the beta-difference quotient in
\eqref{eq:MP-upper-Pochhammer-simplified} is exactly
\begin{equation*}
\frac{
\prod_{j=1}^{q}
(\beta_{h+j}-\beta_{h+s_N}-d_N)_{m_{N,j}}
}{
\prod_{j=1}^{q}
(\beta_{h-1+j}-\beta_{h-1+s_N}-d_N)_{m_{N,j}}
}
=
\frac{E_N(\boldsymbol\beta^{[h]})}
{E_N(\boldsymbol\beta^{[h-1]})}.
\end{equation*}
Both \(\boldsymbol\beta^{[h]}\) and \(\boldsymbol\beta^{[h-1]}\) satisfy the strict
cyclic inequalities by Lemma~\ref{lem:MP-affine-shift-stability}. Hence
\eqref{eq:MP-beta-sign} applies to each of them:
\begin{equation*}
\operatorname{sgn}E_N(\boldsymbol\beta^{[h]})
=
\operatorname{sgn}E_N(\boldsymbol\beta^{[h-1]})
=(-1)^N.
\end{equation*}
Neither product vanishes, so their quotient is strictly positive.

It remains to check the other two quotients in
\eqref{eq:MP-upper-Pochhammer-simplified}. In the first quotient, every
Pochhammer symbol starts at a number of the form
\(\alpha_i+\beta_r+d_N+1\). The shifted parameters satisfy
\(\alpha_i+\beta_r>-1\), while \(d_N+1\ge1\); hence every such initial
argument is positive. In the third quotient, the initial arguments have
the form
\(\alpha_{i_N}+\beta_r+n_{N,i_N}\). Combining
\eqref{eq:MP-step-quotients} with
\eqref{eq:MP-type-I-step-indices} gives
\(n_{N,i_N}=e_N+1\ge1\), and therefore these initial arguments are also
positive. Thus every Pochhammer symbol in the first and third quotients is
positive, in both numerator and denominator.

All three quotients in
\eqref{eq:MP-upper-Pochhammer-simplified} are consequently positive, and
so \(u_{h,N}^{\mathrm P}>0\). It has already been proved that
\(\ell_{a,N}^{\mathrm P}>0\). Since the diagonal entries of
\(L_a^{\mathrm P}\) and the superdiagonal entries of \(U_h^{\mathrm P}\)
are equal to \(1\), every nontrivial entry of every individual bidiagonal
factor in \eqref{eq:MP-PBF} is positive. Therefore
\(\mathcal R^{\mathrm P,+}_{p,q}\subseteq
\mathcal P^{\mathrm P}_{p,q}\).

	To prove the reverse inclusion
	\(\mathcal P^{\mathrm P}_{p,q}\subseteq
	\mathcal R^{\mathrm P,+}_{p,q}\), use the affine extensions of the
	parameters defined above and put
	\begin{equation}\label{eq:MP-cyclic-increments}
	\Delta_a^\alpha:=\alpha_{a+1}-\alpha_a,
	\qquad
	\Delta_b^\beta:=\beta_{b+1}-\beta_b.
	\end{equation}
	These increments are periodic in their indices and satisfy
	\begin{equation}\label{eq:MP-cyclic-increment-sums}
	\sum_{a=1}^{p}\Delta_a^\alpha=1,
	\qquad
	\sum_{b=1}^{q}\Delta_b^\beta=1.
	\end{equation}
	Let \((\boldsymbol\alpha,\boldsymbol\beta)\in
	\mathcal P^{\mathrm P}_{p,q}\). The objective is to recover the two
	strict cyclic orderings in \eqref{eq:MP-PBF-region} from the assumed
	positivity of the individual bidiagonal factors. First, the lowest
	entries of the lower factors are used to prove that every alpha increment
	is positive and, at the same
	time, that \(\Delta_1^\beta>0\). The
	entries of the upper factors will then propagate this positivity from
	\(\Delta_1^\beta\) to all the remaining beta increments.

	Substitution of \(N=0\) in
	\eqref{eq:MP-lower-Pochhammer-simplified} gives
	\begin{equation}\label{eq:MP-lowest-lower-factor}
	\ell_{a,0}^{\mathrm P}
	=
	\frac{\Delta_a^\alpha\Delta_1^\beta}
	{(\alpha_a+\beta_2+1)(\alpha_{a+1}+\beta_2+1)},
	\qquad 1\le a\le p.
	\end{equation}
	The same identity covers the cases \(p=1\) and \(q=1\) under the
	affine cyclic conventions. More precisely, if \(p=1\), then
	\(\alpha_2=\alpha_1+1\) and \(\Delta_1^\alpha=1\); if \(q=1\), then
	\(\beta_2=\beta_1+1\) and \(\Delta_1^\beta=1\). By the definition of
	\(\mathcal A^{\mathrm P}_{p,q}\),
	\(\alpha_i+\beta_j+1>0\) for every pair of original indices
	\(1\le i\le p\) and \(1\le j\le q\). If \(q\ge2\), then \(\beta_2\)
	is one of the original beta parameters; if \(q=1\), then
	\(\beta_2=\beta_1+1\), which only increases both denominator factors.
	Likewise, \(\alpha_{a+1}\) is an original alpha parameter when \(a<p\),
	whereas \(\alpha_{p+1}=\alpha_1+1\). Consequently, each denominator
	factor in \eqref{eq:MP-lowest-lower-factor} is a positive quantity of the
	form \(\alpha_i+\beta_j+1\), possibly increased by one or two. Notice
	that this conclusion uses only admissibility. Since
	\((\boldsymbol\alpha,\boldsymbol\beta)\in\mathcal P^{\mathrm P}_{p,q}\), every entry
	\(\ell_{a,0}^{\mathrm P}\) is strictly positive. Formula
	\eqref{eq:MP-lowest-lower-factor} and positivity of its denominator then
	give
	\(\Delta_a^\alpha\Delta_1^\beta>0\) for every \(1\le a\le p\).
	Thus all the increments \(\Delta_a^\alpha\) have the same sign as the
	fixed increment \(\Delta_1^\beta\). This common sign cannot be negative,
	because \(\sum_{a=1}^p\Delta_a^\alpha=1\). Hence it is positive, and
	therefore
	\begin{equation}\label{eq:MP-alpha-order-necessary}
	\Delta_a^\alpha>0\quad(1\le a\le p),
	\qquad
	\Delta_1^\beta>0.
	\end{equation}

	If \(q\ge2\), let \(i_1\) be the index determined by
	\eqref{eq:MP-step-quotients} for \(N=1\), and use the step-line degrees
	\(n_{1,i}\) from \eqref{eq:MP-type-I-step-indices}. Substitution of \(N=1\)
	in \eqref{eq:MP-upper-Pochhammer-simplified} yields
	\begin{equation}\label{eq:MP-second-upper-factor}
	u_{b,1}^{\mathrm P}
	=D_b\frac{\Delta_{b+1}^\beta}{\Delta_b^\beta},
	\qquad 1\le b\le q,
	\end{equation}
	where the indices are affine cyclic and
	\begin{equation}\label{eq:MP-second-upper-positive-part}
	D_b:=
	\prod_{i=1}^{p}
	\frac{(\alpha_i+\beta_{b+1}+1)_{n_{1,i}}}
	{(\alpha_i+\beta_{b+2}+1)_{n_{1,i}}}
	\frac{\alpha_{i_1}+n_{1,i_1}+\beta_b}
	{\alpha_{i_1}+n_{1,i_1}+\beta_{b+1}}.
	\end{equation}
	The inequality \(D_b>0\) follows from admissibility alone. Indeed,
	each affinely extended beta parameter is an original beta parameter plus
	a nonnegative integer. Hence the Pochhammer symbols in
	\eqref{eq:MP-second-upper-positive-part} have positive initial arguments
	by \(\alpha_i+\beta_j>-1\). Moreover,
	\(n_{1,i_1}=e_1+1\ge1\), so the two remaining linear factors are also
	positive. No cyclic ordering of the beta parameters is used here.
	Thus positivity of all \(u_{b,1}^{\mathrm P}\) forces consecutive
	cyclic increments \(\Delta_b^\beta\) and
	\(\Delta_{b+1}^\beta\) to have the same sign. Since
	\(\Delta_1^\beta>0\), all \(\Delta_b^\beta\) are positive. For
	\(q=1\) this conclusion is automatic because
	\(\Delta_1^\beta=1\). Consequently,
	\[
	\alpha_1<\cdots<\alpha_p<\alpha_1+1,
	\qquad
	\beta_1<\cdots<\beta_q<\beta_1+1.
	\]
	The remaining inequalities in
	\eqref{eq:MP-PBF-region} already belong to the admissible domain.
	This proves
	\(\mathcal P^{\mathrm P}_{p,q}\subseteq
	\mathcal R^{\mathrm P,+}_{p,q}\), and hence equality.
\end{proof}

\subsection{Nonnegativity on the structural band of the recurrence matrix}
\label{subsec:MP-positivity}

The weaker requirement needed for a Doob stochastic normalization is now
considered.  For a \((p,q)\)-band matrix indexed by \(\Nzero\), define
its \emph{structural band} by
\[
\mathfrak B_{p,q}
\coloneq
\left\{(N,N+s):N\in\Nzero,\ -\min\{p,N\}\le s\le q\right\}.
\]
These are precisely the positions allowed by the bandwidth and the boundary;
positions outside \(\mathfrak B_{p,q}\) are structural zeros. In this
paper, \emph{active} always refers to the polynomial components defined
before Proposition~\ref{pro:MP-Cauchy-step-line-construction}. The recurrence
matrix need only be entrywise nonnegative on \(\mathfrak B_{p,q}\) for the
Doob normalization below; whenever strict positivity is obtained, it will be
stated explicitly.

\begin{definition}[Structural-band nonnegativity set]
\label{def:MP-structural-band-nonnegativity-set}
Define
\begin{equation}
\mathcal T^{\mathrm P,\ge0}_{p,q}
\coloneq
\left\{(\boldsymbol\alpha,\boldsymbol\beta)\in\mathcal A^{\mathrm P}_{p,q}:
(T^{\mathrm P})_{N,M}\ge0
\text{ for every }(N,M)\in\mathfrak B_{p,q}\right\}.
\end{equation}
\end{definition}

Positive bidiagonal factorization implies strict positivity
throughout the structural band. Indeed, in the product \eqref{eq:MP-PBF},
every structurally allowed entry of \(T^{\mathrm P}\) contains at least one
path whose factor entries are all positive.
Theorem~\ref{thm:MP-PBF-region} therefore gives
\begin{equation}\label{eq:MP-basic-positivity-inclusions}
\mathcal R^{\mathrm P,+}_{p,q}
=\mathcal P^{\mathrm P}_{p,q}
\subseteq\mathcal T^{\mathrm P,\ge0}_{p,q}.
\end{equation}

The recurrence matrix can remain entrywise positive beyond the region where
the prescribed factorization is positive. Two enlarged regions will be
used. The first does not impose an order on the \(\alpha\)-parameters.
\begin{definition}\label{def:MP-matrix-positive-region}
Define
\begin{equation}\label{eq:MP-matrix-positive-region}
\mathcal S^{\mathrm P,+}_{p,q}:=
\left\{(\boldsymbol\alpha,\boldsymbol\beta)\in\mathcal A^{\mathrm P}_{p,q}:
\max_i\alpha_i-\min_i\alpha_i<1,
\quad
\beta_1<\cdots<\beta_q<\beta_1+1
\right\}.
\end{equation}
\end{definition}

The second region is obtained from the transposed mixed system, but a sign
condition is needed in order to return to the original monic normalization.
For \(1\le s\le q\), set
\begin{equation}\label{eq:MP-beta-inversion-number}
\operatorname{inv}_s(\boldsymbol\beta)
\coloneq
\#\{h\in\{1,\ldots,s-1\}:\ \beta_h>\beta_s\}.
\end{equation}
Thus \(\operatorname{inv}_s(\boldsymbol\beta)\) is the number of inversions
created by the index \(s\): it counts the earlier beta parameters lying to
the right of \(\beta_s\) when
\(\beta_1,\ldots,\beta_s\) are arranged in increasing order.

\begin{definition}[Dual diameter and even-insertion regions]
\label{def:MP-dual-positive-region}
Define
\begin{align}
\mathcal D^{\mathrm P,\vee}_{p,q}
&\coloneq
\left\{(\boldsymbol\alpha,\boldsymbol\beta)\in\mathcal A^{\mathrm P}_{p,q}:
\alpha_1<\cdots<\alpha_p<\alpha_1+1,
\quad
\max_j\beta_j-\min_j\beta_j<1
\right\},
\label{eq:MP-dual-diameter-region}
\\
\mathcal S^{\mathrm P,\vee,+}_{p,q}
&\coloneq
\left\{(\boldsymbol\alpha,\boldsymbol\beta)\in
\mathcal D^{\mathrm P,\vee}_{p,q}:
\operatorname{inv}_s(\boldsymbol\beta)\equiv0\pmod 2
\text{ for }1\le s\le q
\right\}.
\label{eq:MP-dual-positive-region}
\end{align}
\end{definition}

The word \emph{insertion} has a direct combinatorial meaning here. Insert
the labels \(1,2,\ldots,q\), one at a time, into the increasing order of
their beta values. At stage \(s\), condition
\eqref{eq:MP-dual-positive-region} requires an even number of the preceding
labels to remain to the right of \(s\). Consequently the number of allowed
relative orders is
\begin{equation}\label{eq:MP-even-insertion-count}
\prod_{s=1}^{q}\left\lceil\frac{s}{2}\right\rceil
=
\left(\left\lfloor\frac{q}{2}\right\rfloor\right)!
\left(\left\lceil\frac{q}{2}\right\rceil\right)!.
\end{equation}
For example, when \(q=4\), the four possible increasing orders of the
labels are
\((1,2,3,4)\), \((1,4,2,3)\), \((3,1,2,4)\), and
\((3,4,1,2)\). Thus the increasing beta order is only one of the allowed
orders.

\begin{theorem}[Two enlarged regions of structural-band positivity]
\label{thm:MP-matrix-positive-region}
For every \(p,q\ge1\),
\begin{equation}
\mathcal R^{\mathrm P,+}_{p,q}
\subseteq
\mathcal S^{\mathrm P,+}_{p,q}
\cap
\mathcal S^{\mathrm P,\vee,+}_{p,q},
\qquad
\mathcal S^{\mathrm P,+}_{p,q}
\cup
\mathcal S^{\mathrm P,\vee,+}_{p,q}
\subseteq
\mathcal T^{\mathrm P,\ge0}_{p,q}.
\label{eq:MP-two-positive-regions}
\end{equation}
On \(\mathcal S^{\mathrm P,+}_{p,q}\), more precisely, the grouped lower factor
\(\mathcal L^{\mathrm P}:=L_1^{\mathrm P}\cdots L_p^{\mathrm P}\) is
totally nonnegative and has strictly positive entries throughout its lower
band, while every upper factor \(U_b^{\mathrm P}\) is positive. Consequently
every entry of \(T^{\mathrm P}\) in its structural band is strictly positive.

The dual assertion is exact within the diameter domain
\(\mathcal D^{\mathrm P,\vee}_{p,q}\):
\begin{equation}\label{eq:MP-dual-positive-region-exact}
\mathcal D^{\mathrm P,\vee}_{p,q}
\cap\mathcal T^{\mathrm P,\ge0}_{p,q}
=
\mathcal S^{\mathrm P,\vee,+}_{p,q}.
\end{equation}
\end{theorem}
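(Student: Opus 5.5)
The first inclusion is immediate from the definitions. On \(\mathcal R^{\mathrm P,+}_{p,q}\) the alpha parameters satisfy \(\alpha_1<\cdots<\alpha_p<\alpha_1+1\), so their diameter is \(\alpha_p-\alpha_1<1\). The betas are strictly increasing, so every \(\operatorname{inv}_s(\boldsymbol\beta)=0\) is even, and their diameter \(\beta_q-\beta_1\) is also less than \(1\).

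For \(\mathcal S^{\mathrm P,+}_{p,q}\subseteq\mathcal T^{\mathrm P,\ge0}_{p,q}\), I would keep the factorization \(T^{\mathrm P}=\mathcal L^{\mathrm P}U_q^{\mathrm P}\cdots U_1^{\mathrm P}\) from \eqref{eq:MP-PBF} and analyse the two groups separately.

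\emph{Upper factors.} Formula \eqref{eq:MP-upper-Pochhammer-simplified} involves only beta differences in its middle quotient. The sign count \eqref{eq:MP-beta-sign} in the proof of Theorem~\ref{thm:MP-PBF-region} used only the cyclic ordering of \(\boldsymbol\beta\) and its shifts, and Lemma~\ref{lem:MP-affine-shift-stability} gives that ordering. The other two quotients are positive by admissibility. Hence every \(u_{b,N}^{\mathrm P}>0\) with no ordering of \(\boldsymbol\alpha\) required.

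\emph{Grouped lower factor.} The individual \(\ell_{a,N}^{\mathrm P}\) may now be negative, so I would not work with the \(L_a^{\mathrm P}\). Instead I would identify \(\mathcal L^{\mathrm P}\) intrinsically. From \eqref{eq:MP-Cauchy-direct-recurrence-matrix} and the uniqueness in Proposition~\ref{pro:MP-explicit-grouped-LU}, \(\mathcal L^{\mathrm P}\) is the unit lower connection between the type-II forms and the forms after multiplication by the upper factors. Equivalently, on the type-I side it is the matrix expressing the \(A\)-forms of the system with \(\boldsymbol\alpha\) shifted by one full period. Its entries are then ratios of Cauchy-type determinants in the \(\eta\)-sequence.

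The key observation I would aim for is that these entries depend on \(\boldsymbol\alpha\) only through the multiset \(\{\alpha_i\}\) together with the step-line bookkeeping. Relabelling the alphas then reduces to the sorted ordered case, where \(\mathcal L^{\mathrm P}\) is a product of positive bidiagonals, and this would give total nonnegativity together with strict positivity on the lower band. Concretely, I would use the symmetric-function form of \eqref{eq:MP-grouped-lower-explicit}, namely elementary symmetric sums over selected factors. I would then check that, under the diameter condition \(\max\alpha-\min\alpha<1\), the products \(\prod(\alpha_h-\alpha_i-k)\) have signs that cancel in pairs.

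This step is where I expect the main difficulty. If the symmetry route fails, my fallback is to write each \(\mathsf L^{\mathrm P}_{N,d}\) as a Cauchy minor ratio and sign it directly, mimicking the \(E_N\) count with the alpha diameter replacing the cyclic order. Once both groups are settled, every structural-band entry of \(T^{\mathrm P}\) contains a positive path through them, so it is strictly positive.

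\emph{Dual region.} I would transpose the mixed system: interchange \(p\leftrightarrow q\) and \(\boldsymbol\alpha\leftrightarrow\boldsymbol\beta\). By the determinant symmetry noted in Proposition~\ref{pro:MP-Cauchy-step-line-construction}, \((T^{\mathrm P})^{\mathsf T}\) is diagonally similar to the recurrence matrix of the dual system. The previous paragraph, applied to the dual, gives positivity of that dual matrix under the dual diameter condition on \(\boldsymbol\beta\) and the cyclic order on \(\boldsymbol\alpha\).

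Returning to the monic normalization introduces a diagonal conjugation by the ratios \(\widehat\kappa_N/\widehat\kappa_{N+1}\)-type leading coefficients. The entry \(T_{N,M}\) is multiplied by the sign of \(\kappa_N\kappa_M\). I would compute \(\operatorname{sgn}\widehat\kappa_N\) from the explicit formula in Corollary~\ref{coro:MP-componentwise-Cauchy-formulas}, counting negative factors as in the \(E_N\) argument. Once the beta order is no longer increasing, this count picks up the inversions of \(s_N\), giving \(\operatorname{sgn}=(-1)^{N+\operatorname{inv}_{s_N}(\boldsymbol\beta)\cdot(\text{parity data})}\). Consequently the conjugated matrix is nonnegative exactly when all \(\operatorname{inv}_s\) are even; this is the claim for the \(\supseteq\) direction of \eqref{eq:MP-dual-positive-region-exact}.

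For the reverse inclusion in \eqref{eq:MP-dual-positive-region-exact}, suppose some \(\operatorname{inv}_s\) is odd. I would then exhibit a specific structural entry, namely the extreme superdiagonal entry at \(N=s-1\) or its neighbour, which the conjugation makes strictly negative. This contradicts membership in \(\mathcal T^{\mathrm P,\ge0}_{p,q}\).
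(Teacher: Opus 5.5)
Your proposal has a genuine gap at its central step: total nonnegativity and lower-band positivity of \(\mathcal L^{\mathrm P}\) on \(\mathcal S^{\mathrm P,+}_{p,q}\).

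\emph{The relabelling argument fails.} Your ``key observation'' is false. \(\mathcal L^{\mathrm P}\), and indeed \(T^{\mathrm P}\), depends on the order in which the \(\alpha_i\) are listed, not only on their multiset. For \(p=2\), \(q=1\), \(\beta_1=0\), formula \eqref{eq:MP-lowest-lower-factor} gives \((\mathcal L^{\mathrm P})_{1,0}=\ell^{\mathrm P}_{1,0}+\ell^{\mathrm P}_{2,0}=1/((\alpha_1+2)(\alpha_1+3))\). This changes when \(\alpha_1\) and \(\alpha_2\) are interchanged, so relabelling does not reduce anything to the sorted case. Nor is \eqref{eq:MP-grouped-lower-explicit} an elementary symmetric function of the \(\alpha_i\): each \(\ell^{\mathrm P}_{a,\cdot}\) involves the shift \(\boldsymbol\alpha^{[a]}\), and for \(\alpha_1>\alpha_2\) one has \(\ell^{\mathrm P}_{1,0}<0<\ell^{\mathrm P}_{2,0}\). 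So \(\mathsf L^{\mathrm P}_{N,d}\) is a sum with genuine cancellation, not a single Cauchy product whose negative factors can be counted as for \(E_N\). Even a successful entrywise signing would not give the total nonnegativity the theorem asserts.

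\emph{The missing idea} is an intrinsic factorization of \(\mathcal L^{\mathrm P}\) into positive elementary bidiagonals that are not the Christoffel factors. From \(\Lambda_q\mathscr M=L_{\mathscr M}T^{\mathrm P}U_{\mathscr M}=(L_{\mathscr M}\mathcal L^{\mathrm P})(\mathcal U^{\mathrm P}U_{\mathscr M})\) and uniqueness, \(\mathcal L^{\mathrm P}=L_{\mathscr M}^{-1}L^{+}_{\mathscr M}\), where \(\mathscr M^{+}=\Lambda_q\mathscr M\) is the Cauchy matrix with pole parameters \(\eta_{M+p}=\eta_M+1\). The columns of \(L_{\mathscr M}\) evaluate a rational Newton basis at the nodes \(\xi_N\) (increasing by the beta order), with poles \(-\eta_r\). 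On each leading block, pass to the poles \(-\eta_{r+p}\) by moving \(\eta_0,\dots,\eta_{p-1}\) to the right through adjacent interchanges. Interchanging \(u=\rho_K<v=\rho_{K+1}\) right-multiplies the evaluation matrix by \(I+c_KE_{K+1,K}\) with \(c_K>0\). Every interchange has \(u<v\) exactly because \(\max_{r<p}\eta_r<\min_{r<p}\eta_{r+p}\), which is the alpha-diameter condition; this is how it enters, with no order imposed on \(\boldsymbol\alpha\). Lower-band positivity then comes from a path that uses one interchange in each of \(i\) passes.

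\emph{The other parts.} Your upper-factor argument is valid, and your dual strategy is the paper's. However, the dual step inherits the gap, since it needs the same lemma for the swapped pair in \(\mathcal S^{\mathrm P,+}_{q,p}\). It also needs two corrections.
\begin{enumerate}
\item The conjugating diagonal is \(\diag(a_{N,i_N,e_N})\), the inverse Cauchy pivots, whose sign is exactly \((-1)^{\operatorname{inv}_{s_N}(\boldsymbol\beta)}\). By contrast, \(\widehat\kappa_N\) has sign \((-1)^{N-\operatorname{inv}_{s_N}(\boldsymbol\beta)}\) and would produce a spurious checkerboard.
\item The extreme superdiagonal cannot detect a failure, since \((T^{\mathrm P})_{N,N+q}=1\) and \(s_{N+q}=s_N\). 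Use instead \((T^{\mathrm P})_{s-2,s-1}\) for the least \(s\) with \(\operatorname{inv}_s(\boldsymbol\beta)\) odd. This entry is strictly negative once the swapped matrix is known to be strictly positive on its band.
\end{enumerate}
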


The proof is divided into three lemmas. The first treats the grouped lower
factor, the second the upper Christoffel factors, and the third the diagonal
relation with the transposed mixed system.

\begin{lemma}[Grouped lower factor on the alpha-diameter region]
\label{lem:MP-grouped-lower-positive-region}
If
\((\boldsymbol\alpha,\boldsymbol\beta)\in\mathcal S^{\mathrm P,+}_{p,q}\), then the
grouped lower factor
\(\mathcal L^{\mathrm P}:=L_1^{\mathrm P}\cdots L_p^{\mathrm P}\)
is totally nonnegative and has strictly positive entries throughout its
lower band:
\begin{equation}\label{eq:MP-grouped-lower-structural-positive}
(\mathcal L^{\mathrm P})_{N,N-i}>0,
\qquad 1\le i\le\min\{p,N\}.
\end{equation}
\end{lemma}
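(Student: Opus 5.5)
The plan is to reduce the unordered alpha case to the ordered cyclic region \(\mathcal R^{\mathrm P,+}_{p,q}\) of Theorem~\ref{thm:MP-PBF-region}. The key claim is that the grouped lower factor \(\mathcal L^{\mathrm P}\) depends on \(\boldsymbol\alpha\) only through the multiset \(\{\alpha_1,\ldots,\alpha_p\}\), at least as long as the alpha-diameter is smaller than one. Granting this, take \((\boldsymbol\alpha,\boldsymbol\beta)\in\mathcal S^{\mathrm P,+}_{p,q}\) and let \(\boldsymbol\alpha'\) be the increasing rearrangement of \(\boldsymbol\alpha\). Since \(\max_i\alpha_i-\min_i\alpha_i<1\), we have \(\alpha'_1<\cdots<\alpha'_p<\alpha'_1+1\). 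Admissibility gives \(\alpha'_1+\beta_1>-1\), and the beta ordering is already cyclic. Hence \((\boldsymbol\alpha',\boldsymbol\beta)\in\mathcal R^{\mathrm P,+}_{p,q}\), and every \(L_a^{\mathrm P}\) built from \(\boldsymbol\alpha'\) is a positive lower bidiagonal matrix.

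From this, \(\mathcal L^{\mathrm P}\) is a product of nonnegative bidiagonal matrices, so it is totally nonnegative by Cauchy--Binet. For the strict band positivity \eqref{eq:MP-grouped-lower-structural-positive}, use the path formula \eqref{eq:MP-grouped-lower-explicit}. The entry \((\mathcal L^{\mathrm P})_{N,N-d}\) is a sum over \(1\le a_1<\cdots<a_d\le p\) of products of positive \(\ell^{\mathrm P}_{a_r,N-r}\). This sum is nonempty exactly when \(d\le\min\{p,N\}\), so the entry is strictly positive.

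The main step is the permutation invariance. I would prove it through the intrinsic description in Proposition~\ref{pro:MP-explicit-grouped-LU}: \(\mathcal L^{\mathrm P}\) is the unit lower Gauss--Borel factor of \(T^{\mathrm P}\). Its entries are therefore ratios of minors of \(T^{\mathrm P}\), and by \eqref{eq:MP-Cauchy-direct-recurrence-matrix} these are expressible through \(L_{\mathscr M}\), hence through Cauchy determinants in the sequences \(\xi\) and \(\eta\).

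Two observations then control the alpha dependence. First, \(\mathscr M\) and \(L_{\mathscr M}\) change under a permutation of the alphas only by a reordering of \(\eta\) inside each period block \(\{\eta_{pe},\ldots,\eta_{pe+p-1}\}\). Second, the relevant minors involve complete period blocks, apart from the partial block at the step-line edge. For complete blocks, the Cauchy formulas \eqref{eq:MP-Cauchy-lower-factor}--\eqref{eq:MP-Cauchy-lower-inverse} depend on \(\eta_0,\ldots,\eta_{K}\) only through symmetric products of the form \(\prod_r(\xi+\eta_r)\), which are unchanged by reordering within a block.

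Consequently, I would first verify the invariance directly on \(\mathcal L^{\mathrm P}\) by showing that \(x\mapsto\) the monic type-II forms of the Christoffel-shifted system (the ones obtained after applying \(\mathcal U^{\mathrm P}\)) are defined by orthogonality against a span of the form \(\{x^{\alpha_i+k}\}\) that is symmetric in \(\boldsymbol\alpha\). The edge block is the delicate point, because a partial block is not symmetric under reordering. This is where I expect the real obstacle.

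If the symmetry argument fails at the boundary rows, the fallback is a direct sign count on the path sums \eqref{eq:MP-grouped-lower-explicit}. The model for this is the \(E_N\) counting argument in the proof of Theorem~\ref{thm:MP-PBF-region}. The alpha-difference factors \(\alpha_{a+i_N}-\alpha_a+e_N\) are positive whenever \(e_N\ge1\), by the diameter condition, so only the rows with \(e_N=0\) need to be grouped and checked. For those rows I would try to show that their contributions combine into a symmetric positive expression.
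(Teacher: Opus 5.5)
\textbf{The reduction step fails.} Your argument rests on the claim that $\mathcal L^{\mathrm P}$ depends on $\boldsymbol\alpha$ only through the multiset $\{\alpha_1,\ldots,\alpha_p\}$, and that claim is false. Step-line mixed orthogonality depends on the order of the alpha weights. Permuting them permutes the columns of $\mathscr M$ inside each period block, which in general changes the span of the leading columns $0,\ldots,K$ when $p\nmid K+1$. Hence $L_{\mathscr M}$ changes (for instance $(L_{\mathscr M})_{N,0}=(\xi_0+\eta_0)/(\xi_N+\eta_0)$), and so in general do the forms $B_N^{\mathrm P}$ with $p\nmid N$ and $T^{\mathrm P}$ itself.

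A minimal check: take $(p,q)=(2,1)$ and put $A=\alpha_1+\beta_1+2$, $B=\alpha_2+\beta_1+2$. Then \eqref{eq:MP-lowest-lower-factor} gives
\[
(\mathcal L^{\mathrm P})_{1,0}=\ell^{\mathrm P}_{1,0}+\ell^{\mathrm P}_{2,0}
=\frac{B-A}{AB}+\frac{A+1-B}{B(A+1)}=\frac{1}{A(A+1)},
\]
which is not symmetric in $\alpha_1,\alpha_2$. So the grouped factor at $(\boldsymbol\alpha,\boldsymbol\beta)$ and at its increasing rearrangement $(\boldsymbol\alpha',\boldsymbol\beta)$ are different matrices. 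The PBF at $(\boldsymbol\alpha',\boldsymbol\beta)$ therefore tells you nothing about the former. The same example shows that the given factorization cannot be used directly. If $\alpha_2<\alpha_1<\alpha_2+1$, then $\ell^{\mathrm P}_{1,0}<0$, so $L_1^{\mathrm P}L_2^{\mathrm P}$ is not a product of nonnegative bidiagonals, and the band entry is positive only through a telescoping cancellation.

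\textbf{The fallback does not close the gap.} You are right that $\alpha_{a+i_N}-\alpha_a+e_N>0$ once $e_N\ge1$, so negative factor entries occur only for $N<p$. But an entrywise sign analysis of the path sums \eqref{eq:MP-grouped-lower-explicit} can at best give positivity of band entries, not nonnegativity of all minors. The ``symmetric'' recombination you hope for also does not exist.

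\textbf{The missing idea is a different factorization of $\mathcal L^{\mathrm P}$ into nonnegative bidiagonal matrices.} The paper obtains it as follows.
\begin{itemize}
\item $\mathscr M^+\coloneq\Lambda_q\mathscr M$ is the Cauchy matrix with the same nodes $\xi_N$ and shifted poles $\eta_{M+p}=\eta_M+1$.
\item Since $\mathscr M^+=L_{\mathscr M}T^{\mathrm P}U_{\mathscr M}=(L_{\mathscr M}\mathcal L^{\mathrm P})(\mathcal U^{\mathrm P}U_{\mathscr M})$, uniqueness of the Gauss--Borel factorization gives $\mathcal L^{\mathrm P}=L_{\mathscr M}^{-1}L_{\mathscr M}^+$. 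This is the connection matrix between two rational Newton bases.
\item One passes between these bases by adjacent interchanges that carry the block $\eta_0,\ldots,\eta_{p-1}$ past $\eta_p,\ldots,\eta_{M+p}$. Each interchange of poles $u<v$ multiplies the evaluation matrix on the right by $I+c_KE_{K+1,K}$ with $c_K>0$; this uses that $\xi$ is increasing and that all pole--node sums are positive.
\item The diameter condition, in the form $\max_{r<p}\eta_r<\min_{r<p}\eta_{r+p}$, is exactly what makes every interchanged pair increasing. This holds regardless of the order inside the block, which is why no alpha ordering is needed.
\item This yields total nonnegativity of every leading block. Choosing one interchange per pass then produces a positive path for each band position $(N,N-i)$.
\end{itemize}
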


\begin{proof}
	Fix
	\((\boldsymbol\alpha,\boldsymbol\beta)\in\mathcal S^{\mathrm P,+}_{p,q}\).
	Using the step-line indices defined in
	\eqref{eq:MP-step-quotients}, recall from
	\eqref{eq:MP-Cauchy-exponent-sequences} that
	\[
	\xi_N=\beta_{s_N}+d_N,
	\qquad
	\eta_N=\alpha_{i_N}+e_N+1.
	\]
	By \eqref{eq:MP-step-line-Cauchy-moment-matrix}, the Cauchy moment matrix
	has entries
	\[
	\mathscr M_{N,M}=\frac{1}{\xi_N+\eta_M}.
	\]
	Let
	\(\mathscr M=L_{\mathscr M}U_{\mathscr M}\)
	be its factorization from
	Proposition~\ref{pro:MP-Cauchy-step-line-construction}.
	The increasing order of \(\boldsymbol\beta\) makes the sequence
	\((\xi_N)_{N\ge0}\) strictly increasing. Moreover, the diameter condition
	on \(\boldsymbol\alpha\) gives
	\begin{equation}\label{eq:MP-eta-block-separation}
		\max_{0\le r<p}\eta_r
		<
		\min_{0\le r<p}\eta_{r+p},
		\qquad
		\eta_{R+p}=\eta_R+1,\quad R\ge0.
	\end{equation}
	
	By \eqref{eq:MP-Cauchy-lower-factor}, write
	\((L_{\mathscr M})_{N,K}=\phi_K(\xi_N)\), where
	\[
	\phi_K(x)=
	\frac{\prod_{r=0}^{K-1}(x-\xi_r)}
	{\prod_{r=0}^{K}(x+\eta_r)}
	\frac{\prod_{r=0}^{K}(\xi_K+\eta_r)}
	{\prod_{r=0}^{K-1}(\xi_K-\xi_r)}.
	\]
	The normalization gives
	\(\phi_K(\xi_K)=1\), while
	\(\phi_K(\xi_r)=0\) for \(r<K\). Thus
	\(\phi_0,\phi_1,\ldots\) form the rational analogue of the Newton basis,
	with poles at
	\(-\eta_0,-\eta_1,\ldots\).
	
	Let \(\mathscr M^{+}\) be the Cauchy moment matrix obtained after a
	complete left shift of the alpha indices. Its pole-parameter sequence is
	\((\eta_{M+p})_{M\ge0}\). Write
	\[
	\mathscr M^{+}
	=
	L_{\mathscr M}^{+}U_{\mathscr M}^{+}
	\]
	for its normalized Gauss--Borel factorization. Since
	\[
	\xi_{N+q}=\xi_N+1,
	\qquad
	\eta_{M+p}=\eta_M+1,
	\]
	one has
	\(\mathscr M^{+}=\Lambda_q\mathscr M\). Therefore
	\[
		\mathscr M^{+}=
		\Lambda_qL_{\mathscr M}U_{\mathscr M} =
		L_{\mathscr M}T^{\mathrm P}U_{\mathscr M} =
		\bigl(L_{\mathscr M}\mathcal L^{\mathrm P}\bigr)
		\bigl(\mathcal U^{\mathrm P}U_{\mathscr M}\bigr).
	\]
	The first factor in the last product is unit lower triangular and the
	second is upper triangular. Uniqueness of the normalized Gauss--Borel
	factorization consequently gives
	\[
	L_{\mathscr M}^{+}
	=
	L_{\mathscr M}\mathcal L^{\mathrm P},
	\]
	and hence
	\begin{equation}\label{eq:MP-grouped-lower-connection}
		\mathcal L^{\mathrm P}
		=
		L_{\mathscr M}^{-1}L_{\mathscr M}^{+}.
	\end{equation}
	
	Identity \eqref{eq:MP-grouped-lower-connection} identifies
	\(\mathcal L^{\mathrm P}\) as the connection matrix between the two
	Gauss--Borel factors, but it does not by itself determine the signs of its
	entries or minors. It will now be proved that this connection matrix is totally
	nonnegative and strictly positive throughout its lower band.
	
	Consider the ordered pole-parameter sequence
	\(\eta_0,\eta_1,\ldots\)
	that defines the rational Newton basis whose evaluation matrix is
	\(L_{\mathscr M}\); the corresponding poles are
	\(-\eta_0,-\eta_1,\ldots\). Auxiliary rational Newton bases are introduced
	by applying adjacent interchanges to this ordered sequence. This is only
	an auxiliary change of basis: no parameter of the mixed system and no
	entry of the original moment matrix is changed.
	
	Fix \(M\ge0\). Move the initial block
	\(\eta_0,\ldots,\eta_{p-1}\), preserving its internal order, across the
	next \(M+1\) elements of the pole-parameter sequence. Thus
	\[
	\bigl(
	\eta_0,\ldots,\eta_{p-1},
	\eta_p,\ldots,\eta_{M+p},
	\eta_{M+p+1},\ldots
	\bigr)
	\]
	is transformed, by adjacent interchanges, into
	\[
	\bigl(
	\eta_p,\ldots,\eta_{M+p},
	\eta_0,\ldots,\eta_{p-1},
	\eta_{M+p+1},\ldots
	\bigr).
	\]
	The first \(M+1\) pole parameters in the final sequence are therefore
	\(\eta_p,\ldots,\eta_{M+p}\), precisely those defining
	\((L_{\mathscr M}^{+})^{[M]}\).
	
	For any intermediate pole-parameter sequence
	\(\rho=(\rho_0,\rho_1,\ldots)\), define
	\[
	\phi_K^{\rho}(x)
	\coloneq
	\frac{\prod_{r=0}^{K-1}(x-\xi_r)}
	{\prod_{r=0}^{K}(x+\rho_r)}
	\frac{\prod_{r=0}^{K}(\xi_K+\rho_r)}
	{\prod_{r=0}^{K-1}(\xi_K-\xi_r)},
	\qquad K\ge0,
	\]
	with the convention that an empty product equals \(1\). Then
	\[
	\phi_K^{\rho}(\xi_K)=1,
	\qquad
	\phi_K^{\rho}(\xi_r)=0
	\quad (r<K).
	\]
	Hence the evaluation matrix
	\[
	\left[
	\phi_K^{\rho}(\xi_N)
	\right]_{N,K\ge0}
	\]
	is unit lower triangular.
	
	Suppose that two adjacent pole parameters
	\[
	u=\rho_K<v=\rho_{K+1}
	\]
	are interchanged, and denote the resulting sequence by \(\rho'\).
	The two families agree for \(j\ne K\). Putting
	\(\phi_K^{\rho}\) and
	\(\phi_K^{\rho'}\) over their common denominator gives
	\[
	\phi_K^{\rho'}(x)
	=
	\phi_K^{\rho}(x)
	+
	c_K\phi_{K+1}^{\rho}(x),
	\]
	where evaluation at \(x=\xi_{K+1}\) gives
	\[
	c_K
	=
	\phi_K^{\rho}(\xi_{K+1})
	\frac{(\xi_{K+1}-\xi_K)(v-u)}
	{(\xi_K+u)(\xi_{K+1}+v)}.
	\]
	Consequently, the evaluation matrix for \(\rho'\) is obtained from the
	evaluation matrix for \(\rho\)
	by right multiplication by
	\[
	I+c_KE_{K+1,K}.
	\]
	
	It remains to check that \(c_K>0\). The product formula above gives
	\[
	\phi_K^{\rho}(\xi_{K+1})>0,
	\]
	because \(\xi_{K+1}>\xi_r\) for \(0\le r\le K\) and every pole--node
	sum is positive. Moreover,
	\[
	\xi_{K+1}-\xi_K>0,
	\qquad
	v-u>0,
	\qquad
	\xi_K+u>0,
	\qquad
	\xi_{K+1}+v>0.
	\]
	Thus \(c_K>0\). By
	\eqref{eq:MP-eta-block-separation}, every adjacent interchange used in
	the preceding block displacement has \(u<v\). Each elementary connection
	matrix is therefore unit lower bidiagonal with a strictly positive
	subdiagonal entry, and is totally nonnegative.
	
	Let \(\widehat C_M\) be the product, in the order in which the
	interchanges are performed, of all these elementary connection matrices.
	It acts on the full index set and differs from the identity only in a
	finite leading block. Put
	\[
	C^{[M]}
	\coloneq
	(\widehat C_M)^{[M]}.
	\]
	Evaluating the initial and final rational Newton bases at
	\(\xi_0,\ldots,\xi_M\) gives
	\[
	(L_{\mathscr M}^{+})^{[M]}
	=
	L_{\mathscr M}^{[M]}C^{[M]}.
	\]
	Since \(L_{\mathscr M}^{[M]}\) is unit lower triangular,
	\[
	C^{[M]}
	=
	\bigl(L_{\mathscr M}^{[M]}\bigr)^{-1}
	(L_{\mathscr M}^{+})^{[M]}.
	\]
	For lower triangular matrices, multiplication and inversion commute with
	taking leading principal blocks. Therefore
	\[
	C^{[M]}
	=
	\bigl(
	L_{\mathscr M}^{-1}L_{\mathscr M}^{+}
	\bigr)^{[M]}.
	\]
	The matrix \(\widehat C_M\), and hence its leading block \(C^{[M]}\), is
	totally nonnegative. Since \(M\) is arbitrary, every finite minor of
	\(L_{\mathscr M}^{-1}L_{\mathscr M}^{+}\) is contained in a sufficiently
	large leading block of this form. Thus
	\[
	\mathcal L^{\mathrm P}
	=
	L_{\mathscr M}^{-1}L_{\mathscr M}^{+}
	\]
	is totally nonnegative. Moreover, whenever
	\(0\le K\le N\le M\), the \((N,K)\) entry of \(C^{[M]}\) is exactly the
	corresponding entry of
	\(L_{\mathscr M}^{-1}L_{\mathscr M}^{+}\). No limiting argument is
	involved.
	
	It remains to prove strict positivity throughout the lower band. Each
	interchange at position \(k\) contributes an elementary connection matrix
	\[
	I+c_kE_{k+1,k},
	\qquad c_k>0.
	\]
	Its subdiagonal entry may be viewed as a directed edge from column \(k\)
	to row \(k+1\). Consequently, every entry of the product of the elementary
	connection matrices is a sum of nonnegative path weights. It is enough to
	exhibit one path with strictly positive weight for each position in the
	lower band.
	
	Perform the block displacement in \(p\) passes. In pass \(j\), move
	\(\eta_{p-j}\) to the right across the next \(M+1\) elements. The
	adjacent interchanges in this pass occur successively at the positions
	\[
	p-j,p-j+1,\ldots,M+p-j,
	\qquad 1\le j\le p.
	\]
	Fix \(N\le M\) and
	\(1\le i\le\min\{p,N\}\). For each \(t=1,\ldots,i\), select the
	interchange at position \(N-t\) in pass
	\[
	j_t
	\coloneq
	\max\{t,p-N+t\}
	=
	t+\max\{0,p-N\}.
	\]
	The pass numbers satisfy
	\[
	1\le j_1<\cdots<j_i\le p.
	\]
	Moreover,
	\[
	p-j_t\le N-t\le M+p-j_t,
	\]
	so every selected interchange occurs in the stated pass.
	
	In chronological order, the selected positions are
	\[
	N-1,N-2,\ldots,N-i.
	\]
	Matrix products are read from right to left when following a path from a
	column index to a row index. Hence the selected subdiagonal entries are
	traversed in the reverse order
	\[
	N-i,N-i+1,\ldots,N-1
	\]
	and produce the path
	\[
	N-i
	\longrightarrow
	N-i+1
	\longrightarrow
	\cdots
	\longrightarrow
	N.
	\]
	All the selected subdiagonal entries are strictly positive, while every
	remaining elementary factor contributes its unit diagonal entry. This
	path therefore has strictly positive weight. Since all other path weights
	are nonnegative, the \((N,N-i)\) entry of \(C^{[M]}\), and hence of
	\(\mathcal L^{\mathrm P}\), is strictly positive. This proves
	\eqref{eq:MP-grouped-lower-structural-positive}.
\end{proof}
\begin{lemma}[Positivity of the upper Christoffel factors]
\label{lem:MP-upper-positive-region}
If
\((\boldsymbol\alpha,\boldsymbol\beta)\in\mathcal S^{\mathrm P,+}_{p,q}\), then every
upper bidiagonal factor \(U_b^{\mathrm P}\), \(1\le b\le q\), is positive.
\end{lemma}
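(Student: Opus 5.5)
The plan is to repeat the sign analysis of the upper entries from the proof of Theorem~\ref{thm:MP-PBF-region}. The aim is to check that it never used the ordering of the \(\alpha\)-parameters; only the \(\beta\)-ordering and positivity of the \(\alpha+\beta\) sums enter. Formula \eqref{eq:MP-upper-Pochhammer-simplified} for \(u_{b,N}^{\mathrm P}\) is a product of three quotients, and I would treat them separately.

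\emph{The \(\beta\)-difference quotient.} The middle quotient equals \(E_N(\boldsymbol\beta^{[b]})/E_N(\boldsymbol\beta^{[b-1]})\), with \(E_N\) as in \eqref{eq:MP-beta-sign-product}. The sign count \eqref{eq:MP-beta-sign} only requires that the shifted vector satisfy \(b_1<\cdots<b_q<b_1+1\). The \(\beta\)-part of the stability argument in Lemma~\ref{lem:MP-affine-shift-stability} uses only \(\beta_1<\cdots<\beta_q<\beta_1+1\), which is part of the definition of \(\mathcal S^{\mathrm P,+}_{p,q}\). Hence both shifts have sign \((-1)^N\), neither product vanishes, and the middle quotient is positive.

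\emph{The two \(\alpha\)-\(\beta\) quotients.} In the first quotient every Pochhammer symbol begins at \(\alpha_i+\beta_{r}+d_N+1\), with \(i\) an original index and \(\beta_r\) an affinely extended \(\beta\). Since \(\beta_r\) is an original \(\beta\) plus a nonnegative integer, admissibility \(\alpha_i+\beta_j>-1\) makes every such argument positive. In the third quotient the arguments are \(\alpha_{i_N}+n_{N,i_N}+\beta_r\) with \(n_{N,i_N}=e_N+1\ge1\), so they are positive for the same reason. This is exactly the observation already made for \(D_b\) in \eqref{eq:MP-second-upper-positive-part}: no \(\alpha\)-ordering is used. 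Nonvanishing of denominators follows the same way. The \(\beta\)-differences inside the denominators are nonintegral because they lie strictly between \(-1\) and \(1\) up to an integer shift and are nonzero.

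The product of the three quotients is therefore positive, and the superdiagonal entries equal \(1\), so each \(U_b^{\mathrm P}\) is positive. The only point needing care is to confirm that formula \eqref{eq:MP-upper-Pochhammer-simplified} involves the \(\alpha\)'s only through original indices, unshifted, in sums with \(\beta\)'s. If some \(\alpha\)-shift did appear there, I would instead bound it using the diameter condition \(\max\alpha-\min\alpha<1\) together with \(\alpha_i+\beta_j>-1\).
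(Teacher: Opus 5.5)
Your argument is correct, but it takes a different route from the paper's proof.

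\textbf{Your route.} You work with the simplified Pochhammer formula \eqref{eq:MP-upper-Pochhammer-simplified} and re-run the sign bookkeeping from the proof of Theorem~\ref{thm:MP-PBF-region}. The beta-difference quotient there is a ratio of the two products $E_N(\boldsymbol\beta^{[b]})$ and $E_N(\boldsymbol\beta^{[b-1]})$. These are not factorwise positive; each has sign $(-1)^N$ by \eqref{eq:MP-beta-sign}. You then check that only the cyclic $\beta$-order and admissibility enter the argument.

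\textbf{The paper's route.} The paper goes back to the Cauchy pivots. It writes $(U_b^{\mathrm P})_{N,N}=\kappa_N^{(b)}/\kappa_N^{(b-1)}$, where $\kappa_N^{(b)}$ is the ratio of consecutive leading minors of the row-shifted Cauchy matrix with nodes $\xi_{N+b}$. It expands both pivots by the Cauchy determinant formula, and the $\eta$-Vandermonde factors cancel. What remains is a product of differences $\xi_{N+b}-\xi_{r+b}$ and $\xi_{N+b-1}-\xi_{r+b-1}$, together with sums $\xi+\eta$. Each factor is individually positive, because $(\xi_N)$ is strictly increasing and $\xi_N+\eta_M>0$.

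\textbf{Comparison.} The paper's version needs no parity count. It also shows structurally why the $\alpha$-order is irrelevant: the $\alpha$'s enter only through $\eta$, and after the cancellation $\eta$ appears only in positive sums. Your version reuses an existing computation and makes explicit that the upper-factor half of the PBF theorem never used the $\alpha$-order. It does rely on the appendix derivation of \eqref{eq:MP-upper-Pochhammer-simplified} being valid on the whole admissible domain. That is fine, since affine shifts of admissible parameters remain admissible.

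\textbf{One small imprecision.} For the index $j=s_N$ the $\beta$-difference is $0$, so it is not nonintegral as you state. The corresponding symbol is $(-d_N)_{d_N}=(-1)^{d_N}d_N!\neq0$, and this case is already covered by the nonvanishing part of the sign count you invoke.
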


\begin{proof}
For the \(b\)-th right Christoffel shift, let
\(\xi_N^{(b)}=\xi_{N+b}\) and let \(D_N^{(b)}\) be the determinant of the
leading \((N+1)\)-square block of the shifted Cauchy matrix. The Cauchy
determinant formula reads
\[
D_N^{(b)}=
\frac{\prod_{0\le r<s\le N}
(\xi_s^{(b)}-\xi_r^{(b)})(\eta_s-\eta_r)}
{\prod_{r,s=0}^{N}(\xi_r^{(b)}+\eta_s)}.
\]
With \(D_{-1}^{(b)}:=1\), put
\(\kappa_N^{(b)}=D_N^{(b)}/D_{N-1}^{(b)}\). Then
\((U_b^{\mathrm P})_{N,N}=\kappa_N^{(b)}/\kappa_N^{(b-1)}\). The
quotient can be displayed explicitly as
\[
\frac{\kappa_N^{(b)}}{\kappa_N^{(b-1)}}
=
\prod_{r=0}^{N-1}
\frac{\xi_{N+b}-\xi_{r+b}}
     {\xi_{N+b-1}-\xi_{r+b-1}}
\prod_{s=0}^{N}
\frac{\xi_{N+b-1}+\eta_s}
     {\xi_{N+b}+\eta_s}
\prod_{r=0}^{N-1}
\frac{\xi_{r+b-1}+\eta_N}
     {\xi_{r+b}+\eta_N}.
\]
Indeed, the \(\eta\)-difference factors cancel between the two pivots.
Every difference and sum in the displayed quotient is positive. Thus every
diagonal entry of \(U_b^{\mathrm P}\) is positive.  Its superdiagonal
entries are \(1\) by the monic normalization, so every nontrivial entry of
every upper factor is positive.
\end{proof}

\begin{lemma}[Dual diagonal relation and sign rule]
\label{lem:MP-dual-diagonal-sign}
Let
\((\boldsymbol\alpha,\boldsymbol\beta)\in\mathcal D^{\mathrm P,\vee}_{p,q}\), and set
\[
\widetilde T^{\,\mathrm P}
\coloneq T^{\mathrm P}_{q,p}(\boldsymbol\beta,\boldsymbol\alpha).
\]
The leading coefficients \(a_{N,i_N,e_N}\) are nonzero. If
\(D_A:=\diag(a_{N,i_N,e_N})_{N\ge0}\), then
\[
\widetilde T^{\mathrm P}
=D_A^{-1}(T^{\mathrm P})^{\mathsf T}D_A,
\qquad
\operatorname{sgn}a_{N,i_N,e_N}
=(-1)^{\operatorname{inv}_{s_N}(\boldsymbol\beta)}.
\]
\end{lemma}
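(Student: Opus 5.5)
The plan is to recognize the transposed mixed system as the Cauchy construction of Proposition~\ref{pro:MP-Cauchy-step-line-construction} applied to the transposed moment matrix. Then both the diagonal relation and the sign rule follow from uniqueness of Gauss--Borel factorizations and the Cauchy determinant \eqref{eq:MP-Cauchy-determinant}.

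\emph{Step 1: the dual moment matrix is \(\mathscr M^{\mathsf T}\).} For the \((q,p)\) system with parameters \((\boldsymbol\beta,\boldsymbol\alpha)\), the roles of the step-line quotients swap. Its exponent sequences are therefore \(\widetilde\xi_N=\alpha_{i_N}+e_N=\eta_N-1\) and \(\widetilde\eta_M=\beta_{s_M}+d_M+1=\xi_M+1\). Hence \(\widetilde\xi_N+\widetilde\eta_M=\eta_N+\xi_M\), so \(\widetilde{\mathscr M}=\mathscr M^{\mathsf T}\). Since \(\mathcal D^{\mathrm P,\vee}_{p,q}\subseteq\mathcal A^{\mathrm P}_{p,q}\), Proposition~\ref{pro:MP-Cauchy-step-line-construction} applies to both systems, and all leading minors are nonzero.

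\emph{Step 2: the diagonal relation.} Write \(\mathscr M=L_{\mathscr M}U_{\mathscr M}\) and \(\Delta\coloneq\diag(U_{\mathscr M})\). Transposing gives
\[
\mathscr M^{\mathsf T}=(U_{\mathscr M}^{\mathsf T}\Delta^{-1})(\Delta L_{\mathscr M}^{\mathsf T}).
\]
The first factor is unit lower triangular and the second is upper triangular, so uniqueness gives \(\widetilde L_{\mathscr M}=U_{\mathscr M}^{\mathsf T}\Delta^{-1}\). Applying \eqref{eq:MP-Cauchy-direct-recurrence-matrix} to the dual system, whose shift is \(\Lambda_p\), yields
\[
\widetilde T^{\mathrm P}=\Delta U_{\mathscr M}^{-\mathsf T}\Lambda_pU_{\mathscr M}^{\mathsf T}\Delta^{-1}
=\Delta\bigl(U_{\mathscr M}\Lambda_p^{\mathsf T}U_{\mathscr M}^{-1}\bigr)^{\mathsf T}\Delta^{-1}
=\Delta (T^{\mathrm P})^{\mathsf T}\Delta^{-1},
\]
using the second expression in \eqref{eq:MP-Cauchy-direct-recurrence-matrix}.

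Next identify \(\Delta^{-1}\) with \(D_A\). By \eqref{eq:MP-Cauchy-direct-step-line-forms}, the coefficient of \(x^{\eta_N-1}=x^{\alpha_{i_N}+e_N}\) in \(A_N^{\mathrm P}\) is \((U_{\mathscr M}^{-\mathsf T})_{N,N}=1/(U_{\mathscr M})_{N,N}\). Because \(n_{N,i_N}=e_N+1\), this coefficient is the top coefficient \(a_{N,i_N,e_N}\) of the component \(A_N^{(i_N),\mathrm P}\) in Corollary~\ref{coro:MP-componentwise-Cauchy-formulas}. Hence \(D_A=\Delta^{-1}\), which is nonzero. The main care point is this bookkeeping: checking that regrouping exponents matches the \(K=N\) term with the leading coefficient of component \(i_N\), and that the dual monic normalization is exactly the \(\Delta\)-rescaled factor.

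\emph{Step 3: the sign rule.} We have \(a_{N,i_N,e_N}=1/(U_{\mathscr M})_{N,N}=\det\mathscr M^{[N-1]}/\det\mathscr M^{[N]}\). By \eqref{eq:MP-Cauchy-determinant} this ratio equals a positive sum-product times
\[
\Bigl(\prod_{r<N}(\xi_N-\xi_r)(\eta_N-\eta_r)\Bigr)^{-1}.
\]
The ordered cyclic condition on \(\boldsymbol\alpha\) makes \((\eta_N)\) strictly increasing, as in Lemma~\ref{lem:MP-affine-shift-stability}, so the \(\eta\)-factors are positive.

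For the \(\xi\)-factors, take \(r<N\). If \(d_r<d_N\), the diameter condition \(\max\beta-\min\beta<1\) gives \(\xi_r<\beta_{s_r}+d_N-1+1\), and in fact \(\xi_r<\xi_N\). If \(d_r=d_N\), then \(s_r<s_N\), and \(\xi_r>\xi_N\) exactly when \(\beta_{s_r}>\beta_{s_N}\). Thus the number of negative factors is \(\operatorname{inv}_{s_N}(\boldsymbol\beta)\). No factor vanishes, by admissibility together with the diameter bound. This gives \(\operatorname{sgn}a_{N,i_N,e_N}=(-1)^{\operatorname{inv}_{s_N}(\boldsymbol\beta)}\).
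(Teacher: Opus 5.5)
Your proof is correct, but it follows a different route from the paper's.

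\textbf{The paper's route.} The paper works at the level of the forms. It argues that $A_N^{\mathrm P}/a_{N,i_N,e_N}$ satisfies the type-II orthogonality conditions of the swapped system: the degree bounds $\boldsymbol\nu_p(N+1)$ match, and transposing the matrix of measures exchanges the two families. It then invokes uniqueness up to scalars. The similarity $\widetilde T^{\mathrm P}=D_A^{-1}(T^{\mathrm P})^{\mathsf T}D_A$ is read off by dividing the left recurrence $xA_M^{\mathrm P}=\sum_N(T^{\mathrm P})_{N,M}A_N^{\mathrm P}$ by the leading coefficients. For the sign, the paper substitutes $i=i_N$, $k=e_N$ into \eqref{eq:MP-explicit-A-coefficients} and counts negative factors separately:
\begin{itemize}
\item the $\beta$-difference product $E_N(\boldsymbol\beta)$ gives $(-1)^{N-\operatorname{inv}_{s_N}(\boldsymbol\beta)}$;
\item the $\alpha$-difference product $F_N(\boldsymbol\alpha)$ gives $(-1)^{N-e_N}$;
\item these are combined with the explicit factor $(-1)^{e_N}$.
\end{itemize}

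\textbf{Your route.} You observe that the swapped Cauchy matrix is exactly $\mathscr M^{\mathsf T}$. Uniqueness of the normalized $LU$ factorization of the transpose then gives $\widetilde L_{\mathscr M}=U_{\mathscr M}^{\mathsf T}\Delta^{-1}$, and the similarity follows from \eqref{eq:MP-Cauchy-direct-recurrence-matrix} without any appeal to orthogonality conditions. You read the sign off the pivot $1/(U_{\mathscr M})_{N,N}$, written as a Cauchy-determinant quotient. There the only possibly negative factors are $\xi_N-\xi_r$ with $r$ in the same $\beta$-cycle as $N$.

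\textbf{What each buys.} Your argument is shorter and makes the sign rule transparent. Each inversion contributes exactly one negative factor, and the $\alpha$-parameters never enter because $(\eta_N)$ is increasing. The paper's argument stays inside the explicit coefficient formula \eqref{eq:MP-dual-leading-coefficient-explicit} and reuses the $E_N$ sign-counting device from Theorem~\ref{thm:MP-PBF-region}. It also exhibits the dual monic forms explicitly as $A_N^{\mathrm P}/a_{N,i_N,e_N}$. Your factorization yields the same fact, since $\widetilde B_N^{\mathrm P}=(U_{\mathscr M})_{N,N}A_N^{\mathrm P}$, but you never need it.

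\textbf{A small slip.} In the case $d_r<d_N$, your displayed inequality is garbled. It should read $\xi_r\le\beta_{s_r}+d_N-1<\beta_{s_N}+d_N=\xi_N$, using $\beta_{s_r}-\beta_{s_N}<1$ from the diameter condition.
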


\begin{proof}
Fix
\((\boldsymbol\alpha,\boldsymbol\beta)\in\mathcal D^{\mathrm P,\vee}_{p,q}\) and
interchange the two parameter families.
To compare the swapped matrix with the original recurrence, first identify the
normalization connecting the two systems.
In the original system, the type-I multi-index is
\(\boldsymbol n_N=\boldsymbol\nu_p(N+1)\). In the swapped system, the number of type-II
components is \(p\), and its type-II multi-index is again
\(\boldsymbol\nu_p(N+1)\). Thus the polynomial components of the original
type-I form have exactly the degree bounds required of the type-II form in
the swapped problem. In particular, the active component is the component
with index \(i_N\), and
\(n_{N,i_N}=e_N+1\), so its degree is \(e_N\). By
\eqref{eq:MP-explicit-A-components}, its leading coefficient is therefore
the already defined coefficient \(a_{N,i_N,e_N}\). Substituting
\(i=i_N\) and \(k=e_N\) into
\eqref{eq:MP-explicit-A-coefficients} gives
\begin{equation}\label{eq:MP-dual-leading-coefficient-explicit}
a_{N,i_N,e_N}
=
C_N^{A}
\frac{
\prod_{j=1}^{q}
(\alpha_{i_N}+\beta_j+e_N+1)_{m_{N,j}}
}{
e_N!(-1)^{e_N}
\prod_{\substack{h=1\\h\ne i_N}}^{p}
(\alpha_h-\alpha_{i_N}-e_N)_{n_{N,h}}
}.
\end{equation}
All these leading coefficients are nonzero on the admissible domain. Indeed,
the Pochhammer symbols involving alpha--beta sums have positive factors,
whereas the alpha- and beta-difference Pochhammer symbols cannot vanish by
the nonintegrality assumptions in
\eqref{eq:MP-admissible-domain}. The
forms
\(A_N^{\mathrm P}/a_{N,i_N,e_N}\) are precisely the monic type-II forms
of the swapped system. Indeed, transposing the matrix of measures exchanges
its row and column families, so the type-I orthogonality conditions for the
original system become the type-II orthogonality conditions for the swapped
one. Proposition~\ref{pro:MP-Cauchy-step-line-construction}, through the
nonvanishing leading Cauchy minors, shows that these conditions determine
the form up to a nonzero scalar. Division
by \(a_{N,i_N,e_N}\) fixes that scalar because it makes the active component
monic.

The matrix relation can now be derived explicitly.
The multiplication recurrence in the original left family is
\begin{equation*}
xA_M^{\mathrm P}
=
\sum_{N\ge0}(T^{\mathrm P})_{N,M}A_N^{\mathrm P}.
\end{equation*}
Put
\(\widetilde B_N^{\mathrm P}:=A_N^{\mathrm P}/a_{N,i_N,e_N}\). Dividing
the preceding identity by \(a_{M,i_M,e_M}\) and replacing
\(A_N^{\mathrm P}\) by
\(a_{N,i_N,e_N}\widetilde B_N^{\mathrm P}\) gives
\begin{equation*}
x\widetilde B_M^{\mathrm P}
=
\sum_{N\ge0}
\frac{a_{N,i_N,e_N}}{a_{M,i_M,e_M}}
(T^{\mathrm P})_{N,M}\widetilde B_N^{\mathrm P}.
\end{equation*}
By definition, the coefficient of \(\widetilde B_N^{\mathrm P}\) in this
last recurrence is \((\widetilde T^{\mathrm P})_{M,N}\). Hence, if
\(D_A:=\diag(a_{N,i_N,e_N})_{N\ge0}\),
\begin{equation}\label{eq:MP-dual-diagonal-similarity}
\widetilde T^{\mathrm P}
=D_A^{-1}(T^{\mathrm P})^{\mathsf T}D_A,
\qquad
(\widetilde T^{\mathrm P})_{M,N}
=\frac{a_{N,i_N,e_N}}{a_{M,i_M,e_M}}
(T^{\mathrm P})_{N,M}.
\end{equation}

It remains to determine exactly when the diagonal entries of \(D_A\) have
one sign. For \(N=qd_N+s_N-1\), put
\begin{equation}\label{eq:MP-dual-beta-product}
E_N(\boldsymbol\beta)
\coloneq
\prod_{j=1}^{q}
(\beta_j-\beta_{s_N}-d_N)_{m_{N,j}}.
\end{equation}
Before counting signs, recall from
\eqref{eq:MP-type-I-step-indices} that \(\boldsymbol m_N=\boldsymbol\nu_q(N)\).
Since \(N=qd_N+s_N-1\), this means explicitly that
\[
m_{N,j}
=
\begin{cases}
d_N+1,&j<s_N,\\
d_N,&j\ge s_N.
\end{cases}
\]
The negative factors in
\eqref{eq:MP-dual-beta-product} are now counted. If \(j<s_N\), then the factors are
\begin{equation*}
\beta_j-\beta_{s_N}-d_N,\ldots,
\beta_j-\beta_{s_N}-1,\beta_j-\beta_{s_N}.
\end{equation*}
When \(\beta_j<\beta_{s_N}\), the last factor is negative and hence all
\(d_N+1\) factors are negative. When
\(\beta_j>\beta_{s_N}\), the last factor is positive. If \(d_N\ge1\),
its predecessor is smaller than zero because
\(0<\beta_j-\beta_{s_N}<1\), and all earlier factors are smaller still;
if \(d_N=0\), there is no preceding factor. Thus in either case exactly
the first \(d_N\) factors are negative. Among the \(s_N-1\) indices
preceding \(s_N\), precisely
\(\operatorname{inv}_{s_N}(\boldsymbol\beta)\) satisfy the second alternative. The number of
negative factors contributed by the indices \(j<s_N\) is consequently
\begin{equation*}
(s_N-1)(d_N+1)-\operatorname{inv}_{s_N}(\boldsymbol\beta).
\end{equation*}
If \(d_N=0\), every Pochhammer symbol with \(j\ge s_N\) is empty and
contributes no negative factor. If \(d_N\ge1\), then for \(j=s_N\) the
\(d_N\) factors are
\(-d_N,\ldots,-1\). For \(j>s_N\), they are
\begin{equation*}
\beta_j-\beta_{s_N}-d_N,\ldots,
\beta_j-\beta_{s_N}-1.
\end{equation*}
The last of these is negative because
\(\beta_j-\beta_{s_N}<1\), and every preceding factor is still smaller.
Thus each index \(j\ge s_N\) contributes exactly \(d_N\) negative factors,
for a total contribution of \((q-s_N+1)d_N\). Adding both contributions
and using \(qd_N+s_N-1=N\) gives
\begin{equation}\label{eq:MP-dual-beta-product-sign}
\operatorname{sgn}E_N(\boldsymbol\beta)
=(-1)^{N-\operatorname{inv}_{s_N}(\boldsymbol\beta)}.
\end{equation}

The corresponding alpha-difference product in
\eqref{eq:MP-dual-leading-coefficient-explicit} is
\begin{equation*}
F_N(\boldsymbol\alpha)
\coloneq
\prod_{\substack{h=1\\h\ne i_N}}^{p}
(\alpha_h-\alpha_{i_N}-e_N)_{n_{N,h}}.
\end{equation*}
Here the counting is similar but simpler because the alpha parameters are
in increasing cyclic order. If \(h<i_N\), then
\(-1<\alpha_h-\alpha_{i_N}<0\), and the \(e_N+1\) factors run from
\(\alpha_h-\alpha_{i_N}-e_N\) to
\(\alpha_h-\alpha_{i_N}\); all are negative. If \(h>i_N\), then
\(0<\alpha_h-\alpha_{i_N}<1\). When \(e_N=0\), the corresponding
Pochhammer symbol is empty; when \(e_N\ge1\), its factors run from
\(\alpha_h-\alpha_{i_N}-e_N\) to
\(\alpha_h-\alpha_{i_N}-1\) and are all negative. Hence their total number
is
\begin{equation*}
(i_N-1)(e_N+1)+(p-i_N)e_N=N-e_N,
\end{equation*}
and therefore
\begin{equation}\label{eq:MP-dual-alpha-product-sign}
\operatorname{sgn}F_N(\boldsymbol\alpha)=(-1)^{N-e_N}.
\end{equation}
Every Pochhammer symbol involving a sum of an alpha and a beta parameter is
positive. The numerator in the definition of \(C_N^{A}\) is therefore
positive, while its denominator is exactly \(E_N(\boldsymbol\beta)\). Since the
sign of the reciprocal of a nonzero real number is the sign of the number
itself,
\(\operatorname{sgn}C_N^{A}
=\operatorname{sgn}E_N(\boldsymbol\beta)
=(-1)^{N-\operatorname{inv}_{s_N}(\boldsymbol\beta)}\).
In \eqref{eq:MP-dual-leading-coefficient-explicit}, the remaining
factors in the denominator whose signs must be counted are
\((-1)^{e_N}\) and
\(F_N(\boldsymbol\alpha)\). Therefore equations
\eqref{eq:MP-dual-leading-coefficient-explicit},
\eqref{eq:MP-dual-beta-product-sign}, and
\eqref{eq:MP-dual-alpha-product-sign} give, step by step,
\begin{equation*}
\operatorname{sgn}a_{N,i_N,e_N}
=(-1)^{N-\operatorname{inv}_{s_N}(\boldsymbol\beta)}
(-1)^{e_N}(-1)^{N-e_N}
=(-1)^{\operatorname{inv}_{s_N}(\boldsymbol\beta)}.
\end{equation*}
This proves the sign rule
\begin{equation}\label{eq:MP-dual-leading-coefficient-sign}
\operatorname{sgn}a_{N,i_N,e_N}
=(-1)^{\operatorname{inv}_{s_N}(\boldsymbol\beta)}.
\end{equation}
\end{proof}

\begin{proof}[Proof of Theorem~\ref{thm:MP-matrix-positive-region}]
Let
\((\boldsymbol\alpha,\boldsymbol\beta)\in\mathcal S^{\mathrm P,+}_{p,q}\).
Lemma~\ref{lem:MP-grouped-lower-positive-region} gives the total
nonnegativity of \(\mathcal L^{\mathrm P}\) and strict positivity throughout
its lower band, while
Lemma~\ref{lem:MP-upper-positive-region} gives the positivity of every
upper factor. If \((N,M)\in\mathfrak B_{p,q}\) and \(M\ge N\), the
\((N,M)\) entry of
\(T^{\mathrm P}=\mathcal L^{\mathrm P}
U_q^{\mathrm P}\cdots U_1^{\mathrm P}\)
contains the product of the diagonal entry of \(\mathcal L^{\mathrm P}\)
with a positive path from \(N\) to \(M\) through the upper factors. If
\(M<N\), it contains the product of
\((\mathcal L^{\mathrm P})_{N,M}>0\) with the positive diagonal path
through the upper factors. All other contributions are nonnegative.
Thus every structural-band entry of \(T^{\mathrm P}\) is strictly positive,
which proves
\(\mathcal S^{\mathrm P,+}_{p,q}\subseteq
\mathcal T^{\mathrm P,\ge0}_{p,q}\) together with the more precise
assertions in the theorem.

For the dual region, fix
\((\boldsymbol\alpha,\boldsymbol\beta)\in\mathcal D^{\mathrm P,\vee}_{p,q}\) and set
\(\widetilde T^{\,\mathrm P}
\coloneq T^{\mathrm P}_{q,p}(\boldsymbol\beta,\boldsymbol\alpha)\).
The swapped pair
\((\boldsymbol\beta,\boldsymbol\alpha)\) belongs to
\(\mathcal S^{\mathrm P,+}_{q,p}\): its first parameter vector has diameter
less than one and its second parameter vector is in strict cyclic order.
The first part of the proof therefore shows that
\(\widetilde T^{\,\mathrm P}\) is strictly positive throughout its
structural band.
By Lemma~\ref{lem:MP-dual-diagonal-sign}, the leading coefficients are
nonzero, the matrix
\(D_A=\diag(a_{N,i_N,e_N})_{N\ge0}\) satisfies
\eqref{eq:MP-dual-diagonal-similarity}, and the coefficients obey the
sign rule \eqref{eq:MP-dual-leading-coefficient-sign}.

If \((\boldsymbol\alpha,\boldsymbol\beta)\in
\mathcal S^{\mathrm P,\vee,+}_{p,q}\), all entries of \(D_A\) are positive.
The diagonal relation \eqref{eq:MP-dual-diagonal-similarity} then transfers
the strict positivity of every structurally allowed entry of
\(\widetilde T^{\mathrm P}\) to the corresponding structurally allowed
entry of \(T^{\mathrm P}\). Explicitly, for every
\((N,M)\in\mathfrak B_{p,q}\),
\begin{equation*}
(T^{\mathrm P})_{N,M}
=
\frac{a_{M,i_M,e_M}}{a_{N,i_N,e_N}}
(\widetilde T^{\mathrm P})_{M,N}>0,
\end{equation*}
because both leading coefficients and the transposed structural-band entry are
positive. This proves the second sufficient inclusion in
\eqref{eq:MP-two-positive-regions}.

Conversely, remain in \(\mathcal D^{\mathrm P,\vee}_{p,q}\) and assume
that \(T^{\mathrm P}\) is nonnegative on its structural band, as required by
the definition of \(\mathcal T^{\mathrm P,\ge0}_{p,q}\). Every
\(a_{N,i_N,e_N}\) is nonzero, and the swapped matrix
\(\widetilde T^{\mathrm P}\) is strictly positive on its structural band.
Therefore \eqref{eq:MP-dual-diagonal-similarity} shows that no
structural-band entry of \(T^{\mathrm P}\) can vanish. In this dual diameter
domain, structural-band nonnegativity consequently implies strict positivity
throughout that band.

In particular, for every \(N\ge0\), the two adjacent entries below are
structurally allowed and strictly positive, and
\eqref{eq:MP-dual-diagonal-similarity} gives
\begin{equation*}
(\widetilde T^{\mathrm P})_{N+1,N}
=\frac{a_{N,i_N,e_N}}
{a_{N+1,i_{N+1},e_{N+1}}}
(T^{\mathrm P})_{N,N+1}>0.
\end{equation*}
Thus consecutive leading coefficients have the same sign. At \(N=0\), one
has \(d_0=e_0=0\), \(s_0=i_0=1\), and
\eqref{eq:MP-explicit-A-coefficients} reduces to
\[
a_{0,1,0}=\alpha_1+\beta_1+1>0
\]
by admissibility. Induction on \(N\) therefore shows that every
\(a_{N,i_N,e_N}\) is positive. Now fix \(s\in\{1,\ldots,q\}\) and take
\(N=s-1\). Since \(0\le N<q\), the decomposition
\(N=qd_N+s_N-1\) has \(d_N=0\) and \(s_N=s\). The sign identity
\eqref{eq:MP-dual-leading-coefficient-sign} becomes
\(1=\operatorname{sgn}a_{s-1,i_{s-1},e_{s-1}}
=(-1)^{\operatorname{inv}_s(\boldsymbol\beta)}\).
Hence \(\operatorname{inv}_s(\boldsymbol\beta)\) is even. Since \(s\) was arbitrary, the
even-insertion condition holds for all beta indices, which establishes
\eqref{eq:MP-dual-positive-region-exact}.

Finally, the ordered cyclic conditions defining
\(\mathcal R^{\mathrm P,+}_{p,q}\) imply both diameter conditions, and
increasing beta order gives \(\operatorname{inv}_s(\boldsymbol\beta)=0\) for all \(s\).
Therefore \(\mathcal R^{\mathrm P,+}_{p,q}\) is contained in both enlarged
regions, completing the proof.
\end{proof}

\begin{example}[Duality and nonmaximality]
\label{ex:MP-asymmetric-positive-region}
	The two enlarged regions in \eqref{eq:MP-two-positive-regions} are not
	the global positivity set. For example,
	\[
	(p,q)=(3,1),\qquad
	\boldsymbol\alpha=\left(\frac34,-\frac12,\frac54\right),\qquad
	\beta_1=0
	\]
	is admissible and direct substitution in the recurrence coefficients
	shows that every structural-band entry of \(T^{\mathrm P}\) is positive.
	Its alpha diameter is greater than one and its alpha parameters are not
	in cyclic order; hence it lies in
	\(\mathcal T^{\mathrm P,\ge0}_{3,1}\setminus
	(\mathcal S^{\mathrm P,+}_{3,1}\cup
	\mathcal S^{\mathrm P,\vee,+}_{3,1})\).
	The Markov normalization below is restricted to
	\(\mathcal S^{\mathrm P,+}_{p,q}\), where the required positivity of
	\(B_N^{\mathrm P}(1)\) is also proved.
\end{example}

The first enlarged region still has a direct Markovian meaning, even though the
individual lower factors are not all positive outside the ordered cyclic
region.  The following result
also supplies the positivity of the vector used in the Doob normalization.

\begin{proposition}[Markov normalization on the enlarged region]
\label{pro:MP-Markov-enlarged-region}
Let \((\boldsymbol\alpha,\boldsymbol\beta)\in\mathcal S^{\mathrm P,+}_{p,q}\), and let
\(B_N^{\mathrm P}\) be the monic type-II step-line form. Then
\(h_N^{\mathrm P}:=B_N^{\mathrm P}(1)>0\) and
\(T^{\mathrm P}h^{\mathrm P}=h^{\mathrm P}\). Consequently, with
\(H^{\mathrm P}=\diag(h_N^{\mathrm P})\), the matrix
\(P^{\mathrm P}:=(H^{\mathrm P})^{-1}T^{\mathrm P}H^{\mathrm P}\) is the
classical Doob \(h\)-transform \cite{Doob1957,Doob1984}; it is row-stochastic
and is strictly positive throughout its structural band.
Thus every
parameter point in \(\mathcal S^{\mathrm P,+}_{p,q}\) defines a banded
Markov chain.  On
\(\mathcal S^{\mathrm P,+}_{p,q}\setminus
\mathcal R^{\mathrm P,+}_{p,q}\), however, the prescribed bidiagonal
factorization is not positive.
\end{proposition}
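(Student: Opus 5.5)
We separate the four assertions: the harmonic identity \(T^{\mathrm P}h^{\mathrm P}=h^{\mathrm P}\), the positivity \(h_N^{\mathrm P}>0\), the stochasticity and band positivity of \(P^{\mathrm P}\), and the failure of the ordered PBF off \(\mathcal R^{\mathrm P,+}_{p,q}\).

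\emph{Harmonic identity.} This is algebraic and holds on the whole admissible domain. By Proposition~\ref{pro:MP-Cauchy-step-line-construction}, the right recurrence is \(x\mathbf B^{\mathrm P}(x)=T^{\mathrm P}\mathbf B^{\mathrm P}(x)\), read componentwise in each polynomial component \(B_N^{(j),\mathrm P}\). Summing the components, each weighted by \(x^{\beta_j}\), gives the same relation for the scalar forms. The sums are finite because \(T^{\mathrm P}\) is banded. Evaluating at \(x=1\) then gives \((T^{\mathrm P}h^{\mathrm P})_N=h_N^{\mathrm P}\).

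\emph{Stochasticity, assuming positivity of \(h\).} Once \(h_N^{\mathrm P}>0\) for all \(N\), set \(H^{\mathrm P}=\diag(h_N^{\mathrm P})\). Then \(P^{\mathrm P}\one=(H^{\mathrm P})^{-1}T^{\mathrm P}h^{\mathrm P}=\one\). Positive diagonal conjugation preserves signs entrywise. So the strict structural-band positivity of \(T^{\mathrm P}\) on \(\mathcal S^{\mathrm P,+}_{p,q}\), from Theorem~\ref{thm:MP-matrix-positive-region}, passes to \(P^{\mathrm P}\), and \(P^{\mathrm P}\) is row-stochastic with a positive band.

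\emph{Positivity of \(h\): the main obstacle.} This is the step I expect to be hardest. The plan is to induct on \(N\) using the grouped factorization \(T^{\mathrm P}=\mathcal L^{\mathrm P}\mathcal U^{\mathrm P}\). From \(L_{\mathscr M}^{+}=L_{\mathscr M}\mathcal L^{\mathrm P}\) and the definition \(\mathbf B=L_{\mathscr M}^{-1}\boldsymbol\chi_\beta\), the vector \(\mathbf B^{+}\coloneq(\mathcal L^{\mathrm P})^{-1}\mathbf B\) is the monic type-II family for the alpha-shifted system. Then \(x\mathbf B=\mathcal L^{\mathrm P}\mathcal U^{\mathrm P}\mathbf B\) shows that \(\mathcal U^{\mathrm P}\mathbf B=x\mathbf B^{+}\). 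At \(x=1\) this gives \(\mathcal U^{\mathrm P}h=h^{+}\) and \(\mathcal L^{\mathrm P}h^{+}=h\), where \(h^{+}_N=B^{+}_N(1)\). The shifted system again lies in \(\mathcal S^{\mathrm P,+}_{p,q}\), since the alpha diameter is unchanged and the beta order is the same.

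The relation \(h=\mathcal L^{\mathrm P}h^{+}\) alone would suffice if \(h^{+}>0\), but that is circular. Instead I would use the upper relation in the form \(h_N=\bigl(h^{+}_N-\sum_{e\ge1}\mathsf U_{N,e}h_{N+e}\bigr)/\mathsf U_{N,0}\), which is not triangular in the right direction. The direct route I would try first is therefore different: prove \(h_N>0\) from the integral representation of the type-II form. I would write \(B_N^{\mathrm P}(1)\) as \(1/\det\mathscr M^{[N]}\) times a bordered Cauchy determinant, with last row \(\xi\)-exponents and extra column \(1\). Taking the limit \(\eta\to0\) in the Cauchy form, \(1=\lim \eta\cdot 1/(\xi+\eta)\), turns this into a ratio of Cauchy-type determinants, following the partial-fraction formula in the proof of Proposition~\ref{pro:MP-Cauchy-step-line-construction}. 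The sign can then be read off from ordered products, using that \(\xi\) is increasing. Concretely, I would evaluate the partial-fraction identity at \(z\to\infty\) or \(z=0\): summing the coefficients \(c_{N,K}\) is exactly evaluation at \(x=1\) of \(\sum_K c_{N,K}x^{\xi_K}\). This sum can be represented as a contour integral, or via \(\int_0^1\) of the rational function times \(x\). If that fails, the fallback is a positivity-of-minors argument using the total nonnegativity of \(\mathcal L^{\mathrm P}\) from Lemma~\ref{lem:MP-grouped-lower-positive-region}.

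\emph{Non-positivity off \(\mathcal R^{\mathrm P,+}_{p,q}\).} This follows immediately from Theorem~\ref{thm:MP-PBF-region}, since \(\mathcal P^{\mathrm P}_{p,q}=\mathcal R^{\mathrm P,+}_{p,q}\).
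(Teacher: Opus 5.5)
Your harmonic identity, the stochasticity and structural-band positivity of \(P^{\mathrm P}\) via Theorem~\ref{thm:MP-matrix-positive-region}, and the failure of the ordered PBF via \(\mathcal P^{\mathrm P}_{p,q}=\mathcal R^{\mathrm P,+}_{p,q}\) are correct and agree with the paper. Your citation of Theorem~\ref{thm:MP-PBF-region} is a sharper justification of that last point than the paper's remark.

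\textbf{The gap.} It lies in the step that carries the content of the proposition, \(h_N^{\mathrm P}>0\): you list strategies but complete none of them.
\begin{itemize}
\item The grouped-factor induction is circular, as you say.
\item The fallback through total nonnegativity of \(\mathcal L^{\mathrm P}\) has the same circularity, since \(h=\mathcal L^{\mathrm P}h^{+}\) needs \(h^{+}>0\).
\item The ``direct route'' is misstated. First, \(\eta/(\xi+\eta)\to0\) as \(\eta\to0\), so a column of ones arises only as \(\eta\to+\infty\). Second, the monic form is the bordered determinant divided by \(\det\mathscr M^{[N-1]}\), not \(\det\mathscr M^{[N]}\). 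Third, the partial-fraction identity at \(z=0\) gives \(\sum_K c_{N,K}/\xi_K\), not \(h_N^{\mathrm P}\). At \(z\to\infty\) both sides just tend to \(0\) unless you first multiply by \(z\).
\end{itemize}

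\textbf{How to close it.} Your idea does work once executed correctly. Multiply the first partial-fraction identity in the proof of Proposition~\ref{pro:MP-Cauchy-step-line-construction} by \(z\) and let \(z\to\infty\). Since \(h_N^{\mathrm P}=B_N^{\mathrm P}(1)=\sum_{K=0}^{N}c_{N,K}\), this gives
\[
h_N^{\mathrm P}
=\frac{\prod_{r=0}^{N-1}(\xi_N-\xi_r)}{\prod_{r=0}^{N-1}(\xi_N+\eta_r)},
\]
which is exactly \eqref{eq:MP-h-Cauchy-product}. Equivalently, take the bordered Cauchy determinant with an extra column parameter \(\eta\), multiply by \(\eta\), let \(\eta\to+\infty\), and divide by \(\det\mathscr M^{[N-1]}\); this yields the same quotient.

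On \(\mathcal S^{\mathrm P,+}_{p,q}\), the cyclic order \(\beta_1<\cdots<\beta_q<\beta_1+1\) makes \((\xi_N)\) strictly increasing. Admissibility gives \(\xi_N+\eta_r\ge\alpha_{i_r}+\beta_{s_N}+1>0\). Hence \(h_N^{\mathrm P}>0\).

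\textbf{Comparison with the paper.} The paper argues differently, by zero counting:
\begin{itemize}
\item Type-II orthogonality against the \(N\) functions \(x^{\alpha_i+k}\) forces at least \(N\) sign changes of \(B_N^{\mathrm P}\) in \((0,1)\).
\item The extended-Chebyshev property of the \(N+1\) exponents \(\beta_j+k\) caps the positive zeros at \(N\).
\item The unique top exponent \(\beta_{s_N}+d_N\), with coefficient \(1\), makes \(B_N^{\mathrm P}\) positive to the right of its last zero, in particular at \(x=1\).
\end{itemize}
Your corrected route is more elementary and produces the closed product formula that the paper derives only later, in Proposition~\ref{pro:MP-unbounded-null-recurrence}. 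The paper's argument uses only the Chebyshev structure, not the explicit Cauchy evaluation.
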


\begin{proof}
The \(N\) functions \(x^{\alpha_i+k}\), where \(1\le i\le p\) and
\(0\le k\le\widetilde n_{N,i}-1\), have distinct real exponents and form
an extended complete Chebyshev system on \((0,\infty)\); see, for example,
the classical total-positivity treatment in \cite{Karlin}. The usual AT
sign-change argument applied to the type-II orthogonality established in
Proposition~\ref{pro:MP-Cauchy-step-line-construction} therefore forces
\(B_N^{\mathrm P}\) to have at least \(N\) sign changes in \((0,1)\).
On the other hand, \(B_N^{\mathrm P}\) is a nonzero linear combination of
the \(N+1\) distinct generalized monomials \(x^{\beta_j+k}\), with
\(1\le j\le q\) and \(0\le k\le\widetilde m_{N,j}-1\). These monomials
form another extended complete Chebyshev system on \((0,\infty)\), so a
nonzero such combination has at most \(N\) positive zeros, counted with
multiplicity. Consequently, the \(N\) zeros forced by orthogonality are
simple sign-change zeros in \((0,1)\), and there is no further zero on
\((0,\infty)\).

It remains to determine the sign to the right of these zeros. Since
\(N=qd_N+s_N-1\), one has \(\widetilde m_{N,j}=d_N+1\) for \(j\le s_N\)
and \(\widetilde m_{N,j}=d_N\) for \(j>s_N\). Thus the active monic
component contributes \(x^{\beta_{s_N}+d_N}\) with coefficient \(1\). If
\(j<s_N\), then \(\beta_j+d_N<\beta_{s_N}+d_N\); if \(j>s_N\), the cyclic
order gives \(\beta_j+d_N-1<\beta_{s_N}+d_N\). Hence this is the unique
term of largest exponent, and \(B_N^{\mathrm P}(x)>0\) for all sufficiently
large \(x\). Since there is no zero in \([1,\infty)\), it follows that
\(B_N^{\mathrm P}(1)>0\).

The multiplication recurrence evaluated at \(x=1\) gives
\(T^{\mathrm P}h^{\mathrm P}=h^{\mathrm P}\).  Theorem~\ref{thm:MP-matrix-positive-region}
gives entrywise nonnegativity of \(T^{\mathrm P}\), with strict positivity
throughout its structural band.  Positive diagonal conjugation preserves
these signs,
and \(P^{\mathrm P}\one
=(H^{\mathrm P})^{-1}T^{\mathrm P}h^{\mathrm P}=\one\).
Thus \(P^{\mathrm P}\) is row-stochastic.  The final statement follows
because Theorem~\ref{thm:MP-matrix-positive-region} concerns the grouped
lower factor, not the signs of its individual bidiagonal factors.
\end{proof}

\subsection{Stochastic normalization and stochastic bidiagonal factorization}
\label{subsec:MP-stochastic-normalization}
The matrix $T^{\mathrm P}$ is the spectral recurrence matrix. The Markov transition matrix is obtained by the normalization at the spectral endpoint $x=1$. By Proposition~\ref{pro:MP-Markov-enlarged-region}, let
\(h^{\mathrm P}=(h_N^{\mathrm P})_{N\ge0}\), with
\(h_N^{\mathrm P}>0\) and
\(T^{\mathrm P}h^{\mathrm P}=h^{\mathrm P}\), be the positive right eigenvector given by evaluating the normalized step-line forms at $x=1$, and let $H^{\mathrm P}=\diag(h_N^{\mathrm P})$. Define
\(P^{\mathrm P}:=(H^{\mathrm P})^{-1}T^{\mathrm P}H^{\mathrm P}\).
Since $T^{\mathrm P}\ge0$ throughout the enlarged region
\eqref{eq:MP-matrix-positive-region}, this conjugation gives
\(P^{\mathrm P}\one=(H^{\mathrm P})^{-1}T^{\mathrm P}h^{\mathrm P}=\one\), so $P^{\mathrm P}$ is a row-stochastic banded kernel throughout
\(\mathcal S^{\mathrm P,+}_{p,q}\).

The grouped Gaussian factors in
Proposition~\ref{pro:MP-explicit-grouped-LU} admit a direct stochastic
normalization throughout this larger region. Set
\(v^{\mathrm P}:=\mathcal U^{\mathrm P}h^{\mathrm P}\) and
\(D^{\mathrm P}:=\diag(v^{\mathrm P})\); that is,
\(v_N^{\mathrm P}=\sum_{e=0}^{q}
\mathsf U^{\mathrm P}_{N,e}h^{\mathrm P}_{N+e}\). Define
\begin{equation}
\label{eq:MP-grouped-stochastic-factors}
P_{\downarrow}^{\mathrm P}
\coloneq
(H^{\mathrm P})^{-1}\mathcal L^{\mathrm P}D^{\mathrm P},
\qquad
P_{\uparrow}^{\mathrm P}
\coloneq
(D^{\mathrm P})^{-1}\mathcal U^{\mathrm P}H^{\mathrm P}.
\end{equation}
Then
\begin{equation}
\label{eq:MP-grouped-stochastic-LU}
P^{\mathrm P}
=P_{\downarrow}^{\mathrm P}P_{\uparrow}^{\mathrm P},
\end{equation}
and the nonzero entries of the two factors are explicitly
\begin{align}
(P_{\downarrow}^{\mathrm P})_{N,N-d}
&=
\frac{\mathsf L^{\mathrm P}_{N,d}\,v^{\mathrm P}_{N-d}}
{h^{\mathrm P}_N},
&&0\le d\le\min\{p,N\},
\label{eq:MP-grouped-lower-stochastic-explicit}
\\
(P_{\uparrow}^{\mathrm P})_{M,M+e}
&=
\frac{\mathsf U^{\mathrm P}_{M,e}\,h^{\mathrm P}_{M+e}}
{v^{\mathrm P}_M},
&&0\le e\le q.
\label{eq:MP-grouped-upper-stochastic-explicit}
\end{align}
Indeed, Theorem~\ref{thm:MP-matrix-positive-region} gives positivity of
all entries in the lower and upper bands of \(\mathcal L^{\mathrm P}\) and
\(\mathcal U^{\mathrm P}\), respectively, so \(v^{\mathrm P}>0\). Moreover,
\(P_{\uparrow}^{\mathrm P}\one
=(D^{\mathrm P})^{-1}\mathcal U^{\mathrm P}h^{\mathrm P}=\one\) and
\(P_{\downarrow}^{\mathrm P}\one
=(H^{\mathrm P})^{-1}\mathcal L^{\mathrm P}v^{\mathrm P}
=(H^{\mathrm P})^{-1}T^{\mathrm P}h^{\mathrm P}=\one\).
Thus \eqref{eq:MP-grouped-stochastic-LU} is an explicit stochastic
Gauss--Borel factorization with downward jumps of sizes at most \(p\) and
upward jumps of sizes at most \(q\).  For \(p=2,q=1\), it specializes to
the \(P_LP_U\) factorization of the Jacobi--Pi\~neiro urn model in
\cite{GrunbaumDelaIglesia}; formulas
\eqref{eq:MP-grouped-lower-stochastic-explicit} and
\eqref{eq:MP-grouped-upper-stochastic-explicit} give its general mixed
\((p,q)\) counterpart.

On the smaller PBF region \(\mathcal R^{\mathrm P,+}_{p,q}\), the
stochastic normalization can moreover be performed factor by factor.  This
is the construction of \cite[Proposition~2.3]{BFM6}, now applied to the
explicit mixed Pi\~neiro factors. Put
\(M_1=L_1^{\mathrm P},\ldots,M_p=L_p^{\mathrm P}\) and
\(M_{p+1}=U_q^{\mathrm P},\ldots,M_{p+q}=U_1^{\mathrm P}\), so that
$T^{\mathrm P}=M_1\cdots M_{p+q}$. Set $h^{[p+q]}=h^{\mathrm P}$ and define recursively
\(h^{[r-1]}=M_rh^{[r]}\), for
\(r=p+q,p+q-1,\ldots,1\).
Then $h^{[0]}=h^{\mathrm P}$. If $H_r=\diag(h^{[r]})$, the matrices
\(\widehat M_r:=H_{r-1}^{-1}M_rH_r\),
\(r\in\{1,\ldots,p+q\}\), are stochastic bidiagonal matrices, because
\(\widehat M_r\one
=H_{r-1}^{-1}M_rh^{[r]}
=H_{r-1}^{-1}h^{[r-1]}=\one\).
Consequently
\begin{equation}\label{eq:MP-SBF}
P^{\mathrm P}
=
\widehat L_1^{\mathrm P}\cdots \widehat L_p^{\mathrm P}
\widehat U_q^{\mathrm P}\cdots \widehat U_1^{\mathrm P},
\end{equation}
where each factor is bidiagonal, stochastic, and has two positive entries in every non-boundary row.  More explicitly, for a lower factor, with the convention that the entry with column index \(-1\) is absent,
\begin{align}
(\widehat L_k^{\mathrm P})_{N,N}
&=
\frac{h_N^{[k]}}{h_N^{[k-1]}},
&
(\widehat L_k^{\mathrm P})_{N,N-1}
&=
\frac{\ell_{k,N-1}^{\mathrm P}h_{N-1}^{[k]}}{h_N^{[k-1]}},
\qquad k\in\{1,\ldots,p\},
\label{eq:MP-stochastic-lower-explicit}
\end{align}
and, for the upper factor \(U_b^{\mathrm P}\), which occupies the position
\(r=p+q-b+1\) in the ordered product,
\begin{align}
(\widehat U_b^{\mathrm P})_{N,N}
&=
\frac{u_{b,N}^{\mathrm P}h_N^{[r]}}{h_N^{[r-1]}},
&
(\widehat U_b^{\mathrm P})_{N,N+1}
&=
\frac{h_{N+1}^{[r]}}{h_N^{[r-1]}},
\qquad b\in\{1,\ldots,q\}.
\label{eq:MP-stochastic-upper-explicit}
\end{align}
The quantities \(\ell_{a,N}^{\mathrm P}\) and \(u_{b,N}^{\mathrm P}\) are the simplified Pochhammer products in \eqref{eq:MP-lower-Pochhammer-simplified} and \eqref{eq:MP-upper-Pochhammer-simplified}; hence the stochastic factors are the explicit Doob normalizations of those bidiagonal coefficients. This is the stochastic bidiagonal factorization used in this paper and in \cite{BFM6}.

The original recurrence matrix need not be bounded even though its
Doob-normalized matrix is stochastic.  The distinction is determined
exactly by the cyclic spacings of the beta parameters.

\begin{proposition}[Boundedness of the Pi\~neiro recurrence]
\label{prop:MP-boundedness-criterion}
Assume the ordered Pi\~neiro PBF conditions and put
\begin{equation}
\label{eq:MP-cyclic-beta-spacings}
\vartheta_j^{\mathrm P}
\coloneq
\beta_{j+1}-\beta_j
\quad (1\le j<q),
\qquad
\vartheta_q^{\mathrm P}
\coloneq
1+\beta_1-\beta_q.
\end{equation}
For \(b\in\{1,\ldots,q\}\), let
\(\rho(N,b)\in\{1,\ldots,q\}\) be determined by
\(\rho(N,b)\equiv N+b\pmod q\).  Along every fixed residue class
\(N\equiv r\pmod{\operatorname{lcm}(p,q)}\),
\begin{equation}
\label{eq:MP-upper-factor-asymptotic}
u_{b,N}^{\mathrm P}
=
C_{b,r}^{\mathrm P}
N^{q\vartheta_{\rho(N,b)}^{\mathrm P}-1}
\bigl(1+\mathrm O(N^{-1})\bigr),
\qquad C_{b,r}^{\mathrm P}>0.
\end{equation}
For every \(a\in\{1,\ldots,p\}\), along the same residue class,
\begin{equation}
\label{eq:MP-lower-factor-asymptotic}
\ell_{a,N}^{\mathrm P}
=
\widetilde C_{a,r}^{\mathrm P}
N^{q\vartheta_{s_N}^{\mathrm P}-1}
\bigl(1+\mathrm O(N^{-1})\bigr),
\qquad \widetilde C_{a,r}^{\mathrm P}>0,
\end{equation}
where \(s_N\) is determined by \(N=qd_N+s_N-1\).  Consequently,
\begin{equation}
\label{eq:MP-boundedness-equivalence}
T^{\mathrm P}\text{ is bounded on }\ell^s(\mathbb N_0)
\text{ for every }1\le s\le\infty
\quad\Longleftrightarrow\quad
\vartheta_1^{\mathrm P}=\cdots=\vartheta_q^{\mathrm P}=\frac1q.
\end{equation}
Regardless of this alternative, \(P^{\mathrm P}\) is bounded on every
\(\ell^s\).  If \(T^{\mathrm P}\) is unbounded, the identity
\(P^{\mathrm P}=(H^{\mathrm P})^{-1}T^{\mathrm P}H^{\mathrm P}\)
is therefore not a bounded similarity.
\end{proposition}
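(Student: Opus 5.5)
The plan is to read the asymptotics directly off the explicit Pochhammer formulas \eqref{eq:MP-upper-Pochhammer-simplified} and \eqref{eq:MP-lower-Pochhammer-simplified}. Along a fixed residue class \(N\equiv r\pmod{\operatorname{lcm}(p,q)}\), the data \(i_N,s_N\) are fixed, and \(d_N,e_N\) grow linearly (\(d_N\sim N/q\), \(e_N\sim N/p\)); the multi-indices \(n_{N,i},m_{N,j}\) are equal to \(e_N\) or \(d_N\) plus a fixed \(0\) or \(1\). Each Pochhammer quotient \((z+c)_n/(z+c')_n\), with argument and length both linear in \(N\), is a ratio of Gamma functions. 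Stirling's formula \(\Gamma(x+a)/\Gamma(x+b)=x^{a-b}(1+\mathrm O(x^{-1}))\) then turns it into \(\text{const}\cdot N^{c-c'}\) along the residue class. Positivity of the constants comes from Theorem~\ref{thm:MP-PBF-region}: all entries are positive, so the leading coefficients, which are nonzero Gamma ratios, are positive.

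For \(u_{b,N}^{\mathrm P}\), I will do the following.
\begin{enumerate}
\item The first product shifts the beta argument from \(\beta_{b-1+s_N}\) to \(\beta_{b+s_N}\). Over the \(p\) factors it contributes \(N^{-(\beta_{b+s_N}-\beta_{b-1+s_N})}\) per factor, in the form \(\Gamma(x+n)/\Gamma(x)\) ratios.
\item The third product contributes \(N^{-(\beta_{b+j}-\beta_{b-1+j})}\) summed over \(j\). This sum telescopes cyclically to \(-1\) by \eqref{eq:MP-cyclic-increment-sums}.
\item The middle beta-difference quotient is handled via \(E_N(\boldsymbol\beta^{[h]})/E_N(\boldsymbol\beta^{[h-1]})\). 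Writing each \((b_j-b_{s_N}-d_N)_{m}\) up to sign as \(\Gamma(d_N+1+b_{s_N}-b_j)/\Gamma(\cdot)\) gives power contributions in the cyclic differences.
\end{enumerate}
The bookkeeping should collapse the total exponent to \(q\vartheta_{\rho(N,b)}-1\). The spacing \(\beta_{b+s_N}-\beta_{b-1+s_N}\) is \(\vartheta_{\rho}\), and it enters with multiplicity \(q\) once the \(p\)-fold alpha products and the \(q\)-fold beta products are combined with the \(m_{N,j}\) lengths. I will verify this exponent count carefully on the case \(p=q=1\) (classical Jacobi-type, exponent \(0\)) as a sanity check. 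The lower entries \(\ell_{a,N}^{\mathrm P}\) are treated identically: the quotient \(E_{N+1}/E_N\) gives \(N^{q\vartheta_{s_N}}\) up to the \(N^{-1}\) coming from the final linear factor \(\frac{\alpha_{a+i_N}-\alpha_a+e_N}{\alpha_{a+i_N}+e_N+\beta_{s_N}+d_N+1}\), which tends to a constant, together with the Pochhammer alpha-beta quotients. This exponent bookkeeping is the main obstacle, especially the boundary residues where \(s_{N+1}\) wraps from \(q\) to \(1\) and \(d_{N+1}=d_N+1\).

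For the equivalence \eqref{eq:MP-boundedness-equivalence}, note that \(\sum_j\vartheta_j=1\), so either all \(\vartheta_j=1/q\) or some \(\vartheta_j>1/q\). In the equal case all exponents vanish, so every factor entry is bounded. Hence each bidiagonal factor is bounded on every \(\ell^s\) (a bidiagonal matrix with bounded entries is bounded by Schur's test), and so is their product \(T^{\mathrm P}\). Otherwise some \(\vartheta_j>1/q\), so the exponent \(q\vartheta_j-1>0\). Choosing residue classes with \(\rho(N,b)=j\) for all \(b\) simultaneously (possible since \(\rho(N,b)\) depends on \(N+b\)), the diagonal entry \((T^{\mathrm P})_{N,N+q}\) is \(\prod_b\)-free (it equals \(1\)). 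Instead I will use \((T^{\mathrm P})_{N,N}\ge\prod_b u_{b,N}^{\mathrm P}\), since all paths are positive. This product contains one factor with positive exponent and others with exponent \(\ge-1\), which might cancel. To avoid that, I will instead use \((T^{\mathrm P})_{N,N-p}=\prod_{a}\ell_{a,\cdot}\) evaluated on a class where \(s\) equals \(j\) at each relevant index. If that still cancels, I will use the fact that the entries \(\mathsf U^{\mathrm P}_{N,q-1}\ge\) the single-stay term \(u_{b,N}\), with \(b\) chosen so that \(\rho=j\), times \(\mathsf L_{N,0}=1\). Then \((T^{\mathrm P})_{N,N+q-1}\ge u_{b,\cdot}^{\mathrm P}\to\infty\), and unbounded entries give unboundedness on every \(\ell^s\).

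Finally, \(P^{\mathrm P}\) is stochastic and banded, so it has \(\ell^\infty\) norm \(1\). Its column sums are bounded because each column has at most \(p+q+1\) entries in \([0,1]\), which gives boundedness on \(\ell^1\), and Riesz–Thorin interpolation then gives boundedness on every \(\ell^s\). If a bounded similarity existed while \(T^{\mathrm P}\) is unbounded, this would be a contradiction, which yields the final remark.
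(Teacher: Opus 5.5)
The gap is the exponent count, which is the whole content of \eqref{eq:MP-upper-factor-asymptotic}--\eqref{eq:MP-lower-factor-asymptotic}. You defer it (``should collapse''), and the rule you propose for it is false. For a Pochhammer quotient whose start and length are both linear in \(N\),
\[
\frac{(z+c)_n}{(z+c')_n}
=\frac{\Gamma(z+c+n)}{\Gamma(z+c'+n)}\,\frac{\Gamma(z+c')}{\Gamma(z+c)}
\sim\Bigl(\frac{z+n}{z}\Bigr)^{c-c'},
\]
which tends to a positive constant, not \(\mathrm{const}\cdot N^{c-c'}\); you kept only the top end.

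\emph{Upper factor.} In \(u^{\mathrm P}_{b,N}\) the first product tends to \((p/(p+q))^{p\vartheta_\rho}\) and the third to \(q/(p+q)\), so neither carries a power of \(N\). Powers come only from Pochhammer symbols with one end bounded, i.e.\ the beta-difference products: \((\beta_{k+j}-\beta_{k+s_N}-d_N)_{m_{N,j}}=\pm\Gamma(d_N+1+\beta_{k+s_N}-\beta_{k+j})/\Gamma(c_{k,j})\), with \(c_{k,j}\) independent of \(N\). Hence the middle quotient \(E_N(\boldsymbol\beta^{[b]})/E_N(\boldsymbol\beta^{[b-1]})\) alone is of order \(N^{q(\beta_{b+s_N}-\beta_{b-1+s_N})-\sum_j(\beta_{b+j}-\beta_{b-1+j})}=N^{q\vartheta_\rho-1}\). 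The cyclic telescoping to \(1\) that you assign to the third product belongs here. Your itemization (\(-p\vartheta_\rho\) from the first product, \(-1\) from the third), added to this, gives \((q-p)\vartheta_\rho-2\). Already for \(p=q=1\), where the middle quotient is identically \(1\), it returns \(-2\) instead of \(0\).

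\emph{Lower factor.} The same correction applies to \(\ell^{\mathrm P}_{a,N}\). The quotient \(E_{N+1}/E_N\) gives \(N^{q\vartheta_{s_N}}\), and the final linear factor tends to \(q/(p+q)\). The \(N^{-1}\) comes from the alpha--beta Pochhammer quotient. With \(x_N=\beta_{s_N}+d_N\), its top ends differ, after reindexing via \(\alpha_{a+p}=\alpha_a+1\) and \(|\boldsymbol n_{N+1}|-|\boldsymbol n_N|=1\), only by \(x_N\mapsto x_{N+1}\); this gives \((N(1/p+1/q))^{-p\vartheta_{s_N}}\). Its bottom ends give \((N/q)^{p\vartheta_{s_N}-1}\) by \(\sum_i\alpha_{a+i}=\sum_i\alpha_{a-1+i}+1\).

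The rest of your plan is the paper's argument. Your final fallback for unboundedness is exactly what the paper does. Made precise, \((T^{\mathrm P})_{N,N+q-1}\ge u^{\mathrm P}_{b,N+q-b}\) for each \(b\), and this lower bound has exponent \(q\vartheta_\rho-1\) with \(\rho\equiv N\pmod q\) for every \(b\). So take \(N\equiv j\pmod q\) with \(\vartheta_j>1/q\). The detours you abandoned were indeed dead ends: \(\rho(N,b)\) runs through all residues as \(b\) varies, and \(\prod_b u^{\mathrm P}_{b,N}\) has total exponent \(\sum_b(q\vartheta_{\rho(N,b)}-1)=0\). The bounded direction, and the \(\ell^1\)/\(\ell^\infty\)-plus-interpolation argument for \(P^{\mathrm P}\), match the paper.
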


\begin{proof}
Fix a residue class modulo \(\operatorname{lcm}(p,q)\).  On that class all
step-line indices occurring in
\eqref{eq:MP-lower-Pochhammer-simplified} and
\eqref{eq:MP-upper-Pochhammer-simplified} are affine functions of \(N\).
Write each Pochhammer symbol as a quotient of Gamma functions and use
\[
\frac{\Gamma(x+c)}{\Gamma(x+d)}
=x^{c-d}\bigl(1+\mathrm O(x^{-1})\bigr).
\]
Put \(x_N=\beta_{s_N}+d_N\) and extend
\(\beta_{q+1}=\beta_1+1\).  Then
\(x_{N+1}-x_N=\vartheta_{s_N}^{\mathrm P}\).  In the first quotient of
\eqref{eq:MP-lower-Pochhammer-simplified}, the identity
\[
(\beta_j-\beta_{s_N}-d_N)_{d_N}
=(-1)^{d_N}
\frac{\Gamma(d_N+1+\beta_{s_N}-\beta_j)}
{\Gamma(1+\beta_{s_N}-\beta_j)}
\]
shows that the \(q\) Gamma quotients contribute
\(N^{q\vartheta_{s_N}^{\mathrm P}}\).  In the quotient of the alpha
Pochhammer products, the affine cyclic identity
\[
\sum_{i=1}^{p}\alpha_{a+i}
=\sum_{i=1}^{p}\alpha_{a-1+i}+1
\]
together with
\(\lvert\boldsymbol n_{N+1}\rvert-\lvert\boldsymbol n_N\rvert=1\)
leaves the net power \(N^{-1}\); the remaining rational factor has a
finite positive limit.  This proves
\eqref{eq:MP-lower-factor-asymptotic}.

For the upper factor, collecting the Gamma powers leaves the cyclic
difference between the two beta parameters selected by \(N+b\).  The net
power is
\[
q\bigl(\beta_{\rho+1}-\beta_\rho\bigr)-1
\quad(\rho<q),
\qquad
q\bigl(1+\beta_1-\beta_q\bigr)-1
\quad(\rho=q),
\]
which proves \eqref{eq:MP-upper-factor-asymptotic}.  Notice that
\(\sum_{j=1}^q\vartheta_j^{\mathrm P}=1\).

If all cyclic spacings equal \(1/q\), both displayed asymptotics have
exponent zero, so every bidiagonal factor has uniformly bounded entries.
Their finite product is a band matrix with uniformly
bounded entries and hence defines a bounded operator on every \(\ell^s\).
If the spacings are not all equal, one of them is greater than \(1/q\).
Equation~\eqref{eq:MP-upper-factor-asymptotic} then gives an unbounded
subsequence in an upper factor.  The \((q-1)\)-st superdiagonal of
\(U_q^{\mathrm P}\cdots U_1^{\mathrm P}\) is a sum of the upper-factor
diagonal coefficients at shifted indices, all with positive coefficients.
Multiplication on the left by the unit lower factors only adds nonnegative
terms.  Thus an entry sequence of \(T^{\mathrm P}\) is unbounded.

Finally, a row-stochastic band matrix has entries at most one, row sums one,
and uniformly bounded column sums because each column meets only finitely
many rows.  Hence \(P^{\mathrm P}\) is bounded on \(\ell^1\) and
\(\ell^\infty\), and interpolation gives boundedness on the intermediate
spaces.  A bounded invertible diagonal similarity could not conjugate an
unbounded operator to this bounded matrix.
\end{proof}

All quantities used in the construction have now been given explicitly.
Given \(p,q,\boldsymbol\alpha,\boldsymbol\beta\) and a state \(N\), equations
\eqref{eq:MP-type-I-step-indices} and
\eqref{eq:MP-type-II-step-indices} determine the two multi-indices;
\eqref{eq:MP-explicit-type-I-form} and
\eqref{eq:MP-explicit-type-II-form} define the corresponding mixed forms;
\eqref{eq:MP-explicit-A-coefficients},
\eqref{eq:MP-raw-B-coefficients}, and
\eqref{eq:MP-h-explicit-alpha-beta} compute the two mixed forms and the
harmonic vector; equations
\eqref{eq:MP-lower-Pochhammer-simplified} and
\eqref{eq:MP-upper-Pochhammer-simplified} compute the bidiagonal factors;
\eqref{eq:MP-lower-bidiagonal-factors} and
\eqref{eq:MP-upper-bidiagonal-factors} place those coefficients in the
factor matrices;
and equations \eqref{eq:MP-stochastic-lower-explicit} and
\eqref{eq:MP-stochastic-upper-explicit} compute every elementary
transition probability. Finite multiplication then gives any requested entry of
\(T^{\mathrm P}\) or \(P^{\mathrm P}\). If the parameters are rational,
all these quantities are rational and are obtained by exact arithmetic.

\subsection{Spectral representation after stochastic normalization}
\label{subsec:MP-discrete-spectral}
The mixed Pi\~neiro spectral representation is preserved by the stochastic
normalization: it is conjugated by the positive harmonic vector. Unlike the
general Favard representation, no measure has to be recovered here: the polynomial
vectors are given by
\eqref{eq:MP-explicit-A-components} and
\eqref{eq:MP-explicit-B-components}, while every entry of the spectral
measure is the elementary beta density
\(x^{\alpha_a+\beta_b}\,\dd x\) from
\eqref{eq:MP-matrix-measure} and
\eqref{eq:MP-spectral-measure-explicit}. Consequently, the multiplication
matrix satisfies the completely explicit formula
\begin{equation}
\label{eq:MP-Tn-spectral}
(T^{\mathrm P})^n_{N,M}
=
\sum_{a=1}^{p}\sum_{b=1}^{q}
\int_0^1
B_N^{(b),\mathrm P}(x)
A_M^{(a),\mathrm P}(x)
x^{n+\alpha_a+\beta_b}\,\dd x,
\qquad n\in\Nzero,
\end{equation}
in which both polynomial factors are finite sums with coefficients expressed
by Pochhammer products. Since
\(P^{\mathrm P}=(H^{\mathrm P})^{-1}T^{\mathrm P}H^{\mathrm P}\), with
\(H^{\mathrm P}=\diag(h_N^{\mathrm P})\), one has
\((P^{\mathrm P})^n
=(H^{\mathrm P})^{-1}(T^{\mathrm P})^nH^{\mathrm P}\).
Therefore
\begin{equation}
\label{eq:MP-Pn-spectral}
(P^{\mathrm P})^n_{N,M}
=
\frac{h_M^{\mathrm P}}{h_N^{\mathrm P}}
\sum_{a=1}^{p}\sum_{b=1}^{q}
\int_0^1
B_N^{(b),\mathrm P}(x)
A_M^{(a),\mathrm P}(x)
x^{n+\alpha_a+\beta_b}\,\dd x.
\end{equation}
Equivalently, define the Doob-normalized left and right polynomial vectors by
\begin{equation}
\widetilde B_N^{(b),\mathrm P}(x)
\coloneq
\frac{B_N^{(b),\mathrm P}(x)}{h_N^{\mathrm P}},
\qquad
\widetilde A_M^{(a),\mathrm P}(x)
\coloneq
h_M^{\mathrm P}A_M^{(a),\mathrm P}(x).
\end{equation}
Then the transition probabilities of the discrete-time chain are given by the
Karlin--McGregor type formula
\begin{equation}\label{eq:MP-Pn-spectral-normalized}
(P^{\mathrm P})^n_{N,M}
=
\sum_{a=1}^{p}\sum_{b=1}^{q}
\int_0^1
\widetilde B_N^{(b),\mathrm P}(x)
\widetilde A_M^{(a),\mathrm P}(x)
x^{n+\alpha_a+\beta_b}\,\dd x.
\end{equation}
Thus the mixed Pi\~neiro polynomials provide the spectral interpretation of the
discrete-time Markov chain itself, not only of the underlying recurrence
matrix. The stochastic normalization changes the polynomial normalization on
the two sides, but it leaves the spectral variable and the explicit matrix
of beta densities unchanged. Every transition probability is therefore a
finite sum of beta integrals and can, for fixed \(n,N,M\), be evaluated as a
finite combination of rational functions of the parameters.

\begin{corollary}[Evaluation of the discrete spectral integrals]
\label{coro:MP-discrete-spectral-explicit}
For \(N,M,n\in\Nzero\), the entries of the recurrence matrix and of the
discrete-time Markov kernel are
\begin{align}
\label{eq:MP-Tn-spectral-explicit}
(T^{\mathrm P})^n_{N,M}
&=
\frac{1}{\widehat\kappa_N}
\sum_{a=1}^{p}\sum_{b=1}^{q}
\sum_{r=0}^{\widetilde m_{N,b}-1}
\sum_{s=0}^{n_{M,a}-1}
\frac{c_{N,b,r}\,a_{M,a,s}}
{\alpha_a+\beta_b+n+r+s+1},
\\
\label{eq:MP-Pn-spectral-explicit}
(P^{\mathrm P})^n_{N,M}
&=
\frac{h_M^{\mathrm P}}
{h_N^{\mathrm P}\widehat\kappa_N}
\sum_{a=1}^{p}\sum_{b=1}^{q}
\sum_{r=0}^{\widetilde m_{N,b}-1}
\sum_{s=0}^{n_{M,a}-1}
\frac{c_{N,b,r}\,a_{M,a,s}}
{\alpha_a+\beta_b+n+r+s+1},
\end{align}
where \(a_{M,a,s}\) and \(c_{N,b,r}\) are given by
\eqref{eq:MP-explicit-A-coefficients} and
\eqref{eq:MP-raw-B-coefficients}, respectively. Moreover,
\(\widehat\kappa_N=c_{N,s_N,d_N}\), while
\(h_K^{\mathrm P}=B_K^{\mathrm P}(1)\), for \(K\ge0\), is given
explicitly by \eqref{eq:MP-h-explicit-alpha-beta}.
As usual, a sum whose upper limit is \(-1\) is empty. In particular, for
fixed \(N,M,n\), both entries are rational functions of
\(\boldsymbol\alpha\) and \(\boldsymbol\beta\); when the parameters are rational, the
entries are rational numbers.
\end{corollary}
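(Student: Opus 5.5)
The plan is to start from the spectral formula \eqref{eq:MP-Tn-spectral} and expand both polynomial factors into monomials using Corollary~\ref{coro:MP-componentwise-Cauchy-formulas}. Then each term is a single beta integral. By \eqref{eq:MP-explicit-B-components}, the type-II component is
\[
B_N^{(b),\mathrm P}(x)=\widehat\kappa_N^{-1}\sum_{r=0}^{\widetilde m_{N,b}-1}c_{N,b,r}x^r,
\]
where inactive components, those with \(\widetilde m_{N,b}=0\), give an empty sum. By \eqref{eq:MP-explicit-A-components}, the type-I component is
\[
A_M^{(a),\mathrm P}(x)=\sum_{s=0}^{n_{M,a}-1}a_{M,a,s}x^s,
\]
with the same convention for \(n_{M,a}=0\). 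Substituting both into \eqref{eq:MP-Tn-spectral} reduces the \((a,b)\) summand to a finite double sum of integrals
\[
\int_0^1x^{n+\alpha_a+\beta_b+r+s}\,\dd x=\frac{1}{\alpha_a+\beta_b+n+r+s+1}.
\]
Interchanging the finite sums with the integral needs no justification. The resulting expression is \eqref{eq:MP-Tn-spectral-explicit}.

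The one point to check is convergence of these integrals, that is, \(\alpha_a+\beta_b+n+r+s+1>0\). This follows from the admissibility inequality \(\alpha_a+\beta_b>-1\) in \eqref{eq:MP-admissible-domain}, since \(n,r,s\ge0\). The same inequality shows that no denominator vanishes. I would also record that \(\widehat\kappa_N\ne0\). Its Pochhammer factors built from alpha--beta sums are positive. Its beta-difference Pochhammer factors are nonzero by the nonintegrality assumption, exactly as in the proof of Lemma~\ref{lem:MP-dual-diagonal-sign}.

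For the Markov kernel, I would use the identity
\[
(P^{\mathrm P})^n=(H^{\mathrm P})^{-1}(T^{\mathrm P})^nH^{\mathrm P},
\]
which holds entrywise because the matrices are banded and the conjugation is diagonal. This gives \((P^{\mathrm P})^n_{N,M}=(h_M^{\mathrm P}/h_N^{\mathrm P})(T^{\mathrm P})^n_{N,M}\), and \eqref{eq:MP-Pn-spectral-explicit} follows from \eqref{eq:MP-Tn-spectral-explicit}. Here \(h_K^{\mathrm P}=B_K^{\mathrm P}(1)\) is given by \eqref{eq:MP-h-explicit-alpha-beta}, and it is positive by Proposition~\ref{pro:MP-Markov-enlarged-region}, so the division is legitimate. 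The identity \(\widehat\kappa_N=c_{N,s_N,d_N}\) is the definition in Corollary~\ref{coro:MP-componentwise-Cauchy-formulas}.

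Rationality then follows by inspection. The quantities \(a_{M,a,s}\), \(c_{N,b,r}\), \(\widehat\kappa_N\) and \(h_K^{\mathrm P}\) are finite products and sums of Pochhammer symbols and factorials in \(\boldsymbol\alpha,\boldsymbol\beta\). For fixed \(N,M,n\), all summation ranges are fixed. Both entries are therefore rational functions of the parameters, and they are rational numbers when the parameters are rational. I see no real obstacle here. The only care needed is with empty ranges and with the nonvanishing of \(\widehat\kappa_N\) and \(h_N^{\mathrm P}\), which the cited results supply.
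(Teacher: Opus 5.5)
Your proposal is correct and follows the paper's proof essentially verbatim. The paper likewise substitutes the finite component expansions into \eqref{eq:MP-Tn-spectral}, evaluates each term as the beta integral $1/(\alpha_a+\beta_b+n+r+s+1)$, and passes to $P^{\mathrm P}$ through the Doob relation $(P^{\mathrm P})^n_{N,M}=(h_M^{\mathrm P}/h_N^{\mathrm P})(T^{\mathrm P})^n_{N,M}$. Your extra checks are harmless well-definedness remarks that the paper leaves implicit: convergence via $\alpha_a+\beta_b>-1$, nonvanishing of $\widehat\kappa_N$, and positivity of $h_N^{\mathrm P}$.
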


\begin{proof}
Substitute the finite expansions
\eqref{eq:MP-explicit-A-components} and
\eqref{eq:MP-explicit-B-components} into
\eqref{eq:MP-Tn-spectral}. Each summand is evaluated by
\[
\int_0^1
x^{\alpha_a+\beta_b+n+r+s}\,\dd x
=
\frac{1}{\alpha_a+\beta_b+n+r+s+1}.
\]
This gives \eqref{eq:MP-Tn-spectral-explicit}. The Doob relation
\((P^{\mathrm P})^n_{N,M}
=(h_M^{\mathrm P}/h_N^{\mathrm P})(T^{\mathrm P})^n_{N,M}\)
then gives \eqref{eq:MP-Pn-spectral-explicit}. Rationality follows from
the explicit Pochhammer-product formulas
\eqref{eq:MP-explicit-A-coefficients},
\eqref{eq:MP-raw-B-coefficients}, and
\eqref{eq:MP-h-explicit-alpha-beta}.
\end{proof}

\section{Markov processes associated with the mixed Pi\~neiro system}
\label{sec:mixed-Pineiro-Markov}

Section~\ref{sec:mixed-Pineiro-model} produces one explicit stochastic
banded kernel. Here it is used to define a discrete-time chain, two finite-urn
constructions, a bounded continuous-time chain, a QBD obtained by grouping
consecutive states, and an unbounded chain with state-dependent rates.
All these constructions use the same Pi\~neiro transition kernel; only the
time scale or the labeling of the states changes.

\subsection{The discrete-time Markov chain and the elementary factor transitions}
\label{subsec:MP-discrete-chain}
The stochastic matrix $P^{\mathrm P}$ defines a discrete-time Markov chain
\(X^{\mathrm P}=(X^{\mathrm P}_n)_{n\ge0}\), whose transition law is
\(\Pr\{X^{\mathrm P}_{r+1}=M\mid X^{\mathrm P}_r=N\}
=(P^{\mathrm P})_{N,M}\). Since $P^{\mathrm P}$ has bandwidth $(p,q)$,
this probability vanishes whenever \(M-N\notin\{-p,\ldots,q\}\).
For admissible jumps one has the Doob-normalized formula
\begin{equation}
p^{\mathrm P}_{N,N+s}
\coloneq
(P^{\mathrm P})_{N,N+s}
=
\frac{h^{\mathrm P}_{N+s}}{h^{\mathrm P}_{N}}
(T^{\mathrm P})_{N,N+s},
\qquad
s\in\{-p,\ldots,q\},
\end{equation}
with the convention that the right-hand side is zero when $N+s<0$. Hence
\(\sum_{s=-p}^{q}p^{\mathrm P}_{N,N+s}=1\) for
\(N\in\Nzero\). In particular, the case $(p,q)=(3,2)$ gives a
six-diagonal chain with possible one-step moves
\(N\mapsto N-3,N-2,N-1,N,N+1,N+2\), subject only to the boundary
restrictions at $N=0,1,2$.

This wide-jump Markov chain exists throughout
\(\mathcal S^{\mathrm P,+}_{p,q}\).  In the part of that region lying
outside \(\mathcal R^{\mathrm P,+}_{p,q}\), the factorization into the
individual Christoffel bidiagonal factors is not positive.  On the PBF
region, the stochastic bidiagonal factorization gives the following more
refined description of the same transition. Write the factors in the order
\(F_1=\widehat L_1^{\mathrm P},\ldots,F_p=\widehat L_p^{\mathrm P}\) and
\(F_{p+1}=\widehat U_q^{\mathrm P},\ldots,F_{p+q}=\widehat U_1^{\mathrm P}\), so that
\(P^{\mathrm P}=F_1\cdots F_{p+q}\).
For a lower factor $\widehat L_k^{\mathrm P}$ put
\(\delta_k(N):=(\widehat L_k^{\mathrm P})_{N,N-1}\) and
\(1-\delta_k(N)=(\widehat L_k^{\mathrm P})_{N,N}\),
with $\delta_k(0)=0$. For an upper factor $\widehat U_j^{\mathrm P}$ put
\(\gamma_j(N):=(\widehat U_j^{\mathrm P})_{N,N+1}\) and
\(1-\gamma_j(N)=(\widehat U_j^{\mathrm P})_{N,N}\).
In the ordered cyclic PBF region \eqref{eq:MP-PBF-region},
\(0<\delta_k(N)<1\) for \(N\ge1\) and
\(0<\gamma_j(N)<1\) for \(N\ge0\),
with the boundary convention $\delta_k(0)=0$. Thus one step of
$X^{\mathrm P}$ is the result of \(p+q\) nearest-neighbor transitions:
first \(p\) possible downward moves, followed by \(q\) possible upward
moves. Explicitly, if
\(N=N_0,N_1,\ldots,N_{p+q}=M\)
is a path such that, for $r\in\{1,\ldots,p\}$, $N_r-N_{r-1}\in\{0,-1\}$, and, for $r\in\{p+1,\ldots,p+q\}$, $N_r-N_{r-1}\in\{0,+1\}$, then
\begin{equation}\label{eq:MP-path-probabilities}
(P^{\mathrm P})_{N,M}
=
\sum_{N_1,\ldots,N_{p+q-1}}
\prod_{r=1}^{p+q}(F_r)_{N_{r-1},N_r},
\end{equation}
where the sum is restricted to paths satisfying these conditions.
Formula~\eqref{eq:MP-path-probabilities} is the probabilistic form of the
stochastic bidiagonal factorization: the wide jump \(N\to M\) is the net
displacement after the successive birth/death-or-stay transitions.

\subsection{Finite urns and a rational (3,2) worked example}
\label{subsec:MP-finite-urns}

The preceding elementary transitions are now realized with finite urns.
At stage \(r\), the current intermediate state \(N\) selects the
urn \(\mathscr U_{r,N}\); the color drawn determines whether the state
moves by one in the direction of that factor or remains fixed.  The updated
state then selects the urn used at stage \(r+1\).  Thus the model is a
state-dependent collection of binary urns used in a prescribed order, not
a single urn whose number of blue balls is the state.  The intermediate
state is updated after every draw.  Corollary~\ref{cor:MP-state-encoding-urn-model}
below gives a second construction in which the number of blue balls in
the primary urn is the state.

The only quantity in
\eqref{eq:MP-stochastic-lower-explicit}--
\eqref{eq:MP-stochastic-upper-explicit} that might appear implicit is the
harmonic vector. Formula~\eqref{eq:MP-h-explicit-alpha-beta} has already
computed it directly from the explicit type-II coefficients, so no spectral
eigenvector remains to be determined. In the ordered cyclic PBF region it is
positive.

For brevity set \(m=p+q\) and write the ordered factors as
\(M_1=L_1^{\mathrm P},\ldots,M_p=L_p^{\mathrm P}\) and
\(M_{p+1}=U_q^{\mathrm P},\ldots,M_m=U_1^{\mathrm P}\).
The intermediate vectors used above can equivalently be written without an
implicit recurrence as
\begin{equation}
\label{eq:MP-intermediate-h-explicit}
h^{[r]}=M_{r+1}\cdots M_mh^{\mathrm P},
\qquad 0\le r<m,
\qquad
h^{[m]}=h^{\mathrm P}.
\end{equation}
Since every entry of every \(M_r\) is given by
\eqref{eq:MP-lower-Pochhammer-simplified} or
\eqref{eq:MP-upper-Pochhammer-simplified}, equations
\eqref{eq:MP-raw-B-coefficients}--\eqref{eq:MP-intermediate-h-explicit}
express all the urn probabilities below solely in terms of
\(\boldsymbol\alpha,\boldsymbol\beta,N,p,q\).

\begin{proposition}[Mixed Pi\~neiro finite-urn model]
\label{pro:MP-finite-urn-model}
	Assume
	\((\boldsymbol\alpha,\boldsymbol\beta)
	\in\mathcal R^{\mathrm P,+}_{p,q}\cap\mathbb Q^{p+q}\).
	At an intermediate state \(N\), the \(a\)-th lower experiment,
	\(a\in\{1,\ldots,p\}\), uses the two positive rational weights
	\begin{align}
	\mathcal B^-_{a,N}
	&=
	\ell_{a,N-1}^{\mathrm P}h_{N-1}^{[a]},
	&
	\mathcal R^-_{a,N}
	&=
	h_N^{[a]},
	\label{eq:MP-lower-urn-weights}
	\end{align}
	with \(\mathcal B^-_{a,0}=0\). A blue draw sends \(N\) to \(N-1\)
	and a red draw leaves it at \(N\). Hence
	\begin{equation}
	\label{eq:MP-lower-urn-probability-alpha-beta}
	\delta_a(N)
	=
	\frac{\mathcal B^-_{a,N}}
	{\mathcal B^-_{a,N}+\mathcal R^-_{a,N}}
	=
	\frac{
	\ell_{a,N-1}^{\mathrm P}h_{N-1}^{[a]}
	}{
	h_N^{[a]}+
	\ell_{a,N-1}^{\mathrm P}h_{N-1}^{[a]}
	}.
	\end{equation}

	For \(b\in\{1,\ldots,q\}\), let
	\(r(b)=p+q-b+1\), the position of \(U_b^{\mathrm P}\) in the ordered
	product. The corresponding upper experiment uses
	\begin{align}
	\mathcal B^+_{b,N}
	&=
	h_{N+1}^{[r(b)]},
	&
	\mathcal R^+_{b,N}
	&=
	u_{b,N}^{\mathrm P}h_N^{[r(b)]}.
	\label{eq:MP-upper-urn-weights}
	\end{align}
	A blue draw sends \(N\) to \(N+1\) and a red draw leaves it at \(N\),
	with
	\begin{equation}
	\label{eq:MP-upper-urn-probability-alpha-beta}
	\gamma_b(N)
	=
	\frac{\mathcal B^+_{b,N}}
	{\mathcal B^+_{b,N}+\mathcal R^+_{b,N}}
	=
	\frac{
	h_{N+1}^{[r(b)]}
	}{
	h_{N+1}^{[r(b)]}
	+u_{b,N}^{\mathrm P}h_N^{[r(b)]}
	}.
	\end{equation}
	Write each probability in lowest terms,
	\(\delta_a(N)=A^-_{a,N}/D^-_{a,N}\) and
	\(\gamma_b(N)=A^+_{b,N}/D^+_{b,N}\).
	Then the lower urn contains \(A^-_{a,N}\) blue balls and
	\(D^-_{a,N}-A^-_{a,N}\) red balls, whereas the upper urn contains
	\(A^+_{b,N}\) blue balls and \(D^+_{b,N}-A^+_{b,N}\) red balls.
	Executing the \(p\) lower urns in the order
	\(1,\ldots,p\), followed by the \(q\) upper urns in the order
	\(q,\ldots,1\), gives exactly one transition of \(P^{\mathrm P}\).
\end{proposition}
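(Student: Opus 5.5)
The proof combines three ingredients already established: the exact PBF region, the factor-by-factor Doob normalization \eqref{eq:MP-stochastic-lower-explicit}--\eqref{eq:MP-stochastic-upper-explicit}, and the urn realization of Theorem~\ref{thm:finite-urn-correspondence}. Since \((\boldsymbol\alpha,\boldsymbol\beta)\in\mathcal R^{\mathrm P,+}_{p,q}\), Theorem~\ref{thm:MP-PBF-region} makes every \(\ell_{a,N}^{\mathrm P}\) and \(u_{b,N}^{\mathrm P}\) strictly positive. Proposition~\ref{pro:MP-Markov-enlarged-region}, using \(\mathcal R^{\mathrm P,+}_{p,q}\subseteq\mathcal S^{\mathrm P,+}_{p,q}\), gives \(h^{\mathrm P}>0\). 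Each intermediate vector \(h^{[r]}=M_{r+1}\cdots M_mh^{\mathrm P}\) is then positive, because it is obtained from a positive vector by nonnegative bidiagonal matrices with positive diagonal or superdiagonal entries. All these quantities are rational when the parameters are rational. The coefficients \eqref{eq:MP-lower-Pochhammer-simplified}--\eqref{eq:MP-upper-Pochhammer-simplified} are finite products of rational numbers, and \(h^{\mathrm P}_N\) is a finite rational expression by \eqref{eq:MP-h-explicit-alpha-beta}. So every weight in \eqref{eq:MP-lower-urn-weights} and \eqref{eq:MP-upper-urn-weights} is a positive rational number, except \(\mathcal B^-_{a,0}=0\).

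Next I identify the urn probabilities with the stochastic factor entries. For a lower factor, the row identity \(h^{[a-1]}=L_a^{\mathrm P}h^{[a]}\) reads
\[
h_N^{[a-1]}=h_N^{[a]}+\ell_{a,N-1}^{\mathrm P}h_{N-1}^{[a]},
\]
since the diagonal entries are \(1\) and the subdiagonal entry in row \(N\) is \(\ell_{a,N-1}^{\mathrm P}\). The denominator \(\mathcal B^-_{a,N}+\mathcal R^-_{a,N}\) is therefore \(h_N^{[a-1]}\), so \(\delta_a(N)\) coincides with \((\widehat L_a^{\mathrm P})_{N,N-1}\) from \eqref{eq:MP-stochastic-lower-explicit}. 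The red probability \(h_N^{[a]}/h_N^{[a-1]}\) equals the diagonal entry. The upper factors are handled the same way, with \(r=r(b)\), which is the stated position of \(U_b^{\mathrm P}\) in the ordered product. The identity \(h^{[r-1]}=U_b^{\mathrm P}h^{[r]}\) gives
\[
h_N^{[r-1]}=u_{b,N}^{\mathrm P}h_N^{[r]}+h_{N+1}^{[r]},
\]
which matches \(\gamma_b(N)\) with \((\widehat U_b^{\mathrm P})_{N,N+1}\). The main bookkeeping obstacle is getting these index positions right: which \(h^{[\cdot]}\) enters each stage, and the shift \(N-1\) in the lower subdiagonal convention \eqref{eq:MP-lower-bidiagonal-factors}. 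I would check these directly against \eqref{eq:MP-intermediate-h-explicit}.

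Finally, each probability lies strictly in \((0,1)\) away from the lower boundary and is rational. Writing it in lowest terms \(A/D\) gives an urn with \(A\) blue and \(D-A\) red balls, exactly as in the proof of Theorem~\ref{thm:finite-urn-correspondence}. Running the \(p\) lower urns in the order \(1,\ldots,p\) and then the upper urns in the order \(q,\ldots,1\) realizes the product \(\widehat L_1^{\mathrm P}\cdots\widehat L_p^{\mathrm P}\widehat U_q^{\mathrm P}\cdots\widehat U_1^{\mathrm P}\). By the SBF \eqref{eq:MP-SBF} and the path formula \eqref{eq:MP-path-probabilities}, this product is \(P^{\mathrm P}\), which completes the argument.
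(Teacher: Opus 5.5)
Your proposal is correct and follows essentially the same route as the paper: rationality and positivity of the weights on $\mathcal R^{\mathrm P,+}_{p,q}$, identification of the two urn denominators with $h_N^{[a-1]}$ and $h_N^{[r(b)-1]}$ through $h^{[r-1]}=M_rh^{[r]}$ (so the urn probabilities are exactly the entries of the Doob-normalized factors), and then reduced fractions together with the stochastic bidiagonal factorization \eqref{eq:MP-SBF}. Your extra justifications simply make explicit what the paper asserts in one line: positivity of $h^{\mathrm P}$ via Proposition~\ref{pro:MP-Markov-enlarged-region}, and propagation of positivity through the nonnegative bidiagonal factors.
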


\begin{proof}
	For rational parameters, every finite Pochhammer product in
	\eqref{eq:MP-lower-Pochhammer-simplified},
	\eqref{eq:MP-upper-Pochhammer-simplified}, and
	\eqref{eq:MP-raw-B-coefficients} is rational. Therefore
	\(h_N^{\mathrm P}\), every component of \(h^{[r]}\), and all four
	weights in
	\eqref{eq:MP-lower-urn-weights}--\eqref{eq:MP-upper-urn-weights}
	are rational. They are positive in
	\(\mathcal R^{\mathrm P,+}_{p,q}\), except for the forced lower
	boundary weight \(\mathcal B^-_{a,0}=0\).

	The lower denominator in
	\eqref{eq:MP-lower-urn-probability-alpha-beta} is
	\(h_N^{[a-1]}\), while the upper denominator in
	\eqref{eq:MP-upper-urn-probability-alpha-beta} is
	\(h_N^{[r(b)-1]}\). Hence these are exactly the entries of the
	stochastic factors in
	\eqref{eq:MP-stochastic-lower-explicit} and
	\eqref{eq:MP-stochastic-upper-explicit}. Reducing the rational
	probabilities gives finite integer ball counts, and the ordered product
	of the corresponding binary transitions is \(P^{\mathrm P}\) by
	\eqref{eq:MP-SBF}.
\end{proof}

\begin{corollary}[Grouped mixed Pi\~neiro urn model with the ball count as state]
\label{cor:MP-state-encoding-urn-model}
Every rational point of \(\mathcal S^{\mathrm P,+}_{p,q}\) admits both the
minimal-node and the fixed-length grouped finite-urn realizations of
Theorem~\ref{thm:state-encoding-grouped-urns}.  If the primary urn contains
\(N\) blue balls, the lower experiment samples
\(D\in\{0,\ldots,\min(p,N)\}\), resets the urn to \(N-D\), and the upper
experiment samples \(E\in\{0,\ldots,q\}\) and resets it to \(N-D+E\), with
\begin{align*}
\Pr\{D=d\}
&=(P_{\downarrow}^{\mathrm P})_{N,N-d}
=\frac{\mathsf L^{\mathrm P}_{N,d}\,v^{\mathrm P}_{N-d}}
{h^{\mathrm P}_N},\\
\Pr\{E=e\mid D=d\}
&=(P_{\uparrow}^{\mathrm P})_{N-d,N-d+e}
=\frac{\mathsf U^{\mathrm P}_{N-d,e}\,
h^{\mathrm P}_{N-d+e}}{v^{\mathrm P}_{N-d}}.
\end{align*}
In the PBF region the two grouped factors are products of the \(p+q\)
ordered stochastic bidiagonal factors.
\end{corollary}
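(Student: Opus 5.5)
The corollary follows by feeding the grouped stochastic Gauss--Borel factorization \eqref{eq:MP-grouped-stochastic-LU} into Theorem~\ref{thm:state-encoding-grouped-urns}. The work is in checking the hypotheses of that theorem with \(P_{\downarrow}=P_{\downarrow}^{\mathrm P}\) and \(P_{\uparrow}=P_{\uparrow}^{\mathrm P}\); once they hold, both the minimal-node and the fixed-length realizations follow verbatim. The displacement laws then read off directly from \eqref{eq:MP-grouped-lower-stochastic-explicit} and \eqref{eq:MP-grouped-upper-stochastic-explicit}, taking \(m=N-d\) in the upper formula.

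I would verify the hypotheses in the following order.

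First, \emph{stochasticity}. Both factors are row-stochastic, as already shown after \eqref{eq:MP-grouped-stochastic-factors}, and their product is \(P^{\mathrm P}\).

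Second, \emph{exact support}. The theorem requires strict positivity on the exact support. Lemma~\ref{lem:MP-grouped-lower-positive-region} gives \((\mathcal L^{\mathrm P})_{N,N-d}>0\) for \(1\le d\le\min\{p,N\}\), and the diagonal equals \(1\). Lemma~\ref{lem:MP-upper-positive-region} gives positive upper factors, so each \(\mathsf U^{\mathrm P}_{M,e}\) is a nonempty sum of positive path weights for \(0\le e\le q\). Combining this with \(h^{\mathrm P}>0\) from Proposition~\ref{pro:MP-Markov-enlarged-region} yields \(v^{\mathrm P}>0\). Hence every band entry of both stochastic factors is strictly positive, and entries off the band vanish because the factors are triangular of the stated widths.

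Third, \emph{rationality}, which I expect to be the only point needing care. For rational \((\boldsymbol\alpha,\boldsymbol\beta)\), the Pochhammer products \eqref{eq:MP-lower-Pochhammer-simplified} and \eqref{eq:MP-upper-Pochhammer-simplified} are rational, and so are \(h^{\mathrm P}_N\) by \eqref{eq:MP-h-explicit-alpha-beta}. The grouped entries \eqref{eq:MP-grouped-lower-explicit}--\eqref{eq:MP-grouped-upper-explicit} are finite sums of products of these, so they are rational as well. Since \(v^{\mathrm P}_N\) is a finite sum, it is rational too. The one subtlety is that outside \(\mathcal R^{\mathrm P,+}_{p,q}\) the individual lower entries \(\ell^{\mathrm P}_{a,N}\) may be negative. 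This does not matter, because only rationality of the grouped sums is used, and positivity comes from Lemma~\ref{lem:MP-grouped-lower-positive-region} rather than from the factors. Alternatively, rationality follows from the Cauchy connection \eqref{eq:MP-grouped-lower-connection}, whose entries are rational by \eqref{eq:MP-Cauchy-lower-factor}.

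Finally, the last sentence of the corollary is the ``in particular'' clause of Theorem~\ref{thm:state-encoding-grouped-urns}. On \(\mathcal R^{\mathrm P,+}_{p,q}\), the factor-by-factor normalization \eqref{eq:MP-SBF} telescopes: the product of the stochastic lower factors is \((H^{\mathrm P})^{-1}\mathcal L^{\mathrm P}H_p\), where \(H_p=\diag(h^{[p]})\) and \(h^{[p]}=\mathcal U^{\mathrm P}h^{\mathrm P}=v^{\mathrm P}\). This equals \(P_{\downarrow}^{\mathrm P}\), and similarly the product of the stochastic upper factors equals \(P_{\uparrow}^{\mathrm P}\).
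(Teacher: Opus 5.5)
Your proposal is correct and follows the same route as the paper. The paper's proof simply observes that the factors in \eqref{eq:MP-grouped-stochastic-factors} are rational, row-stochastic and strictly positive on their one-sided bands, so Theorem~\ref{thm:state-encoding-grouped-urns} applies, and that in the PBF region they coincide with the products of the ordered stochastic bidiagonal factors; your version spells out what the paper leaves implicit, notably the telescoping identity \(h^{[p]}=\mathcal U^{\mathrm P}h^{\mathrm P}=v^{\mathrm P}\), which identifies \(P_{\downarrow}^{\mathrm P}\) and \(P_{\uparrow}^{\mathrm P}\) with \(\widehat L_1^{\mathrm P}\cdots\widehat L_p^{\mathrm P}\) and \(\widehat U_q^{\mathrm P}\cdots\widehat U_1^{\mathrm P}\).
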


\begin{proof}
The factors in \eqref{eq:MP-grouped-stochastic-factors}--
\eqref{eq:MP-grouped-upper-stochastic-explicit} are rational,
row-stochastic, and strictly positive on their one-sided structural bands,
so Theorem~\ref{thm:state-encoding-grouped-urns} applies.  The displayed
laws are their entries, and
\eqref{eq:MP-grouped-lower-explicit}--
\eqref{eq:MP-grouped-upper-explicit} express them as finite sums of explicit
Pochhammer products.  In the PBF region the grouped factors are
\(P_{\downarrow}^{\mathrm P}
=\widehat L_1^{\mathrm P}\cdots\widehat L_p^{\mathrm P}\) and
\(P_{\uparrow}^{\mathrm P}
=\widehat U_q^{\mathrm P}\cdots\widehat U_1^{\mathrm P}\).
\end{proof}

The discrete kernel, its Poissonization, its QBD representation, and the
positive-measure mixed representation with
\(\dd\psi_{j,i}^{\mathrm P}(x)=x^{\alpha_i+\beta_j}\,\dd x\)
are valid throughout the enlarged region.  Outside the PBF region, the
kernel no longer has the ordered decomposition into positive binary
bidiagonal transitions, and the PBF Favard theorem does not apply.

One convenient way to select such parameters is to choose \(K\in\mathbb N\)
and integers \(A_1,\ldots,A_p,B_1,\ldots,B_q\) satisfying
\begin{align*}
-K&<A_1<\cdots<A_p<A_1+K,\\
-K&<B_1<\cdots<B_q<B_1+K,
&A_1+B_1&>-K.
\end{align*}
and set \(\alpha_i=A_i/K\) and \(\beta_j=B_j/K\).
This gives a dense arithmetic subfamily of
\(\mathcal R^{\mathrm P,+}_{p,q}\). The reduced fractions in
Proposition~\ref{pro:MP-finite-urn-model} give the smallest urn at each state
and elementary transition; simultaneous integer multiples give equivalent
urns.

\begin{example}[A rational \((p,q)=(3,2)\) Pi\~neiro chain and its five urns]
\label{ex:MP-rational-urn-cycle}
Take \((p,q)=(3,2)\),
\(\boldsymbol\alpha=
\begin{bNiceMatrix}-\frac13&0&\frac13\end{bNiceMatrix}\), and
\(\boldsymbol\beta=\begin{bNiceMatrix}0&\frac13\end{bNiceMatrix}\).
The following inequalities show that the parameters belong to the ordered
PBF region:
\[
-1<-\frac13<0<\frac13<\frac23,
\qquad
-1<0<\frac13<1,
\qquad
-\frac13+0>-1.
\]
Consequently, the ordered cyclic PBF theorem gives the positive bidiagonal
factorization
\(T^{\mathrm P}=L_1^{\mathrm P}L_2^{\mathrm P}L_3^{\mathrm P}
U_2^{\mathrm P}U_1^{\mathrm P}\).
The first three mixed forms, obtained directly from
\eqref{eq:MP-explicit-A-coefficients}--
\eqref{eq:MP-explicit-B-components}, are
\begin{align*}
\boldsymbol A_0^{\mathrm P}(x)
&=\begin{bNiceMatrix}[small]\frac23\\0\\0\end{bNiceMatrix},
&
\boldsymbol B_0^{\mathrm P}(x)
&=\begin{bNiceMatrix}[small]1&0\end{bNiceMatrix},
\\
\boldsymbol A_1^{\mathrm P}(x)
&=\begin{bNiceMatrix}[small]-8\\12\\0\end{bNiceMatrix},
&
\boldsymbol B_1^{\mathrm P}(x)
&=\begin{bNiceMatrix}[small]-\frac23&1\end{bNiceMatrix},
\\
\boldsymbol A_2^{\mathrm P}(x)
&=\begin{bNiceMatrix}[small]35\\-140\\\frac{350}{3}\end{bNiceMatrix},
&
\boldsymbol B_2^{\mathrm P}(x)
&=\begin{bNiceMatrix}[small]x+\frac25&-\frac65\end{bNiceMatrix},
\end{align*}
They satisfy the biorthogonality relation
\eqref{eq:MP-Cauchy-direct-biorthogonality}, so the example starts from
explicit mixed forms.  The Doob-normalized factors give the following
ordered product of five stochastic transitions:
\(P^{\mathrm P}=\widehat L_1^{\mathrm P}\widehat L_2^{\mathrm P}
\widehat L_3^{\mathrm P}\widehat U_2^{\mathrm P}\widehat U_1^{\mathrm P}\).
The first three downward probabilities in the lower factors are
\begin{align*}
(\delta_1(N))_{N=1}^{3}
&=\begin{bNiceMatrix}\frac14&\frac27&\frac13\end{bNiceMatrix},\\
(\delta_2(N))_{N=1}^{3}
&=\begin{bNiceMatrix}\frac15&\frac14&\frac3{10}\end{bNiceMatrix},\\
(\delta_3(N))_{N=1}^{3}
&=\begin{bNiceMatrix}\frac16&\frac29&\frac3{11}\end{bNiceMatrix},
\end{align*}
and the first three upward probabilities are
\begin{align*}
(\gamma_2(N))_{N=0}^{2}
&=\begin{bNiceMatrix}\frac25&\frac37&\frac12\end{bNiceMatrix},&
(\gamma_1(N))_{N=0}^{2}
&=\begin{bNiceMatrix}\frac13&\frac12&\frac12\end{bNiceMatrix}.
\end{align*}
Each entry is implemented by a finite two-color urn.  For example, the
path through the five factors
\[
2\xrightarrow{\widehat L_1}1
\xrightarrow{\widehat L_2}1
\xrightarrow{\widehat L_3}0
\xrightarrow{\widehat U_2}1
\xrightarrow{\widehat U_1}2
\]
uses, successively, urns with blue/red contents
\(\begin{bNiceMatrix}2&5\end{bNiceMatrix}\),
\(\begin{bNiceMatrix}1&4\end{bNiceMatrix}\),
\(\begin{bNiceMatrix}1&5\end{bNiceMatrix}\), \qquad
\(\begin{bNiceMatrix}2&3\end{bNiceMatrix}\), and
\(\begin{bNiceMatrix}1&1\end{bNiceMatrix}\); the color sequence is
blue--red--blue--blue--blue. Its probability is
\((2/7)(4/5)(1/6)(2/5)(1/2)=4/525\).

Multiplication of the five stochastic factors gives the following initial
block:
\begin{equation}
\label{eq:MP-rational-example-P}
\left.P^{\mathrm P}\right|_{\{0,\ldots,4\}\times\{0,\ldots,6\}}
=
\begin{bNiceMatrix}[cell-space-limits=2pt]
\frac25&\frac25&\frac15&0&0&0&0\\
\frac15&\frac{12}{35}&\frac7{20}&\frac3{28}&0&0&0\\
\frac1{20}&\frac{19}{105}&\frac{43}{120}&\frac{183}{616}&\frac5{44}&0&0\\
\frac1{180}&\frac{17}{315}&\frac{91}{440}&\frac{1051}{3080}&
\frac{861}{2860}&\frac{196}{2145}&0\\
0&\frac2{315}&\frac{389}{6435}&\frac{144}{715}&
\frac{5282}{15015}&\frac{201}{715}&\frac9{91}
\end{bNiceMatrix}.
\end{equation}
The rows sum to one, and the five urns generate the six net displacements
\(-3,-2,-1,0,+1,+2\).

For the grouped realization in which the ball count is the state, starting
from state \(3\), the
lower displacement has law
\[
\begin{bNiceMatrix}
\Pr\{D=0\}&\Pr\{D=1\}&\Pr\{D=2\}&\Pr\{D=3\}
\end{bNiceMatrix}
=
\begin{bNiceMatrix}
\frac{56}{165}&\frac{21}{44}&\frac{61}{360}&\frac1{72}
\end{bNiceMatrix}.
\]
Applying the upper grouped experiment after resetting the primary urn to
\(3-D\) gives
\[
\begin{bNiceMatrix}
\Pr\{3\to0\}&\Cdots&\Pr\{3\to5\}
\end{bNiceMatrix}
=
\begin{bNiceMatrix}
\frac1{180}&\frac{17}{315}&\frac{91}{440}&
\frac{1051}{3080}&\frac{861}{2860}&\frac{196}{2145}
\end{bNiceMatrix},
\]
the nonzero part of the fourth row of
\eqref{eq:MP-rational-example-P}. Thus the successive
binary experiments and the two grouped experiments give the same
transition law.

This example also displays the distinction in
Proposition~\ref{prop:MP-boundedness-criterion}.  Its cyclic beta spacings
are \(1/3\) and \(2/3\); hence the upper-factor exponents are
\(-1/3\) and \(1/3\).  In particular,
\[
(T^{\mathrm P})_{2m,2m+1}=\Theta(m^{1/3}),
\qquad m\to\infty,
\]
so the Pi\~neiro recurrence matrix in its original step-line normalization
is unbounded.  The matrix \(P^{\mathrm P}\) displayed in
\eqref{eq:MP-rational-example-P} is nevertheless a banded row-stochastic
matrix and is bounded on every \(\ell^s\).  Thus the Doob conjugation in
this concrete example is not a bounded similarity.
\end{example}

\subsection{Bounded Poissonization and long-time behavior}
\label{subsec:MP-continuous-chain}
For any rate $\nu>0$, define
\(Q^{\mathrm P}:=\nu(P^{\mathrm P}-I)\). Then $Q^{\mathrm P}$ is a
conservative bounded $Q$-matrix with two-sided bandwidth $(p,q)$. Its
off-diagonal rates are \(q^{\mathrm P}_{N,M}=\nu p^{\mathrm P}_{N,M}\)
for \(M\neq N\), while
\(q^{\mathrm P}_{N,N}=-\nu\sum_{M\neq N}p^{\mathrm P}_{N,M}
=-\nu(1-p^{\mathrm P}_{N,N})\).
For example, the first row of the generator associated with
Example~\ref{ex:MP-rational-urn-cycle} is
\[
\begin{bNiceMatrix}[margin=5pt]q^{\mathrm P}_{0,0}&q^{\mathrm P}_{0,1}&q^{\mathrm P}_{0,2}&\Cdots&\end{bNiceMatrix}
=\nu
\begin{bNiceMatrix}[margin=5pt]-\frac35&\frac25&\frac15&0&0&\Cdots&\end{bNiceMatrix}.
\]
All remaining rates follow from the explicit factor formulas, with
\(-\nu(1-p^{\mathrm P}_{N,N})\) on the diagonal.
Equivalently, if $(N_\nu(t))_{t\ge0}$ is a Poisson process of rate $\nu$,
independent of $X^{\mathrm P}$, then
\(Y^{\mathrm P}_t:=X^{\mathrm P}_{N_\nu(t)}\) has generator
$Q^{\mathrm P}$. Its transition semigroup is
\(\mathrm e^{tQ^{\mathrm P}}
=
\mathrm e^{-\nu t}\sum_{n=0}^{\infty}
\frac{(\nu t)^n}{n!}(P^{\mathrm P})^n\).
Using \eqref{eq:MP-Pn-spectral-normalized}, this becomes
\begin{equation}
\label{eq:MP-ct-spectral}
(\mathrm e^{tQ^{\mathrm P}})_{N,M}
=
\sum_{a=1}^{p}\sum_{b=1}^{q}
\int_0^1
\widetilde B_N^{(b),\mathrm P}(x)\,
\e^{-\nu t(1-x)}\,
\widetilde A_M^{(a),\mathrm P}(x)
x^{\alpha_a+\beta_b}\,\dd x.
\end{equation}
Similarly, for \(\lambda>0\), the resolvent of the continuous-time generator is
\begin{equation}
\label{eq:MP-resolvent-spectral}
(\lambda I-Q^{\mathrm P})^{-1}_{N,M}
=
\sum_{a=1}^{p}\sum_{b=1}^{q}
\int_0^1
\frac{
\widetilde B_N^{(b),\mathrm P}(x)
\widetilde A_M^{(a),\mathrm P}(x)
}{\lambda+\nu(1-x)}
x^{\alpha_a+\beta_b}\,\dd x.
\end{equation}

The generalized hypergeometric notation fixed in
\eqref{eq:generalized-hypergeometric-definition} is used. Thus
\(\pFq{1}{1}{a}{c}{z}\) is Kummer's confluent hypergeometric function,
whereas \(\pFq{2}{1}{a,b}{c}{z}\) is the Gauss hypergeometric function.

\begin{corollary}[Evaluation of the continuous-time spectral integrals]
\label{coro:MP-continuous-spectral-explicit}
Put \(\rho_{a,b,r,s}:=\alpha_a+\beta_b+r+s+1>0\). For \(t\ge0\),
\begin{equation}
\label{eq:MP-ct-spectral-explicit}
(\mathrm e^{tQ^{\mathrm P}})_{N,M}
=
\frac{h_M^{\mathrm P}}
{h_N^{\mathrm P}\widehat\kappa_N}
\sum_{a=1}^{p}\sum_{b=1}^{q}
\sum_{r=0}^{\widetilde m_{N,b}-1}
\sum_{s=0}^{n_{M,a}-1}
\frac{c_{N,b,r}\,a_{M,a,s}}{\rho_{a,b,r,s}}
\mathrm e^{-\nu t}\,
\pFq{1}{1}
{\rho_{a,b,r,s}}
{\rho_{a,b,r,s}+1}
{\nu t}.
\end{equation}
For \(\lambda>0\),
\begingroup
\small
\begin{equation}
\label{eq:MP-resolvent-spectral-explicit}
(\lambda I-Q^{\mathrm P})^{-1}_{N,M}
=
\frac{h_M^{\mathrm P}}
{h_N^{\mathrm P}\widehat\kappa_N}
\sum_{a=1}^{p}\sum_{b=1}^{q}
\sum_{r=0}^{\widetilde m_{N,b}-1}
\sum_{s=0}^{n_{M,a}-1}
\frac{c_{N,b,r}\,a_{M,a,s}}
{(\lambda+\nu)\rho_{a,b,r,s}}
\pFq{2}{1}
{1,\rho_{a,b,r,s}}
{\rho_{a,b,r,s}+1}
{\dfrac{\nu}{\lambda+\nu}}.
\end{equation}
\endgroup
\end{corollary}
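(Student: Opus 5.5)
The plan is to start from the spectral formulas \eqref{eq:MP-ct-spectral} and \eqref{eq:MP-resolvent-spectral}. Both follow from Theorem~\ref{thm:CT-bounded} applied to \(T=P^{\mathrm P}\). Equivalently, they follow from inserting \eqref{eq:MP-Pn-spectral-normalized} into the Poisson mixture and then taking the Laplace transform; domination is justified exactly as in the proof of that theorem. Into these formulas I would substitute the finite expansions \eqref{eq:MP-explicit-A-components} and \eqref{eq:MP-explicit-B-components}, together with the Doob-normalized vectors \(\widetilde B_N^{(b),\mathrm P}=B_N^{(b),\mathrm P}/h_N^{\mathrm P}\) and \(\widetilde A_M^{(a),\mathrm P}=h_M^{\mathrm P}A_M^{(a),\mathrm P}\). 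This is the same bookkeeping as in Corollary~\ref{coro:MP-discrete-spectral-explicit}. It produces the prefactor \(h_M^{\mathrm P}/(h_N^{\mathrm P}\widehat\kappa_N)\) and the finite quadruple sum of coefficients \(c_{N,b,r}a_{M,a,s}\). Each summand then carries an elementary integral with exponent \(\rho-1\), where \(\rho=\rho_{a,b,r,s}\).

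Two scalar integrals remain to be evaluated. For the semigroup, I would expand \(\e^{\nu tx}\) in its power series and integrate term by term; this is legitimate because all terms are nonnegative and \(\rho>0\). This gives
\[
\int_0^1 x^{\rho-1}\e^{-\nu t(1-x)}\,\dd x
=\e^{-\nu t}\sum_{k\ge0}\frac{(\nu t)^k}{k!(\rho+k)}
=\frac{\e^{-\nu t}}{\rho}\,\pFq{1}{1}{\rho}{\rho+1}{\nu t},
\]
using \(1/(\rho+k)=(\rho)_k/(\rho(\rho+1)_k)\).

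For the resolvent, I would write
\[
\frac{1}{\lambda+\nu(1-x)}=\frac{1}{\lambda+\nu}\,\frac{1}{1-zx},
\qquad z=\frac{\nu}{\lambda+\nu}\in(0,1).
\]
I would then expand the geometric series in \(zx\), which converges uniformly on \([0,1]\), and integrate term by term:
\[
\int_0^1\frac{x^{\rho-1}}{\lambda+\nu(1-x)}\,\dd x
=\frac{1}{\lambda+\nu}\sum_{k\ge0}\frac{z^k}{\rho+k}
=\frac{1}{(\lambda+\nu)\rho}\,\pFq{2}{1}{1,\rho}{\rho+1}{z},
\]
because \((1)_k/k!=1\). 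As a cross-check, the same result follows by Laplace-transforming the \({}_1F_1\) series term by term.

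The only step that needs care is justifying the interchange of the finite sums, the spectral integral and the series. Since the outer sums are finite, this reduces to the two scalar interchanges above, both of which are covered by monotone or uniform convergence; I expect no real obstacle. I would also check the convention that empty sums vanish for inactive components. This is automatic because \(n_{M,a}=0\) or \(\widetilde m_{N,b}=0\) gives zero components.
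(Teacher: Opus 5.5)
Your proposal is correct and matches the paper's proof: you substitute the finite component expansions into \eqref{eq:MP-ct-spectral} and \eqref{eq:MP-resolvent-spectral} and reduce everything to the two elementary integrals, whose series evaluations you supply while the paper only states them. One small caution is that the route you should rely on is the direct one, inserting \eqref{eq:MP-Pn-spectral-normalized} into the Poisson mixture; Theorem~\ref{thm:CT-bounded} itself assumes a positive stochastic bidiagonal factorization, which the paper only guarantees on \(\mathcal R^{\mathrm P,+}_{p,q}\), and it is stated for Favard data normalized by \(A_0^{(a)}=\delta_{a,1}\), whereas here \(A_0^{(1),\mathrm P}=\alpha_1+\beta_1+1\).
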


\begin{proof}
After expanding the two polynomial components in
\eqref{eq:MP-ct-spectral} and
\eqref{eq:MP-resolvent-spectral}, only the scalar integrals
\begin{align*}
\int_0^1x^{\rho-1}\e^{-\nu t(1-x)}\,\dd x
&=
\frac{\e^{-\nu t}}{\rho}\,
\pFq{1}{1}{\rho}{\rho+1}{\nu t},
\\
\int_0^1\frac{x^{\rho-1}}{\lambda+\nu(1-x)}\,\dd x
&=
\frac{1}{(\lambda+\nu)\rho}\,
\pFq{2}{1}{1,\rho}{\rho+1}{\dfrac{\nu}{\lambda+\nu}}
\end{align*}
remain. Substitution gives
\eqref{eq:MP-ct-spectral-explicit} and
\eqref{eq:MP-resolvent-spectral-explicit}.
\end{proof}

Thus the model gives a concrete continuous-time Markov chain with wide
two-sided jumps. Its transition and resolvent kernels are explicitly represented by
the finite Pi\~neiro polynomials and the elementary beta densities.
Unlike the discrete-time entries in
\eqref{eq:MP-Pn-spectral-explicit}, the continuous-time and resolvent
entries are not generally rational functions of the parameters. They are,
however, finite hypergeometric sums with coefficients rational in
\(\boldsymbol\alpha\) and \(\boldsymbol\beta\). Equivalently, the Poisson expansion of
the semigroup has the rational coefficients given explicitly by
\eqref{eq:MP-Pn-spectral-explicit}.

\begin{proposition}[Null recurrence, invariant measure, and local limits]
\label{pro:MP-null-recurrence-local-limits}
Let
\((\boldsymbol\alpha,\boldsymbol\beta)\in
\mathcal S^{\mathrm P,+}_{p,q}\), so that \(P^{\mathrm P}\) is the
stochastic kernel defined in
Subsection~\ref{subsec:MP-stochastic-normalization}, and let
\(Q^{\mathrm P}=\nu(P^{\mathrm P}-I)\), with \(\nu>0\).  Put
\[
\vartheta\coloneq\alpha_1+\beta_1+1>0,
\qquad
\pi_M^{\mathrm P}
\coloneq
h_M^{\mathrm P}A_M^{\mathrm P}(1),
\qquad M\in\Nzero.
\]
Then the discrete-time chain is irreducible, aperiodic, and null
recurrent.  Its continuous-time Poissonization is irreducible and null
recurrent.  More precisely,
\begin{align}
\label{eq:MP-origin-return-exact}
\bigl((P^{\mathrm P})^k\bigr)_{0,0}
&=
\frac{\vartheta}{k+\vartheta},
&&k\in\Nzero,
\\
\label{eq:MP-origin-return-continuous-exact}
\bigl(\e^{tQ^{\mathrm P}}\bigr)_{0,0}
&=
\e^{-\nu t}\,
\pFq{1}{1}{\vartheta}{\vartheta+1}{\nu t},
&&t\ge0.
\end{align}
The positive vector
\(\pi^{\mathrm P}=(\pi_M^{\mathrm P})_{M\ge0}\) is an invariant
measure:
\begin{equation}
\label{eq:MP-invariant-measure}
\pi^{\mathrm P}P^{\mathrm P}=\pi^{\mathrm P},
\qquad
\sum_{M=0}^{\infty}\pi_M^{\mathrm P}=+\infty.
\end{equation}
For every fixed \(N,M\in\Nzero\),
\begin{align}
\label{eq:MP-discrete-local-limit}
\lim_{k\to\infty}
k\bigl((P^{\mathrm P})^k\bigr)_{N,M}
&=
\pi_M^{\mathrm P},
\\
\label{eq:MP-continuous-local-limit}
\lim_{t\to\infty}
t\bigl(\e^{tQ^{\mathrm P}}\bigr)_{N,M}
&=
\frac{\pi_M^{\mathrm P}}{\nu}.
\end{align}
Thus the leading local asymptotic is independent of the initial state.
\end{proposition}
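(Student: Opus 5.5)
The plan is to work entirely from the explicit spectral formula \eqref{eq:MP-Pn-spectral-normalized} and its continuous-time version \eqref{eq:MP-ct-spectral}, in which every spectral measure is the explicit density \(x^{\alpha_a+\beta_b}\,\dd x\).

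\emph{Origin return probabilities.} At \(N=M=0\) the normalizations \(\boldsymbol B_0^{\mathrm P}=(1,0,\ldots,0)\), \(h_0^{\mathrm P}=1\) and \(A_0^{(a),\mathrm P}=\delta_{a,1}a_{0,1,0}\) apply, with \(a_{0,1,0}=\alpha_1+\beta_1+1=\vartheta\); the latter value is fixed by the normalization \(\int_0^1x^{\beta_1}A_0^{\mathrm P}\,\dd x=1\). Only the \((1,1)\) term survives, so
\[
\bigl((P^{\mathrm P})^k\bigr)_{0,0}=\vartheta\int_0^1x^{k+\vartheta-1}\,\dd x=\frac{\vartheta}{k+\vartheta}.
\]
The continuous-time formula \eqref{eq:MP-origin-return-continuous-exact} then follows from the scalar Kummer integral already used in Corollary~\ref{coro:MP-continuous-spectral-explicit}.

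\emph{Recurrence class.} Irreducibility comes from strict positivity of \(P^{\mathrm P}\) on the structural band (Proposition~\ref{pro:MP-Markov-enlarged-region}): there are positive one-step moves \(N\to N\pm1\), as in Proposition~\ref{pro:CT-irreducible}. Aperiodicity comes from \((P^{\mathrm P})_{0,0}>0\). Since \(\sum_k\vartheta/(k+\vartheta)=\infty\) while the terms tend to \(0\), the chain is recurrent and not positive recurrent. The Poissonization inherits both properties through the Tonelli argument of Theorem~\ref{thm:CT-recurrence-psi}.

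\emph{Local limits.} I would apply the argument of Theorem~\ref{thm:bounded-endpoint-asymptotics} directly to \eqref{eq:MP-Pn-spectral-normalized}, which avoids needing a PBF on the enlarged region. Near \(x=1\) each density \(x^{\alpha_a+\beta_b}\) has endpoint exponent \(\delta=0\) and limit \(\omega_{b,a}=1\). The polynomials are continuous, and the part of the integral on \([0,1-\varepsilon]\) is exponentially small. Substituting \(u=k(1-x)\) gives
\[
k\bigl((P^{\mathrm P})^k\bigr)_{N,M}\to\sum_{a,b}\widetilde B_N^{(b),\mathrm P}(1)\,\widetilde A_M^{(a),\mathrm P}(1).
\]
The right-hand side equals \(\frac{h_M^{\mathrm P}}{h_N^{\mathrm P}}\,B_N^{\mathrm P}(1)\,A_M^{\mathrm P}(1)=\pi_M^{\mathrm P}\), because \(\sum_bB_N^{(b),\mathrm P}(1)=B_N^{\mathrm P}(1)=h_N^{\mathrm P}\) and \(\sum_aA_M^{(a),\mathrm P}(1)=A_M^{\mathrm P}(1)\). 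Here I use that the scalar forms evaluated at \(x=1\) are exactly these sums, since \(1^{\alpha_i}=1\). The continuous-time limit is obtained the same way with \(u=\nu t(1-x)\), which gives the factor \(1/\nu\).

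\emph{Invariance and infinite mass.} The left recurrence \(xA_M^{\mathrm P}=\sum_N(T^{\mathrm P})_{N,M}A_N^{\mathrm P}\), evaluated at \(x=1\), shows that \(A^{\mathrm P}(1)\) is a left eigenvector of \(T^{\mathrm P}\). Conjugating by \(H^{\mathrm P}\) then gives \(\pi^{\mathrm P}P^{\mathrm P}=\pi^{\mathrm P}\). Positivity of \(\pi^{\mathrm P}\) follows from the local limit, since \(\pi_M^{\mathrm P}\ge0\) as a limit of nonnegative quantities. To upgrade this to strict positivity, I would use that an invariant vector of an irreducible kernel is either identically zero or strictly positive, and that \(\pi_0^{\mathrm P}=\vartheta>0\). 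Finally, \(\sum_M\pi_M^{\mathrm P}=\infty\): if the mass were finite, normalizing would give a stationary distribution for an irreducible recurrent chain, forcing positive recurrence and contradicting the null recurrence shown above. This last step relies on uniqueness of invariant measures for recurrent chains.

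The main obstacle is justifying dominated convergence uniformly. The polynomial products are bounded on \([0,1]\), and the densities satisfy \(x^{\alpha_a+\beta_b}\le C\) near \(1\) because all exponents are \(>-1\) and the singularity sits at \(0\); together these make the domination easy. The rest of the care is bookkeeping: the absence of atoms at \(x=1\), and the correct identification of the \(\widetilde B\), \(\widetilde A\) sums evaluated at \(1\).
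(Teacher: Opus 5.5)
Your proposal is correct and follows essentially the same route as the paper's proof. Both arguments obtain $\bigl((P^{\mathrm P})^k\bigr)_{0,0}=\vartheta/(k+\vartheta)$ directly from the beta-density spectral formula, and get irreducibility and aperiodicity from structural-band positivity together with $(P^{\mathrm P})_{0,0}>0$. Both then apply the endpoint argument of Theorem~\ref{thm:bounded-endpoint-asymptotics} directly to the Doob-normalized formula, and obtain invariance from the left recurrence at $x=1$, with infinite mass forced by null recurrence.
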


\begin{proof}
At \(N=0\), the step-line indices give
\(A_0^{(1),\mathrm P}(x)=\vartheta\),
\(B_0^{(1),\mathrm P}(x)=1\), and all the other initial components
vanish.  Also \(h_0^{\mathrm P}=1\).  Formula
\eqref{eq:MP-Pn-spectral} therefore gives
\[
\bigl((P^{\mathrm P})^k\bigr)_{0,0}
=
\vartheta\int_0^1x^{k+\vartheta-1}\,\dd x
=
\frac{\vartheta}{k+\vartheta},
\]
which proves \eqref{eq:MP-origin-return-exact}.  Poissonization, or
equivalently \eqref{eq:MP-ct-spectral}, gives
\eqref{eq:MP-origin-return-continuous-exact}.

\(P^{\mathrm P}\) is strictly positive throughout its structural band on
\(\mathcal S^{\mathrm P,+}_{p,q}\), so the chain is irreducible.
Moreover, \((P^{\mathrm P})_{0,0}>0\), and hence irreducibility implies
aperiodicity.  Equation \eqref{eq:MP-origin-return-exact} gives
\[
\sum_{k=0}^{\infty}
\bigl((P^{\mathrm P})^k\bigr)_{0,0}
=+\infty,
\qquad
\lim_{k\to\infty}
\bigl((P^{\mathrm P})^k\bigr)_{0,0}=0.
\]
The first relation proves recurrence.  The second excludes positive
recurrence for this irreducible aperiodic chain.  Uniformization preserves
the classification, so the continuous-time chain is also null recurrent.

Next, the fact that every Pi\~neiro spectral density has endpoint exponent
zero is used:
\(x^{\alpha_a+\beta_b}\longrightarrow1\)
as \(x\uparrow1\).
Applying the endpoint argument of
Theorem~\ref{thm:bounded-endpoint-asymptotics} directly to the
Doob-normalized formula \eqref{eq:MP-Pn-spectral} gives
\[
\lim_{k\to\infty}
k\bigl((P^{\mathrm P})^k\bigr)_{N,M}
=
\frac{h_M^{\mathrm P}}{h_N^{\mathrm P}}
\sum_{b=1}^{q}B_N^{(b),\mathrm P}(1)
\sum_{a=1}^{p}A_M^{(a),\mathrm P}(1).
\]
By the definitions of the scalar forms,
\(\sum_bB_N^{(b),\mathrm P}(1)=h_N^{\mathrm P}\) and
\(\sum_aA_M^{(a),\mathrm P}(1)=A_M^{\mathrm P}(1)\).
This proves \eqref{eq:MP-discrete-local-limit}.  The continuous-time
limit follows in the same way from \eqref{eq:MP-ct-spectral}, or from
the second limit in
Theorem~\ref{thm:bounded-endpoint-asymptotics}.

The left multiplication recurrence in
Proposition~\ref{pro:MP-Cauchy-step-line-construction}, evaluated at
\(x=1\), gives
\((A_M^{\mathrm P}(1))_{M\ge0}^{\mathsf T}T^{\mathrm P}
=(A_M^{\mathrm P}(1))_{M\ge0}^{\mathsf T}\).
Conjugating by \(H^{\mathrm P}\) shows that the vector in
\eqref{eq:MP-invariant-measure} is invariant.  Its entries are
nonnegative by \eqref{eq:MP-discrete-local-limit};
\(\pi_0^{\mathrm P}=\vartheta>0\), and irreducibility then makes
every entry strictly positive.  If its total mass were finite, it could
be normalized to a stationary distribution, contradicting null
recurrence.  This completes the proof.
\end{proof}

\begin{corollary}[Critical Green, resolvent, and return asymptotics]
\label{coro:MP-critical-return-asymptotics}
For fixed \(N,M\in\Nzero\),
\begin{align}
\label{eq:MP-critical-Green}
\sum_{k=0}^{\infty}
z^k\bigl((P^{\mathrm P})^k\bigr)_{N,M}
&\sim
\pi_M^{\mathrm P}\log\frac{1}{1-z},
&&z\uparrow1,
\\
\label{eq:MP-critical-resolvent}
\bigl((\lambda I-Q^{\mathrm P})^{-1}\bigr)_{N,M}
&\sim
\frac{\pi_M^{\mathrm P}}{\nu}
\log\frac{\nu}{\lambda},
&&\lambda\downarrow0.
\end{align}
If
\(\tau_M^+\coloneq
\inf\{k\ge1:X_k^{\mathrm P}=M\}\), then
\begin{equation}
\label{eq:MP-first-return-tail}
\Pr_M\{\tau_M^+>k\}
\sim
\frac{1}{\pi_M^{\mathrm P}\log k}.
\end{equation}
In particular,
\(\operatorname E_M[\tau_M^+]=+\infty\).
\end{corollary}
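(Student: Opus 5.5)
The plan is to derive all three statements from Proposition~\ref{pro:MP-null-recurrence-local-limits} by the same route used for Corollary~\ref{coro:bounded-endpoint-classification}; the only new input is that the endpoint constant \(c_{N,M}\) is now \(\pi_M^{\mathrm P}\).

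First, \eqref{eq:MP-discrete-local-limit} says \(((P^{\mathrm P})^k)_{N,M}=\pi_M^{\mathrm P}k^{-1}(1+o(1))\), with \(\pi_M^{\mathrm P}>0\) by Proposition~\ref{pro:MP-null-recurrence-local-limits}. I will apply the elementary Abelian lemma: if \(a_k\sim c/k\) with \(c>0\), then \(\sum_k a_kz^k\sim c\sum_{k\ge1}z^k/k=c\log\frac1{1-z}\) as \(z\uparrow1\). To prove it, split the sum at a large \(K\). The head \(\sum_{k<K}\) stays bounded, while the tail is squeezed between \((c\pm\varepsilon)\sum_{k\ge K}z^k/k\). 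This gives \eqref{eq:MP-critical-Green}.

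Second, for the resolvent I will use \eqref{eq:MP-continuous-local-limit}, which gives \(f(t)\coloneq(\e^{tQ^{\mathrm P}})_{N,M}\sim\pi_M^{\mathrm P}/(\nu t)\) as \(t\to\infty\), together with \(0\le f\le1\). The Laplace transform then splits as follows.
\begin{itemize}
\item The piece \(\int_0^{T}\e^{-\lambda t}f\,\dd t\) stays bounded as \(\lambda\downarrow0\).
\item On \([T,\infty)\), the function \(f\) lies between \((\pi_M^{\mathrm P}\pm\varepsilon)/(\nu t)\), so this piece is comparable to \(\int_T^\infty \e^{-\lambda t}t^{-1}\,\dd t=E_1(\lambda T)\sim\log\frac1\lambda\).
\end{itemize}
Since \(\log(\nu/\lambda)\sim\log(1/\lambda)\), this yields \eqref{eq:MP-critical-resolvent}. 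An alternative is to integrate the explicit Poisson mixture against \eqref{eq:MP-critical-Green} with \(z=\nu/(\lambda+\nu)\): then \(R(\lambda)=(\lambda+\nu)^{-1}G(z)\) and \(1/(1-z)=(\lambda+\nu)/\lambda\), which gives the result directly.

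Third, for the return tail I will take \(N=M\) and use the scalar renewal identity of Proposition~\ref{pro:general-Green-first-return} with \(A=\{M\}\). It gives \(G_{M,M}(z)=1/(1-F_M(z))\), hence \(1-F_M(z)\sim 1/(\pi_M^{\mathrm P}\log\frac1{1-z})\). Recurrence, already proved, gives \(F_M(1)=1\). This in turn gives
\[
\sum_{k\ge0}\Pr_M\{\tau_M^+>k\}z^k=\frac{1-F_M(z)}{1-z}\sim\frac{1}{1-z}\cdot\frac{1}{\pi_M^{\mathrm P}L(1/(1-z))},\qquad L=\log,
\]
where \(L\) is slowly varying. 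The coefficients are nonincreasing in \(k\), so the monotone Hardy--Littlewood--Karamata Tauberian theorem \cite[Chapter~XIII, Section~5]{feller2}, with exponent \(\rho=1\), transfers this to \(\sum_{j\le k}\Pr_M\{\tau_M^+>j\}\sim k/(\pi_M^{\mathrm P}\log k)\). Monotonicity then gives \eqref{eq:MP-first-return-tail}. Finally, \(\operatorname E_M[\tau_M^+]=\sum_k\Pr_M\{\tau_M^+>k\}=\infty\), because \(\sum 1/\log k\) diverges.

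The main obstacle is the Tauberian step, specifically passing from the partial-sum asymptotic to the pointwise tail. This requires monotonicity, which holds here, together with the slowly varying \(1/\log\) form, which is exactly the setting of Feller's monotone density version. I will cite that theorem as in Corollary~\ref{coro:bounded-endpoint-classification}, checking only that \(L(x)=1/\log x\) is slowly varying and that \(\rho=1\).
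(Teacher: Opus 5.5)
Your proposal is correct and follows essentially the same route as the paper. The paper obtains the two logarithmic asymptotics as the Abelian and Laplace-transform forms of the local limits in Proposition~\ref{pro:MP-null-recurrence-local-limits}, and then reuses the renewal and monotone Tauberian argument of Corollary~\ref{coro:bounded-endpoint-classification} with \(c_{M,M}=\pi_M^{\mathrm P}\). You supply the elementary Abelian and Laplace estimates that the paper leaves implicit. The only blemish is notational: in the Tauberian step the slowly varying factor should consistently be \(L(x)=1/(\pi_M^{\mathrm P}\log x)\), rather than \(L=\log\) in one place and \(1/\log\) in another.
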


\begin{proof}
Equations \eqref{eq:MP-critical-Green} and
\eqref{eq:MP-critical-resolvent} are the Abelian and Laplace-transform
forms of
\eqref{eq:MP-discrete-local-limit} and
\eqref{eq:MP-continuous-local-limit}.  The renewal argument in
Corollary~\ref{coro:bounded-endpoint-classification}, with
\(c_{M,M}=\pi_M^{\mathrm P}\), gives
\eqref{eq:MP-first-return-tail}.
\end{proof}

\begin{corollary}[Evaluation of the continued fraction and the first-return law]
\label{coro:MP-factor-resolved-return-CF}
Let
\[
\vartheta=\alpha_1+\beta_1+1>0,
\qquad
g_0^{\mathrm P}(z)
\coloneq
\sum_{k=0}^{\infty}
\bigl((P^{\mathrm P})^k\bigr)_{0,0}z^k,
\]
and let
\[
f_0^{\mathrm P}(z)
\coloneq
\operatorname E_0\!\left[
z^{\tau_0^+}\ind_{\{\tau_0^+<\infty\}}
\right].
\]
In the ordered PBF region, let
\(\Phi_0^{\mathrm P}(\tau)\) be the boundary matrix continued fraction
constructed from the \(p+q\) stochastic Pi\~neiro factors as in
\cite{ManasContinuedFractions2026}.  Then
\begin{align}
\label{eq:MP-factor-CF-hypergeometric-evaluation}
e_1^{\mathsf T}\Phi_0^{\mathrm P}(\tau)e_1
&=
g_0^{\mathrm P}(\tau^{p+q})
=
\pFq{2}{1}
{1,\vartheta}
{\vartheta+1}
{\tau^{p+q}},
\\
\label{eq:MP-first-return-PGF-explicit}
f_0^{\mathrm P}(z)
&=
1-
\left[
\pFq{2}{1}
{1,\vartheta}
{\vartheta+1}
{z}
\right]^{-1}.
\end{align}
Thus the continued fraction constructed from the individual bidiagonal
factors has an explicit hypergeometric value and determines the complete
first-return distribution of the original scalar Pi\~neiro chain.

For the rational \((p,q)=(3,2)\) example,
\(\vartheta=2/3\), and
\begin{align}
\label{eq:MP-rational-origin-Green-expansion}
g_0^{\mathrm P}(z)
&=
1+\frac25z+\frac14z^2+\frac2{11}z^3+O(z^4),
\\
\label{eq:MP-rational-first-return-expansion}
f_0^{\mathrm P}(z)
&=
\frac25z+\frac9{100}z^2+\frac{63}{1375}z^3+O(z^4).
\end{align}
In particular,
\[
\Pr_0\{\tau_0^+=1\}=\frac25,
\qquad
\Pr_0\{\tau_0^+=2\}=\frac9{100},
\qquad
\Pr_0\{\tau_0^+=3\}=\frac{63}{1375}.
\]
Moreover,
\begin{equation}
\label{eq:MP-first-return-CF-boundary}
1-f_0^{\mathrm P}(z)
\sim
\frac{1}{\vartheta\log(1/(1-z))},
\qquad z\uparrow1.
\end{equation}
Consequently the return probability is one, whereas
\(\operatorname E_0[\tau_0^+]=+\infty\).
\end{corollary}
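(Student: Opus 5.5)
The plan is to prove the Green-function identity \eqref{eq:MP-factor-CF-hypergeometric-evaluation} first, then obtain \eqref{eq:MP-first-return-PGF-explicit} by the renewal identity, and finally read off the expansions and the boundary asymptotic.

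\emph{Green function.} By Proposition~\ref{pro:MP-null-recurrence-local-limits}, equation \eqref{eq:MP-origin-return-exact}, we have \(((P^{\mathrm P})^k)_{0,0}=\vartheta/(k+\vartheta)\). Since \((\vartheta)_k/(\vartheta+1)_k=\vartheta/(\vartheta+k)\) and \((1)_k=k!\), this gives
\[
g_0^{\mathrm P}(z)=\sum_{k\ge0}\frac{\vartheta}{k+\vartheta}z^k
=\pFq{2}{1}{1,\vartheta}{\vartheta+1}{z},
\qquad 0\le z<1 .
\]
In the ordered PBF region the stochastic factors \eqref{eq:MP-SBF} are positive stochastic bidiagonal matrices, so the boundary evaluation \eqref{eq:factor-resolved-return-Green} from \cite{ManasContinuedFractions2026} applies with \(M=p+q\). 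It identifies \(e_1^{\mathsf T}\Phi_0^{\mathrm P}(\tau)e_1\) with \(g_0^{\mathrm P}(\tau^{p+q})\), which is \eqref{eq:MP-factor-CF-hypergeometric-evaluation}.

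\emph{First-return law.} Proposition~\ref{pro:general-Green-first-return} with \(A=\{0\}\), in its scalar form \eqref{eq:factor-resolved-first-return-law}, gives \(f_0^{\mathrm P}=1-1/g_0^{\mathrm P}\). This is \eqref{eq:MP-first-return-PGF-explicit}.

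\emph{Rational example.} For the \((3,2)\) example, \(\vartheta=-\tfrac13+0+1=\tfrac23\), so the coefficients \(\vartheta/(k+\vartheta)=2/(3k+2)\) are \(1,\tfrac25,\tfrac14,\tfrac2{11}\). This yields \eqref{eq:MP-rational-origin-Green-expansion}, which is consistent with \((P^{\mathrm P})_{0,0}=\tfrac25\) in \eqref{eq:MP-rational-example-P}. For the reciprocal, put \(g=1+u\) with \(u=\tfrac25z+\tfrac14z^2+\tfrac2{11}z^3+O(z^4)\), so that \(1-1/g=u-u^2+u^3-\cdots\). The computation is:
\begin{itemize}
\item the \(z\) coefficient is \(\tfrac25\);
\item the \(z^2\) coefficient is \(\tfrac14-\tfrac4{25}=\tfrac9{100}\);
\item the \(z^3\) coefficient is \(\tfrac2{11}-2\cdot\tfrac25\cdot\tfrac14+\tfrac8{125}=\tfrac{250-275+88}{1375}=\tfrac{63}{1375}\).
\end{itemize}
This is routine arithmetic, which I will simply check.

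\emph{Boundary asymptotic.} Corollary~\ref{coro:MP-critical-return-asymptotics} gives \(g_0^{\mathrm P}(z)\sim\pi_0^{\mathrm P}\log(1/(1-z))\) with \(\pi_0^{\mathrm P}=\vartheta\). This can also be seen directly: \(\vartheta/(k+\vartheta)\sim\vartheta/k\), and \(\sum\vartheta z^k/k=\vartheta\log(1/(1-z))\) up to a bounded error. Then \(1-f_0^{\mathrm P}=1/g_0^{\mathrm P}\) gives \eqref{eq:MP-first-return-CF-boundary}.

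\emph{Consequences.} Letting \(z\uparrow1\), \(1-f_0^{\mathrm P}(z)\to0\), so \(\Pr_0\{\tau_0^+<\infty\}=f_0^{\mathrm P}(1^-)=1\). Moreover,
\[
\frac{1-f_0^{\mathrm P}(z)}{1-z}\sim\frac{1}{\vartheta(1-z)\log(1/(1-z))}\to\infty .
\]
Since this quotient equals \(\sum_k\Pr_0\{\tau_0^+>k\}z^k\), monotone convergence gives \(\operatorname E_0[\tau_0^+]=\infty\).

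The only genuinely external ingredient is the applicability of \eqref{eq:factor-resolved-return-Green}, which requires positivity of the stochastic factors. This is why the statement is restricted to \(\mathcal R^{\mathrm P,+}_{p,q}\), where Theorem~\ref{thm:MP-PBF-region} provides that positivity. I expect no real obstacle beyond verifying this hypothesis.
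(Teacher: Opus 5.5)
Your proposal is correct and follows the paper's proof essentially step by step. The exact return probabilities \eqref{eq:MP-origin-return-exact} give the ${}_2F_1$ series, the cited boundary evaluation \eqref{eq:factor-resolved-return-Green} together with the renewal identity gives the two displayed formulas, and the rational coefficients and the boundary asymptotic (via \eqref{eq:MP-critical-Green} with $\pi_0^{\mathrm P}=\vartheta$) are obtained exactly as you describe. The only deviation is in the last step: you obtain $\operatorname E_0[\tau_0^+]=+\infty$ by a direct Abelian argument on $\sum_k\Pr_0\{\tau_0^+>k\}z^k=(1-f_0^{\mathrm P}(z))/(1-z)$, whereas the paper invokes the Tauberian tail asymptotic \eqref{eq:MP-first-return-tail}; your argument is valid and slightly more elementary.
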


\begin{proof}
By \eqref{eq:MP-origin-return-exact},
\[
g_0^{\mathrm P}(z)
=
\sum_{k=0}^{\infty}\frac{\vartheta}{k+\vartheta}z^k
=
\pFq{2}{1}{1,\vartheta}{\vartheta+1}{z}.
\]
The boundary evaluation established in
\cite{ManasContinuedFractions2026}, recorded here as
\eqref{eq:factor-resolved-return-Green}, identifies this Green function
with the phase-\(1\) entry of the continued fraction after the Pi\~neiro
factors are substituted.  Equation
\eqref{eq:factor-resolved-first-return-law} then gives
\(f_0^{\mathrm P}=1-1/g_0^{\mathrm P}\).  Substituting
\(\vartheta=2/3\) and expanding proves
\eqref{eq:MP-rational-origin-Green-expansion} and
\eqref{eq:MP-rational-first-return-expansion}.  Finally,
\eqref{eq:MP-critical-Green} at \(N=M=0\), together with
\(\pi_0^{\mathrm P}=\vartheta\), gives
\eqref{eq:MP-first-return-CF-boundary}.  The last assertions follow from
\eqref{eq:MP-first-return-tail}.
\end{proof}

For rational Pi\~neiro parameters, every finite continued-fraction cut is
a matrix of rational functions.  Evaluating the recursion backwards and
then applying \eqref{eq:factor-resolved-first-return-law} produces
successive exact return probabilities without forming powers of the
infinite transition matrix.  Equations
\eqref{eq:MP-factor-CF-hypergeometric-evaluation} and
\eqref{eq:MP-first-return-PGF-explicit} provide an independent closed form
against which those finite computations can be checked.  Pi\~neiro is used
here as an explicit illustration; the continued-fraction algorithm and
its convergence are developed in
\cite{ManasContinuedFractions2026}, while the return-law transformation
used here is \eqref{eq:factor-resolved-first-return-law}.

\subsection{The mixed Pi\~neiro chain as a QBD}
\label{subsec:MP-QBD}

Write each scalar state uniquely as \(N=rK+i\), where
\(r=\max\{p,q\}\),
\(K\in\Nzero\) and \(i\in\{0,\ldots,r-1\}\),
and relabel the mixed Pi\~neiro chain as
\begin{equation}
Z_n^{\mathrm P}
=\begin{bNiceMatrix}K_n^{\mathrm P}&I_n^{\mathrm P}\end{bNiceMatrix}
=
\begin{bNiceMatrix}[cell-space-limits=2pt]
\left\lfloor\frac{X_n^{\mathrm P}}{r}\right\rfloor&
X_n^{\mathrm P}-r\left\lfloor\frac{X_n^{\mathrm P}}{r}\right\rfloor
\end{bNiceMatrix}.
\end{equation}
For \(\varepsilon\in\{-1,0,1\}\), define the \(r\times r\) blocks
\begin{equation}
\left(\mathsf P_K^{\mathrm P,\varepsilon}\right)_{i,j}
\coloneq
p_{rK+i,r(K+\varepsilon)+j}^{\mathrm P},
\qquad 0\le i,j\le r-1,
\end{equation}
where entries with a negative level are zero. Proposition~\ref{pro:banded-kernel-QBD}
then gives the block-tridiagonal transition matrix
\begin{equation}
P^{\mathrm P}
=
\begin{bNiceMatrix}[cell-space-limits=4pt,margin=5pt]
\mathsf P_0^{\mathrm P,0} & \mathsf P_0^{\mathrm P,+} & 0_{r\times r} &
\Cdots[shorten-start=2pt,shorten-end=-20pt] & \phantom{0}  \\[-5pt]
\mathsf P_1^{\mathrm P,-} & \mathsf P_1^{\mathrm P,0} &
\mathsf P_1^{\mathrm P,+} &
\Ddots[shorten-start=-0pt,shorten-end=5pt] & \phantom{0} \\
0_{r\times r} & \mathsf P_2^{\mathrm P,-} &
\mathsf P_2^{\mathrm P,0} & \mathsf P_2^{\mathrm P,+} & \\
\Vdots[shorten-start=0pt,shorten-end=-15pt] &
\Ddots[shorten-start=0pt,shorten-end=-35pt] &
\Ddots[shorten-start=0pt,shorten-end=-32pt] &
\Ddots[shorten-start=0pt,shorten-end=-30pt] &
\Ddots[shorten-start=0pt,shorten-end=-0pt] &
\\
\phantom{0} & \phantom{\mathsf P_2^{\mathrm P,-}} &
\phantom{\mathsf P_2^{\mathrm P,0}} &
\phantom{\mathsf P_2^{\mathrm P,+}} & \phantom{0}
\end{bNiceMatrix}.
\end{equation}
The QBD is generally level-dependent. The level $K$ records the block of
$r$ consecutive scalar states and the phase $i$ records the exact state
inside that block; no information is discarded.

The urn construction also gives each block entry explicitly. If the process starts in
phase $i$ of level $K$, then
$\left(\mathsf P_K^{\mathrm P,\varepsilon}\right)_{i,j}$ is the sum of
the probabilities of all admissible $(p+q)$-stage urn paths whose net effect
is to end in phase $j$ of level $K+\varepsilon$. The block row is
therefore stochastic, that is,
\((\mathsf P_K^{\mathrm P,-}
+\mathsf P_K^{\mathrm P,0}
+\mathsf P_K^{\mathrm P,+})\one_r=\one_r\).

\begin{proposition}[Invariant vectors by level and limiting phase distribution]
\label{pro:MP-QBD-local-limit}
For \(L\in\Nzero\), define the invariant row vector associated with level
\(L\) as
\[
\boldsymbol\pi_L^{\mathrm P}
\coloneq
\begin{bNiceMatrix}
\pi_{rL}^{\mathrm P}&\Cdots&
\pi_{rL+r-1}^{\mathrm P}
\end{bNiceMatrix}.
\]
The discrete-time chain grouped by levels, and its continuous-time Poissonization,
are irreducible and null recurrent.  The invariant measure satisfies the
block balance equations
\begin{align}
\label{eq:MP-QBD-invariant-boundary}
\boldsymbol\pi_0^{\mathrm P}
&=
\boldsymbol\pi_0^{\mathrm P}\mathsf P_0^{\mathrm P,0}
+\boldsymbol\pi_1^{\mathrm P}\mathsf P_1^{\mathrm P,-},
\\
\label{eq:MP-QBD-invariant-interior}
\boldsymbol\pi_L^{\mathrm P}
&=
\boldsymbol\pi_{L-1}^{\mathrm P}\mathsf P_{L-1}^{\mathrm P,+}
+\boldsymbol\pi_L^{\mathrm P}\mathsf P_L^{\mathrm P,0}
+\boldsymbol\pi_{L+1}^{\mathrm P}\mathsf P_{L+1}^{\mathrm P,-},
\qquad L\ge1,
\end{align}
and \(\sum_{L\ge0}\boldsymbol\pi_L^{\mathrm P}\one_r=+\infty\).
For every pair of fixed levels \(K,L\),
\begin{align}
\label{eq:MP-QBD-discrete-local-limit}
\lim_{k\to\infty}
k\bigl((P^{\mathrm P})^k\bigr)_{K,L}
&=
\one_r\boldsymbol\pi_L^{\mathrm P},
\\
\label{eq:MP-QBD-continuous-local-limit}
\lim_{t\to\infty}
t\bigl(\e^{tQ^{\mathrm P}}\bigr)_{K,L}
&=
\frac{1}{\nu}\one_r\boldsymbol\pi_L^{\mathrm P}.
\end{align}
Both limits have rank one.  Thus the dependence on the initial phase
disappears at leading order, while the phase profile at the arrival level
is determined by the nonnormalizable invariant measure.

More explicitly, write
\(Z^{\mathrm P}(t)=[K^{\mathrm P}(t),I^{\mathrm P}(t)]\) for the
continuous-time QBD, and let either chain start from phase \(i\) of level
\(K\).
Then
\begin{align}
\label{eq:MP-QBD-level-local-limit}
\lim_{k\to\infty}
k\Pr_{K,i}\{K_k^{\mathrm P}=L\}
&=
\boldsymbol\pi_L^{\mathrm P}\one_r,
\\
\label{eq:MP-QBD-level-local-limit-continuous}
\lim_{t\to\infty}
t\Pr_{K,i}\{K^{\mathrm P}(t)=L\}
&=
\frac{1}{\nu}\boldsymbol\pi_L^{\mathrm P}\one_r.
\end{align}
For every \(j\in\{0,\ldots,r-1\}\), the conditional phase limits are
\begin{align}
\label{eq:MP-QBD-conditional-phase-discrete}
\lim_{k\to\infty}
\Pr_{K,i}\{I_k^{\mathrm P}=j\mid K_k^{\mathrm P}=L\}
&=
\frac{\pi_{rL+j}^{\mathrm P}}
{\boldsymbol\pi_L^{\mathrm P}\one_r},
\\
\label{eq:MP-QBD-conditional-phase-continuous}
\lim_{t\to\infty}
\Pr_{K,i}\{I^{\mathrm P}(t)=j\mid K^{\mathrm P}(t)=L\}
&=
\frac{\pi_{rL+j}^{\mathrm P}}
{\boldsymbol\pi_L^{\mathrm P}\one_r}.
\end{align}
Thus the limiting phase distribution at a fixed arrival level is the
normalized restriction of the invariant measure to that level, in both
discrete and continuous time.
\end{proposition}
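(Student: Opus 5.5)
The plan is to treat this proposition as a block reformulation of Proposition~\ref{pro:MP-null-recurrence-local-limits}, using the bijection of Proposition~\ref{pro:banded-kernel-QBD}. The grouping \(n\leftrightarrow(K,i)\) is a relabelling and not a lumping. Hence irreducibility, recurrence, null recurrence and invariant measures of \(Z^{\mathrm P}\), and of its Poissonization, are exactly those of the scalar chain \(X^{\mathrm P}\). The first assertion therefore follows directly from Proposition~\ref{pro:MP-null-recurrence-local-limits}, since those properties are invariant under a bijection of the state space.

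Next comes the invariance identity \(\pi^{\mathrm P}P^{\mathrm P}=\pi^{\mathrm P}\) from \eqref{eq:MP-invariant-measure}. I will split the row vector into level blocks \(\boldsymbol\pi_L^{\mathrm P}\) and read off the \(L\)-th block column of the block-tridiagonal matrix. Only the blocks \(\mathsf P_{L-1}^{\mathrm P,+}\), \(\mathsf P_L^{\mathrm P,0}\) and \(\mathsf P_{L+1}^{\mathrm P,-}\) are nonzero in that column, and \(\mathsf P_0^{\mathrm P,-}=0\). This yields \eqref{eq:MP-QBD-invariant-boundary} and \eqref{eq:MP-QBD-invariant-interior}. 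The infinite total mass is the same sum as in \eqref{eq:MP-invariant-measure}, just grouped differently.

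For the local limits, I fix \(K,L\) and apply \eqref{eq:MP-discrete-local-limit} entrywise with \(N=rK+i\) and \(M=rL+j\). This is a finite set of \(r^2\) limits, each equal to \(\pi^{\mathrm P}_{rL+j}\) regardless of \(i\). Assembled as a matrix, the limit is \(\one_r\boldsymbol\pi_L^{\mathrm P}\), which has rank one because \(\boldsymbol\pi_L^{\mathrm P}\neq0\) by positivity. The continuous-time statement is obtained the same way from \eqref{eq:MP-continuous-local-limit}.

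The level limits follow by summing over \(j\): \(\Pr_{K,i}\{K_k^{\mathrm P}=L\}=\sum_j((P^{\mathrm P})^k)_{rK+i,rL+j}\), so multiplying by \(k\) gives \(\boldsymbol\pi_L^{\mathrm P}\one_r\). For the conditional phase limits, I will write the conditional probability as a ratio of \(k\,((P^{\mathrm P})^k)_{rK+i,rL+j}\) to \(k\Pr_{K,i}\{K_k^{\mathrm P}=L\}\) and take limits in numerator and denominator separately. The one point needing care is that the denominator limit \(\boldsymbol\pi_L^{\mathrm P}\one_r\) is strictly positive; this holds because every \(\pi_M^{\mathrm P}>0\) by Proposition~\ref{pro:MP-null-recurrence-local-limits}. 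With that checked, the quotient converges and there is no real obstacle beyond this bookkeeping. The same argument with \(t\) in place of \(k\), where the factor \(1/\nu\) cancels, gives the continuous-time conditional limit.
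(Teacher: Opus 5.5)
Your proposal is correct and follows the paper's proof essentially step for step. Both use the bijective relabelling for irreducibility and null recurrence, group \(\pi^{\mathrm P}P^{\mathrm P}=\pi^{\mathrm P}\) by destination levels, collect the \(r^2\) scalar local limits from \eqref{eq:MP-discrete-local-limit} and \eqref{eq:MP-continuous-local-limit}, multiply by \(\one_r\), and divide phase by level limits. You also check explicitly that \(\boldsymbol\pi_L^{\mathrm P}\one_r>0\), and that \(\boldsymbol\pi_L^{\mathrm P}\neq0\) for the rank-one claim; the paper leaves these implicit.
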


\begin{proof}
The map between a scalar state and a level--phase pair is bijective, so it
preserves irreducibility and the recurrence classification.  Hence the QBD
is null recurrent in both time scales.  Grouping the scalar invariance
identity \(\pi^{\mathrm P}P^{\mathrm P}=\pi^{\mathrm P}\) by destination
levels gives \eqref{eq:MP-QBD-invariant-boundary} and
\eqref{eq:MP-QBD-invariant-interior}.  The divergence of the total block
mass is precisely the second assertion in
\eqref{eq:MP-invariant-measure}.

The \((i,j)\) entry of the first block in
\eqref{eq:MP-QBD-discrete-local-limit} is
\(\bigl((P^{\mathrm P})^k\bigr)_{rK+i,rL+j}\).
Equation \eqref{eq:MP-discrete-local-limit} shows that its product with
\(k\) converges to \(\pi_{rL+j}^{\mathrm P}\), independently of
\(i\).  Collecting the \(r^2\) scalar limits gives the first identity.
The second follows identically from
\eqref{eq:MP-continuous-local-limit}.  Multiplying the two block limits on
the right by \(\one_r\) gives
\eqref{eq:MP-QBD-level-local-limit} and
\eqref{eq:MP-QBD-level-local-limit-continuous}.  Finally, divide the local
limit for phase \(j\) by the corresponding level limit.  This gives
\eqref{eq:MP-QBD-conditional-phase-discrete} and
\eqref{eq:MP-QBD-conditional-phase-continuous}.
\end{proof}

For the rational example in
Example~\ref{ex:MP-rational-urn-cycle}, $r=3$, and the phases of level
$K$ are the scalar states $3K,3K+1,3K+2$. Multiplying the same five
explicit stochastic factors gives the first two block rows. At the boundary,
\begin{align}
\mathsf P_0^{\mathrm P,-}&=0,
&
\mathsf P_0^{\mathrm P,0}
&=
\begin{bNiceMatrix}[cell-space-limits=2pt]
\frac25&\frac25&\frac15\\
\frac15&\frac{12}{35}&\frac7{20}\\
\frac1{20}&\frac{19}{105}&\frac{43}{120}
\end{bNiceMatrix},
&
\mathsf P_0^{\mathrm P,+}
&=
\begin{bNiceMatrix}[cell-space-limits=2pt]
0&0&0\\
\frac3{28}&0&0\\
\frac{183}{616}&\frac5{44}&0
\end{bNiceMatrix}.
\end{align}
The next down-level, same-level, and up-level blocks are
\begin{align*}
\mathsf P_1^{\mathrm P,-}
&=
\begin{bNiceMatrix}[cell-space-limits=2pt]
\frac1{180}&\frac{17}{315}&\frac{91}{440}\\
0&\frac2{315}&\frac{389}{6435}\\
0&0&\frac{15}{2002}
\end{bNiceMatrix},
\\
\mathsf P_1^{\mathrm P,0}
&=
\begin{bNiceMatrix}[cell-space-limits=2pt]
\frac{1051}{3080}&\frac{861}{2860}&\frac{196}{2145}\\
\frac{144}{715}&\frac{5282}{15015}&\frac{201}{715}\\
\frac{125}{2002}&\frac{1719}{8008}&\frac{3329}{9724}
\end{bNiceMatrix},
\\
\mathsf P_1^{\mathrm P,+}
&=
\begin{bNiceMatrix}[cell-space-limits=2pt]
0&0&0\\
\frac9{91}&0&0\\
\frac{417}{1456}&\frac{165}{1904}&0
\end{bNiceMatrix}.
\end{align*}
In particular, the sums of the three matrices in each displayed block row
send \(\one_3\) to \(\one_3\). A step can
change the level by $-1$, $0$, or $+1$, while its phase records
which of the three scalar states is occupied.

The first two invariant level vectors are also completely explicit.
The formulas for \(A_M^{\mathrm P}(1)\) and \(h_M^{\mathrm P}\) give
\(\boldsymbol\pi_0^{\mathrm P}
=
\begin{bNiceMatrix}\frac23&\frac43&\frac73\end{bNiceMatrix}\),
\(\boldsymbol\pi_1^{\mathrm P}
=
\begin{bNiceMatrix}3&4&\frac{14}{3}\end{bNiceMatrix}\).
Direct multiplication verifies the boundary balance equation
\(\boldsymbol\pi_0^{\mathrm P}
=
\boldsymbol\pi_0^{\mathrm P}\mathsf P_0^{\mathrm P,0}
+\boldsymbol\pi_1^{\mathrm P}\mathsf P_1^{\mathrm P,-}\).
Consequently, conditioned on being in level (0) or (1) at a large
time, the limiting phase distributions are, respectively,
\(\begin{bNiceMatrix}\frac2{13}&\frac4{13}&\frac7{13}\end{bNiceMatrix}\),
\(\begin{bNiceMatrix}\frac9{35}&\frac{12}{35}&\frac{14}{35}\end{bNiceMatrix}\).
This makes the phase profile in
\eqref{eq:MP-QBD-conditional-phase-discrete}--
\eqref{eq:MP-QBD-conditional-phase-continuous} concrete for the rational
model.

\begin{proposition}[A non-block-symmetrizable rational QBD]
\label{pro:MP-QBD-not-symmetrizable}
	The rational mixed Pi\~neiro QBD of
	Example~\ref{ex:MP-rational-urn-cycle} cannot be transformed into a
	symmetric block Jacobi matrix by an invertible block diagonal
	similarity. The same conclusion holds for its Poissonized
	continuous-time generator.
\end{proposition}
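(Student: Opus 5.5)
The plan is to apply Corollary~\ref{coro:QBD-rank-obstruction} with \((p,q)=(3,2)\) and \(r=\max\{p,q\}=3\). Its hypotheses are that \(P^{\mathrm P}\) is \((3,2)\)-banded and that both outer diagonals are nowhere zero. The band structure is built into the construction: by Proposition~\ref{pro:MP-Cauchy-step-line-construction} the recurrence \(T^{\mathrm P}\) has bandwidth \((p,q)\), and the Doob conjugation \(P^{\mathrm P}=(H^{\mathrm P})^{-1}T^{\mathrm P}H^{\mathrm P}\) preserves it.

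For the outer diagonals, note that the example lies in \(\mathcal R^{\mathrm P,+}_{3,2}\subseteq\mathcal S^{\mathrm P,+}_{3,2}\). Proposition~\ref{pro:MP-Markov-enlarged-region} then gives strict positivity of \(P^{\mathrm P}\) on the whole structural band \(\mathfrak B_{3,2}\). In particular \(P^{\mathrm P}_{n,n+2}>0\) and \(P^{\mathrm P}_{n+3,n}>0\) for every \(n\ge0\).

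A more direct route is available from the ordered SBF. The entry \((n,n+2)\) contains the path that stays in all three lower factors and moves up in both upper factors. The entry \((n+3,n)\) contains the path that moves down in all three lower factors and stays in both upper factors. Each path has strictly positive weight in the PBF region.

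With the hypotheses in place, the corollary gives \(\operatorname{rank}\mathsf P_K^{\mathrm P,+}=2\) and \(\operatorname{rank}\mathsf P_{K+1}^{\mathrm P,-}=3\) for all \(K\ge0\). These ranks agree with the displayed blocks. For example, \(\mathsf P_0^{\mathrm P,+}\) has a zero first row and a nonsingular lower-left \(2\times2\) triangle, while \(\mathsf P_1^{\mathrm P,-}\) is upper triangular with nonzero diagonal. A block-diagonal similarity \(R\,P^{\mathrm P}R^{-1}\), with \(R=\diag(R_K)\) and each \(R_K\) invertible, sends the \((K,K+1)\) block to \(R_K\mathsf P_K^{\mathrm P,+}R_{K+1}^{-1}\) and the \((K+1,K)\) block to \(R_{K+1}\mathsf P_{K+1}^{\mathrm P,-}R_K^{-1}\). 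These transformed blocks have ranks \(2\) and \(3\), so they cannot be transposes of each other, and the result cannot be a symmetric block Jacobi matrix.

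For the Poissonized generator \(Q^{\mathrm P}=\nu(P^{\mathrm P}-I)\), the off-diagonal blocks are \(\nu\mathsf P_K^{\mathrm P,\pm}\), because the identity only affects the diagonal blocks. Their ranks are therefore still \(2\) and \(3\). Block-diagonal similarities commute with adding multiples of \(I\), so the same rank argument applies to \(Q^{\mathrm P}\).

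The only step needing care is checking that the outer diagonals are nonzero at every \(n\), including near the boundary. This comes from the structural-band positivity in Proposition~\ref{pro:MP-Markov-enlarged-region}, and for small \(n\) it can also be read off directly from \eqref{eq:MP-rational-example-P}.
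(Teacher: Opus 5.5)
Your proof is correct and is essentially the paper's argument. The off-diagonal level blocks have ranks \(2\) and \(3\). An invertible block-diagonal conjugation preserves these ranks, so the conjugated \((K,K+1)\) and \((K+1,K)\) blocks cannot be transposes, and multiplying by \(\nu\) changes nothing. The only difference is that the paper reads the ranks directly off the explicit boundary blocks \(\mathsf P_0^{\mathrm P,+}\) and \(\mathsf P_1^{\mathrm P,-}\), since one pair of levels already suffices. Your verification of the outer diagonals at every \(n\) in order to apply Corollary~\ref{coro:QBD-rank-obstruction} at all levels is therefore valid but more than is needed.
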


\begin{proof}
	The displayed boundary blocks give
	\(\operatorname{rank}\mathsf P_0^{\mathrm P,+}=2\) and
	\(\operatorname{rank}\mathsf P_1^{\mathrm P,-}=3\).
	Indeed, the first matrix has a nonzero lower triangular \(2\times2\)
	submatrix in its last two rows and first two columns, while the second
	matrix is an invertible upper triangular \(3\times3\) matrix. If
	\(\mathcal R=\operatorname{diag}(R_0,R_1,\ldots)\) were invertible and
	\(\mathcal R P^{\mathrm P}\mathcal R^{-1}\) were symmetric,
	its \((0,1)\) and \((1,0)\) blocks would be transposes. Their ranks would
	therefore agree, whereas multiplication by \(R_0\) and \(R_1\) cannot
	change the two ranks above. This is impossible. Multiplication of both
	off-diagonal blocks by the Poisson rate \(\nu\) leaves their ranks
	unchanged, so the generator is not block symmetrizable either.
\end{proof}

Thus this example does not arise from positive-definite matrix-valued
orthogonality. Its spectral representation is mixed rather than
matrix-orthogonal: it uses distinct left and right polynomial arrays and the
entrywise positive \(2\times3\) matrix of Pi\~neiro measures.

The spectral representation is obtained by grouping the normalized
Pi\~neiro forms in triples, or in blocks of size $r$ in general. Put
\begin{equation*}
\widetilde{\boldsymbol B}_K^{(b),\mathrm P}(x)
\coloneq
\begin{bNiceMatrix}
\widetilde B_{rK}^{(b),\mathrm P}(x)\\
\Vdots\\
\widetilde B_{rK+r-1}^{(b),\mathrm P}(x)
\end{bNiceMatrix},\qquad
\widetilde{\boldsymbol A}_L^{(a),\mathrm P}(x)
\coloneq
\begin{bNiceMatrix}
\widetilde A_{rL}^{(a),\mathrm P}(x)\\
\Vdots\\
\widetilde A_{rL+r-1}^{(a),\mathrm P}(x)
\end{bNiceMatrix}.
\end{equation*}
Then, for $n\in\Nzero$,
\begin{equation}
\left((P^{\mathrm P})^n\right)_{K,L}
=
\sum_{a=1}^{p}\sum_{b=1}^{q}
\int_0^1
\widetilde{\boldsymbol B}_K^{(b),\mathrm P}(x)
\left(\widetilde{\boldsymbol A}_L^{(a),\mathrm P}(x)\right)^{\mathsf T}
x^{n+\alpha_a+\beta_b}\,\dd x.
\end{equation}
For the continuous-time QBD, the block generator
\(Q^{\mathrm P}=\nu(P^{\mathrm P}-I)\)
has blocks
\begin{equation}
\mathsf Q_K^{\mathrm P,-}=\nu\mathsf P_K^{\mathrm P,-},
\qquad
\mathsf Q_K^{\mathrm P,0}=\nu
\left(\mathsf P_K^{\mathrm P,0}-I_r\right),
\qquad
\mathsf Q_K^{\mathrm P,+}=\nu\mathsf P_K^{\mathrm P,+}.
\end{equation}
Its block transition kernel is
\begin{equation}
\left(\e^{tQ^{\mathrm P}}\right)_{K,L}
=
\sum_{a=1}^{p}\sum_{b=1}^{q}
\int_0^1
\widetilde{\boldsymbol B}_K^{(b),\mathrm P}(x)
\left(\widetilde{\boldsymbol A}_L^{(a),\mathrm P}(x)\right)^{\mathsf T}
\e^{-\nu t(1-x)}x^{\alpha_a+\beta_b}\,\dd x.
\end{equation}

The block representation also gives the QBD first-arrival
matrices.  These contain more information than scalar hitting
probabilities because they record the phase in which the arrival level is
first reached.

\begin{proposition}[Green, resolvent, and first-arrival matrices]
\label{pro:MP-QBD-first-arrival}
For \(0\le z<1\) and \(\lambda>0\), define the block Green matrix and the
block resolvent by
\begin{equation*}
\mathcal G_{K,L}^{\mathrm P}(z)
\coloneq
\sum_{n=0}^{\infty}z^n
\left((P^{\mathrm P})^n\right)_{K,L},\qquad
\mathcal R_{K,L}^{\mathrm P}(\lambda)
\coloneq
\left((\lambda I-Q^{\mathrm P})^{-1}\right)_{K,L}.
\end{equation*}
They have the explicit mixed spectral representations
\begin{align}
\label{eq:MP-QBD-Green-spectral}
\mathcal G_{K,L}^{\mathrm P}(z)
&=
\sum_{a=1}^{p}\sum_{b=1}^{q}
\int_0^1
\widetilde{\boldsymbol B}_K^{(b),\mathrm P}(x)
\left(\widetilde{\boldsymbol A}_L^{(a),\mathrm P}(x)\right)^{\mathsf T}
\frac{x^{\alpha_a+\beta_b}}{1-zx}\,\dd x,
\\
\label{eq:MP-QBD-resolvent-spectral}
\mathcal R_{K,L}^{\mathrm P}(\lambda)
&=
\sum_{a=1}^{p}\sum_{b=1}^{q}
\int_0^1
\widetilde{\boldsymbol B}_K^{(b),\mathrm P}(x)
\left(\widetilde{\boldsymbol A}_L^{(a),\mathrm P}(x)\right)^{\mathsf T}
\frac{x^{\alpha_a+\beta_b}}
{\lambda+\nu(1-x)}\,\dd x.
\end{align}
Their singular parts at the recurrent boundary are
\begin{align}
\label{eq:MP-QBD-Green-asymptotic}
\mathcal G_{K,L}^{\mathrm P}(z)
&\sim
\one_r\boldsymbol\pi_L^{\mathrm P}
\log\frac{1}{1-z},
&&z\uparrow1,
\\
\label{eq:MP-QBD-resolvent-asymptotic}
\mathcal R_{K,L}^{\mathrm P}(\lambda)
&\sim
\frac{1}{\nu}\one_r\boldsymbol\pi_L^{\mathrm P}
\log\frac{\nu}{\lambda},
&&\lambda\downarrow0.
\end{align}

Suppose that \(K\ne L\), and put
\(\tau_L\coloneq\inf\{n\ge0:K_n^{\mathrm P}=L\}\).  The discrete
first-arrival matrix, including the arrival phase, is
\begin{equation}
\label{eq:MP-QBD-first-arrival-definition}
\left(\mathcal F_{K,L}^{\mathrm P}(z)\right)_{i,j}
\coloneq
\operatorname E_{K,i}\!\left[
z^{\tau_L}\ind_{\{I_{\tau_L}^{\mathrm P}=j\}}
\right].
\end{equation}
It is determined by
\begin{equation}
\label{eq:MP-QBD-first-arrival-discrete}
\mathcal F_{K,L}^{\mathrm P}(z)
=
\mathcal G_{K,L}^{\mathrm P}(z)
\left(\mathcal G_{L,L}^{\mathrm P}(z)\right)^{-1}.
\end{equation}
For a first positive return to level \(L\), put
\(\tau_L^+\coloneq\inf\{n\ge1:K_n^{\mathrm P}=L\}\) and define
\(\mathcal F_{L,L}^{\mathrm P,+}(z)\) as in
\eqref{eq:MP-QBD-first-arrival-definition}, with \(\tau_L\) replaced by
\(\tau_L^+\).  Then
\begin{equation}
\label{eq:MP-QBD-first-return-matrix}
\mathcal F_{L,L}^{\mathrm P,+}(z)
=
I_r-\left(\mathcal G_{L,L}^{\mathrm P}(z)\right)^{-1}.
\end{equation}

For the continuous-time QBD, let
\(\sigma_L\coloneq\inf\{t\ge0:K^{\mathrm P}(t)=L\}\).  When
\(K\ne L\), its first-arrival transform is
\begin{equation}
\label{eq:MP-QBD-first-arrival-continuous-definition}
\left(\mathcal H_{K,L}^{\mathrm P}(\lambda)\right)_{i,j}
\coloneq
\operatorname E_{K,i}\!\left[
\e^{-\lambda\sigma_L}
\ind_{\{I^{\mathrm P}(\sigma_L)=j\}}
\right],
\end{equation}
and
\begin{equation}
\label{eq:MP-QBD-first-arrival-continuous}
\mathcal H_{K,L}^{\mathrm P}(\lambda)
=
\mathcal R_{K,L}^{\mathrm P}(\lambda)
\left(\mathcal R_{L,L}^{\mathrm P}(\lambda)\right)^{-1}.
\end{equation}
Because the QBD is recurrent, the undiscounted first-arrival matrices are
stochastic:
\begin{equation}
\label{eq:MP-QBD-eventual-level-arrival}
\mathcal F_{K,L}^{\mathrm P}(1)\one_r
=
\mathcal H_{K,L}^{\mathrm P}(0)\one_r
=
\one_r,
\qquad K\ne L,
\end{equation}
where the values at the boundary are understood as monotone limits.
\end{proposition}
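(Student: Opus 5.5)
The proof has three parts. The spectral formulas are collected from earlier scalar identities. The singular parts follow from the scalar critical asymptotics. The first-arrival identities come from a strong Markov decomposition at the level-hitting time, which is the block version of Proposition~\ref{pro:general-Green-first-return} and Proposition~\ref{pro:bounded-resolvent-hitting}.

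\emph{Spectral representations.} For \eqref{eq:MP-QBD-Green-spectral}, I would sum the block formula for \(((P^{\mathrm P})^n)_{K,L}\) against \(z^n\). Since \(0\le x\le1\) and \(0\le z<1\), the geometric series \(\sum_n z^nx^n=(1-zx)^{-1}\) is bounded by \((1-z)^{-1}\). The polynomial entries are bounded on \([0,1]\) and the densities \(x^{\alpha_a+\beta_b}\) are integrable, so dominated convergence allows the sum to be moved inside the integral. For \eqref{eq:MP-QBD-resolvent-spectral}, the bounded Laplace-transform argument of Theorem~\ref{thm:CT-bounded} applies entrywise to the block semigroup formula. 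The asymptotics \eqref{eq:MP-QBD-Green-asymptotic}--\eqref{eq:MP-QBD-resolvent-asymptotic} follow by collecting, entry by entry, the \(r^2\) scalar statements \eqref{eq:MP-critical-Green}--\eqref{eq:MP-critical-resolvent} with \(N=rK+i\) and \(M=rL+j\). Each such limit equals \(\pi^{\mathrm P}_{rL+j}\), independently of \(i\), which produces the rank-one matrix \(\one_r\boldsymbol\pi_L^{\mathrm P}\).

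\emph{First arrival.} I would fix \(K\ne L\) and work with the scalar chain. Every visit to a state \(rL+j\) occurs at or after \(\tau_L\). Applying the strong Markov property at \(\tau_L\) and summing over the arrival phase \(j'\) gives
\[
\mathcal G^{\mathrm P}_{K,L}(z)=\mathcal F^{\mathrm P}_{K,L}(z)\,\mathcal G^{\mathrm P}_{L,L}(z).
\]
This is legitimate because the discount factor \(z^n\) splits as \(z^{\tau_L}z^{n-\tau_L}\). The continuous-time analogue, with \(\e^{-\lambda t}\) and \(\sigma_L\), gives \(\mathcal R_{K,L}=\mathcal H_{K,L}\mathcal R_{L,L}\). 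For the first positive return to level \(L\), splitting off time zero gives the renewal identity \(\mathcal G_{L,L}=I_r+\mathcal F^+_{L,L}\mathcal G_{L,L}\). Each row sum of \(\mathcal F^+_{L,L}(z)\) is at most \(z<1\), so \(I_r-\mathcal F^+_{L,L}\) is invertible. Hence \(\mathcal G_{L,L}=(I_r-\mathcal F^+_{L,L})^{-1}\), which is invertible and yields \eqref{eq:MP-QBD-first-return-matrix}. This is exactly Proposition~\ref{pro:general-Green-first-return} with \(A\) the level set, so that step can simply be cited.

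\emph{Main obstacle.} I expect the main difficulty to be the invertibility of \(\mathcal R^{\mathrm P}_{L,L}(\lambda)\) in continuous time, which has no step-count bound to lean on. I would handle it through the Poissonization: by \eqref{eq:uniformization}, \(\mathcal R_{L,L}(\lambda)=(\lambda+\nu)^{-1}\mathcal G_{L,L}\bigl(\nu/(\lambda+\nu)\bigr)\). Since \(\nu/(\lambda+\nu)<1\), invertibility follows from the discrete case.

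\emph{Stochasticity at the boundary.} For \eqref{eq:MP-QBD-eventual-level-arrival}, I would observe that as \(z\uparrow1\), \(\mathcal F_{K,L}(z)\one_r\) increases monotonically to the vector of probabilities \(\Pr_{K,i}\{\tau_L<\infty\}\). By Proposition~\ref{pro:MP-QBD-local-limit} the chain is irreducible and recurrent, so each of these probabilities equals \(1\). The same argument as \(\lambda\downarrow0\) gives \(\mathcal H_{K,L}(0)\one_r=\one_r\). This avoids taking the limit of the product \(\mathcal G_{K,L}\mathcal G_{L,L}^{-1}\), which would be delicate because both factors diverge like \(\log\frac{1}{1-z}\) to the same singular rank-one matrix.
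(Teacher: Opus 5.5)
Your proposal is correct and follows the paper's proof essentially step for step. The shared steps are: termwise summation and Laplace transformation of the block spectral kernel; collecting the scalar critical asymptotics into \(r\times r\) blocks; the strong Markov factorizations \(\mathcal G_{K,L}^{\mathrm P}=\mathcal F_{K,L}^{\mathrm P}\mathcal G_{L,L}^{\mathrm P}\) and \(\mathcal R_{K,L}^{\mathrm P}=\mathcal H_{K,L}^{\mathrm P}\mathcal R_{L,L}^{\mathrm P}\) together with the level renewal identity; invertibility of \(\mathcal R_{L,L}^{\mathrm P}(\lambda)\) through \(\mathcal R_{L,L}^{\mathrm P}(\lambda)=(\lambda+\nu)^{-1}\mathcal G_{L,L}^{\mathrm P}(\nu/(\lambda+\nu))\); and recurrence for the boundary stochasticity. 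Your domination bound, the row-sum argument for invertibility of \(I_r-\mathcal F_{L,L}^{\mathrm P,+}(z)\), and the monotone-limit treatment of the undiscounted first-arrival matrices only make explicit details the paper leaves implicit.
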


\begin{proof}
Summing the block transition formula over \(n\), and using
\(\sum_{n\ge0}(zx)^n=(1-zx)^{-1}\), gives
\eqref{eq:MP-QBD-Green-spectral}.  Laplace transformation of the
continuous-time block kernel gives
\eqref{eq:MP-QBD-resolvent-spectral}.  Equations
\eqref{eq:MP-QBD-Green-asymptotic} and
\eqref{eq:MP-QBD-resolvent-asymptotic} follow by collecting the scalar
relations \eqref{eq:MP-critical-Green} and
\eqref{eq:MP-critical-resolvent} into \(r\times r\) blocks.

By the strong Markov property at \(\tau_L\), every visit from level \(K\)
to level \(L\) factors into the first arrival, including its phase, and a
subsequent path that starts in level \(L\).  Therefore
\begin{equation}
\mathcal G_{K,L}^{\mathrm P}(z)
=
\mathcal F_{K,L}^{\mathrm P}(z)
\mathcal G_{L,L}^{\mathrm P}(z),
\qquad K\ne L.
\end{equation}
For a return to the initial level, the zero-step visit must be separated:
\begin{equation}
\mathcal G_{L,L}^{\mathrm P}(z)
=
I_r+
\mathcal F_{L,L}^{\mathrm P,+}(z)
\mathcal G_{L,L}^{\mathrm P}(z).
\end{equation}
The second identity also shows that
\(\mathcal G_{L,L}^{\mathrm P}(z)\) is invertible and yields
\eqref{eq:MP-QBD-first-return-matrix}; the first then gives
\eqref{eq:MP-QBD-first-arrival-discrete}.  The strong Markov property at
\(\sigma_L\) similarly gives
\(\mathcal R_{K,L}^{\mathrm P}(\lambda)
=\mathcal H_{K,L}^{\mathrm P}(\lambda)
\mathcal R_{L,L}^{\mathrm P}(\lambda)\).  The diagonal block
\(\mathcal R_{L,L}^{\mathrm P}(\lambda)\) is nonsingular because
uniformization gives
\(\mathcal R_{L,L}^{\mathrm P}(\lambda)
=(\lambda+\nu)^{-1}
\mathcal G_{L,L}^{\mathrm P}(\nu/(\lambda+\nu))\).  This proves
\eqref{eq:MP-QBD-first-arrival-continuous}.  Finally, irreducible
recurrence implies that every level is reached almost surely, which gives
\eqref{eq:MP-QBD-eventual-level-arrival}.
\end{proof}

\begin{corollary}[Continued fraction for the mixed Pi\~neiro returns]
\label{coro:MP-return-CF}
For \(K\ge0\), let
\(\mathcal G_K^{\mathrm P,\uparrow}(z)\) be the Green matrix of the
mixed Pi\~neiro chain restricted to levels \(K,K+1,\ldots\).  Then
\begin{equation}
\label{eq:MP-return-CF}
\mathcal G_K^{\mathrm P,\uparrow}(z)
=
\left[
I_r-z\mathsf P_K^{\mathrm P,0}
-z^2\mathsf P_K^{\mathrm P,+}
\mathcal G_{K+1}^{\mathrm P,\uparrow}(z)
\mathsf P_{K+1}^{\mathrm P,-}
\right]^{-1}.
\end{equation}
At the boundary there is no lower level, and therefore
\begin{equation}
\label{eq:MP-boundary-Green-CF-spectral}
\begin{split}
\mathcal G_0^{\mathrm P,\uparrow}(z)
&=
\mathcal G_{0,0}^{\mathrm P}(z)
=
\sum_{a=1}^{p}\sum_{b=1}^{q}
\int_0^1
\widetilde{\boldsymbol B}_0^{(b),\mathrm P}(x)
\left(\widetilde{\boldsymbol A}_0^{(a),\mathrm P}(x)\right)^{\mathsf T}
\frac{x^{\alpha_a+\beta_b}}{1-zx}\,\dd x.
\end{split}
\end{equation}
The matrix generating function of the first positive return to level \(0\)
is consequently
\begin{equation}
\label{eq:MP-boundary-first-return-CF}
\mathcal F_{0,0}^{\mathrm P,+}(z)
=
z\mathsf P_0^{\mathrm P,0}
+z^2\mathsf P_0^{\mathrm P,+}
\mathcal G_1^{\mathrm P,\uparrow}(z)
\mathsf P_1^{\mathrm P,-}.
\end{equation}

For the bounded continuous-time process \(Q^{\mathrm P}=\nu(P^{\mathrm P}-I)\),
the corresponding tail resolvents satisfy
\begin{equation}
\label{eq:MP-resolvent-CF}
\mathcal R_K^{\mathrm P,\uparrow}(\lambda)
=
\left[
(\lambda+\nu)I_r-\nu\mathsf P_K^{\mathrm P,0}
-\nu^2\mathsf P_K^{\mathrm P,+}
\mathcal R_{K+1}^{\mathrm P,\uparrow}(\lambda)
\mathsf P_{K+1}^{\mathrm P,-}
\right]^{-1}.
\end{equation}
\end{corollary}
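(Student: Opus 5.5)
The plan is to specialize Proposition~\ref{pro:level-return-CF} and the resolvent recursion \eqref{eq:level-resolvent-CF-recursion} to the mixed Pi\~neiro blocks. No new continued-fraction theory is needed. By Proposition~\ref{pro:banded-kernel-QBD} and Subsection~\ref{subsec:MP-QBD}, \(P^{\mathrm P}\) is block tridiagonal with blocks \(\mathsf P_K^{\mathrm P,\varepsilon}\). Applying \eqref{eq:level-return-CF-recursion} with \(\mathsf T_K^\varepsilon=\mathsf P_K^{\mathrm P,\varepsilon}\) gives \eqref{eq:MP-return-CF} directly. To keep things self-contained, I would also recall the Schur-complement argument: split the retained chain into level \(K\) and the tail. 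The only coupling blocks are \(-z\mathsf P_K^{\mathrm P,+}\) and \(-z\mathsf P_{K+1}^{\mathrm P,-}\), and the tail corner of the inverse is \(\mathcal G_{K+1}^{\mathrm P,\uparrow}(z)\). This corner is well defined for \(0\le z<1\) because \(zP^{\mathrm P}\) restricted to the tail is strictly substochastic, so the Neumann series converges entrywise.

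For the boundary identity \eqref{eq:MP-boundary-Green-CF-spectral}, note that at \(K=0\) there is no level below, so \(T^{[0]}=P^{\mathrm P}\). Hence \(\mathcal G_0^{\mathrm P,\uparrow}(z)\) is the \((0,0)\) block of \((I-zP^{\mathrm P})^{-1}=\sum_n z^n(P^{\mathrm P})^n\), which by definition is \(\mathcal G_{0,0}^{\mathrm P}(z)\). The spectral form is then \eqref{eq:MP-QBD-Green-spectral} with \(K=L=0\), proved in Proposition~\ref{pro:MP-QBD-first-arrival}. Formula \eqref{eq:MP-boundary-first-return-CF} is the first equality of \eqref{eq:level-first-return-CF} at \(K=0\). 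Since no path can go below level \(0\), stopping below \(K\) is vacuous there. Consequently this matrix coincides with \(\mathcal F_{0,0}^{\mathrm P,+}(z)\) of Proposition~\ref{pro:MP-QBD-first-arrival}, consistent with \eqref{eq:MP-QBD-first-return-matrix}.

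For \eqref{eq:MP-resolvent-CF}, insert the generator blocks \(\mathsf Q_K^{\mathrm P,0}=\nu(\mathsf P_K^{\mathrm P,0}-I_r)\) and \(\mathsf Q_K^{\mathrm P,\pm}=\nu\mathsf P_K^{\mathrm P,\pm}\) into \eqref{eq:level-resolvent-CF-recursion}. This gives \(\lambda I_r-\nu\mathsf P_K^{\mathrm P,0}+\nu I_r-\nu^2\mathsf P_K^{\mathrm P,+}\mathcal R_{K+1}^{\mathrm P,\uparrow}\mathsf P_{K+1}^{\mathrm P,-}\), which is the claim.

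The recursion \eqref{eq:level-resolvent-CF-recursion} requires a stable, conservative, non-explosive generator. That holds here because \(Q^{\mathrm P}\) is bounded. Boundedness also lets one bypass the limiting argument: uniformization gives \(\mathcal R_K^{\mathrm P,\uparrow}(\lambda)=(\lambda+\nu)^{-1}\mathcal G_K^{\mathrm P,\uparrow}(\nu/(\lambda+\nu))\), because killing below \(K\) commutes with Poissonization. Substituting \(z=\nu/(\lambda+\nu)\) into \eqref{eq:MP-return-CF} and multiplying through by \(\lambda+\nu\) yields \eqref{eq:MP-resolvent-CF} algebraically.

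The only delicate point is justifying the infinite Schur complement, that is, that the tail corner of the inverse is the tail Green matrix. I would handle it as the paper does: use finite truncations to levels \(K,\ldots,N\), apply the finite Schur identity to each, and then pass to the limit \(N\to\infty\) by monotone convergence of the nonnegative path sums. The matrix \(I_r-z\mathsf P_K^{\mathrm P,0}-\cdots\) is invertible since it equals \((\mathcal G_K^{\mathrm P,\uparrow})^{-1}\); this follows from the renewal identity, as the row sums of the first-return matrix are at most \(z<1\).
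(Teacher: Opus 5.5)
Your proposal is correct and follows essentially the paper's route. You specialize Proposition~\ref{pro:level-return-CF} and \eqref{eq:level-resolvent-CF-recursion} to the blocks \(\mathsf P_K^{\mathrm P,\varepsilon}\), identify the \(K=0\) tail Green matrix with \(\mathcal G_{0,0}^{\mathrm P}(z)\), and invoke \eqref{eq:MP-QBD-Green-spectral}. Your extra uniformization identity \(\mathcal R_K^{\mathrm P,\uparrow}(\lambda)=(\lambda+\nu)^{-1}\mathcal G_K^{\mathrm P,\uparrow}(\nu/(\lambda+\nu))\) is a valid shortcut to \eqref{eq:MP-resolvent-CF} that bypasses the truncation limit, but the paper does not need it.
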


\begin{proof}
Apply Proposition~\ref{pro:level-return-CF}, which is the QBD
specialization of the construction in
\cite{ManasContinuedFractions2026}, to the explicit blocks of
\(P^{\mathrm P}\).  The hypotheses hold because
Proposition~\ref{pro:banded-kernel-QBD} and the explicit grouping above
have already shown that the
Pi\~neiro kernel is block tridiagonal with precisely the blocks
\(\mathsf P_K^{\mathrm P,-}\), \(\mathsf P_K^{\mathrm P,0}\), and
\(\mathsf P_K^{\mathrm P,+}\).  Since level \(0\) is the boundary of the
whole state space, the chain restricted to the levels
\(0,1,\ldots\) is the original chain.  Hence its boundary Green matrix is
\(\mathcal G_{0,0}^{\mathrm P}(z)\), and
\eqref{eq:MP-QBD-Green-spectral} gives
\eqref{eq:MP-boundary-Green-CF-spectral}.  Formula
\eqref{eq:MP-boundary-first-return-CF} is
\eqref{eq:level-first-return-CF} at \(K=0\), and
\eqref{eq:MP-resolvent-CF} follows from
\eqref{eq:level-resolvent-CF-recursion}.
\end{proof}

All coefficients in \eqref{eq:MP-return-CF} depend explicitly on both
parameter vectors \(\boldsymbol\alpha\) and \(\boldsymbol\beta\).  Indeed,
\(\mathsf P_K^{\mathrm P,-}\), \(\mathsf P_K^{\mathrm P,0}\), and
\(\mathsf P_K^{\mathrm P,+}\) are assembled from the scalar probabilities
\(p_{N,N+s}^{\mathrm P}\), whose finite formulas were obtained from the
mixed Pi\~neiro recurrence coefficients and the harmonic vector.  In the
ordered PBF region, the finer construction of
\cite{ManasContinuedFractions2026} uses directly the elementary
probabilities \(\delta_k(N)\) and \(\gamma_j(N)\).  These are again explicit
functions of \(\boldsymbol\alpha\) and \(\boldsymbol\beta\). Thus the coefficients at
every level of that continued fraction are the individual urn
probabilities.

For the rational \((p,q)=(3,2)\) example, \(r=3\).  Therefore
\eqref{eq:MP-return-CF} is a continued fraction of \(3\times3\) matrices,
and every finite convergent is a \(3\times3\) rational matrix in \(z\).
Cutting it after level \(N\) gives the exact Green matrix of the rational
chain stopped on leaving levels \(0,\ldots,N\), and the finite fractions
increase entry by entry to the spectral matrix in
\eqref{eq:MP-boundary-Green-CF-spectral}.

All Green, resolvent, and first-arrival matrices in
Proposition~\ref{pro:MP-QBD-first-arrival} are effectively computable.
Substitution
of the finite Pi\~neiro expansions reduces every entry of
\eqref{eq:MP-QBD-Green-spectral} to finite sums of
\begin{equation}
\int_0^1\frac{x^{\rho-1}}{1-zx}\,\dd x
=
\frac{1}{\rho}\;\pFq{2}{1}{1,\rho}{\rho+1}{z},
\end{equation}
while \eqref{eq:MP-QBD-resolvent-spectral} is evaluated by
\eqref{eq:MP-resolvent-spectral-explicit}.  Thus, for the rational
three-phase example, the first-arrival transforms are obtained from finite
hypergeometric sums followed only by the inversion of an explicit
\(3\times3\) matrix.  This provides an explicit alternative to the single
matrix-quadratic rate equation of a level-homogeneous QBD, which is not
available here because the Pi\~neiro blocks depend on the level.

For the rational QBD, the first passage from the boundary level (0) to
level (1) is particularly concrete.  Before that first arrival the chain
can only move inside level (0); hence
\begin{align}
\mathcal F_{0,1}^{\mathrm P}(z)
&=
z\left(I_3-z\mathsf P_0^{\mathrm P,0}\right)^{-1}
\mathsf P_0^{\mathrm P,+},
&
\mathcal H_{0,1}^{\mathrm P}(\lambda)
&=
\nu\left((\lambda+\nu)I_3-
\nu\mathsf P_0^{\mathrm P,0}\right)^{-1}
\mathsf P_0^{\mathrm P,+}.
\end{align}
At the undiscounted boundary these two matrices coincide and the displayed
rational blocks give
\begin{equation}
\label{eq:MP-rational-QBD-first-arrival-boundary-explicit}
\mathcal F_{0,1}^{\mathrm P}(1)
=
\mathcal H_{0,1}^{\mathrm P}(0)
=
\begin{bNiceMatrix}[cell-space-limits=2pt]
\frac{69}{88}&\frac{19}{88}&0\\
\frac{141}{176}&\frac{35}{176}&0\\
\frac34&\frac14&0
\end{bNiceMatrix}.
\end{equation}
Each row sums to one.  The zero third column has a direct geometric
meaning: since the largest upward scalar jump satisfies \(q=2<r=3\), the
first entrance from level \(0\) into level \(1\) can occur only in phase
\(0\) or phase \(1\), never in phase \(2\).

\subsection{State-dependent time change and the unbounded process}

The preceding construction applies directly to the mixed Pi\~neiro kernel
already developed in this paper. Let
\((\boldsymbol\alpha,\boldsymbol\beta)\in\mathcal R^{\mathrm P,+}_{p,q}\), so that
\(P^{\mathrm P}\) has the positive stochastic bidiagonal factorization
\eqref{eq:MP-SBF} and the ordered sequence of \(p+q\) urn experiments in
Proposition~\ref{pro:MP-finite-urn-model}. Since \(P^{\mathrm P}\) is
strictly positive throughout its structural band,
\((P^{\mathrm P})_{N,N}<1\) for every \(N\). Define
\begin{equation}
\label{eq:MP-unbounded-local-rates}
v_N^{\mathrm P}
\coloneq
\frac{N+1}{1-(P^{\mathrm P})_{N,N}},
\qquad
V^{\mathrm P}\coloneq
\diag(v_0^{\mathrm P},v_1^{\mathrm P},\ldots),
\end{equation}
and
\begin{equation}
\label{eq:MP-unbounded-generator}
Q_{\mathrm{loc}}^{\mathrm P}
\coloneq
V^{\mathrm P}(P^{\mathrm P}-I).
\end{equation}
This is a state-dependent time change of the continuous-time chain with
generator \(P^{\mathrm P}-I\): it changes the holding rates but preserves
the embedded event chain \(P^{\mathrm P}\).

\begin{proposition}[The state-dependent Pi\~neiro generator]
\label{pro:MP-unbounded-generator}
The matrix \(Q_{\mathrm{loc}}^{\mathrm P}\) is an irreducible,
non-explosive, unbounded generator with exact bandwidth \((p,q)\). Its exit
rate from \(N\) is \(N+1\). At state \(N\), its probabilistic realization
runs the same \(p+q\) Pi\~neiro urns as the discrete model, but at the
state-dependent rate \(v_N^{\mathrm P}\).

All its jump rates are explicit finite expressions in
\(\boldsymbol\alpha,\boldsymbol\beta\) and \(N\). At rational parameter points they are
rational. In particular, the rational \((3,2)\) model of
Example~\ref{ex:MP-rational-urn-cycle} gives a fully explicit unbounded
five-urn process with off-diagonal jumps in
\(\{-3,-2,-1,1,2\}\), with virtual stays allowed in the local-clock
realization.
\end{proposition}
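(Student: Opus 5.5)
The plan is to reduce everything to Proposition~\ref{pro:canonical-unbounded-local-clock} and Proposition~\ref{pro:local-clock-urn-realization}, applied with \(T=P^{\mathrm P}\). First I will check the hypotheses of Proposition~\ref{pro:canonical-unbounded-local-clock}. On \(\mathcal R^{\mathrm P,+}_{p,q}\subseteq\mathcal S^{\mathrm P,+}_{p,q}\), Proposition~\ref{pro:MP-Markov-enlarged-region} shows that \(P^{\mathrm P}\) is row-stochastic, \((p,q)\)-banded and strictly positive throughout its structural band \(\mathfrak B_{p,q}\). This gives irreducibility, since the sub- and superdiagonals are positive, and it gives \((P^{\mathrm P})_{N,N}<1\), since every row has at least one positive off-diagonal entry in the band. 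The rates \eqref{eq:MP-unbounded-local-rates} are then exactly the canonical rates \eqref{eq:canonical-local-rates}. That proposition therefore gives at once that \(Q^{\mathrm P}_{\mathrm{loc}}\) is an irreducible, non-explosive generator with exit rate \(v_N^{\mathrm P}(1-(P^{\mathrm P})_{N,N})=N+1\), hence unbounded. Its non-explosion step is the Lyapunov bound \(Q F\le qF\) with \(F(n)=n+1\).

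For the \emph{exact} bandwidth, I note that the off-diagonal entries of \(Q^{\mathrm P}_{\mathrm{loc}}\) are \(v_N^{\mathrm P}(P^{\mathrm P})_{N,M}\) with \(v_N^{\mathrm P}>0\). So \(Q^{\mathrm P}_{\mathrm{loc}}\) has the same support as \(P^{\mathrm P}\) off the diagonal. Strict positivity on \(\mathfrak B_{p,q}\), in particular on the outer diagonals \(M=N+q\) and \(M=N-p\) for \(N\ge p\), shows that both extreme diagonals are nonzero. For the urn realization, the PBF region gives the positive stochastic bidiagonal factorization \eqref{eq:MP-SBF}. Proposition~\ref{pro:local-clock-urn-realization} then says that at state \(N\) one runs a clock of rate \(v_N^{\mathrm P}\) and, at each ring, executes the \(p+q\) ordered experiments. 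For rational parameters these are exactly the finite urns of Proposition~\ref{pro:MP-finite-urn-model}. Note that \(\inf_N v_N^{\mathrm P}\ge 1>0\), as required in Definition~\ref{def:state-dependent-SBF-uniformization}.

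For explicitness and rationality, the entries \((P^{\mathrm P})_{N,M}\) are finite sums of products of the explicit factor entries \eqref{eq:MP-stochastic-lower-explicit}--\eqref{eq:MP-stochastic-upper-explicit}. Equivalently, they are \(h_M^{\mathrm P}(T^{\mathrm P})_{N,M}/h_N^{\mathrm P}\), with \(h^{\mathrm P}\) given by \eqref{eq:MP-h-explicit-alpha-beta}. So \(v_N^{\mathrm P}\) and all rates \(v_N^{\mathrm P}(P^{\mathrm P})_{N,M}\) are finite rational expressions in \(\boldsymbol\alpha,\boldsymbol\beta,N\), and they are rational at rational points.

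For the \((3,2)\) example, \((p,q)=(3,2)\) gives off-diagonal jumps in \(\{-3,-2,-1,1,2\}\) and five urns, as shown in Example~\ref{ex:MP-rational-urn-cycle}. Virtual stays occur with probability \((P^{\mathrm P})_{N,N}>0\); for instance \((P^{\mathrm P})_{0,0}=2/5\). The only point needing care is the exact-bandwidth claim near the boundary, where the lower diagonals are truncated. I expect to handle it by reading ``exact bandwidth'' as nonvanishing of the outer diagonals wherever they are structurally allowed, which the structural-band positivity gives directly.
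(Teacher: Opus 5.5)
Your proposal is correct and follows the paper's own proof. Like the paper, you apply Proposition~\ref{pro:canonical-unbounded-local-clock} to \(T=P^{\mathrm P}\), use strict positivity on the structural band \(\mathfrak B_{p,q}\) to get irreducibility, \((P^{\mathrm P})_{N,N}<1\) and the exact bandwidth, and read off explicitness and rationality from the Pi\~neiro factor and harmonic-vector formulas. Your extra checks, namely \(\inf_N v_N^{\mathrm P}\ge1\) and the urn realization via Proposition~\ref{pro:local-clock-urn-realization}, are consistent with the paper. One small wording point: the rates are rational in \(\boldsymbol\alpha,\boldsymbol\beta\) for each fixed \(N\), not rational functions of \(N\) itself.
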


\begin{proof}
Apply Proposition~\ref{pro:canonical-unbounded-local-clock} to
\(T=P^{\mathrm P}\). Strict positivity throughout the structural band
preserves the exact bandwidth, and \eqref{eq:MP-unbounded-local-rates}
makes the exit rate
equal to \(N+1\). The factor entries, the entries of \(P^{\mathrm P}\),
and hence its diagonal are given explicitly in the Pi\~neiro section.
Therefore the local rates and all the entries of
\(Q_{\mathrm{loc}}^{\mathrm P}\) are explicit in the parameters.
\end{proof}

For the rational \((3,2)\) example, the diagonal entries displayed in
\eqref{eq:MP-rational-example-P} give, for instance,
\[
v_0^{\mathrm P}=\frac53,\qquad
v_1^{\mathrm P}=\frac{70}{23},\qquad
v_2^{\mathrm P}=\frac{360}{77},\qquad
v_3^{\mathrm P}=\frac{12320}{2029}.
\]
Multiplying the first two rows of \(P^{\mathrm P}-I\) by these local
rates gives
\begin{equation}
\label{eq:MP-unbounded-generator-first-rows}
\left.Q_{\mathrm{loc}}^{\mathrm P}\right|_{\{0,1\}\times\{0,1,2,3\}}
=
\begin{bNiceMatrix}[cell-space-limits=3pt]
-1&\frac23&\frac13&0\\
\frac{14}{23}&-2&\frac{49}{46}&\frac{15}{46}
\end{bNiceMatrix}.
\end{equation}
Thus the two displayed exit rates are \(1\) and \(2\), while the
off-diagonal rates are the rational wide-jump probabilities of the urn
kernel multiplied by the corresponding local rate.

The event chain \(P^{\mathrm P}\) is null recurrent by
Proposition~\ref{pro:MP-null-recurrence-local-limits}. Hence
\(Q_{\mathrm{loc}}^{\mathrm P}\) is recurrent. Its invariant measure is
\begin{equation}
\label{eq:MP-unbounded-invariant-measure}
\pi_{N,\mathrm{loc}}^{\mathrm P}
=
\frac{\pi_N^{\mathrm P}}{v_N^{\mathrm P}}
=
\pi_N^{\mathrm P}
\frac{1-(P^{\mathrm P})_{N,N}}{N+1},
\end{equation}
where
\(\pi_N^{\mathrm P}=h_N^{\mathrm P}A_N^{\mathrm P}(1)\) is the explicit
invariant measure in \eqref{eq:MP-invariant-measure}. In the present
Pi\~neiro model, the total mass of this measure can be determined
explicitly.

\begin{proposition}[Recurrence classification of the state-dependent process]
\label{pro:MP-unbounded-null-recurrence}
For every
\((\boldsymbol\alpha,\boldsymbol\beta)\in\mathcal R^{\mathrm P,+}_{p,q}\), one has
\begin{equation}
\label{eq:MP-invariant-measure-simplified}
\pi_N^{\mathrm P}
=
\xi_N+\eta_N
=
\alpha_{i_N}+\beta_{s_N}+e_N+d_N+1.
\end{equation}
Moreover,
\begin{equation}
\label{eq:MP-top-transition-asymptotic}
\lim_{N\to\infty}
(P^{\mathrm P})_{N,N+q}
=
\lim_{N\to\infty}
\frac{h_{N+q}^{\mathrm P}}{h_N^{\mathrm P}}
=
\left(\frac{p}{p+q}\right)^{p+q}.
\end{equation}
Consequently,
\begin{equation}
\label{eq:MP-unbounded-positive-recurrence}
\sum_{N=0}^{\infty}
\pi_N^{\mathrm P}
\frac{1-(P^{\mathrm P})_{N,N}}{N+1}
=+\infty,
\end{equation}
and \(Q_{\mathrm{loc}}^{\mathrm P}\) is null recurrent.
\end{proposition}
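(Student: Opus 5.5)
The plan is to prove the three displayed claims in order, with everything read off from the Cauchy construction of Proposition~\ref{pro:MP-Cauchy-step-line-construction}. For \eqref{eq:MP-invariant-measure-simplified}, write \(\pi_N^{\mathrm P}=h_N^{\mathrm P}A_N^{\mathrm P}(1)=B_N^{\mathrm P}(1)A_N^{\mathrm P}(1)\) and evaluate both factors with the partial-fraction identities from that proof.

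By \eqref{eq:MP-Cauchy-direct-step-line-forms}, \(B_N^{\mathrm P}(1)=\sum_K c_{N,K}\). This sum is the coefficient of \(1/z\) at \(z=\infty\) in \(\sum_K c_{N,K}/(z+\xi_K)\). The right-hand side of that identity has numerator of degree \(N\) and denominator of degree \(N+1\), so multiplying by \(z\) and letting \(z\to\infty\) gives
\[
h_N^{\mathrm P}=\frac{\prod_{r=0}^{N-1}(\xi_N-\xi_r)}{\prod_{r=0}^{N-1}(\xi_N+\eta_r)}.
\]
The same argument applied to the second identity gives
\[
A_N^{\mathrm P}(1)=\sum_K d_{N,K}=\frac{\prod_{r=0}^{N}(\xi_N+\eta_r)}{\prod_{r=0}^{N-1}(\xi_N-\xi_r)}.
\]
Multiplying the two, everything cancels except the factor \(\xi_N+\eta_N\). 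Substituting \eqref{eq:MP-Cauchy-exponent-sequences} then gives \(\alpha_{i_N}+\beta_{s_N}+e_N+d_N+1\).

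For \eqref{eq:MP-top-transition-asymptotic}, first note that \((T^{\mathrm P})_{N,N+q}=1\). The only path from \(N\) to \(N+q\) through \eqref{eq:MP-PBF} uses the unit diagonals of the lower factors and the unit superdiagonals of all \(q\) upper factors. Hence \((P^{\mathrm P})_{N,N+q}=h_{N+q}^{\mathrm P}/h_N^{\mathrm P}\), and it remains to find the limit of this ratio from the product formula for \(h_N^{\mathrm P}\).

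Along a residue class modulo \(\operatorname{lcm}(p,q)\), the sequences satisfy \(\xi_{N+q}=\xi_N+1\) and \(\eta_{N+p}=\eta_N+1\). So each product in \(h_N^{\mathrm P}\) splits into \(q\) (respectively \(p\)) Pochhammer blocks in one step of size \(1\), and \(h_N^{\mathrm P}\) becomes a finite product of Gamma quotients. I would then apply Stirling's formula (as in the proof of Proposition~\ref{prop:MP-boundedness-criterion}) to \(\log h_{N+q}^{\mathrm P}-\log h_N^{\mathrm P}\). Heuristically, \(\xi_N\approx N/q\) and \(\eta_r\approx r/p\), so \(\log h_N^{\mathrm P}\) is a Riemann sum of \(\log\bigl(p(1-u)/(p+qu)\bigr)\) with \(u=r/N\), and the increment over \(q\) steps tends to \((p+q)\log\bigl(p/(p+q)\bigr)\). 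The main obstacle is controlling the boundary terms in this Riemann-sum/Stirling computation, so that only the exponential rate survives and the power-law corrections cancel in the ratio. I would handle it by telescoping directly: write \(h_{N+q}^{\mathrm P}/h_N^{\mathrm P}\) as an explicit product of ratios \(\Gamma(x+c)/\Gamma(x+d)\) with \(x\) linear in \(N\), and check that the exponents cancel. The limit must also come out the same on every residue class.

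For \eqref{eq:MP-unbounded-positive-recurrence}, use \(1-(P^{\mathrm P})_{N,N}\ge(P^{\mathrm P})_{N,N+q}\), which is bounded below by a positive constant for large \(N\) by \eqref{eq:MP-top-transition-asymptotic}. Also \(\pi_N^{\mathrm P}\ge N/q+N/p-C\) by \eqref{eq:MP-invariant-measure-simplified}. So the summands are bounded below by a positive constant, and the series diverges. Finally, \(Q_{\mathrm{loc}}^{\mathrm P}\) is non-explosive by Proposition~\ref{pro:canonical-unbounded-local-clock} and recurrent because \(P^{\mathrm P}\) is null recurrent. Theorem~\ref{thm:local-invariant-measure} with \(Z_V=+\infty\) then gives null recurrence.
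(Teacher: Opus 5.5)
Your proposal is correct and follows essentially the same route as the paper. You obtain $h_N^{\mathrm P}$ and $A_N^{\mathrm P}(1)$ in closed form as residues at infinity of the partial-fraction identities, which is the same fact as the paper's Lagrange-interpolation identity for a monic polynomial. You then use $(T^{\mathrm P})_{N,N+q}=1$ and Gamma-quotient asymptotics for $h_{N+q}^{\mathrm P}/h_N^{\mathrm P}$, and conclude with $1-(P^{\mathrm P})_{N,N}\ge(P^{\mathrm P})_{N,N+q}$, $\pi_N^{\mathrm P}/(N+1)\to 1/p+1/q$, and $Z_V=+\infty$.
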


\begin{proof}
First, the invariant measure is simplified. For pairwise distinct nodes
\(z_0,\ldots,z_N\) and a monic polynomial \(f\) of degree \(N\), Lagrange
interpolation gives
\[
\sum_{K=0}^{N}
\frac{f(z_K)}
{\prod_{\substack{0\le r\le N\\r\ne K}}(z_K-z_r)}
=1.
\]
Evaluate the first formula in
\eqref{eq:MP-Cauchy-direct-step-line-forms} at \(x=1\), substitute
\eqref{eq:MP-Cauchy-lower-inverse}, and apply this identity with
\(z_K=\xi_K\) and
\(f(z)=\prod_{r=0}^{N-1}(z+\eta_r)\). This gives
\begin{equation}
\label{eq:MP-h-Cauchy-product}
h_N^{\mathrm P}
=
\frac{\prod_{r=0}^{N-1}(\xi_N-\xi_r)}
{\prod_{r=0}^{N-1}(\xi_N+\eta_r)}.
\end{equation}
Applying the same interpolation identity to the second formula in
\eqref{eq:MP-Cauchy-direct-step-line-forms}, now using
\eqref{eq:MP-Cauchy-upper-inverse-transpose} and
\(f(z)=\prod_{r=0}^{N-1}(\xi_r+z)\), yields
\[
A_N^{\mathrm P}(1)
=
\frac{\prod_{r=0}^{N}(\xi_N+\eta_r)}
{\prod_{r=0}^{N-1}(\xi_N-\xi_r)}.
\]
Multiplication cancels every factor except
\(\xi_N+\eta_N\), proving
\eqref{eq:MP-invariant-measure-simplified}.

Next, the size of the extreme upward transition is determined. Write
\(N=qd_N+s_N-1=pe_N+i_N-1\). Grouping the factors in
\eqref{eq:MP-h-Cauchy-product} according to their beta and alpha
indices gives
\begin{equation}
\label{eq:MP-h-grouped-product}
h_N^{\mathrm P}
=
\frac{
d_N!\,
\prod_{j=1}^{s_N-1}
(\beta_{s_N}-\beta_j)_{d_N+1}
\prod_{j=s_N+1}^{q}
(\beta_{s_N}-\beta_j+1)_{d_N}
}{
\prod_{a=1}^{i_N-1}
(\alpha_a+\beta_{s_N}+d_N+1)_{e_N+1}
\prod_{a=i_N}^{p}
(\alpha_a+\beta_{s_N}+d_N+1)_{e_N}
}.
\end{equation}
Empty products are understood as \(1\). Formula
\eqref{eq:MP-h-grouped-product} also gives the large-\(N\) asymptotics
directly. Replace each Pochhammer symbol by a quotient of gamma
functions and use
\(\Gamma(x+c)/\Gamma(x+d)
=x^{c-d}(1+\mathrm O(x^{-1}))\), \(x\to\infty\).
Since \(d_N=N/q+O(1)\) and \(e_N=N/p+\mathrm O(1)\), \(N\to\infty\), the powers contributed by
the finitely many alpha and beta factors cancel, and on every residue
class modulo \(\operatorname{lcm}(p,q)\) one obtains
\[
\frac{h_{N+q}^{\mathrm P}}{h_N^{\mathrm P}}
=
\left(\frac{p}{p+q}\right)^{p+q}
\left(1+\mathrm O\left(\frac1N\right)\right),\qquad N\to\infty.
\]
The limit is the same on all residue classes. The monic normalization of
the type-II recurrence gives
\((T^{\mathrm P})_{N,N+q}=1\). Hence the Doob transform satisfies
\((P^{\mathrm P})_{N,N+q}=h_{N+q}^{\mathrm P}/h_N^{\mathrm P}\), which
proves \eqref{eq:MP-top-transition-asymptotic}.

Finally, \eqref{eq:MP-invariant-measure-simplified} gives
\[
\frac{\pi_N^{\mathrm P}}{N+1}
\longrightarrow
\frac1p+\frac1q.
\]
Since
\(1-(P^{\mathrm P})_{N,N}\ge(P^{\mathrm P})_{N,N+q}\), it follows that
\[
\liminf_{N\to\infty}
\pi_N^{\mathrm P}
\frac{1-(P^{\mathrm P})_{N,N}}{N+1}
\ge
\left(\frac1p+\frac1q\right)
\left(\frac{p}{p+q}\right)^{p+q}>0.
\]
The summands in \eqref{eq:MP-unbounded-positive-recurrence} therefore do
not tend to zero, so the invariant measure of
\(Q_{\mathrm{loc}}^{\mathrm P}\) has infinite total mass. The process is
recurrent and irreducible, and is consequently null recurrent.
\end{proof}

For \(\lambda>0\), define
\[
D_\lambda^{\mathrm P}
\coloneq
\diag\left(
\frac{v_N^{\mathrm P}}{\lambda+v_N^{\mathrm P}}
\right)_{N\ge0},
\qquad
K_\lambda^{\mathrm P}
\coloneq D_\lambda^{\mathrm P}P^{\mathrm P}.
\]
The resolvent is
\begin{equation}
\label{eq:MP-unbounded-resolvent}
\bigl(R_{Q_{\mathrm{loc}}^{\mathrm P}}(\lambda)\bigr)_{N,M}
=
\frac{1}{\lambda+v_M^{\mathrm P}}
\sum_{k=0}^{\infty}
\bigl((K_\lambda^{\mathrm P})^k\bigr)_{N,M}.
\end{equation}
The kernel \(K_\lambda^{\mathrm P}\) has the explicit PBF obtained from
\eqref{eq:MP-SBF} by
Proposition~\ref{pro:local-kernel-PBF}. Therefore
\eqref{eq:local-resolvent-spectral-Abel} supplies a positive mixed
spectral representation of every discounted potential, and
\eqref{eq:local-Laplace-hitting-time} gives all first-passage Laplace
transforms.

Finally, grouping the states of \(Q_{\mathrm{loc}}^{\mathrm P}\) by the same
level--phase bijection used in Subsection~\ref{subsec:MP-QBD} produces an
unbounded level-dependent QBD generator. Its block resolvent is obtained by
grouping the entries in \eqref{eq:MP-unbounded-resolvent}; the strong-Markov matrix
quotients in Proposition~\ref{pro:MP-QBD-first-arrival} then give the
Laplace transforms of first arrival at a prescribed level, including the
arrival phase.

\section{Jacobi-like PBF kernels and their Markov normalization}
\label{sec:Jacobi-like-Markov}

The Jacobi-like system considered in this section is a broad extension of
the mixed Pi\~neiro family constructed spectrally in
Section~\ref{sec:mixed-Pineiro-model} and used to produce explicit banded
Markov processes in Section~\ref{sec:mixed-Pineiro-Markov}. In the
Pi\~neiro case the \(q\) row weights are the monomials
\(x^{\beta_j}\), and the step-line moment matrix is a Cauchy matrix. Here
each monomial is replaced by a positive beta-convolution weight whose
Mellin transform contains the \(q\) pairs of Gamma factors determined by
\(\boldsymbol a\) and \(\boldsymbol b\). The resulting moment matrix is no longer of
Cauchy type and depends on substantially more parameters.  It still has
arbitrary \((p,q)\) bandwidth and is obtained from a mixed Gauss--Borel
factorization.

The aim is to prove that this wider family has the properties needed for
the same probabilistic constructions as in the Pi\~neiro case. Starting from the
Jacobi-like mixed system of \cite{JacobiLaguerreMixed}, a parameter region
is identified directly in which the recurrence matrix has
a positive bidiagonal factorization, including exact Gamma-factor
cancellations. Positive Doob normalization then produces a broader class
of explicit spectral examples of banded Markov processes, together with
their urn, continuous-time, and QBD realizations. Complete cancellation,
\(\boldsymbol a=\boldsymbol b=\boldsymbol\beta\), recovers the mixed Pi\~neiro construction
and its Markov normalization exactly. Terminating generalized
hypergeometric formulas for the power components and terminating Kamp\'e
de F\'eriet formulas for the Jacobi-like components are also recorded.

\subsection{Beta-convolution weights and the step-line moment matrix}
\label{subsec:Jacobi-like-moments}

For real parameters \(u<v\), put
\[
\mathcal B_{u,v}(x)
\coloneq
\frac{x^u(1-x)^{v-u-1}}{\Gamma(v-u)},
\qquad 0<x<1.
\]
Its Mellin transform is
\[
\int_0^1x^{s-1}\mathcal B_{u,v}(x)\,\dd x
=\frac{\Gamma(s+u)}{\Gamma(s+v)}.
\]
The Mellin convolution
\((f*g)(x)\coloneq\int_x^1f(t)g(x/t)\,\dd t/t\)
gives a positive weight on \((0,1)\) whose Mellin transform is the
product of the corresponding Gamma quotients.

Fix \(p\ge1\) and \(q\ge2\), and let
\[
\boldsymbol\alpha=(\alpha_1,\ldots,\alpha_p),\qquad
\boldsymbol a=(a_1,\ldots,a_q),\qquad
\boldsymbol b=(b_1,\ldots,b_q).
\]
Assume
\begin{equation}
\label{eq:Jacobi-like-basic-order}
\alpha_1<\cdots<\alpha_p<\alpha_1+1,\qquad
a_1<\cdots<a_q,\qquad
b_1<\cdots<b_q,\qquad
a_h\le b_h,
\end{equation}
as well as \(a_1>-1\) and
\(\alpha_i+a_h>-1\) for every \(i,h\). Define the \(q\) row weights
by their Mellin transforms
\begin{equation}
\label{eq:Jacobi-like-weight-Mellin}
\int_0^1x^{s-1}w_j^{\mathrm J}(x)\,\dd x
=
\frac{1}{s+b_j}
\prod_{h=1}^{q}
\frac{\Gamma(s+a_h)}{\Gamma(s+b_h)},
\qquad 1\le j\le q.
\end{equation}
When all inequalities \(a_h<b_h\) are strict,
\[
w_j^{\mathrm J}
=
\mathcal B_{a_1,b_1}*\cdots*
\mathcal B_{a_j,b_j+1}*\cdots*
\mathcal B_{a_q,b_q}.
\]
Equivalently, the Mellin--Barnes definition
\eqref{eq:Markov-Meijer-G-definition} gives the single-function
representation
\begin{equation}
\label{eq:Jacobi-like-weight-Meijer-G}
w_j^{\mathrm J}(x)
=
G_{q,q}^{q,0}
\left(
x\,\middle|\,
\begin{matrix}
\boldsymbol b+\boldsymbol e_j\\
\boldsymbol a
\end{matrix}
\right),
\qquad 0<x<1.
\end{equation}
Indeed, the Mellin transform of the right-hand side is
\begin{equation}
\frac{\prod_{h=1}^{q}\Gamma(s+a_h)}
{\prod_{h=1}^{q}\Gamma(s+b_h+\delta_{j,h})}
=
\frac{1}{s+b_j}
\prod_{h=1}^{q}\frac{\Gamma(s+a_h)}{\Gamma(s+b_h)},
\end{equation}
which is exactly \eqref{eq:Jacobi-like-weight-Mellin}.
If a value occurs in both lists \(\boldsymbol a\) and \(\boldsymbol b\), its numerator
and denominator Gamma factors cancel in
\eqref{eq:Jacobi-like-weight-Mellin}. This includes cross-cancellations
\(a_u=b_v\) with \(u\ne v\), not only equalities \(a_h=b_h\) at the same
position. The reduced weight is obtained by cancelling the common factors
before Mellin inversion and is also the weak limit of the corresponding
non-canceled weights. Thus exact cancellations do not create singular
measures.

The Meijer \(G\)-function also displays the hypergeometric character of
the weights.  If its lower Mellin poles are simple, equivalently
\(a_\rho-a_\sigma\notin\mathbb Z\) for \(\rho\ne\sigma\), Slater's residue
expansion gives
\begin{equation}
\label{eq:Jacobi-like-weight-hypergeometric}
w_j^{\mathrm J}(x)
=
\sum_{\rho=1}^{q}
\frac{
\prod_{\substack{\sigma=1\\\sigma\ne\rho}}^{q}
\Gamma(a_\sigma-a_\rho)
}{
\prod_{h=1}^{q}
\Gamma(b_h+\delta_{j,h}-a_\rho)
}
x^{a_\rho}
\pFq{q}{q-1}
{
\{1+a_\rho-b_h-\delta_{j,h}\}_{h=1}^{q}
}
{
\{1+a_\rho-a_\sigma\}_{\sigma=1,\,\sigma\ne\rho}^{q}
}
{x}.
\end{equation}
Thus a generic Jacobi-like weight is a sum of \(q\) terms of the form
\(x^{a_\rho}{}_qF_{q-1}(x)\), rather than a single generalized
hypergeometric series.  When lower poles coalesce, or when Gamma factors
cancel, \eqref{eq:Jacobi-like-weight-Meijer-G} remains the primary formula:
one first cancels the common Gamma factors in
\eqref{eq:Jacobi-like-weight-Mellin} and then takes the corresponding limit
of \eqref{eq:Jacobi-like-weight-hypergeometric}.

The Jacobi-like matrix of measures is
\begin{equation}
\label{eq:Jacobi-like-matrix-measure}
\dd\boldsymbol\mu^{\mathrm J}(x)
=
\left[w_j^{\mathrm J}(x)x^{\alpha_i}\right]_{
\substack{1\le j\le q\\1\le i\le p}}
\dd x.
\end{equation}
Write a row index as \(N=qu+j-1\) and a column index as
\(M=pv+i-1\). With
\(s=u+v+\alpha_i+1\), the associated scalar step-line moment matrix is
\begin{equation}
\label{eq:Jacobi-like-step-line-moment}
\mathscr M^{\mathrm J}_{N,M}
=
\int_0^1x^{u+v+\alpha_i}w_j^{\mathrm J}(x)\,\dd x
=
\frac{1}{s+b_j}
\prod_{h=1}^{q}
\frac{\Gamma(s+a_h)}{\Gamma(s+b_h)}.
\end{equation}

On each side, the components of the step-line multi-indices differ by at
most one, and the cyclic Christoffel shifts used below preserve this
property. In the AT subdomain treated in
\cite{JacobiLaguerreMixed}, their normality follows from the mixed
AT-system theorem. The real PBF region considered here is broader than
that subdomain. Theorem~\ref{thm:Jacobi-like-PBF-region} therefore proves
directly, through positivity of the required collocation minors, that the
original moment matrix and all Christoffel transforms used in the
factorization are normal throughout this region.

Whenever the leading principal truncations are nonsingular, let
\(\mathscr M^{\mathrm J}=L_{\mathrm J}U_{\mathrm J}\)
be the Gauss--Borel factorization, with \(L_{\mathrm J}\) unit lower
triangular. Until Theorem~\ref{thm:Jacobi-like-PBF-region}, the forms and
the recurrence matrix constructed from this factorization are understood
under this existence condition. Introduce the two ordered basis vectors
\begin{align*}
\boldsymbol\chi_w
&=
\begin{bNiceMatrix}[margin=5pt]
w_1^{\mathrm J}&\Cdots&w_q^{\mathrm J}&
xw_1^{\mathrm J}&\Cdots&xw_q^{\mathrm J}&\Cdots
\end{bNiceMatrix}^{\mathsf T},
\\
\boldsymbol\chi_\alpha
&=
\begin{bNiceMatrix}[margin=7pt]
x^{\alpha_1}&\Cdots&x^{\alpha_p}&
x^{\alpha_1+1}&\Cdots&x^{\alpha_p+1}&\Cdots
\end{bNiceMatrix}^{\mathsf T}.
\end{align*}
The polynomial components that occur in these two basis vectors are now
written explicitly. If
\[
N=qu+j-1,\qquad M=pv+i-1,
\qquad 1\le j\le q,\quad 1\le i\le p,
\]
put
\[
n_{N,r}^{\mathrm B}
=
\begin{cases}
u+1,&1\le r\le j,\\
u,&j<r\le q,
\end{cases}
\qquad
n_{M,\ell}^{\mathrm A}
=
\begin{cases}
v+1,&1\le\ell\le i,\\
v,&i<\ell\le p.
\end{cases}
\]
The component polynomials are
\begin{align}
\label{eq:Jacobi-like-explicit-components}
B_N^{(r),\mathrm J}(x)
&=
\sum_{k=0}^{n_{N,r}^{\mathrm B}-1}
(L_{\mathrm J}^{-1})_{N,qk+r-1}x^k,
&&1\le r\le q,
\\
A_M^{(\ell),\mathrm J}(x)
&=
\sum_{k=0}^{n_{M,\ell}^{\mathrm A}-1}
(U_{\mathrm J}^{-\mathsf T})_{M,pk+\ell-1}x^k,
&&1\le\ell\le p.
\nonumber
\end{align}
A sum whose upper limit is \(-1\) is understood to be zero.
These coefficients require no additional orthogonal object. Let
\(\mathscr M^{\mathrm J,[N]}\) denote the leading
\((N+1)\)-square block of the explicit Gamma-quotient matrix
\eqref{eq:Jacobi-like-step-line-moment}, and let \(e_N\) be the last
coordinate vector in \(\mathbb R^{N+1}\). In the scalar basis orders
\(\boldsymbol\chi_w,\boldsymbol\chi_\alpha\), the complete coefficient
rows are
\begin{equation}
\label{eq:Jacobi-like-finite-coefficient-formulas}
\boldsymbol b_N
=
\frac{
e_N^{\mathsf T}(\mathscr M^{\mathrm J,[N]})^{-1}
}{
e_N^{\mathsf T}(\mathscr M^{\mathrm J,[N]})^{-1}e_N
},
\qquad
\boldsymbol a_M^{\mathsf T}
=
(\mathscr M^{\mathrm J,[M]})^{-1}e_M.
\end{equation}
Splitting these finite vectors into residue classes modulo \(q\) and
\(p\), respectively, gives exactly the component polynomials in
\eqref{eq:Jacobi-like-explicit-components}. Thus every coefficient is,
by Cramer's rule, a quotient of finite determinants whose entries are
given explicitly in \eqref{eq:Jacobi-like-step-line-moment}.

Thus
\[
\deg B_N^{(r),\mathrm J}\le n_{N,r}^{\mathrm B}-1,
\qquad
\deg A_M^{(\ell),\mathrm J}\le n_{M,\ell}^{\mathrm A}-1.
\]
If \(N=qu+j-1\), then the \(j\)-th component contains the highest
allowed power and is monic because
\((L_{\mathrm J}^{-1})_{N,N}=1\). Likewise, if \(M=pv+i-1\), the
coefficient of the highest allowed power in the \(i\)-th component is
\((U_{\mathrm J})_{M,M}^{-1}\). The polynomial vectors are therefore
\[
\boldsymbol B_N^{\mathrm J}
=
\begin{bNiceMatrix}
B_N^{(1),\mathrm J}&\Cdots&B_N^{(q),\mathrm J}
\end{bNiceMatrix},
\qquad
\boldsymbol A_M^{\mathrm J}
=
\begin{bNiceMatrix}
A_M^{(1),\mathrm J}&\Cdots&A_M^{(p),\mathrm J}
\end{bNiceMatrix}.
\]
The corresponding scalar mixed forms are
\[
B_N^{\mathrm J}(x)
=
\sum_{r=1}^{q}B_N^{(r),\mathrm J}(x)w_r^{\mathrm J}(x),
\qquad
A_M^{\mathrm J}(x)
=
\sum_{\ell=1}^{p}A_M^{(\ell),\mathrm J}(x)x^{\alpha_\ell}.
\]
Equivalently, the two infinite vectors of scalar forms are
\begin{equation}
\label{eq:Jacobi-like-step-line-forms}
\mathbf B^{\mathrm J}=L_{\mathrm J}^{-1}\boldsymbol\chi_w,
\qquad
\mathbf A^{\mathrm J}=U_{\mathrm J}^{-\mathsf T}
\boldsymbol\chi_\alpha
\end{equation}
and the defining step-line orthogonality is
\begin{align}
\label{eq:Jacobi-like-explicit-orthogonality}
\int_0^1B_N^{\mathrm J}(x)x^{k+\alpha_\ell}\,\dd x
&=0,
&&0\le pk+\ell-1<N,
\\
\int_0^1x^kw_r^{\mathrm J}(x)A_M^{\mathrm J}(x)\,\dd x
&=\delta_{qk+r-1,M},
&&0\le qk+r-1\le M.
\nonumber
\end{align}
With these leading-coefficient normalizations, the two sequences are
biorthonormal:
\begin{equation}
\label{eq:Jacobi-like-biorthogonality}
\int_0^1B_N^{\mathrm J}(x)A_M^{\mathrm J}(x)\,\dd x
=\delta_{N,M}.
\end{equation}

\subsubsection{Terminating hypergeometric step-line forms}

The determinant formulas above remain valid throughout the PBF region.
For comparison with the companion hypergeometric theory, explicit
terminating series for the same polynomial components are also recorded.
If
\(d\ge1\), \(N=du+r\), and \(0\le r<d\), let
\[
\boldsymbol\sigma_d(N)
\coloneq
(\underbrace{u+1,\ldots,u+1}_{r\ {\rm entries}},
\underbrace{u,\ldots,u}_{d-r\ {\rm entries}}).
\]
The first block is empty when \(r=0\). Put
\begin{equation}
\label{eq:Jacobi-like-hypergeometric-step-indices}
\boldsymbol\nu_N=\boldsymbol\sigma_p(N),
\qquad
\boldsymbol\mu_N=\boldsymbol\sigma_q(N+1),
\qquad
\boldsymbol\nu_N^{\,*}=\boldsymbol\sigma_p(N+1),
\qquad
\boldsymbol\mu_N^{\,*}=\boldsymbol\sigma_q(N).
\end{equation}
For a vector \(\boldsymbol c\), the notation \(\boldsymbol c^{\,*r}\) means that its
\(r\)-th component is deleted. The symbol \(\boldsymbol1_d\) denotes the
\(d\)-vector of ones and \(\boldsymbol e_r^{\,*J}\) for the \(r\)-th coordinate
vector in \(\mathbb R^q\) with its \(J\)-th component deleted.

Suppose first that no denominator in the following finite expressions
vanishes. For \(1\le i\le p\) with
\((\boldsymbol\nu_N^{\,*})_i\ge1\), define
\begingroup
\small
\begin{equation}
\label{eq:Jacobi-like-hypergeometric-A-component}
\widehat A_N^{(i),\mathrm J}(x)
=
C_{N,i}^{\mathrm J,A}\,
\pFq{p+q}{p+q-1}
{
\substack{
-(\boldsymbol\nu_N^{\,*})_i+1,\
\{\alpha_i+b_r+(\boldsymbol\mu_N^{\,*})_r+1\}_{r=1}^{q},
\{\alpha_i-\alpha_h-(\boldsymbol\nu_N^{\,*})_h+1\}_{h\ne i}
}
}
{
\substack{
\{\alpha_i+a_r+1\}_{r=1}^{q},
\{\alpha_i-\alpha_h+1\}_{h\ne i}
}
}
{x},
\end{equation}
\endgroup
where
\begin{equation}
\label{eq:Jacobi-like-hypergeometric-A-prefactor}
\begin{aligned}
C_{N,i}^{\mathrm J,A}
&\coloneq
(-1)^{1+\sum_{h\ne i}(\boldsymbol\nu_N^{\,*})_h}
\frac{
\prod_{r=1}^{q}\Gamma(\alpha_i+b_r+1)
}{
\prod_{r=1}^{q}\Gamma(\alpha_i+a_r+1)
}
\\
&\quad\times
\frac{
\prod_{r=1}^{q}
(\alpha_i+b_r+1)_{(\boldsymbol\mu_N^{\,*})_r}
}{
((\boldsymbol\nu_N^{\,*})_i-1)!
\prod_{h\ne i}
(\alpha_i-\alpha_h-(\boldsymbol\nu_N^{\,*})_h+1)_{
(\boldsymbol\nu_N^{\,*})_h}
}.
\end{aligned}
\end{equation}
If \((\boldsymbol\nu_N^{\,*})_i=0\), set
\(\widehat A_N^{(i),\mathrm J}=0\), and put
\[
\widehat A_N^{\mathrm J}(x)
\coloneq
\sum_{i=1}^{p}\widehat A_N^{(i),\mathrm J}(x)x^{\alpha_i}.
\]
Write \(N=qv+j_N-1\), with \(1\le j_N\le q\), and define
\begin{equation}
\label{eq:Jacobi-like-hypergeometric-A-normalizer}
\gamma_N^{\mathrm J,A}
\coloneq
\int_0^1
x^{(\boldsymbol\mu_N^{\,*})_{j_N}}
\widehat A_N^{\mathrm J}(x)w_{j_N}^{\mathrm J}(x)\,\dd x.
\end{equation}
Then the components in
\eqref{eq:Jacobi-like-explicit-components} are
\begin{equation}
\label{eq:Jacobi-like-normalized-hypergeometric-A}
A_N^{(i),\mathrm J}(x)
=
\frac{\widehat A_N^{(i),\mathrm J}(x)}
{\gamma_N^{\mathrm J,A}}.
\end{equation}

The \(B\)-components admit a terminating Kamp\'e de F\'eriet
representation with only one outer sum. For
\((\boldsymbol\mu_N)_r\ge1\), put
\begin{equation}
\label{eq:Jacobi-like-KdF-B-representation}
\widehat B_N^{(r),\mathrm J}(x)
=
\mathcal K_{N;r,r}^{\mathrm J}\mathcal F_{N;r}^{\mathrm J}(x)
+
\sum_{\substack{1\le J\le q\\J\ne r,\;(\boldsymbol\mu_N)_J\ge2}}
\mathcal K_{N;r,J}^{\mathrm J}\mathcal F_{N;r,J}^{\mathrm J}(x),
\end{equation}
where
\begin{equation}
\label{eq:Jacobi-like-KdF-B-diagonal-prefactor}
\mathcal K_{N;r,r}^{\mathrm J}
\coloneq
-
\frac{
(\boldsymbol\alpha+(b_r+1)\boldsymbol1_p)_{\boldsymbol\nu_N}
}{
((\boldsymbol\mu_N)_r-1)!
(\boldsymbol b^{\,*r}-b_r\boldsymbol1_{q-1})_{\boldsymbol\mu_N^{\,*r}}
}
\end{equation}
and
\begingroup
\small
\begin{equation}
\label{eq:Jacobi-like-KdF-B-diagonal}
\mathcal F_{N;r}^{\mathrm J}(x)
\coloneq
F_{p+q:0;q-1}^{p+q:1;q}
\left[
\begin{array}{c}
\substack{
1-(\boldsymbol\mu_N)_r,\,
\boldsymbol\alpha+(b_r+1)\boldsymbol1_p+\boldsymbol\nu_N,
b_r\boldsymbol1_{q-1}-\boldsymbol b^{\,*r}
+\boldsymbol1_{q-1}-\boldsymbol\mu_N^{\,*r}
:1;\,
b_r\boldsymbol1_q-\boldsymbol a}
\\
\substack{
\boldsymbol\alpha+(b_r+1)\boldsymbol1_p,
b_r\boldsymbol1_q-\boldsymbol a+\boldsymbol1_q
:\text{---};\,
b_r\boldsymbol1_{q-1}-\boldsymbol b^{\,*r}}
\end{array}
\middle|x,1
\right].
\end{equation}
\endgroup
For \(J\ne r\) with \((\boldsymbol\mu_N)_J\ge2\), let
\begin{multline}
\label{eq:Jacobi-like-KdF-B-offdiagonal-prefactor}
\mathcal K_{N;r,J}^{\mathrm J}
\coloneq
\frac{
(\boldsymbol\alpha+(b_J+2)\boldsymbol1_p)_{\boldsymbol\nu_N}
}{
((\boldsymbol\mu_N)_J-2)!
(\boldsymbol b^{\,*J}-(b_J+1)\boldsymbol1_{q-1})_{\boldsymbol\mu_N^{\,*J}}
}
\frac{
(\boldsymbol a-b_r\boldsymbol1_q)_1
}{
(\boldsymbol b^{\,*r}-b_r\boldsymbol1_{q-1})_1
}
\\
\quad\times
\frac{
(\boldsymbol b^{\,*r}-(b_J+1)\boldsymbol1_{q-1})_1
}{
(\boldsymbol a-(b_J+1)\boldsymbol1_q)_1
},
\end{multline}
and
\begingroup
\small
\begin{equation}
\label{eq:Jacobi-like-KdF-B-offdiagonal}
\mathcal F_{N;r,J}^{\mathrm J}(x)
\coloneq
F_{p+q:0;q-1}^{p+q:1;q}
\left[
\begin{array}{c}
\substack{
2-(\boldsymbol\mu_N)_J,\,
\boldsymbol\alpha+(b_J+2)\boldsymbol1_p+\boldsymbol\nu_N,\\
b_J\boldsymbol1_{q-1}-\boldsymbol b^{\,*J}
+2\boldsymbol1_{q-1}-\boldsymbol\mu_N^{\,*J}
:1;\,
b_J\boldsymbol1_q-\boldsymbol a+\boldsymbol1_q}
\\
\substack{
\boldsymbol\alpha+(b_J+2)\boldsymbol1_p,\\
b_J\boldsymbol1_q-\boldsymbol a+2\boldsymbol1_q
:\text{---};\,
b_J\boldsymbol1_{q-1}-\boldsymbol b^{\,*J}
+\boldsymbol1_{q-1}+\boldsymbol e_r^{\,*J}}
\end{array}
\middle|x,1
\right].
\end{equation}
\endgroup
If \((\boldsymbol\mu_N)_r=0\), set
\(\widehat B_N^{(r),\mathrm J}=0\). Finally, define
\begin{equation}
\label{eq:Jacobi-like-hypergeometric-B-normalizer}
\gamma_N^{\mathrm J,B}
\coloneq
[x^{(\boldsymbol\mu_N)_{j_N}-1}]
\widehat B_N^{(j_N),\mathrm J}(x).
\end{equation}
Then
\begin{equation}
\label{eq:Jacobi-like-normalized-KdF-B}
B_N^{(r),\mathrm J}(x)
=
\frac{\widehat B_N^{(r),\mathrm J}(x)}
{\gamma_N^{\mathrm J,B}}.
\end{equation}

The formulas
\eqref{eq:Jacobi-like-hypergeometric-A-component}--%
\eqref{eq:Jacobi-like-normalized-KdF-B}
are the step-line specialization of the mixed Jacobi-like formulas in
\cite{JacobiLaguerreMixed}. The parameters
\(1-(\boldsymbol\mu_N)_r\) and \(2-(\boldsymbol\mu_N)_J\) make the Kamp\'e de F\'eriet
series terminate. At resonant parameter values, including some exact
Gamma cancellations, individual prefactors may have apparent
singularities. The polynomial vectors are then obtained by removable
continuation from generic parameters; equivalently, and without taking
any limit, they are given by the determinant formulas
\eqref{eq:Jacobi-like-finite-coefficient-formulas}.

If \((\Lambda_d)_{N,M}=\delta_{M,N+d}\), multiplication by \(x\)
gives the \((p,q)\)-band recurrence matrix
\begin{equation}
\label{eq:Jacobi-like-recurrence}
T^{\mathrm J}
=L_{\mathrm J}^{-1}\Lambda_qL_{\mathrm J}
=U_{\mathrm J}\Lambda_p^{\mathsf T}U_{\mathrm J}^{-1}.
\end{equation}

\subsection{The ordered PBF region}
\label{subsec:Jacobi-like-PBF}

The strict generic chamber has a simple description, but exact
cancellations are important because they contain the Pi\~neiro system.
The finite connection problem is treated separately from the signs of the
bidiagonal factors. Throughout this subsection, the prescribed PBF means
the specific cyclic Christoffel factorization
\eqref{eq:Jacobi-like-PBF}; the obstruction below concerns this
factorization, not every conceivable bidiagonal decomposition of the same
recurrence matrix.

Put
\begin{equation}
\label{eq:Jacobi-like-R-F}
R(s)=\prod_{h=1}^{q}\frac{\Gamma(s+a_h)}{\Gamma(s+b_h)},
\qquad
F_{d,j}(z)=\frac{R(z+d)}{z+d+b_j}.
\end{equation}
Also put
\[
\delta_{\mathrm J}
\coloneq
\sum_{h=1}^{q}(b_h-a_h).
\]
With the definitions above one has the exact identity
\[
\mathscr M^{\mathrm J}_{qu+j-1,M}
=F_{u,j}(\zeta_M).
\]
Thus the moment matrix \eqref{eq:Jacobi-like-step-line-moment} is the
collocation matrix of the ordered functions
\[
F_{0,1},\ldots,F_{0,q},F_{1,1},\ldots,F_{1,q},\ldots
\]
at the increasing nodes
\begin{equation}
\label{eq:Jacobi-like-nodes}
\zeta_{pv+i}=v+\alpha_{i+1}+1,
\qquad v\ge0,\quad 0\le i<p.
\end{equation}
Put \(b_{q+1}=b_1+1\) and introduce
\begin{equation}
\label{eq:Jacobi-like-G-Newton}
G_{d,k}(z)
\coloneq
F_{d,1}(z)
\prod_{\ell=1}^{k}
\frac{z+d+a_\ell}{z+d+b_{\ell+1}},
\qquad 0\le k\le q.
\end{equation}
The Gamma functional equation gives
\(G_{d,0}=F_{d,1}\) and \(G_{d,q}=F_{d+1,1}\). Thus the functions in
\eqref{eq:Jacobi-like-G-Newton}, read cycle after cycle, form one
rational Newton sequence.

For \(0\le n\le q-1\), partial fractions give the finite change of basis
\begin{equation}
\label{eq:Jacobi-like-F-G-connection}
F_{d,n+1}
=
\sum_{k=0}^{n}c_{n,k}G_{d,k},
\qquad
c_{n,k}
=
(b_{k+1}-a_{k+1})
\frac{\prod_{\ell=1}^{k}(b_{n+1}-b_\ell)}
{\prod_{\ell=1}^{k+1}(b_{n+1}-a_\ell)}.
\end{equation}
At a removable equality the quotient is understood after cancellation.
The same lower triangular matrix
\begin{equation}
\label{eq:Jacobi-like-C-definition}
C\coloneq[c_{n,k}]_{n,k=0}^{q-1}
\end{equation}
connects every complete \(q\)-cycle.

\begin{lemma}[Finite rational Newton collocation]
\label{lem:Jacobi-like-finite-Newton}
Let
\[
x_0<x_1<\cdots<x_M,\qquad
\rho_1,\ldots,\rho_M<x_0,
\]
and, for \(0\le k<M\), set
\[
\psi_k(x)
=
\gamma_k
\frac{\prod_{h=0}^{k-1}(x-x_h)}
{\prod_{h=1}^{k+1}(x-\rho_h)},
\qquad \gamma_k>0.
\]
Then
\[
\left[\psi_k(x_n)\right]_{
\substack{0\le n\le M\\0\le k<M}}
\]
is totally nonnegative. Every minor not forced to vanish by its lower
triangular zero pattern is positive.
\end{lemma}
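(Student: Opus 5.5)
The plan is to prove the lemma by induction on \(M\), using Neville elimination. The aim is to factor the collocation matrix \(\Psi_M\coloneq[\psi_k(x_n)]\) into a positive diagonal matrix times a product of elementary bidiagonal matrices with nonnegative off-diagonal entries. Total nonnegativity then follows from Cauchy--Binet, as in the proof of Proposition~\ref{pro:scalar-shift-obstruction}. Strict positivity of the admissible minors then follows from the path argument used at the end of Lemma~\ref{lem:MP-grouped-lower-positive-region}.

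\emph{First step: normalization.} Since every \(x_n>x_0>\rho_h\), each row may be multiplied by the positive number \(\prod_{h=1}^{M}(x_n-\rho_h)\), and each column by \(\gamma_k^{-1}\); neither operation changes the signs of minors. The entries then become the polynomials
\[
\prod_{h=0}^{k-1}(x_n-x_h)\prod_{h=k+2}^{M}(x_n-\rho_h),
\]
a two-parameter Newton basis, and zero-pattern minors are untouched.

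\emph{Second step: peeling off the first node and pole.} For \(k\ge1\) one has
\[
\psi_k(x)=\frac{\gamma_k}{\gamma'_{k-1}}\,\frac{x-x_0}{x-\rho_1}\,\psi'_{k-1}(x),
\]
where the \(\psi'_j\) are the functions of the lemma built from the nodes \(x_1<\cdots<x_M\) and the poles \(\rho_2,\ldots,\rho_M\); these data still satisfy the hypotheses, with \(M-1\) in place of \(M\). Column \(0\) is \(\gamma_0/(x_n-\rho_1)>0\). Perform the Neville row operations row\(_n\) \(-\) \(c_n\,\)row\(_{n-1}\), for \(n=M,M-1,\ldots,1\), with
\[
c_n=\psi_0(x_n)/\psi_0(x_{n-1})=(x_{n-1}-\rho_1)/(x_n-\rho_1)\in(0,1).
\]
This clears column \(0\) below the pivot and amounts to left multiplication by the inverse of a unit lower bidiagonal matrix with positive subdiagonal.

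\emph{Key identity and main obstacle.} The crucial claim is that the reduced block, rows \(1,\dots,M\) and columns \(1,\dots,M-1\), equals \(\operatorname{diag}(\text{positive})\cdot\widetilde\Psi\cdot\operatorname{diag}(\text{positive})\). Here \(\widetilde\Psi\) is again a collocation matrix of the lemma's type, now on \(M\) nodes with \(M-1\) functions; its nodes and poles come from the shifted data, possibly after one adjacent interchange. Concretely, I must show that
\[
(x_n-x_0)\pi(x_n)-(x_{n-1}-x_0)\pi(x_{n-1})
\]
is a positive multiple of a rational Newton function evaluated at an interlaced node. Here \(\pi=\psi'_{k-1}\cdot(\text{rational factor})\) has the same ordered zero/pole structure, and the divided difference should factor because \(\pi\) vanishes at \(x_1,\dots,x_{k-1}\). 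This is the step I expect to be hardest; the exact bookkeeping of which pole is absorbed is where the argument could fail.

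\emph{Fallback for the obstacle.} If the direct factorization resists, I will instead mimic the pole-interchange argument of Lemma~\ref{lem:MP-grouped-lower-positive-region}. First treat the degenerate configuration in which all \(\rho_h\) coincide. There, after the row scaling, \(\Psi_M\) is a Newton-basis evaluation times a Vandermonde-type matrix, both classically TN with positive noncorner minors, since \(x\mapsto(x-\rho)^{-j}\) is a Chebyshev system on \((\rho,\infty)\). Next, move the poles apart one at a time. Each adjacent change is implemented by right multiplication by \(I+cE_{k+1,k}\), and \(c>0\) is computed exactly as \(c_K\) was computed there. Since products of TN matrices are TN, total nonnegativity is preserved throughout.

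\emph{Strict positivity.} In either route, strict positivity of every minor not forced to vanish by the lower triangular zero pattern follows from the Lindström path count. Each elementary factor contributes strictly positive edges, and for row sets \(n_1<\dots<n_s\) and column sets \(k_1<\dots<k_s\) satisfying \(n_i\ge k_i\) one can exhibit vertex-disjoint positive paths; these index sets are exactly the minors not forced to vanish.
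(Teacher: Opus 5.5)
\textbf{There is a genuine gap.} Your main route breaks at the step you flagged, and that step is not merely hard: the ``key identity'' is false in general.

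After dividing row $n$ by $\psi_0(x_n)>0$, the Neville step replaces the column-$k$ entry by $\varphi_k(x_n)-\varphi_k(x_{n-1})$, where
\[
\varphi_k=\frac{\psi_k}{\psi_0}=\frac{\gamma_k}{\gamma_0}\,\frac{\prod_{h=0}^{k-1}(x-x_h)}{\prod_{h=2}^{k+1}(x-\rho_h)}.
\]
This is $(x_n-x_{n-1})$ times a first divided difference. For distinct $\rho_h$, partial fractions make $\varphi_k[x_{n-1},x_n]$ a combination of the terms $1/\bigl((x_{n-1}-\rho_h)(x_n-\rho_h)\bigr)$, $2\le h\le k+1$. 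Consequently:
\begin{itemize}
\item the ratio of the reduced entries in columns $k=2$ and $k=1$ is a function of $(x_{n-1}-\rho_2)(x_n-\rho_2)/\bigl((x_{n-1}-\rho_3)(x_n-\rho_3)\bigr)$;
\item the ratio of columns $k=3$ and $k=2$ also involves $(x_{n-1}-\rho_4)(x_n-\rho_4)$, and as a function of $x_n$ it generically has degree two.
\end{itemize}
If the Schur complement were $\operatorname{diag}\cdot\widetilde\Psi\cdot\operatorname{diag}$ with $\widetilde\Psi$ of the lemma's type, both ratios would be M\"obius functions of one common node $y_{n-1}$, hence of each other. For $M\ge5$ this condition is overdetermined, and generic data violate it. The polynomial case behaves the same way: already for the Vandermonde matrix, one Neville step produces $(t_n-t_{n-1})h_{k-1}(t_{n-1},t_n)$, which is not a Vandermonde collocation in any single node.

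An adjacent interchange cannot repair this, because it only permutes the poles. Your fallback fails for a related reason. Interchanges preserve the multiset $\{\rho_h\}$, so they can never connect the all-coincident configuration to one with distinct poles. Changing the value of a pole is also not available: it changes the span $\{P/\prod_{h=1}^{M}(x-\rho_h):\deg P\le M-1\}$ of the column functions, so the new matrix is in general not the old one times any constant matrix. Neither route therefore produces the bidiagonal factorization with positive edges that your final Lindstr\"om argument needs.

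\textbf{What is missing} is a direct proof that the Neville pivots are positive, without requiring the Schur complements to keep the lemma's shape. The paper's proof is organized this way: positive pivots and multipliers, then reversal into nonnegative lower bidiagonal factors and a positive diagonal, then path products. One clean way to get pivot positivity is as follows. For $j<M$, columns $0,\dots,j$ equal $P^{(j)}_k/Q_j$, where
\[
Q_j=\prod_{h=1}^{j+1}(x-\rho_h),\qquad P^{(j)}_k=\gamma_k\prod_{h<k}(x-x_h)\prod_{h=k+2}^{j+1}(x-\rho_h),
\]
and each $P^{(j)}_k$ has degree $j$. Hence, for any rows $n_0<\cdots<n_j$,
\[
\det\bigl[\psi_k(x_{n_i})\bigr]_{i,k=0}^{j}
=\frac{\prod_{0\le u<v\le j}(x_{n_v}-x_{n_u})}{\prod_{i=0}^{j}Q_j(x_{n_i})}\,\det C_j,
\]
where $C_j$ is the coefficient matrix of $P^{(j)}_0,\dots,P^{(j)}_j$. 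Taking $n_i=i$ gives a lower triangular matrix, so
\[
\det C_j=\frac{\prod_{k=0}^{j}P^{(j)}_k(x_k)}{\prod_{0\le u<v\le j}(x_v-x_u)}>0.
\]
Every initial-column minor is therefore positive. The Neville pivots are quotients of such minors with consecutive rows, so all pivots and all multipliers are positive. Your last paragraph then applies.
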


\begin{proof}
The zeros \(x_0,\ldots,x_{k-1}\) make the entries with \(n<k\)
vanish. Eliminate the remaining entries column by column, subtracting a
multiple of row \(n-1\) from row \(n\). At each step one uses only
\[
\frac{x-c}{x-d}
-
\frac{y-c}{y-d}
=
\frac{(x-y)(c-d)}
{(x-d)(y-d)}.
\]
Every factor produced by the elimination is a difference
\(x_j-x_i\) or \(x_j-\rho_h\), and is therefore positive. All nonzero
pivots and all nonzero elimination multipliers are positive. Reversing
the elimination writes the collocation matrix as a product of
nonnegative lower bidiagonal matrices and a positive diagonal matrix.
Hence every minor is a sum of nonnegative path products; whenever the
triangular pattern permits the minor, at least one of those products is
positive.
\end{proof}

\begin{lemma}[Cyclic rational Newton collocation]
\label{lem:Jacobi-like-cyclic-Newton}
Assume the ordering and integrability conditions in
\eqref{eq:Jacobi-like-basic-order}, and suppose that
\(a_q<b_1+1\). Let
\(\Phi_0,\Phi_1,\ldots\) be the sequence obtained by reading
\eqref{eq:Jacobi-like-G-Newton} cycle by cycle, after any common
zero--pole pairs have been canceled. Then, for every increasing finite
list of evaluation nodes \(x_0<\cdots<x_M\) chosen from
\eqref{eq:Jacobi-like-nodes}, the collocation matrix
\[
\left[\Phi_n(x_k)\right]_{0\le n,k\le M}
\]
is strictly totally positive. Moreover, if the last node is replaced by
a variable \(z\) and then \(z\to+\infty\), every minor containing that
column has a strictly positive limit after that column is multiplied by
\(z^{\delta_{\mathrm J}+1}\).
\end{lemma}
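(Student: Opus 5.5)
The strategy is to reduce the cyclic statement to Lemma~\ref{lem:Jacobi-like-finite-Newton} by writing every \(\Phi_n\), up to a positive factor common to its whole row, as a rational Newton function of the type \(\psi_k\) there. The zeros must lie at earlier nodes, the poles must lie to the left of all nodes, and the number of poles must equal the number of zeros plus one.

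Reading \eqref{eq:Jacobi-like-G-Newton} cycle by cycle and iterating \(G_{d,q}=F_{d+1,1}\), one obtains
\[
\Phi_{qd+k}(z)=F_{0,1}(z)\prod_{m=0}^{d-1}\prod_{\ell=1}^{q}\frac{z+m+a_\ell}{z+m+b_{\ell+1}}\prod_{\ell=1}^{k}\frac{z+d+a_\ell}{z+d+b_{\ell+1}}.
\]
Here \(F_{0,1}(z)=R(z)/(z+b_1)\) is positive for \(z>-a_1\). All nodes satisfy \(\zeta\ge\alpha_1+1>-a_1\), by \(\alpha_1+a_1>-1\).

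The common factor \(R(z)/(z+b_1)\) is a positive function of the column variable. Multiplying a column by a positive constant does not change signs of minors, so it may be pulled out of each column. What remains is the \(n\)-th function
\[
\prod\frac{z+c}{z+c'}\cdot\frac{1}{z+b_1}.
\]
Its zeros are at \(-m-a_\ell\) and its poles at \(-b_1\) and \(-m-b_{\ell+1}\).

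The poles lie to the left of all nodes, since \(-m-b_{\ell+1}\le -b_1<-a_1<\zeta_0\).

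The zeros are the delicate point: they are not at the nodes. So I would not apply Lemma~\ref{lem:Jacobi-like-finite-Newton} literally. Instead I would repeat its elimination argument with the collocation matrix transposed, viewing row index \(n\) as the function and column as the node. Then the functions form a chain \(\Phi_{n+1}=\Phi_n\cdot r_n\), where each \(r_n(z)=(z+c_n)/(z+c_n')\) is a single Möbius factor. It is increasing in \(z\) when \(c_n'>c_n\), i.e.\ when \(b_{\ell+1}>a_\ell\). This holds: \(b_{\ell+1}>b_\ell\ge a_\ell\) for \(\ell<q\), and for \(\ell=q\) it is \(b_1+1>a_q\), which is exactly the hypothesis \(a_q<b_1+1\).

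I would then prove strict total positivity of \([\Phi_n(x_k)]\) by induction through the identity
\[
\frac{x-c}{x-d}-\frac{y-c}{y-d}=\frac{(x-y)(c-d)}{(x-d)(y-d)}.
\]
Starting from the bottom, subtract from each row \(n+1\) the row \(n\) times \(r_n(x_0)\). The new entries are \(\Phi_n(x_k)\bigl(r_n(x_k)-r_n(x_0)\bigr)\), which are positive multiples of \((x_k-x_0)\) times a new Möbius-type function. This is the standard Neville argument for products of increasing linear fractional factors, equivalently Descartes systems of the form \(\prod (z+c_i)/(z+c_i')\) with interlacing-type ordering. The main obstacle is that after one elimination step the remaining rows must again form a chain of the same kind with shifted parameters. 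I would check this by noting that the ratio
\[
\frac{r_n(z)-r_n(x_0)}{z-x_0}=\frac{c_n'-c_n}{(z+c_n')(x_0+c_n')}
\]
just inserts an extra pole \(-c_n'\), again left of the nodes. Hence the induction closes, and the resulting bidiagonal factorization has all multipliers positive, giving strict positivity of every minor.

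Cancelled zero–pole pairs only remove a factor equal to \(1\), so they are harmless.

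For the limit statement, I would use the asymptotics as \(z\to\infty\). Each Möbius factor tends to \(1\), and \(R(z)\sim z^{-\delta_{\mathrm J}}\). Hence \(z^{\delta_{\mathrm J}+1}\Phi_n(z)\to1\) for all \(n\). The limiting column is the all-ones column, i.e.\ the node sits at \(+\infty\). The elimination above extends to a node at infinity, since the factors \((x-y)\), normalized, stay positive. The limiting minor therefore equals a positive product, and is strictly positive.
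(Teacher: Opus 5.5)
There is a genuine gap at the point you yourself flag as the main obstacle: your induction does not close for the reason you give. Write the successive quotients as $\Phi_m/\Phi_{m-1}=(z+A_m)/(z+B_m)$. After your first Neville step, divide column $k$ by $x_k-x_0>0$. The new rows are then, up to positive constants, $\Phi_n(z)/(z+B_{n+1})$, so their successive quotients are
\[
\frac{z+A_{n+1}}{z+B_{n+2}},
\]
not $(z+A_{n+1})/(z+B_{n+1})$. The zero stays put while the pole advances by one. After $k$ steps the quotient is $(z+A_{n+1})/(z+B_{n+k+1})$. Positivity of the new entries requires this quotient to be increasing on the nodes, which is the prefix separation
\[
A_u<B_v\qquad (1\le u\le v)
\]
used in the paper. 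You only checked the case $u=v$ (each M\"obius factor increasing) together with the position of the poles, and that is not enough. Take $\Phi_0=1$, $\Phi_1=(z+5)/(z+6)$, $\Phi_2=\Phi_1\,z/(z+1)$ at the nodes $1,2,3$. Both factors are increasing and all zeros and poles lie to the left of the nodes, yet the $3\times3$ determinant equals $-1/1512$. So ``the extra pole is again left of the nodes, hence the induction closes'' is not a valid step.

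The missing inequality does hold under the lemma's hypotheses, but it needs its own argument, because neither $(A_m)$ nor $(B_m)$ need be monotone (the hypotheses allow $a_q>a_1+1$ and $b_q>b_1+1$). Let $v=qd+\ell$.
\begin{itemize}
\item If $u\le v$ lies in the same cycle and $\ell<q$, then $A_u\le d+a_\ell\le d+b_\ell<d+b_{\ell+1}=B_v$.
\item If $u\le v$ lies in the same cycle and $\ell=q$, then $A_u\le d+a_q<d+b_1+1=B_v$.
\item If $u$ lies in an earlier cycle, then $A_u\le d-1+a_q<d+b_1\le B_v$.
\end{itemize}
The shifted data $(A_m,B_{m+1})$ inherit this condition, so with it your elimination does work. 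Its pivots then produce exactly the factors $B_{n+i}-A_{n+j}$, $j\le i$, of the paper's solid-minor formula.

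There is also a smaller point. A row-only Neville elimination, run from every starting index and on every node subset, gives positivity only of the minors with consecutive function indices. To reach every minor you still need one of the following: Fekete's lemma, the Gasca--Pe\~na initial-minor criterion as in the paper, or a separate argument for the upper triangular factor your elimination leaves behind. The same applies to ``every minor containing that column'' in the limit statement.
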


\begin{proof}
It is enough first to consider strict inequalities and no cancellation.
The common factor \(\Phi_0(x_k)\) is positive and may be divided out of
column \(k\). Number the successive rational quotients by
\[
\frac{\Phi_m(z)}{\Phi_{m-1}(z)}
=\frac{z+A_m}{z+B_m},
\qquad m\ge1.
\]
More explicitly, for \(d\ge0\) and \(1\le\ell\le q\),
\[
A_{qd+\ell}=d+a_\ell,\qquad
B_{qd+\ell}=
\begin{cases}
d+b_{\ell+1},&1\le\ell<q,\\
d+b_1+1,&\ell=q.
\end{cases}
\]
The assumptions imply the stronger prefix separation
\begin{equation}
\label{eq:Jacobi-like-Newton-prefix-separation}
A_u<B_v,
\qquad 1\le u\le v.
\end{equation}
Indeed, suppose first that \(u\) and \(v=qd+\ell\) belong to the same
cycle. If \(\ell<q\), then
\[
A_u\le d+a_\ell\le d+b_\ell<d+b_{\ell+1}=B_v;
\]
if \(\ell=q\), then \(A_u\le d+a_q<d+b_1+1=B_v\).
If the quotient indexed by \(u\) belongs to an earlier cycle than the
quotient indexed by \(v\), then
\[
A_u\le d_v-1+a_q<d_v+b_1\le B_v,
\]
where \(d_v\) is the cycle number of \(v\). Integrability places all
evaluation nodes to the right of the poles.

The minors needed for strict total positivity can now be computed. Fix
\(n\ge0\), \(r\ge0\), and \(y_0<\cdots<y_r\). Factoring
\(\Phi_n(y_k)\) from column \(k\) and putting the remaining rational
functions over their common denominator gives
\begin{equation}
\label{eq:Jacobi-like-cyclic-Newton-solid-minor}
\det\!\left[\Phi_{n+i}(y_k)\right]_{i,k=0}^{r}
=
\left(\prod_{k=0}^{r}\Phi_n(y_k)\right)
\frac{
\prod_{0\le u<v\le r}(y_v-y_u)
\prod_{1\le j\le i\le r}(B_{n+i}-A_{n+j})}
{
\prod_{k=0}^{r}\prod_{i=1}^{r}(y_k+B_{n+i})}.
\end{equation}
After the common denominator is removed, row \(i\)
is the evaluation of the monic polynomial
\[
P_i(y)
=
\prod_{h=1}^{i}(y+A_{n+h})
\prod_{h=i+1}^{r}(y+B_{n+h}).
\]
Evaluation of \(P_0,\ldots,P_r\) at
\(-B_{n+1},\ldots,-B_{n+r}\), together with their common leading
coefficient, gives a triangular matrix and yields
\[
\det\!\left[\operatorname{coeff}_{y^h}P_i\right]_{i,h=0}^{r}
=
\prod_{1\le j\le i\le r}(B_{n+i}-A_{n+j}).
\]
Multiplication by the Vandermonde evaluation matrix proves
\eqref{eq:Jacobi-like-cyclic-Newton-solid-minor}.

Every factor on the right-hand side of
\eqref{eq:Jacobi-like-cyclic-Newton-solid-minor} is positive by
\eqref{eq:Jacobi-like-Newton-prefix-separation}. Hence every minor with
consecutive function indices is positive. In particular, all initial
row and initial column minors of each finite square collocation matrix
are positive. The Gasca--Pe\~na initial-minor criterion for strict total
positivity \cite{GascaPena1993} now shows that the whole
collocation matrix is
strictly totally positive.

A cross-cancellation can be brought next to its matching pole by the
positive adjacent interchanges used in
Proposition~\ref{prop:Jacobi-like-C-classification}; each interchange
multiplies the collocation matrix by a nonnegative bidiagonal connection
matrix with positive diagonal. Strict total positivity is preserved:
in the Cauchy--Binet expansion of any minor, the term using the positive
diagonal minor of the interchange matrix is strictly positive. The
common zero--pole pair may then be deleted before the elimination.
Repeating these operations gives the reduced sequence in the statement.
The same proof applies to it, with a removable equality understood after
cancellation.

Finally, the Gamma quotient in \(F_{0,1}\) and every finite rational
quotient in the Newton sequence satisfy
\[
\lim_{z\to\infty}
z^{\delta_{\mathrm J}+1}\Phi_k(z)=1.
\]
For \(r\ge1\) and \(y_0<\cdots<y_{r-1}\), multiply the last column in
\eqref{eq:Jacobi-like-cyclic-Newton-solid-minor} by
\(z^{\delta_{\mathrm J}+1}\), set \(y_r=z\), and let
\(z\to+\infty\). The limit is
\[
\left(\prod_{k=0}^{r-1}\Phi_n(y_k)\right)
\frac{
\prod_{0\le u<v\le r-1}(y_v-y_u)
\prod_{1\le j\le i\le r}(B_{n+i}-A_{n+j})}
{
\prod_{k=0}^{r-1}\prod_{i=1}^{r}(y_k+B_{n+i})}
>0.
\]
Thus all initial row and column minors of the limiting matrix are
strictly positive. A second application of the Gasca--Pe\~na criterion
shows that every minor containing the scaled column at infinity has a
strictly positive limit. The adjacent-interchange matrices used for
cross-cancellations are independent of \(z\), so the same
Cauchy--Binet argument as above preserves this conclusion after
cancellation. The case of a one-by-one minor follows directly from
the displayed asymptotic formula for \(\Phi_k\). This proves the final
assertion.
\end{proof}

\begin{proposition}[The finite connection matrix]
\label{prop:Jacobi-like-C-classification}
Assume the ordering and weak pairwise inequalities in
\eqref{eq:Jacobi-like-basic-order}. Let \(r\) be the largest integer for
which every member of
\[
b_1,\ldots,b_r
\]
occurs in the \(a\)-list. Delete \(b_1,\ldots,b_r\) from the
\(b\)-list and delete their matching occurrences from the \(a\)-list.
Denote the remaining ordered \(a\)-list by
\[
\widehat a_1<\cdots<\widehat a_m,
\qquad m=q-r.
\]
If \(m\le1\), the connection matrix \(C\) is totally nonnegative. If
\(m\ge2\), then
\begin{equation}
\label{eq:Jacobi-like-C-classification}
C\text{ is totally nonnegative}
\quad\Longleftrightarrow\quad
\widehat a_{m-1}\le b_{r+1}.
\end{equation}
\end{proposition}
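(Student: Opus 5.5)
The plan is to read \(C\) as the connection matrix between two rational Newton-type bases on the same finite cycle, and then to proceed as in Lemma~\ref{lem:MP-grouped-lower-positive-region}: factor \(C\) into elementary bidiagonal matrices and read off signs. Dividing \eqref{eq:Jacobi-like-F-G-connection} by the common factor \(R(z+d)\), row \(n\) expresses the partial fraction \(1/(z+b_{n+1})\) in the basis
\[
g_k(z)=\prod_{\ell=1}^{k}(z+a_\ell)\Big/\prod_{\ell=1}^{k+1}(z+b_\ell),\qquad 0\le k\le q-1 .
\]
So \(C\) changes from the \emph{pure-pole} basis \(1/(z+b_1),\dots,1/(z+b_q)\) to the \emph{interlaced} basis \((g_k)\). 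The pure-pole basis is itself a rational Newton basis whose zeros \(-a_\ell\) have all been replaced by poles \(-b_\ell\). I will therefore pass from \((g_k)\) to the pure-pole basis by a sequence of elementary moves, each of which replaces one zero factor \(z+a_\ell\) by a pole factor of the form \(1/(z+b_j)\), or interchanges adjacent factors. By the two-term identity used in Lemma~\ref{lem:Jacobi-like-finite-Newton}, each such move multiplies the connection matrix by \(I+cE_{k+1,k}\). The multiplier \(c\) is an explicit product whose sign is that of a single difference \(b_j-a_\ell\) or \(b_j-b_i\).

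First step: organise the moves so that, for the cancelled part \(b_1,\dots,b_r\), every move is a genuine zero--pole cancellation. Cancelled pairs contribute removable equalities, handled by the same convention stated after \eqref{eq:Jacobi-like-F-G-connection}, and this reduces \(C\) to a block matrix that is an identity-type block on the first \(r\) indices. For the remaining reduced lists \(\widehat a_1<\dots<\widehat a_m\) and \(b_{r+1}<\dots<b_q\), each multiplier has sign \(\operatorname{sgn}(b_j-\widehat a_\ell)\) with \(j\ge r+1\). These multipliers are nonnegative for every \(\ell\le m-1\) exactly when \(\widehat a_{m-1}\le b_{r+1}\). The last zero \(\widehat a_m\) never has to be moved past a pole of smaller index, because a basis of \(q\) functions uses only the zeros \(a_1,\dots,a_{q-1}\). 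This explains why the condition involves \(\widehat a_{m-1}\) and not \(\widehat a_m\), and why the cases \(m\le1\) are unconditional. Sufficiency then follows from the facts that products of nonnegative bidiagonal matrices are totally nonnegative by Cauchy--Binet, and that taking leading blocks commutes with products of lower triangular matrices.

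Second step, necessity: assume \(\widehat a_{m-1}>b_{r+1}\) and exhibit a negative minor. I would try first the \(1\times1\) entries of \(C\), then \(2\times2\) minors in consecutive rows and columns near index \(r\). The entry \(c_{r+1,k}\) with \(k\) equal to the position of \(\widehat a_{m-1}\) contains the factor \(b_{r+2}-\widehat a_{m-1}\), or a difference \(b_{n+1}-a_\ell\) in the denominator. The ordering \(a_h\le b_h\) should force exactly one such factor to be negative while all the others stay positive. If no single entry is negative, I would compute the \(2\times2\) minor in rows \(r,r+1\) and columns \(k,k+1\) in closed form. It should factor as a product of differences containing \((b_{r+1}-\widehat a_{m-1})\) to an odd power.

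Main obstacle: the bookkeeping of which \(a\)'s match which \(b\)'s once cross-cancellations \(a_u=b_v\) with \(u\ne v\) are allowed. I will handle this by reducing first and relabelling, so that the elementary-move factorization is written only for the reduced lists. Locating a specific negative minor in the necessity direction, uniformly in \(r\) and \(m\), is where I expect the real effort. If a direct entry fails, a fallback is to use the factorization itself. When a single bidiagonal factor has a negative subdiagonal entry and all the other factors are invertible and totally nonnegative, the corresponding initial minor of the product is that negative entry times positive diagonal terms. By the Gasca--Pe\~na initial-minor considerations used in Lemma~\ref{lem:Jacobi-like-cyclic-Newton}, this rules out total nonnegativity.
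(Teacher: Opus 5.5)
Your sufficiency idea is repairable, but the necessity direction has a genuine gap. Under $a_h\le b_h$ every entry of $C$ is nonnegative. In $c_{n,k}$ with $n\ge k$, each of the factors $b_{k+1}-a_{k+1}$, $b_{n+1}-b_\ell$ ($\ell\le k$) and $b_{n+1}-a_\ell$ ($\ell\le k+1$) is $\ge0$, so no $1\times1$ minor can detect the failure. Since $C$ is lower triangular, a $2\times2$ minor in rows $r,r+1$ with columns $k,k+1\ge r$ is either $0$ or $c_{r,r}c_{r+1,r+1}>0$. The obstruction is in general a larger initial minor. The paper uses
\[
\det C[\{1,\ldots,k\},\{0,\ldots,k-1\}]
=\frac{\prod_{i=1}^{k}(b_1-a_i)\prod_{2\le i<j\le k+1}(b_j-b_i)}
{\prod_{i=1}^{k}\prod_{j=i+1}^{k+1}(b_j-a_i)},
\]
where $a_k$ is the first nonterminal $a$-parameter exceeding $b_1$, so the only negative factor is $b_1-a_k$; the analogous minor for the reduced lists handles the cancelled case. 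Your fallback is false as stated. For example, $(I+cE_{21})(I+c'E_{21})=I+(c+c')E_{21}$ with $c<0<-c<c'$ is totally nonnegative. A negative entry in one factor therefore proves nothing unless the factorization is the canonical Neville one, whose multipliers are ratios of initial minors, and you have not shown that yours is.

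On sufficiency, the move ``replace a zero factor'' is not elementary as described. Changing $a_\ell$ alters every $g_k$ with $k\ge\ell$ and gives a full triangular connection, not $I+cE_{k+1,k}$. It is bidiagonal only at the terminal position: $\widetilde g_{q-1}=\theta g_{q-2}+(1-\theta)g_{q-1}$ with $\theta=(b_1-a_{q-1})/(b_q-a_{q-1})$. After that, adjacent zero interchanges carry $b_1$ forward until it cancels the pole $b_1$, and one recurses. Organized this way, your basis-change route does give sufficiency, with binding multiplier $b_{r+1}-\widehat a_{m-1}$. The paper avoids building such a factorization. It observes that $c_{n,k}=\psi_k(b_{n+1})$ for $k\le q-2$, with $\psi_k(x)=(b_{k+1}-a_{k+1})\prod_{h=1}^{k}(x-b_h)\big/\prod_{h=1}^{k+1}(x-a_h)$. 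So the first $q-1$ columns of $C$ are themselves a rational Newton collocation matrix, with nodes $b_j$ and poles $a_1,\ldots,a_{q-1}$. Lemma~\ref{lem:Jacobi-like-finite-Newton} then applies exactly when $a_{q-1}<b_1$, and the last column is a single positive diagonal entry.

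Your reduction step is also wrong for cross-cancellations. If $b_j=a_u$ with $u\ne j$, then $C\ne I_r\oplus C_{\mathrm{red}}$. For instance, $q=2$ with $a_1<a_2=b_1<b_2$ has $r=1$ but $c_{1,0}=(b_1-a_1)/(b_2-a_1)>0$. The paper first moves the matching zero next to its pole by adjacent interchanges and only then cancels. Each interchange is a nonnegative bidiagonal connection with $\theta=(v-u)/(w-u)\in(0,1)$. Your argument has to carry these extra factors, which is harmless for sufficiency but matters for necessity.
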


\begin{proof}
First suppose that no \(b\)-parameter occurs in the \(a\)-list. With
\(x_n=b_{n+1}\), the first \(q-1\) columns of \(C\) can be written as
\[
c_{n,k}
=
(x_k-a_{k+1})
\frac{\prod_{h=0}^{k-1}(x_n-x_h)}
{\prod_{h=0}^{k}(x_n-a_{h+1})},
\qquad 0\le k\le q-2.
\]
Lemma~\ref{lem:Jacobi-like-finite-Newton} applies precisely while every
nonterminal pole \(a_1,\ldots,a_{q-1}\) lies to the left of the first
node \(b_1\). Since the \(a\)'s are ordered, this is
\(a_{q-1}<b_1\). The last column of \(C\) has only one nonzero entry,
on the diagonal, and that entry is positive. Hence \(C\) is totally
nonnegative.

The boundary \(a_{q-1}=b_1\) follows after cancelling the removable
factor. Necessity is also explicit. For \(1\le k\le q-1\),
\begin{equation}
\label{eq:Jacobi-like-C-obstruction}
\det C[\{1,\ldots,k\},\{0,\ldots,k-1\}]
=
\frac{
\prod_{i=1}^{k}(b_1-a_i)
\prod_{2\le i<j\le k+1}(b_j-b_i)
}{
\prod_{i=1}^{k}\prod_{j=i+1}^{k+1}(b_j-a_i)
}.
\end{equation}
If \(a_k\) is the first nonterminal \(a\)-parameter to pass \(b_1\),
all factors on the right are positive except \(b_1-a_k\), so this
minor is negative.

It remains to explain a common value that does not occupy the same
position in the two lists. Suppose that a leading \(b\)-parameter is a
zero parameter \(v\) in a later Newton quotient. Move \(v\) to its
cancellation position by adjacent interchanges. If \(u<v\) is one such
adjacent pair and \(w>v\) is the pole parameter in the affected quotient,
let \(\phi_{K-1}\) be the Newton function immediately preceding that
quotient. Before and after the interchange, the affected functions are
\[
\phi_K(z)
=
\phi_{K-1}(z)\frac{z+d+u}{z+d+w},
\qquad
\widetilde\phi_K(z)
=
\phi_{K-1}(z)\frac{z+d+v}{z+d+w},
\]
where \(d\) is the fixed cycle index. Therefore
\[
\widetilde\phi_K
=
\theta\phi_{K-1}
+(1-\theta)\phi_K,
\qquad
\theta=\frac{v-u}{w-u}\in(0,1).
\]
Thus every interchange is represented by a nonnegative bidiagonal
connection matrix. Once the common value is in place, its numerator and
denominator Gamma factors cancel. Repeating this operation removes
\(b_1,\ldots,b_r\) and leaves the same finite Newton problem with nodes
\(b_{r+1}<\cdots<b_q\) and nonterminal poles
\(\widehat a_1,\ldots,\widehat a_{m-1}\).
Lemma~\ref{lem:Jacobi-like-finite-Newton} now proves sufficiency in
\eqref{eq:Jacobi-like-C-classification}; the analogue of
\eqref{eq:Jacobi-like-C-obstruction} for the reduced lists proves
necessity.
\end{proof}

\begin{theorem}[Jacobi-like PBF region with exact cancellations]
\label{thm:Jacobi-like-PBF-region}
Assume \eqref{eq:Jacobi-like-basic-order} and the integrability conditions
above. Let \(r\) be the largest integer for which every number in the
initial string
\[
b_1,b_2,\ldots,b_r
\]
occurs in the \(a\)-list. Delete these values from the \(b\)-list and
their matching occurrences from the \(a\)-list, and write the remaining
increasing \(a\)-list as
\(\widehat a_1<\cdots<\widehat a_m\), where \(m=q-r\).
The list is empty when \(m=0\).
Suppose that
\begin{equation}
\label{eq:Jacobi-like-PBF-region}
a_q<b_1+1,
\qquad
\widehat a_{m-1}\le b_{r+1}\quad\hbox{when }m\ge2.
\end{equation}
Then the Gauss--Borel factorization of
\(\mathscr M^{\mathrm J}\), and the factorizations of the cyclic
Christoffel transforms entering the bidiagonal construction, all exist.
Consequently the mixed forms and the recurrence matrix
\(T^{\mathrm J}\) in \eqref{eq:Jacobi-like-recurrence} are well defined,
and \(T^{\mathrm J}\) has the positive bidiagonal factorization
\begin{equation}
\label{eq:Jacobi-like-PBF}
T^{\mathrm J}
=
L_1^{\mathrm J}\cdots L_p^{\mathrm J}
U_q^{\mathrm J}\cdots U_1^{\mathrm J}.
\end{equation}
Here the \(L_i^{\mathrm J}\) are unit lower bidiagonal and the
\(U_j^{\mathrm J}\) are upper bidiagonal with unit superdiagonal.
\end{theorem}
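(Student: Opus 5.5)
The plan is to mirror the Pi\~neiro argument: first show that all leading minors are nonzero, then write each bidiagonal factor entry as a ratio of positive minors.

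\emph{Normality.} The identity \(\mathscr M^{\mathrm J}_{qu+j-1,M}=F_{u,j}(\zeta_M)\) shows that \(\mathscr M^{\mathrm J}\) is the collocation matrix of the ordered functions \(F_{0,1},\ldots,F_{0,q},F_{1,1},\ldots\) at the increasing nodes \eqref{eq:Jacobi-like-nodes}. By \eqref{eq:Jacobi-like-F-G-connection}, each complete \(q\)-cycle of these functions is obtained from the corresponding cycle of the Newton sequence \(\Phi_n\) through the same lower triangular matrix \(C\). Hence
\[
\mathscr M^{\mathrm J}=\bigl(I_\infty\otimes C\bigr)\,\bigl[\Phi_n(\zeta_M)\bigr]_{n,M},
\]
where the first factor is block diagonal with every diagonal block equal to \(C\). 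The diagonal entries \(c_{n,n}\) are positive, after cancellation at removable equalities. Under the second condition in \eqref{eq:Jacobi-like-PBF-region}, Proposition~\ref{prop:Jacobi-like-C-classification} shows that \(C\) is totally nonnegative. Under the first condition, \(a_q<b_1+1\), Lemma~\ref{lem:Jacobi-like-cyclic-Newton} shows that the Newton collocation matrix is strictly totally positive. Apply Cauchy--Binet to any leading principal minor of \(\mathscr M^{\mathrm J}\). Every term is nonnegative, and the term using the diagonal of the block-diagonal factor is a positive diagonal product times a strictly positive Newton minor. So every leading minor is strictly positive, and the Gauss--Borel factorization exists.

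\emph{Christoffel transforms.} The cyclic Christoffel transforms are treated the same way. A left shift of the \(\alpha\)-columns replaces the node list by a shifted increasing subsequence of \eqref{eq:Jacobi-like-nodes}. A right Christoffel step in the \(b\)-family replaces the ordered sequence \(F_{d,j}\) by its cyclic shift starting at \(F_{0,b+1}\). For the latter, the corresponding connection matrix is a cyclic relabelling of \(C\), with \(b_{q+1}=b_1+1\), and the Newton sequence simply starts later. Since \eqref{eq:Jacobi-like-Newton-prefix-separation} holds for every starting index, Lemma~\ref{lem:Jacobi-like-cyclic-Newton} still applies, and the same Cauchy--Binet argument gives positive leading minors for every transform used in the construction. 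The mixed forms and \(T^{\mathrm J}\) in \eqref{eq:Jacobi-like-recurrence} are then well defined.

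\emph{Positivity of the factors.} Following the pattern of Lemma~\ref{lem:MP-upper-positive-region}, I would write each factor entry as a ratio of minors. With \(D^{(b)}_N\) the leading minors of the \(b\)-th right transform and \(\kappa^{(b)}_N=D^{(b)}_N/D^{(b)}_{N-1}\), the diagonal entry of \(U_b^{\mathrm J}\) is \(\kappa^{(b)}_N/\kappa^{(b-1)}_N\), a product of positive quantities. For the lower factors, the subdiagonal entry of \(L_a^{\mathrm J}\) is expressed through almost-leading minors: rows \(0,\ldots,N+1\) with columns \(0,\ldots,N-1\) together with one shifted column, taken from the transforms at consecutive Christoffel stages. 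All of these are minors of collocation matrices with increasing rows and columns, hence positive by the strict total positivity established above.

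\emph{Main obstacle.} The hardest step will be identifying the lower-factor entries as ratios of minors that are manifestly positive. In the Pi\~neiro case this followed from explicit Pochhammer formulas. Here, one Christoffel step in the \(\alpha\)-family moves a node from the front of the list to the end, \(\zeta\mapsto\zeta+1\), so the resulting numerator minor has a non-contiguous column set. I would handle it as in Lemma~\ref{lem:MP-grouped-lower-positive-region}, moving that node by adjacent interchanges and checking that every elementary connection coefficient is positive. If that fails, the fallback is the final assertion of Lemma~\ref{lem:Jacobi-like-cyclic-Newton}: the limiting column at \(z\to+\infty\), scaled by \(z^{\delta_{\mathrm J}+1}\), represents the normalization functional, and its minors are positive. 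The numerator and denominator minors should then both be positive after this normalization.
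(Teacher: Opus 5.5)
Your normality step and your upper-factor step are the paper's argument. It writes $\mathcal F=\diag(C,C,\ldots)\mathcal G$, takes total nonnegativity of $C$ from Proposition~\ref{prop:Jacobi-like-C-classification} and strict total positivity of the Newton collocation matrix from Lemma~\ref{lem:Jacobi-like-cyclic-Newton}, uses the positive Cauchy--Binet term, and reads off $(U_b^{\mathrm J})_{N,N}=\kappa^{(b)}_N/\kappa^{(b-1)}_N$.

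Your description of the right transforms is imprecise. The tail $(\Phi_{n+b})_{n\ge0}$ is not a Newton sequence for $F_{0,b+1},F_{0,b+2},\ldots$; for instance $\Phi_1\ne F_{0,2}$ unless $a_1=b_1$. A relabelled $C$ would also need its own total-nonnegativity check. None of this is needed. The $b$-th right transform is $S^b\mathscr M^{\mathrm J}$, with $S_{N,M}=\delta_{M,N+1}$, so its leading minors are $\det\mathcal F[\{b,\ldots,b+N\},\{0,\ldots,N\}]$. In Cauchy--Binet, the term whose index set equals the row set is positive, because a principal minor of the lower-triangular $C$ is the product of its positive diagonal entries. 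The same term shows that every minor of $\mathscr M^{\mathrm J}$ is positive, which is what your later appeal to strict total positivity actually needs.

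\emph{The genuine gap is the lower factors, which you leave unresolved.} The minors you describe, with rows $0,\ldots,N+1$ against $N+1$ columns, are not square. The non-contiguity you anticipate comes from misreading the $\alpha$-shift. Since $\zeta_{M+p}=\zeta_M+1$, passing to $\boldsymbol\alpha^{[1]}$ gives new nodes $\zeta'_M=\zeta_{M+1}$: the first node is simply dropped. Hence $\mathscr M^{[k]}=\mathscr M^{\mathrm J}(S^{\mathsf T})^k$ has contiguous columns. Write $\mathscr M^{[k]}=L^{[k]}U^{[k]}$ with $L^{[k]}$ unit lower triangular. Uniqueness of the Gauss--Borel factorization gives $U^{[a-1]}S^{\mathsf T}=L^{\mathrm J}_aU^{[a]}$. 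Comparing $(N+1,N)$ entries gives the exact mirror of your upper-factor formula:
\[
(L_a^{\mathrm J})_{N+1,N}
=\frac{U^{[a-1]}_{N+1,N+1}}{U^{[a]}_{N,N}}
=\frac{D^{[a-1]}_{N+1}\,D^{[a]}_{N-1}}{D^{[a-1]}_{N}\,D^{[a]}_{N}},
\qquad
D^{[k]}_N\coloneq\det\mathscr M^{\mathrm J}[\{0,\ldots,N\},\{k,\ldots,k+N\}],
\]
with $D^{[k]}_{-1}=1$. All these minors are positive by the Cauchy--Binet argument you already have. So neither the adjacent interchanges nor the column at infinity is needed.

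Once this is filled in, your route differs from the paper's for the lower factors. The paper quotes the closed left normalizer $g_N^{\mathrm J,[k]}$ and the explicit subdiagonal formula from \cite{JacobiLaguerreMixed}, then counts signs. The condition $a_q<b_1+1$ forces $\operatorname{sgn}g_N^{\mathrm J,[k]}=(-1)^{N+1}$, so consecutive normalizers have opposite signs. The cyclic $\alpha$-ordering makes $\alpha_{k+\ell+1}-\alpha_k+u>0$. Your version is self-contained, uses only the total positivity already established, and treats $L$ and $U$ symmetrically. The paper's version produces closed-form entries, which it reuses for the asymptotics in Proposition~\ref{prop:Jacobi-like-boundedness-criterion}.
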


\begin{proof}
Proposition~\ref{prop:Jacobi-like-C-classification} makes the finite
connection \(C\) totally nonnegative. In the absence of cancellations,
the consecutive ratios in one Newton cycle are
\[
\frac{z+d+a_1}{z+d+b_2},\ldots,
\frac{z+d+a_{q-1}}{z+d+b_q},
\frac{z+d+a_q}{z+d+b_1+1}.
\]
After the positive adjacent interchanges used in the proof of the
proposition, the same list is obtained with the canceled zero--pole
pairs removed. If \(m=0\), then \(C=I\) and the remaining collocation
problem is the Pi\~neiro Cauchy problem, whose minors are positive by the
Cauchy determinant formula; this is the complete-cancellation
specialization recorded in Corollary~\ref{cor:Jacobi-like-to-Pineiro}.
If \(m=1\), there is no remaining nonterminal prefix zero. If \(m\ge2\),
condition
\(\widehat a_{m-1}\le b_{r+1}\) places every remaining prefix zero to the
left of its next pole. The final ratio wraps from one \(q\)-cycle to the
next, and \(a_q<b_1+1\) gives the required separation there.
Integrability places all evaluation nodes
\eqref{eq:Jacobi-like-nodes} to the right of these zeros and poles.
Lemma~\ref{lem:Jacobi-like-cyclic-Newton}, applied to successive finite
pieces of this reduced cyclic sequence, makes its collocation matrix
strictly totally positive.

The full collocation matrix factors cyclewise as
\begin{equation}
\label{eq:Jacobi-like-collocation-factorization}
\mathcal F
=
\operatorname{diag}(C,C,\ldots)\mathcal G,
\end{equation}
where \(\mathcal G\) is the Newton collocation matrix. Cauchy--Binet
therefore writes every shifted leading minor of \(\mathcal F\) as a sum
of nonnegative products. At least one term is positive: select the same
indices in each complete connection block, which contributes a product
of positive diagonal minors of \(C\), and a non-forced minor of
\(\mathcal G\). Taking zero shift shows in particular that every leading
principal minor of
\(\mathcal F=\mathscr M^{\mathrm J}\) is positive. Hence the original
Gauss--Borel factorization, the two mixed sequences, and
\(T^{\mathrm J}\) are well defined in the region of the theorem.

The lower Christoffel chain cyclically replaces \(\boldsymbol\alpha\) by
\(\boldsymbol\alpha^{[k]}\). Each shifted alpha list is still increasing with
span less than one, and its nodes remain to the right of all Newton zeros
and poles. The same Cauchy--Binet argument therefore gives positive
leading minors for every member of this chain. The shifted leading minors
for the upper Christoffel chain are precisely the other minors covered by
the preceding argument. Thus all Gauss--Borel factorizations required by
the bidiagonal construction exist. The consecutive quotients of the
upper shifted minors are the diagonal entries of the upper factors
\cite{BFM_Bidiagonal_2026}; consequently every
\(U_j^{\mathrm J}\) is positive.

The normalizers and the lower-factor formula used for the signs are given
next. Extend the alpha parameters cyclically by
\[
\alpha_{sp+i}\coloneq\alpha_i+s,
\qquad s\ge0,\quad 1\le i\le p,
\]
and, for \(0\le k\le p\), put
\[
\boldsymbol\alpha^{[k]}
\coloneq
(\alpha_{k+1},\ldots,\alpha_p,
\alpha_1+1,\ldots,\alpha_k+1).
\]
The first or second part is empty when \(k=0\) or \(k=p\), respectively.
Write
\[
N=pu+\ell=qv+j-1,
\qquad 0\le\ell<p,\quad 1\le j\le q,
\]
and define
\[
d_{N,i}
\coloneq
\begin{cases}
u+1,&1\le i\le\ell+1,\\
u,&\ell+1<i\le p.
\end{cases}
\]
The left Christoffel normalizer after \(k\) cyclic shifts is
\begin{equation}
\label{eq:Jacobi-like-left-normalizer}
g_N^{\mathrm J,[k]}
=
-
\frac{
\prod_{\lambda=1}^{q}(a_\lambda-b_j-v)_v
\prod_{h=1}^{j-1}(b_h-b_j)
}{
\prod_{i=1}^{p}
(\alpha_i^{[k]}+b_j+v+1)_{d_{N,i}}
},
\qquad 0\le k\le p.
\end{equation}
Here \(\alpha_i^{[k]}\) is the \(i\)-th component of
\(\boldsymbol\alpha^{[k]}\). The corresponding subdiagonal entry is
\begin{equation}
\label{eq:Jacobi-like-lower-factor-explicit}
(L_k^{\mathrm J})_{N+1,N}
=
-
\frac{g_{N+1}^{\mathrm J,[k-1]}}
{g_N^{\mathrm J,[k]}}
\frac{
\alpha_{k+\ell+1}-\alpha_k+u
}{
\alpha_{k+\ell+1}+u+1+b_j+v
},
\qquad 1\le k\le p.
\end{equation}
Both formulas follow from the closed left Christoffel chain and its
leading-coefficient quotients in \cite{JacobiLaguerreMixed}.

All factors in the denominator of
\eqref{eq:Jacobi-like-left-normalizer} are positive. Its numerator
contains the following product, whose sign determines the sign of the
normalizer:
\[
-
\prod_{\lambda=1}^{q}(a_\lambda-b_j-v)_v
\prod_{h=1}^{j-1}(b_h-b_j).
\]
Indeed,
\[
a_\lambda-b_j-v+t
\le a_q-b_1-1<0,
\qquad 0\le t<v.
\]
The first product therefore contains \(qv\) negative factors, and the
second contains \(j-1\). Including the initial minus sign gives
\[
\operatorname{sgn}g_N^{\mathrm J,[k]}
=(-1)^{qv+j}=(-1)^{N+1},
\qquad 0\le k\le p.
\]
Consequently the two consecutive normalizers in
\eqref{eq:Jacobi-like-lower-factor-explicit} have opposite signs. Its
remaining denominator is positive, and the cyclic difference
\(\alpha_{k+\ell+1}-\alpha_k+u\) is strictly positive because
\(\alpha_1<\cdots<\alpha_p<\alpha_1+1\). Hence every subdiagonal entry
of every \(L_k^{\mathrm J}\) is positive.
Together with the upper-factor argument this proves
\eqref{eq:Jacobi-like-PBF}.
\end{proof}

\subsubsection{The cases \(q\in\{2,3,4\}\) and maximality in the generic
ordered component}

For the generic statements below, assume \(a_i<b_i\) for every \(i\), so
there are no exact cancellations. The following asymptotic obstruction
will be used only to exclude the remaining open chambers.

\begin{proposition}[A large-minor obstruction]
\label{prop:Jacobi-like-large-minor-obstruction}
Assume \(a_q<b_1+1\).
For \(1\le s<q\), define the lower-left connection minor
\[
E_s
\coloneq
\det C[\{s,\ldots,q-1\},\{0,\ldots,q-s-1\}].
\]
After common removable factors are canceled,
\begin{equation}
\label{eq:Jacobi-like-corner-minor}
E_s
=
\frac{
\prod_{j=1}^{q-s}(b_j-a_j)
\prod_{1\le i<j\le q-s}(b_i-a_j)
\prod_{s+1\le i<j\le q}(b_j-b_i)
}{
\prod_{\ell=s+1}^{q}\prod_{h=1}^{q-s}(b_\ell-a_h)
}.
\end{equation}
If \(E_s<0\), a shifted leading minor of the Jacobi-like collocation
matrix is negative for all sufficiently large orders in each fixed
residue class modulo \(\operatorname{lcm}(p,q)\). Hence the prescribed
PBF does not exist.
\end{proposition}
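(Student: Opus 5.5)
The proof has two parts: an exact evaluation of the corner minor $E_s$, and an asymptotic transfer of its sign to a shifted leading minor of the collocation matrix $\mathcal F=\operatorname{diag}(C,C,\ldots)\mathcal G$ from \eqref{eq:Jacobi-like-collocation-factorization}.

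\emph{Evaluation of $E_s$.} Put $x_n=b_{n+1}$ and write the entries of $C$ as in the proof of Proposition~\ref{prop:Jacobi-like-C-classification}:
\[
c_{n,k}=(b_{k+1}-a_{k+1})\,\frac{\prod_{h=0}^{k-1}(x_n-x_h)}{\prod_{h=0}^{k}(x_n-a_{h+1})}.
\]
The factors $b_{k+1}-a_{k+1}$ come out of column $k$ and give $\prod_{j=1}^{q-s}(b_j-a_j)$. The rows are indexed by $n=s,\ldots,q-1$, that is, by the nodes $b_{s+1},\ldots,b_q$. Each remaining entry is the rational Newton function $\prod_{h<k}(x-b_{h+1})/\prod_{h\le k}(x-a_{h+1})$ evaluated at these nodes. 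The minor $\det\bigl[\prod_{h<k}(x_n-b_{h+1})/\prod_{h\le k}(x_n-a_{h+1})\bigr]$ is a Cauchy–Vandermonde type determinant. Clearing the common denominator $\prod_{h=1}^{q-s}(x_n-a_h)$ from row $n$, row $n$ becomes the evaluation of polynomials $P_k(x)=\prod_{h\le k}(x-b_h)\prod_{h>k+1}(x-a_h)$ of degree $q-s-1$. This is the same triangular coefficient computation used for \eqref{eq:Jacobi-like-cyclic-Newton-solid-minor}, now with the zeros and poles interchanged. It yields the Vandermonde factor $\prod_{s+1\le i<j\le q}(b_j-b_i)$ times $\prod_{1\le i<j\le q-s}(b_i-a_j)$, which is \eqref{eq:Jacobi-like-corner-minor}. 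Removable common factors are handled as in Proposition~\ref{prop:Jacobi-like-C-classification}. I would verify the indexing by checking $q=2$ directly.

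\emph{Sign transfer.} Fix a residue class modulo $\operatorname{lcm}(p,q)$. Consider the shifted leading minor of $\mathcal F$ whose rows are the last $q-s$ rows of one $q$-cycle $d$ together with all earlier rows, and whose columns are the leading columns of matching size. By Cauchy–Binet through $\operatorname{diag}(C,\ldots)$, this minor equals a sum over row selections $S$ of $\mathcal G$:
\[
\sum_S\Bigl(\prod_{\text{cycles}}\text{minor of }C\Bigr)\cdot \det\mathcal G[S,\cdot].
\]
All earlier complete blocks contribute $\det C>0$. In the last block, rows $\{s,\ldots,q-1\}$ of $C$ are paired with some choice of columns $K$. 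The main claim is that as $d\to\infty$, the term with $K=\{0,\ldots,q-s-1\}$ dominates all the others. The comparison is between the Newton functions $G_{d,k}$ evaluated at the fixed nodes $\zeta_M$: $G_{d,k}$ carries the extra factor $\prod_{\ell\le k}(z+d+a_\ell)/(z+d+b_{\ell+1})$. That factor varies slowly in $z$, but it produces distinct polynomial growth in $d$ in the corresponding minors of $\mathcal G$. I expect the cleanest route to be the explicit formula \eqref{eq:Jacobi-like-cyclic-Newton-solid-minor} together with the Gamma asymptotics $\Gamma(x+c)/\Gamma(x+e)\sim x^{c-e}$. Using these, compare $\det\mathcal G[S,\cdot]$ for each choice of $K$; the lowest column set should give the largest power of $d$. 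Since all minors of $\mathcal G$ are positive by Lemma~\ref{lem:Jacobi-like-cyclic-Newton}, the sign of the leading term is $\operatorname{sgn}E_s<0$. Hence the shifted leading minor is eventually negative.

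\emph{Conclusion.} The diagonal entries of the bidiagonal factors are quotients of consecutive shifted leading minors of the relevant Christoffel chain. This was used for the upper factors in Theorem~\ref{thm:Jacobi-like-PBF-region}, citing \cite{BFM_Bidiagonal_2026}. A negative minor therefore forces either a sign change in some factor entry or the failure of normality, so the prescribed PBF cannot exist.

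The main obstacle is the dominance step: exponent bookkeeping with the $d$-dependent Gamma factors, showing that the competing choices $K$ give strictly smaller growth. That strictness depends on the genericity assumption $a_i<b_i$ together with the ordering. If the exponent comparison is not strict, my first fallback would be to scale columns by $z^{\delta_{\mathrm J}+1}$ as in the last part of Lemma~\ref{lem:Jacobi-like-cyclic-Newton} and take the limit there.
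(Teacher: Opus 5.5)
Your evaluation of $E_s$ is fine; it is the same Cauchy--Vandermonde factorization the paper uses in its last step. The sign-transfer step, however, has a genuine gap, and it begins with the choice of minor.

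\textbf{The minor you chose.} You take rows $\{0,\ldots,qd-1\}\cup\{qd+s,\ldots,qd+q-1\}$, so the incomplete connection block sits in the last, growing cycle. This row set has a gap, so it is not a shifted leading minor $D_N^{(s)}=\det\mathcal F[\{s,\ldots,s+N-1\},\{0,\ldots,N-1\}]$. Your concluding paragraph, about quotients of consecutive shifted leading minors, therefore does not apply to it. In fact, since $\mathcal F=L_{\mathrm J}U_{\mathrm J}$ with $U_{\mathrm J}$ upper triangular, your minor equals $\det U_{\mathrm J}^{[N]}$ times the small near-diagonal minor $\det L_{\mathrm J}[\{qd+s,\ldots,qd+q-1\},\{qd,\ldots,qd+q-s-1\}]$. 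The Christoffel chain does not relate that minor to the bidiagonal factor entries.

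\textbf{The dominance claim.} The dominance claim is also false for this minor. The competing Newton minors omit positions of the top cycle. Their left-kernel coefficients are therefore recovered from residues at the moving poles $-B_h$, not at fixed zeros. Take $s=1$ and $M=qd+q-1$, and write $\kappa_j$ for the coefficient of the $j$-th Newton function in the kernel function. The residues at $-B_M$ and $-B_{M-1}$ give $|\kappa_{M-1}/\kappa_M|\asymp d^{\,q(b_q-b_{q-1})}\to\infty$. So the $E_1$ term, which omits position $M$, is the subdominant one, not the dominant one. For $q=3$ it is beaten by the term with coefficient $c_{1,0}c_{2,2}>0$, and the remaining term has coefficient $c_{1,1}c_{2,2}>0$. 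Since $\mathcal G$ remains strictly totally positive when $a_2>b_1$, your minor is eventually positive precisely in the excluded chamber where $E_1<0$. It therefore cannot detect the obstruction.

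\textbf{What is missing.} You need the paper's setup and its key tool. Use $D_N^{(s)}$ itself. Then the incomplete block is the first one, the final block is forced to be principal, and the $\binom qs$ terms differ only in which first-cycle positions are omitted. These Newton minors are not solid, so \eqref{eq:Jacobi-like-cyclic-Newton-solid-minor} handles at most one of them. The paper compares them through two ingredients:
\begin{enumerate}
\item the $s$-dimensional left nullspace spanned by $\mathcal R_\ell(z)=z^\ell\prod_{n<N}(z-\zeta_n)/Q_{N,s}(z)$;
\item the complementary-minor identity \eqref{eq:Jacobi-like-complementary-minors}.
\end{enumerate}
This reduces the comparison to $s\times s$ kernel minors, which are recovered triangularly at the fixed Newton zeros $-a_1,\ldots,-a_q$. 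There the ratios $\mathcal R_0(-a_v)/\mathcal R_0(-a_u)\asymp N^{(p+q)(a_v-a_u)}$ single out the term that omits the $s$ largest $a$'s, and its coefficient is $E_s$.

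Your fallback of scaling a column by $z^{\delta_{\mathrm J}+1}$ does not substitute for this step. It controls a single node tending to infinity, not the growth of the order $N$.
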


\begin{proof}
The proof has three steps. First, Cauchy--Binet reduces
\(D_N^{(s)}\) to exactly \(\binom{q}{s}\) competing terms, indexed by
the \(s\) labels omitted from the first cycle. Second, replacing an
omitted label \(a_u\) by a larger one \(a_v\) multiplies the associated
Newton minor by a positive constant times
\(m^{(p+q)(a_v-a_u)}\), within each fixed residue class. Hence the term
omitting the largest \(s\) labels is the unique dominant term, and its
connection coefficient is \(E_s\).

Let
\(\mathcal C\coloneq\operatorname{diag}(C,C,\ldots)\), so that
\(\mathcal F=\mathcal C\mathcal G\) as in
\eqref{eq:Jacobi-like-collocation-factorization}. For \(N\ge1\) define
the shifted leading minor
\[
D_N^{(s)}
\coloneq
\det\mathcal F[
\{s,s+1,\ldots,s+N-1\},
\{0,1,\ldots,N-1\}].
\]
Cauchy--Binet gives the finite expansion
\begin{equation}
\label{eq:Jacobi-like-shifted-CB}
D_N^{(s)}
=
\sum_{\substack{
I\subset\{0,\ldots,N+s-1\}\\ |I|=N}}
\det\mathcal C[\{s,\ldots,s+N-1\},I]\,
\det\mathcal G[I,\{0,\ldots,N-1\}].
\end{equation}
All Newton minors in this sum are positive by
Lemma~\ref{lem:Jacobi-like-cyclic-Newton}. Because \(\mathcal C\) is
block diagonal, every complete block after the first contributes all of
its columns, and the columns in the final incomplete block are forced as
well. Thus all \(s\) omitted columns lie in the first block. Equivalently,
the only choice is a set of \(q-s\) columns from that block, so precisely
\(\binom{q}{s}\) nonzero terms remain. Every complete intermediate block
contributes \(\det C>0\), and the final incomplete block contributes a
positive principal minor. After these common positive factors are
removed, the only varying connection coefficient is the corresponding
\((q-s)\)-minor of the first block (C).

The Newton minors in
\eqref{eq:Jacobi-like-shifted-CB} are compared next. Divide the first \(N+s\) Newton
functions by their common positive initial factor and put the resulting
rational functions over the common denominator
\[
Q_{N,s}(z)
\coloneq
\prod_{k=0}^{N+s-2}(z+\nu_k),
\]
where
\[
\nu_{qd+k}=d+b_{k+2}\quad (0\le k\le q-2),
\qquad
\nu_{qd+q-1}=d+b_1+1,
\qquad d\ge0.
\]
Their numerators span the polynomials of degree at most \(N+s-1\).
Consequently the evaluation matrix at
\(\zeta_0,\ldots,\zeta_{N-1}\) has an \(s\)-dimensional left
nullspace. A basis is obtained from the rational functions
\begin{equation}
\label{eq:Jacobi-like-null-functions}
\mathcal R_\ell(z)
\coloneq
\frac{
z^\ell\prod_{n=0}^{N-1}(z-\zeta_n)
}{
Q_{N,s}(z)
},
\qquad 0\le\ell<s,
\end{equation}
because they belong to the same numerator space and vanish at all
\(N\) evaluation nodes. This also proves that the nullspace has exactly
the asserted dimension.

To make the complementary-minor step explicit, put
\[
\mathcal G_{N,s}
\coloneq
\mathcal G[
\{0,\ldots,N+s-1\},\{0,\ldots,N-1\}],
\]
and let \(\mathcal K_{N,s}\) be the \(s\)-by-\((N+s)\) coefficient
matrix of
\(\mathcal R_0,\ldots,\mathcal R_{s-1}\) in these first \(N+s\)
Newton functions. Then
\(\mathcal K_{N,s}\mathcal G_{N,s}=0\), and
\(\mathcal K_{N,s}\) has full row rank. After fixing the orientation of
this kernel basis, there is a number \(\sigma_{N,s}>0\), independent of
the \(N\)-set \(I\), such that
\begin{equation}
\label{eq:Jacobi-like-complementary-minors}
\det\mathcal G_{N,s}[I,:]
=
\sigma_{N,s}(-1)^{\sum_{i\in I}i}
\det\mathcal K_{N,s}[:,I^{\mathsf c}],
\qquad
I\subset\{0,\ldots,N+s-1\},\quad |I|=N.
\end{equation}
This is the standard complementary-minor identity for a matrix and an
oriented basis of its left nullspace, or equivalently the Hodge duality
of their maximal compound matrices \cite{Pinkus}.

The coefficients in \(\mathcal K_{N,s}\) are recovered triangularly by
evaluating \eqref{eq:Jacobi-like-null-functions} at the successive
Newton zeros. Explicitly, for \(d\ge0\) and \(0\le k<q\), the quotient
\[
\frac{\Phi_{qd+k+1}(z)}{\Phi_{qd+k}(z)}
\]
has its zero at \(-d-a_{k+1}\). In other words, the relevant column has
position \(j=qd+k\), and its coefficient is recovered by evaluating at
\(-d-a_{k+1}\). This is the precise meaning of the corresponding cycle
shift.

Here is the asymptotic comparison explicitly. Let \(a_u<a_v\), put
\(L=\operatorname{lcm}(p,q)\), fix \(r=N\bmod L\), and write
\(N=qm+t\), with \(0\le t<q\). Grouping the evaluation nodes by their
\(p\) residue classes and using
\(\Gamma(x+c)/\Gamma(x+d)=x^{c-d}(1+\mathrm O(x^{-1}))\) gives
\begin{equation}
\label{eq:Jacobi-like-node-ratio-asymptotic}
\prod_{n=0}^{N-1}
\frac{\zeta_n+a_v}{\zeta_n+a_u}
=
\kappa_{\zeta,u,v,r}\,
m^{p(a_v-a_u)}
\bigl(1+\mathrm O(m^{-1})\bigr).
\end{equation}
Grouping the poles \(\nu_k\) by their \(q\) residue classes similarly
gives
\begin{equation}
\label{eq:Jacobi-like-pole-ratio-asymptotic}
\prod_{k=0}^{N+s-2}
\frac{\nu_k-a_u}{\nu_k-a_v}
=
\kappa_{\nu,u,v,s,r}\,
m^{q(a_v-a_u)}
\bigl(1+\mathrm O(m^{-1})\bigr).
\end{equation}
For a precise comparison, put
\[
\Delta_a\coloneq\min_{1\le i<q}(a_{i+1}-a_i),
\qquad
\delta_a\coloneq\min\{1,(p+q)\Delta_a\}>0.
\]
For an \(s\)-set \(J=\{j_1<\cdots<j_s\}\subset\{1,\ldots,q\}\),
let \(\mathcal N_{J,N}\) denote the Newton minor whose omitted positions
in the first cycle carry the labels \(a_j\), \(j\in J\), and put
\(J^\flat=\{j-1:j\in J\}\). Normalize the Newton sequence by
\(\varphi_j=\Phi_j/\Phi_0\), and set
\[
\mathcal E=[\varphi_{j-1}(-a_i)]_{i,j=1}^{q},
\qquad
(\mathcal H_{N,s})_{\ell,i}=\mathcal R_{\ell-1}(-a_i)
=(-a_i)^{\ell-1}\mathcal R_0(-a_i).
\]
The matrix \(\mathcal E\) is lower triangular, independent of \(N\),
and has nonzero diagonal; removable coincidences are treated after the
cancellations already prescribed above. Evaluating the kernel expansions
at \(-a_1,\ldots,-a_q\) gives
\[
\mathcal H_{N,s}
=\mathcal K_{N,s}[:,\{0,\ldots,q-1\}]\mathcal E^{\mathsf T}.
\]
Cauchy--Binet and the upper triangularity of
\(\mathcal E^{-\mathsf T}\) therefore give, on each fixed residue class,
\[
\det\mathcal K_{N,s}[:,J^\flat]
=A_J\!\left(\prod_{j\in J}\mathcal R_0(-a_j)\right)
 \prod_{1\le u<v\le s}(a_{j_v}-a_{j_u})
 \bigl(1+\mathrm O(N^{-(p+q)\Delta_a})\bigr),
\]
where
\(A_J=(-1)^{\binom{s}{2}}
\det\mathcal E^{-\mathsf T}[J,J]\ne0\).
Indeed, every further Cauchy--Binet term is indexed by
\(I=\{i_1<\cdots<i_s\}\) with \(i_h\le j_h\); unless \(I=J\),
\(\sum_{i\in I}a_i\le\sum_{j\in J}a_j-\Delta_a\). Moreover,
\[
\frac{\mathcal R_0(-a_v)}{\mathcal R_0(-a_u)}
=\prod_{n=0}^{N-1}\frac{\zeta_n+a_v}{\zeta_n+a_u}
 \prod_{k=0}^{N+s-2}\frac{\nu_k-a_u}{\nu_k-a_v}.
\]
Thus \eqref{eq:Jacobi-like-node-ratio-asymptotic},
\eqref{eq:Jacobi-like-pole-ratio-asymptotic}, and
\eqref{eq:Jacobi-like-complementary-minors} give, on every fixed residue
class \(r=N\bmod L\),
\begin{equation}
\label{eq:Jacobi-like-Newton-minor-ratio}
\frac{\mathcal N_{J',N}}{\mathcal N_{J,N}}
=\kappa_{J',J,s,r}
 N^{(p+q)(\sum_{j\in J'}a_j-\sum_{j\in J}a_j)}
 \bigl(1+\mathrm O(N^{-\delta_a})\bigr),
\qquad \kappa_{J',J,s,r}>0.
\end{equation}

Since \(a_1<\cdots<a_q\), the unique dominant term in
\eqref{eq:Jacobi-like-shifted-CB} is the one that omits the largest
\(s\) \(a\)-parameters. Its connection coefficient is
\[
\det C[\{s,\ldots,q-1\},\{0,\ldots,q-s-1\}]
=E_s;
\]
all remaining block factors are positive. Hence
\(\operatorname{sgn}D_N^{(s)}=\operatorname{sgn}E_s\) for all
sufficiently large \(N\) in each fixed residue class. Finally,
substituting the coefficients from
\eqref{eq:Jacobi-like-F-G-connection} into this determinant and
factoring the Cauchy--Vandermonde numerator and denominator gives
\eqref{eq:Jacobi-like-corner-minor}.
\end{proof}

\begin{corollary}[The case \(q=2\)]
\label{cor:Jacobi-like-q2}
In the generic ordered component for \(q=2\), the prescribed PBF exists
if and only if
\[
\alpha_1<\cdots<\alpha_p<\alpha_1+1,
\qquad
a_2<b_1+1.
\]
\end{corollary}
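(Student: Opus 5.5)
The argument has two directions: sufficiency from Theorem~\ref{thm:Jacobi-like-PBF-region}, and necessity from the explicit lower-factor formula \eqref{eq:Jacobi-like-lower-factor-explicit} together with Proposition~\ref{prop:Jacobi-like-large-minor-obstruction}. Throughout, we are in the generic ordered component with \(q=2\), so \(a_1<b_1\), \(a_2<b_2\), \(a_1<a_2\), \(b_1<b_2\), and no Gamma factors cancel.

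\emph{Sufficiency.} Since there are no cancellations, \(r=0\) and \(m=2\) in Theorem~\ref{thm:Jacobi-like-PBF-region}. Condition \eqref{eq:Jacobi-like-PBF-region} then reads \(a_2<b_1+1\) together with \(\widehat a_{1}=a_1\le b_1\). The second inequality holds automatically from \(a_1<b_1\). Under the assumed alpha ordering, the theorem gives the prescribed PBF directly. (For \(q=2\), the connection matrix \(C\) is \(2\times2\) lower triangular with positive diagonal entries. Its only subdiagonal entry is \(c_{1,0}=(b_1-a_1)/(b_2-a_1)>0\), so total nonnegativity is also visible by hand.)

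\emph{Necessity of \(a_2<b_1+1\).} Suppose instead that \(a_2\ge b_1+1\), and exclude the boundary \(a_2=b_1+1\) as a cross-cancellation stratum outside the generic component. Proposition~\ref{prop:Jacobi-like-large-minor-obstruction} assumes \(a_q<b_1+1\), so it does not apply here. I would handle this case through the signs of the left normalizers instead. In \eqref{eq:Jacobi-like-left-normalizer}, the factor \(\prod_\lambda(a_\lambda-b_j-v)_v\) with \(\lambda=2\), \(j=1\) contains the factor \(a_2-b_1-1+t\) at \(t=v-1\), and this factor is now positive. The sign count \((-1)^{N+1}\) therefore fails: along indices \(N\) with \(j_N=j_{N+1}=1\)-adjacent pairs, two consecutive normalizers acquire equal signs. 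Formula \eqref{eq:Jacobi-like-lower-factor-explicit} then produces a negative subdiagonal entry in some \(L_k^{\mathrm J}\). Concretely, I would take \(N=2v\) and \(N+1=2v+1\), with \(j=1\) and \(j=2\) respectively, and count negative factors in \(g_{N+1}^{[k-1]}\) and \(g_N^{[k]}\). These differ by one exactly when \(a_2-b_1-v+t\) changes sign within the range \(t<v\), which happens for large \(v\). This is the main obstacle, because it relies on \eqref{eq:Jacobi-like-lower-factor-explicit} remaining valid outside the region. I would justify that validity by the normality given by the total positivity in Lemma~\ref{lem:Jacobi-like-cyclic-Newton} where it still applies, or else fall back on a shifted-minor sign argument modeled on Proposition~\ref{prop:Jacobi-like-large-minor-obstruction} with the wrap-around pole \(b_1+1\) and zero \(a_2\) interchanged.

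\emph{Necessity of the alpha ordering.} I would copy the reverse inclusion in Theorem~\ref{thm:MP-PBF-region}. Evaluate \eqref{eq:Jacobi-like-lower-factor-explicit} at \(N=0\), where \(u=v=\ell=0\) and \(j=1\). The normalizers are then simple, and the entry becomes a positive multiple of \((\alpha_{k+1}-\alpha_k)\) times a factor of fixed sign, for \(1\le k\le p\). Positivity of all these entries forces every cyclic increment \(\Delta_k^\alpha\) to share one sign, and \(\sum_k\Delta_k^\alpha=1\) makes that sign positive. This recovers \(\alpha_1<\cdots<\alpha_p<\alpha_1+1\).
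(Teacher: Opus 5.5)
Your overall route is the paper's: sufficiency from Theorem~\ref{thm:Jacobi-like-PBF-region}, necessity of \(a_2<b_1+1\) from a sign flip in the left normalizers \eqref{eq:Jacobi-like-left-normalizer} inserted into \eqref{eq:Jacobi-like-lower-factor-explicit}, and the alpha ordering from the \(N=0\) lower entries. Your \(N=0\) computation is right: the entries equal \((b_2-b_1)(\alpha_{k+1}-\alpha_k)/\bigl((\alpha_k+b_2+1)(\alpha_{k+1}+b_2+1)\bigr)\). Do this step first, because it involves no \(a\)-parameter and the wrap argument needs the last factor of \eqref{eq:Jacobi-like-lower-factor-explicit} to be positive.

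The decisive step, however, fails as you state it. Every normalizer with \(j=2\) is positive, because \(a_1,a_2<b_2\). For \(N=2v\), the symbol \((a_2-b_1-v)_v\) has exactly \(\min\{v,\lfloor a_2-b_1\rfloor\}\) positive factors when \(a_2-b_1\notin\mathbb Z\). Hence the lower entries adjacent to \(N=2v\) are negative exactly when this count is odd. A sign change inside the range \(t<v\) is therefore not a parity flip. For large \(v\) the count equals \(\lfloor a_2-b_1\rfloor\), which can be even (e.g. \(a_1=0\), \(b_1=\tfrac12\), \(a_2=3\), \(b_2=4\)); then all sufficiently late lower entries are positive, and large \(v\) gives no contradiction. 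You must take \(v=1\), the first normalizer after a complete two-step cycle, exactly as the paper does. There the symbol is the single factor \(a_2-b_1-1>0\), so \(g_2^{\mathrm J,[\cdot]}\) has the same positive sign as \(g_1^{\mathrm J,[\cdot]}\) and \(g_3^{\mathrm J,[\cdot]}\); shifts do not affect these signs. This forces \((L_k^{\mathrm J})_{2,1}<0\) and \((L_k^{\mathrm J})_{3,2}<0\).

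Two further points need repair. First, you cannot discard the boundary \(a_2=b_1+1\) as a cancellation stratum. The paper's generic component only requires \(a_i<b_i\), i.e.\ no Gamma-factor cancellation, and \(b_1+1\) is not a \(b\)-parameter. The same computation excludes it, since then \(g_2^{\mathrm J,[k]}=0\). More directly, \(F_{1,1}=\frac{a_1-b_1}{b_2-b_1}F_{0,1}+\frac{b_2-a_1}{b_2-b_1}F_{0,2}\), so the leading \(3\times3\) minor of \(\mathscr M^{\mathrm J}\) vanishes and no Gauss--Borel factorization exists.

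Second, Lemma~\ref{lem:Jacobi-like-cyclic-Newton} cannot supply normality, because its hypothesis \(a_q<b_1+1\) is exactly what is being negated. Normality is not needed anyway. If the prescribed PBF exists, the required Gauss--Borel factorizations exist by assumption. After clearing denominators, \eqref{eq:Jacobi-like-left-normalizer}--\eqref{eq:Jacobi-like-lower-factor-explicit} are identities between functions analytic in the parameters on the convex integrability domain. They hold on the open PBF chamber, so by analytic continuation they hold at every normal point, which is all the sign argument requires.
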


\begin{proof}
The finite connection is
\[
C=
\begin{bNiceMatrix}
1&0\\[1mm]
\dfrac{b_1-a_1}{b_2-a_1}&
\dfrac{b_2-b_1}{b_2-a_1}
\end{bNiceMatrix},
\]
so its nonzero minors are positive because
\(a_1<b_1<b_2\). The two Newton ratios are
\[
\frac{z+d+a_1}{z+d+b_2},
\qquad
\frac{z+d+a_2}{z+d+b_1+1}.
\]
The second ratio gives the only additional separation condition,
\(a_2<b_1+1\). It is also necessary: when it fails, the first
normalizer after a complete two-step cycle has the wrong sign and the
corresponding lower bidiagonal entry is nonpositive. The lower
Christoffel numerators similarly force the cyclic alpha ordering.
\end{proof}

\begin{corollary}[The case \(q=3\)]
\label{cor:Jacobi-like-q3}
For \(q=3\), the generic ordered PBF chamber is
\begin{equation}
\label{eq:Jacobi-like-q3-region}
a_2<b_1,\qquad a_3<b_1+1,
\end{equation}
together with the cyclic alpha ordering. The first inequality may be
replaced by \(a_2\le b_1\) when its boundary is included. This is the
only open PBF chamber in the generic ordered component.
\end{corollary}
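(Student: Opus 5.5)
The plan is to prove sufficiency by specializing Theorem~\ref{thm:Jacobi-like-PBF-region}, and necessity by showing that every other open chamber of the generic ordered component violates either a sign condition of the factors or a connection minor controlled by Proposition~\ref{prop:Jacobi-like-large-minor-obstruction}. Throughout, the generic ordered component means \(a_1<a_2<a_3\), \(b_1<b_2<b_3\), \(a_h<b_h\), and no cancellations, so \(r=0\), \(m=3\) and \(\widehat a_h=a_h\).

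For sufficiency, condition \eqref{eq:Jacobi-like-PBF-region} reads \(a_3<b_1+1\) and \(\widehat a_2=a_2\le b_1\). With the cyclic alpha ordering, this is exactly \eqref{eq:Jacobi-like-q3-region} together with its boundary. Theorem~\ref{thm:Jacobi-like-PBF-region} then gives the prescribed PBF, so no further work is needed in this direction.

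For necessity, I would treat the three defining inequalities separately.

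\emph{Alpha ordering.} As in Corollary~\ref{cor:Jacobi-like-q2}, the lower factor entries \eqref{eq:Jacobi-like-lower-factor-explicit} at the first indices, where \(u=v=0\), carry the factor \(\alpha_{k+\ell+1}-\alpha_k\). The normalizer quotient there has a fixed sign that does not depend on the alpha ordering. Positivity for all \(k\) therefore forces all cyclic alpha increments to share a sign, and since they sum to \(1\), all of them are positive. This mirrors the reverse-inclusion argument in Theorem~\ref{thm:MP-PBF-region}.

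\emph{The inequality \(a_3<b_1+1\).} If this fails, the sign count in \eqref{eq:Jacobi-like-left-normalizer} changes after one complete cycle. Concretely, the factor \((a_3-b_j-v)_v\) acquires a nonnegative last factor once \(v\ge1\), so consecutive normalizers no longer alternate in sign and some lower subdiagonal entry is nonpositive. This is the same mechanism as in the case \(q=2\).

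\emph{The inequality \(a_2\le b_1\).} Assume \(a_3<b_1+1\) but \(a_2>b_1\), and apply Proposition~\ref{prop:Jacobi-like-large-minor-obstruction} with \(s=1\). Formula \eqref{eq:Jacobi-like-corner-minor} gives
\[
E_1=\frac{(b_1-a_1)(b_2-a_2)(b_1-a_2)(b_3-b_2)}{\prod_{\ell=2}^{3}\prod_{h=1}^{2}(b_\ell-a_h)} .
\]
Every factor except \(b_1-a_2\) is positive in the generic component: \(b_\ell>b_1\ge\) and \(b_\ell\ge b_2>a_2>a_1\) for \(\ell\ge2\). Hence \(E_1<0\), and the proposition shows that the prescribed PBF does not exist.

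Finally, I would check maximality. The open set cut out by the two strict inequalities and the cyclic alpha ordering is connected, and its complement within the component has been covered by the three failures above. It remains to confirm that the equality strata \(a_2=b_1\) or \(a_3=b_1+1\) are lower-dimensional, with the first one handled by the removable-boundary remark of Theorem~\ref{thm:Jacobi-like-PBF-region}.

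The main obstacle I expect is the case \(a_3\ge b_1+1\). Proposition~\ref{prop:Jacobi-like-large-minor-obstruction} assumes \(a_3<b_1+1\), so it cannot be used there. The argument must instead rest on the explicit normalizer sign, and that formula was derived inside the region. I would first verify that \eqref{eq:Jacobi-like-left-normalizer} remains valid as a rational identity outside the region wherever the relevant minors are nonzero. If it does not, I would fall back on showing that a leading collocation minor changes sign by a direct Cauchy--Binet computation analogous to \eqref{eq:Jacobi-like-cyclic-Newton-solid-minor}, using the factor \(B_{3}-A_{3}=b_1+1-a_3\).
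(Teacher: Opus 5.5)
Your outline is the paper's own proof. Sufficiency comes from Theorem~\ref{thm:Jacobi-like-PBF-region}; the case \(a_2>b_1\) is excluded through \(E_1<0\) and Proposition~\ref{prop:Jacobi-like-large-minor-obstruction} when \(a_3<b_1+1\); and the wrap inequality is taken from the lower normalizers ``as for \(q=2\)''. The first two steps are fine. The third also works when \(a_2<b_1+1<a_3\): then only \(\lambda=3\) flips at \(N=3\), so \(g_3\) and \(g_2\) have the same sign and \((L_k^{\mathrm J})_{3,2}<0\).

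The third step fails, however, when both \(a_2\) and \(a_3\) pass \(b_1+1\), because the two sign changes cancel. This is a different problem from the one you anticipated. Consider the open subchamber
\[
b_1+1<a_2<a_3<\min\{b_1+2,\,b_2+1\}
\]
of the generic ordered component, for example \(\boldsymbol a=(-\tfrac12,\tfrac65,\tfrac32)\) and \(\boldsymbol b=(0,\tfrac{13}{10},\tfrac85)\).
\begin{itemize}
\item For \(j=1\) and \(v\ge1\), each of \((a_2-b_1-v)_v\) and \((a_3-b_1-v)_v\) has exactly one positive factor.
\item For \(j=2,3\), every factor of the two numerator products in \eqref{eq:Jacobi-like-left-normalizer} is negative.
\end{itemize}
Hence \(\operatorname{sgn}g_N^{\mathrm J,[k]}=(-1)^{N+1}\) for all \(N,k\), and every entry \eqref{eq:Jacobi-like-lower-factor-explicit} is positive even though \(a_3>b_1+1\). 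Nor can the \(E_1\) argument be stretched to this region. Its hypothesis fails, and in addition, in the example with \(p=1\), \(\alpha_1=0\), the Newton minor multiplying \(E_1\) in \eqref{eq:Jacobi-like-shifted-CB} is already negative for \(N=3\), so \(E_1\) times that minor is positive. Your fallback fails as well. Here \(B_3-A_2=b_1+1-a_2\) is negative together with \(B_3-A_3\), so every wrap row of \eqref{eq:Jacobi-like-cyclic-Newton-solid-minor} contributes two negative factors. Consequently all leading principal minors of \(\mathscr M^{\mathrm J}\) stay positive there.

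So the complement of \eqref{eq:Jacobi-like-q3-region} is not covered by your three failure mechanisms, and your maximality paragraph does not go through. In this subchamber the obstruction appears in the upper factors at finite order. Put \(D_N^{(s)}=\det\mathscr M^{\mathrm J}[\{s,\ldots,s+N-1\},\{0,\ldots,N-1\}]\). In the example with \(p=1\) and \(\alpha_1=0\), a direct computation gives \(D_2^{(0)},D_3^{(0)},D_2^{(1)}>0\) but \(D_3^{(1)}<0\). Then \eqref{eq:Jacobi-like-upper-factor-minor-cross-ratio} gives \((U_1^{\mathrm J})_{2,2}<0\). A general argument of this finite-order type, or some other obstruction valid when \(a_3\ge b_1+1\), has to be supplied. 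The paper's one-line appeal to Corollary~\ref{cor:Jacobi-like-q2} for the wrap inequality has the same weakness, since for \(q=2\) only one \(a\)-parameter can cross \(b_1+1\).
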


\begin{proof}
Theorem~\ref{thm:Jacobi-like-PBF-region} proves sufficiency. In this
case the only potentially negative non-forced minor of \(C\) is
\[
\det C[\{1,2\},\{0,1\}]
=
\frac{
(b_3-b_2)(a_2-b_1)(a_1-b_1)
}{
(b_2-a_1)(a_2-b_3)(a_1-b_3)
}.
\]
It is nonnegative exactly when \(a_2\le b_1\). If
\(a_2>b_1\), the coefficient \(E_1\) in
\eqref{eq:Jacobi-like-corner-minor} is negative, so
Proposition~\ref{prop:Jacobi-like-large-minor-obstruction} excludes that
open chamber. The strict wrap inequality \(a_3<b_1+1\) is necessary
for the lower factors, as in the proof of
Corollary~\ref{cor:Jacobi-like-q2}.
\end{proof}

\begin{corollary}[The case \(q=4\)]
\label{cor:Jacobi-like-q4}
For \(q=4\), the generic ordered PBF chamber is
\begin{equation}
\label{eq:Jacobi-like-q4-region}
a_3<b_1,\qquad a_4<b_1+1,
\end{equation}
together with the cyclic alpha ordering. The boundary \(a_3=b_1\) is
included, and this is the only open PBF chamber in the generic ordered
component.
\end{corollary}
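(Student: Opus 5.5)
Following the proofs of Corollaries~\ref{cor:Jacobi-like-q2} and~\ref{cor:Jacobi-like-q3}, I will prove sufficiency from Theorem~\ref{thm:Jacobi-like-PBF-region} and exclude every other open chamber with Proposition~\ref{prop:Jacobi-like-large-minor-obstruction}.

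\emph{Sufficiency.} In the generic component there are no cancellations, so $r=0$, $m=q=4$, and condition \eqref{eq:Jacobi-like-PBF-region} reads $a_3\le b_1$ together with $a_4<b_1+1$. Together with the cyclic alpha ordering, Theorem~\ref{thm:Jacobi-like-PBF-region} gives the prescribed PBF on \eqref{eq:Jacobi-like-q4-region}, and it also covers the boundary $a_3=b_1$. Necessity of the wrap inequality $a_4<b_1+1$ and of the alpha ordering comes from the lower-factor sign argument used for $q=2$. If $a_4\ge b_1+1$, the normalizers \eqref{eq:Jacobi-like-left-normalizer} no longer alternate in sign after a full cycle, so some subdiagonal entry \eqref{eq:Jacobi-like-lower-factor-explicit} is nonpositive.

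\emph{Necessity of $a_3\le b_1$.} Assume $a_4<b_1+1$ and $a_3>b_1$. I will list the possible positions of $b_1$ relative to $a_1<a_2<a_3$; genericity gives $a_i<b_i$, and in particular $a_1<b_1$. This leaves two open chambers.
\begin{itemize}
\item \emph{Chamber $a_1<a_2<b_1<a_3$.} In \eqref{eq:Jacobi-like-corner-minor} with $s=1$, the only sign-sensitive factors are $b_i-a_j$ with $1\le i<j\le 3$. Of these, $b_1-a_3<0$ is the single negative one: $b_1-a_2>0$ in this chamber, and $b_2-a_3$ is positive here. Hence $E_1<0$.
\item \emph{Chamber $a_1<b_1<a_2$.} Now $b_1-a_2<0$ and $b_1-a_3<0$. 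The sign of $E_1$ then depends on $b_2-a_3$, so I will also compute $E_2$, whose product over $i<j\le 2$ contains only $b_1-a_2<0$. Thus $E_2<0$ throughout this chamber.
\end{itemize}
The denominators are products of $b_\ell-a_h$ with $\ell>h$, which are positive since $b_\ell>b_h>a_h$. Every other factor is positive by the ordering. In each open chamber with $a_3>b_1$, Proposition~\ref{prop:Jacobi-like-large-minor-obstruction} then produces a negative shifted leading minor, so no PBF exists there.

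The main obstacle is the sign bookkeeping in $E_1$ and $E_2$ across the subchambers. In particular I must check that for each ordering one $s\in\{1,2,3\}$ gives a strictly negative $E_s$ on the whole open set. If $E_1$ changes sign inside the chamber $a_1<b_1<a_2$, for instance through $b_2-a_3$, I would use $E_2$ or $E_3$ on that piece instead. If neither is negative there, I would evaluate the non-forced minors of $C$ directly, as was done for $q=3$.
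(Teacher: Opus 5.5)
Your route is the paper's: sufficiency, including $a_3=b_1$, from Theorem~\ref{thm:Jacobi-like-PBF-region}; exclusion of $a_2<b_1<a_3$ by $E_1<0$ and of $b_1<a_2$ by $E_2<0$ via Proposition~\ref{prop:Jacobi-like-large-minor-obstruction}; and the wrap inequality from the lower factors. Your $E_2$ computation is correct. The sign argument for $E_1$, however, rests on two false claims.

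\emph{The two false claims.} First, in the chamber $a_2<b_1<a_3$ the factor $b_2-a_3$ need not be positive, since $b_2<a_3<b_3$ is allowed. Second, for $s=1$ the denominator of \eqref{eq:Jacobi-like-corner-minor} is not made only of factors $b_\ell-a_h$ with $\ell>h$: it also contains $b_2-a_2$, $b_3-a_3$ and $b_2-a_3$. These three factors also occur in the numerator. Cancel them first, as the formula prescribes, to get
\[
E_1=\frac{(b_1-a_1)(b_1-a_2)(b_1-a_3)(b_3-b_2)(b_4-b_2)(b_4-b_3)}{(b_2-a_1)(b_3-a_1)(b_3-a_2)(b_4-a_1)(b_4-a_2)(b_4-a_3)}.
\]
Only now is every denominator factor of the form $b_\ell-a_h$ with $\ell>h$, hence positive.

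\emph{Consequences.} You get $\operatorname{sgn}E_1=\operatorname{sgn}\bigl((b_1-a_2)(b_1-a_3)\bigr)$. So $E_1<0$ on the whole chamber $a_2<b_1<a_3$, whatever the position of $a_3$ relative to $b_2$. In the chamber $b_1<a_2$, $E_1>0$ outright; it does not depend on $b_2-a_3$, and this is exactly why $E_2$ is needed there. As you wrote it, the two errors happen to offset each other. On the subchamber $b_2<a_3$, your own count (numerator factors $b_1-a_3<0$ and $b_2-a_3<0$ over a supposedly positive denominator) would wrongly give $E_1>0$. With this correction, the contingency in your last paragraph is unnecessary.

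\emph{Minor point on the boundary.} At $a_3=b_1$ the value $b_1$ does occur in the $a$-list, so $r\ge1$ rather than $r=0$. For $r=1$ the theorem uses the reduced list $(a_1,a_2,a_4)$ and the condition $a_2\le b_2$, which holds automatically. The boundary is therefore still covered.
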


\begin{proof}
Sufficiency, including the boundary, follows from
Theorem~\ref{thm:Jacobi-like-PBF-region}. Suppose first that
\(a_2<b_1<a_3\). Directly from
\eqref{eq:Jacobi-like-corner-minor}, \(E_1<0\), so a shifted leading
minor is eventually negative. If \(b_1<a_2<a_3\), then \(E_2<0\) and
the same conclusion follows. These are all open orderings outside
\(a_3<b_1\). Proposition
\ref{prop:Jacobi-like-large-minor-obstruction} therefore excludes every
other open chamber, while \(a_4<b_1+1\) remains necessary for the lower
factors.
\end{proof}

\begin{remark}[Lower-dimensional strata for \(q=4\)]
\label{rem:Jacobi-like-q4-strata}
The preceding corollary classifies open chambers, not every equality
stratum. For example, when
\[
a_2=b_1<a_3,
\]
Proposition~\ref{prop:Jacobi-like-C-classification} cancels \(b_1\)
after one positive adjacent interchange and gives a PBF whenever
\[
a_3\le b_2,\qquad a_4<b_1+1.
\]
Complete and partial direct cancellations are covered by the same
proposition.
\end{remark}

\begin{remark}[What is and is not maximal]
\label{rem:Jacobi-like-PBF-scope}
For \(q\in\{2,3,4\}\), the preceding results give the maximal open chamber in
the generic ordered component. For arbitrary \(q\), Theorem
\ref{thm:Jacobi-like-PBF-region} gives the open chamber
\[
a_{q-1}<b_1,\qquad a_q<b_1+1,
\]
its weak first boundary, and the leading-cancellation strata described
by Proposition~\ref{prop:Jacobi-like-C-classification}. It does not
claim a classification of every lower-dimensional stratum or every
other parameter ordering for arbitrary \(q\).
\end{remark}

\begin{corollary}[Pi\~neiro as complete cancellation]
\label{cor:Jacobi-like-to-Pineiro}
If \(\boldsymbol a=\boldsymbol b=\boldsymbol\beta\), then
\[
w_j^{\mathrm J}(x)=x^{\beta_j},
\qquad
\dd\boldsymbol\mu^{\mathrm J}(x)
=
\left[x^{\alpha_i+\beta_j}\right]_{j,i}\dd x.
\]
The connection matrix in
\eqref{eq:Jacobi-like-F-G-connection} is the identity, and
Theorem~\ref{thm:Jacobi-like-PBF-region} becomes
\[
\alpha_1<\cdots<\alpha_p<\alpha_1+1,\qquad
\beta_1<\cdots<\beta_q<\beta_1+1.
\]
Thus the mixed Pi\~neiro PBF region is the complete-cancellation
specialization of the Jacobi-like result.
\end{corollary}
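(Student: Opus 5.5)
The plan is to verify the three assertions in turn by direct substitution of \(\boldsymbol a=\boldsymbol b=\boldsymbol\beta\) into the Jacobi-like definitions. Then the region of Theorem~\ref{thm:Jacobi-like-PBF-region} is compared with \(\mathcal R^{\mathrm P,+}_{p,q}\) from Theorem~\ref{thm:MP-PBF-region}.

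\emph{Weights and moment matrix.} With \(a_h=b_h\) every Gamma quotient in \eqref{eq:Jacobi-like-weight-Mellin} cancels, so \(R(s)\equiv1\). The Mellin transform of \(w_j^{\mathrm J}\) is therefore \(1/(s+\beta_j)=\int_0^1x^{s-1}x^{\beta_j}\,\dd x\). Uniqueness of the Mellin transform then gives \(w_j^{\mathrm J}(x)=x^{\beta_j}\) and the stated matrix of measures. This is consistent with the paper's convention that canceled Gamma factors are removed before inversion.

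Next, insert \(R\equiv1\) into \eqref{eq:Jacobi-like-step-line-moment}. For \(N=qu+j-1\) and \(M=pv+i-1\) this gives
\[
\mathscr M^{\mathrm J}_{N,M}=\frac{1}{u+v+\alpha_i+1+\beta_j}=\frac{1}{\xi_N+\eta_M},
\]
which is the Cauchy matrix \eqref{eq:MP-step-line-Cauchy-moment-matrix}. Both systems are produced by the same Gauss--Borel factorization. Hence the mixed forms coincide, and \(T^{\mathrm J}=T^{\mathrm P}\) by \eqref{eq:Jacobi-like-recurrence} and \eqref{eq:MP-Cauchy-direct-recurrence-matrix}.

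\emph{Connection matrix.} The cleanest route is to compare the functions directly rather than evaluate \(c_{n,k}\). Here \(F_{d,n+1}(z)=1/(z+d+\beta_{n+1})\). In \eqref{eq:Jacobi-like-G-Newton} each zero \(z+d+a_\ell\) cancels the preceding pole, so \(G_{d,k}(z)=1/(z+d+\beta_{k+1})=F_{d,k+1}(z)\), and therefore \(C=I\).

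As a check on the removable-equality convention, consider \(c_{n,k}\) in \eqref{eq:Jacobi-like-F-G-connection}. For \(k<n\), the factor \(b_{k+1}-a_{k+1}\) vanishes. The denominator stays nonzero, since its factors are \(\beta_{n+1}-\beta_\ell\ne0\) for \(\ell\le k+1\le n\). For \(k=n\), the vanishing numerator factor cancels the denominator factor \(b_{n+1}-a_{n+1}\). The remaining products \(\prod_{\ell\le n}(\beta_{n+1}-\beta_\ell)/(\beta_{n+1}-\beta_\ell)\) equal \(1\).

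\emph{The region.} Every \(b_\ell\) occurs in the \(a\)-list, so the integer \(r\) of Theorem~\ref{thm:Jacobi-like-PBF-region} equals \(q\) and \(m=0\). The second condition in \eqref{eq:Jacobi-like-PBF-region} is therefore vacuous, and the first reads \(\beta_q<\beta_1+1\). The standing hypotheses \eqref{eq:Jacobi-like-basic-order} supply \(\alpha_1<\cdots<\alpha_p<\alpha_1+1\) and \(\beta_1<\cdots<\beta_q\). The integrability conditions \(a_1>-1\) and \(\alpha_i+a_h>-1\) become \(\beta_1>-1\) and \(\alpha_i+\beta_j>-1\), which match the remaining inequalities of \eqref{eq:MP-PBF-region}. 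Thus the region is exactly \(\mathcal R^{\mathrm P,+}_{p,q}\), in agreement with Theorem~\ref{thm:MP-PBF-region}. The case \(m=0\) in the proof of Theorem~\ref{thm:Jacobi-like-PBF-region} already refers to the Cauchy problem, so nothing further is needed.

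The only delicate point is bookkeeping rather than substance. One must interpret the Jacobi-like formulas, such as \(c_{n,k}\) and the Newton sequence, after cancellation instead of evaluating singular expressions. One must also note that the section assumes \(q\ge2\). For \(q=1\) the corollary is read simply as the Pi\~neiro statement itself.
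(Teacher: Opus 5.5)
Your argument is correct and follows the same route as the paper. The paper gives no separate proof: it relies on direct substitution, together with the remark in the \(m=0\) case of the proof of Theorem~\ref{thm:Jacobi-like-PBF-region} that \(C=I\) and the collocation problem becomes the Cauchy one. Your telescoping \(G_{d,k}(z)=1/(z+d+\beta_{k+1})=F_{d,k+1}(z)\) and your identification \(\mathscr M^{\mathrm J}_{N,M}=1/(\xi_N+\eta_M)\) make this explicit.

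One bookkeeping claim needs correcting. The Jacobi-like integrability hypotheses specialize to \(\beta_1>-1\) and \(\alpha_i+\beta_j>-1\), but they do not produce the condition \(-1<\alpha_1\) in \eqref{eq:MP-PBF-region}. For instance, \(\alpha_1=-\tfrac{11}{10}\), \(\beta_1=\tfrac12\) is allowed by the Jacobi-like hypotheses. So the specialized region contains \(\mathcal R^{\mathrm P,+}_{p,q}\), and coincides with it only inside the admissible domain \(\mathcal A^{\mathrm P}_{p,q}\) of Definition~\ref{def:MP-admissible-domain}, where \(\alpha_i>-1\) is imposed by convention. Your assertion that the region is \emph{exactly} \(\mathcal R^{\mathrm P,+}_{p,q}\) should be qualified in this way. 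The corollary itself only asserts the two cyclic orderings, so nothing else in your proof is affected.
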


The compact support of the weights does not by itself control the
canonical mixed Gauss--Borel normalization.  The statement below concerns
the canonical mixed recurrence \(T^{\mathrm J}\), not the ordinary monic
type-II Hessenberg recurrence.  In the strict generic PBF chamber,
boundedness is governed instead by the cyclic spacings of the row
parameters.

\begin{proposition}[Boundedness in the generic Jacobi-like PBF chamber]
\label{prop:Jacobi-like-boundedness-criterion}
Assume \eqref{eq:Jacobi-like-basic-order} and the integrability conditions
above, as well as the strict PBF inequalities
\[
a_{q-1}<b_1,\qquad a_q<b_1+1,
\]
and suppose that no \(a\)-parameter coincides with a \(b\)-parameter.
Put \(b_{q+1}=b_1+1\) and
\begin{equation}
\label{eq:Jacobi-like-cyclic-b-spacings}
\vartheta_j^{\mathrm J}
\coloneq b_{j+1}-b_j,
\qquad 1\le j\le q.
\end{equation}
For \(b\in\{1,\ldots,q\}\), let
\(\rho(N,b)\in\{1,\ldots,q\}\) be determined by
\(\rho(N,b)\equiv N+b\pmod q\).  Along every fixed residue class
\(N\equiv r\pmod{\operatorname{lcm}(p,q)}\),
\begin{align}
\label{eq:Jacobi-like-upper-factor-asymptotic}
(U_b^{\mathrm J})_{N,N}
&=
C_{b,r}^{\mathrm J}
N^{q\vartheta_{\rho(N,b)}^{\mathrm J}-1}
\bigl(1+\mathrm o(1)\bigr),
&
C_{b,r}^{\mathrm J}&>0,
\\
\label{eq:Jacobi-like-lower-factor-asymptotic}
(L_k^{\mathrm J})_{N+1,N}
&=
\widetilde C_{k,r}^{\mathrm J}
N^{q\vartheta_{\rho(N,1)}^{\mathrm J}-1}
\bigl(1+\mathrm O(N^{-1})\bigr),
&
\widetilde C_{k,r}^{\mathrm J}&>0
\end{align}
for \(1\le b\le q\) and \(1\le k\le p\).  Consequently,
\begin{equation}
\label{eq:Jacobi-like-boundedness-equivalence}
T^{\mathrm J}\text{ is bounded on }\ell^\sigma(\mathbb N_0)
\text{ for every }1\le\sigma\le\infty
\quad\Longleftrightarrow\quad
\vartheta_1^{\mathrm J}=\cdots=\vartheta_q^{\mathrm J}=\frac1q.
\end{equation}
\end{proposition}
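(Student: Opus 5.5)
The plan follows the proof of Proposition~\ref{prop:MP-boundedness-criterion}. The lower factors are explicit, so their asymptotics come first and the upper factors are then read off from Gauss--Borel pivot quotients.

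For the lower factors, use \eqref{eq:Jacobi-like-lower-factor-explicit} together with the normalizer \eqref{eq:Jacobi-like-left-normalizer}. Fix a residue class modulo \(\operatorname{lcm}(p,q)\); along it, \(u\), \(v\), \(\ell\), and \(j\) are affine or constant in \(N\). Rewrite each Pochhammer symbol as a Gamma quotient and apply \(\Gamma(x+c)/\Gamma(x+d)=x^{c-d}(1+\mathrm O(x^{-1}))\).
\begin{itemize}
\item The \(a\)-Pochhammer products \((a_\lambda-b_j-v)_v=(-1)^v\Gamma(1+b_j+v-a_\lambda)/\Gamma(1+b_j-a_\lambda)\) compare the pair \((N,j)\) with \((N+1,j')\). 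Their quotient contributes \(N^{q(b_{j'}+v'-b_j-v)}=N^{q\vartheta^{\mathrm J}_{j}}\), where \(j=\rho(N,1)\) up to the indexing convention.
\item The \(\alpha\)-denominators, combined with \(\sum_i\alpha_i^{[k]}=\sum_i\alpha_i^{[k-1]}+1\) and the unit increase in \(|d_{N,i}|\), give the net factor \(N^{-1}\). This is the same cancellation as in the Pi\~neiro proof.
\item The final linear factor has a positive finite limit.
\end{itemize}
Positivity of the constant \(\widetilde C^{\mathrm J}_{k,r}\) then follows from Theorem~\ref{thm:Jacobi-like-PBF-region}. This yields \eqref{eq:Jacobi-like-lower-factor-asymptotic} with an \(\mathrm O(N^{-1})\) error.

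For the upper factors, the proof of Theorem~\ref{thm:Jacobi-like-PBF-region} shows that \((U_b^{\mathrm J})_{N,N}\) is a ratio \(\kappa^{(b)}_N/\kappa^{(b-1)}_N\) of consecutive quotients of shifted leading minors of \(\mathcal F=\operatorname{diag}(C,C,\ldots)\mathcal G\). Expand each shifted minor by Cauchy--Binet as in \eqref{eq:Jacobi-like-shifted-CB}. The \(\mathcal G\)-minors are Newton collocation determinants, and by \eqref{eq:Jacobi-like-cyclic-Newton-solid-minor} these are explicit Cauchy--Vandermonde products. In the strict chamber, Proposition~\ref{prop:Jacobi-like-large-minor-obstruction} and \eqref{eq:Jacobi-like-Newton-minor-ratio} show that a single term dominates each expansion. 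That term omits the largest \(a\)-labels, its coefficient is a positive minor of \(C\), and the remaining terms are smaller by \(N^{-\delta_a}\). This relative error is why only \(1+\mathrm o(1)\) is claimed for the upper factors.

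Each dominant term is a product of Gamma ratios indexed by nodes and poles. In the quotient \(\kappa^{(b)}_N/\kappa^{(b-1)}_N\) everything cancels except a single Newton pole shift. That shift moves the pole parameter from \(b_\rho\) to \(b_{\rho+1}\) with \(\rho=\rho(N,b)\), and over roughly \(N/q\) cycles this produces \(N^{q\vartheta^{\mathrm J}_\rho-1}\). The \(-1\) comes from the extra node factor, exactly as in the Pi\~neiro upper formula \eqref{eq:MP-upper-Pochhammer-simplified}, which is the complete-cancellation case. The \(a\)-dependence cancels between numerator and denominator because both use the same dominant label set. Carrying out this bookkeeping carefully, in particular checking that the dominant label sets for shifts \(b\) and \(b-1\) really coincide, is the main obstacle. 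As a consistency check, the exponent must reduce to \eqref{eq:MP-upper-factor-asymptotic} when \(\boldsymbol a\to\boldsymbol b\).

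The equivalence \eqref{eq:Jacobi-like-boundedness-equivalence} then follows word for word as in Proposition~\ref{prop:MP-boundedness-criterion}, using \(\sum_j\vartheta^{\mathrm J}_j=1\).
\begin{itemize}
\item If every \(\vartheta^{\mathrm J}_j=1/q\), all factor entries are bounded. A finite product of band matrices with bounded entries is bounded on every \(\ell^\sigma\).
\item Otherwise some \(\vartheta^{\mathrm J}_j>1/q\), so some upper diagonal entry grows without bound. Because all factors are nonnegative with unit off-diagonal entries, this entry appears with positive coefficient in the \((q-1)\)-st superdiagonal of \(T^{\mathrm J}\). Hence \(T^{\mathrm J}\) has unbounded entries and is unbounded on every \(\ell^\sigma\).
\end{itemize}
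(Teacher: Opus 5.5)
\textbf{Verdict: genuine gap in the upper-factor asymptotic.}

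Your lower-factor computation agrees with the paper's. The $a$-Pochhammer quotient gives $N^{q\vartheta^{\mathrm J}_j}$ with $j=\rho(N,1)$, and the shifted $\alpha$-Pochhammer quotient gives $N^{-1}$. The deduction of the equivalence from $\sum_j\vartheta^{\mathrm J}_j=1$ is also fine, once \eqref{eq:Jacobi-like-upper-factor-asymptotic} is available. For the direction ``unequal spacings $\Rightarrow$ unbounded'', the paper uses the lower factors, via $0<(L_k^{\mathrm J})_{N+1,N}\le(T^{\mathrm J})_{N+1,N+q}$, so that direction rests only on the explicit formula. Your superdiagonal argument through the upper factors also works, but only after the upper asymptotic is proved. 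You cannot avoid proving it, because the ``$\Leftarrow$'' direction needs $\sup_N(U_b^{\mathrm J})_{N,N}<\infty$, and your sketch of that step does not hold up.

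\textbf{Where the upper-factor sketch fails.} Two steps fail as stated.
\begin{enumerate}
\item For $1\le s<q$, the dominant Cauchy--Binet term in \eqref{eq:Jacobi-like-shifted-CB} is the Newton minor with function indices $\{0,\ldots,N+s-1\}\setminus\{q-s,\ldots,q-1\}$, and this set has a gap. The product formula \eqref{eq:Jacobi-like-cyclic-Newton-solid-minor} covers only consecutive function indices. Gapped Newton minors are not products of linear factors in general. Already for $q=2$, $s=1$, $N=2$, the minor with rows $\Phi_0,\Phi_2$ leaves, after removing the Vandermonde factor and the denominators, a bilinear form in the two nodes that does not factor in general. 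So your claim that ``each dominant term is a product of Gamma ratios'' is unsupported.
\item The check you single out as the main obstacle is false. Shift $s$ omits exactly the $s$ largest labels $a_{q-s+1},\ldots,a_q$, so the dominant sets for $b$ and $b-1$ never coincide. Nor do they need to.
\end{enumerate}

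\textbf{The missing idea.} Compare the dominant gapped minor $\mathcal M_{s,N}$ with the solid minor $S_{s,N}\coloneq\det[\Phi_{s+i}(\zeta_j)]_{i,j=0}^{N-1}$ by means of \eqref{eq:Jacobi-like-Newton-minor-ratio}. This gives $\mathcal M_{s,N}/S_{s,N}=\kappa_{s,r}N^{\lambda_s}\bigl(1+\mathrm O(N^{-\delta_a})\bigr)$, with $\lambda_s$ independent of $N$.
\begin{itemize}
\item $(U_b^{\mathrm J})_{N,N}$ involves only the same-shift ratios $D^{(s)}_{N+1}/D^{(s)}_N$. In these the powers contribute just $(1+1/N)^{\lambda_s}$ and residue-class constants.
\item Hence $(U_b^{\mathrm J})_{N,N}$ reduces, up to a positive constant and $1+\mathrm o(1)$, to the solid cross-ratio $S_{b-1,N}S_{b,N+1}/\bigl(S_{b-1,N+1}S_{b,N}\bigr)$. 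The solid formula evaluates this in closed form.
\item In that evaluation, with $n=b-1$, $m=n+N$, and $A_m,B_m$ the zero and pole sequences from the proof of Lemma~\ref{lem:Jacobi-like-cyclic-Newton}, the node products tend to positive constants.
\item The factor $N^{q\theta}$ comes from $\prod_h\bigl(1+\theta/(B_m-A_{n+h})\bigr)$ and the $N^{-1}$ from the prefactor $1/(B_m-A_{n+1})$, where $\theta=B_{m+1}-B_m=\vartheta^{\mathrm J}_{\rho(N,b)}$.
\end{itemize}
In particular, the $-1$ in the exponent does not come from an ``extra node factor'', as your sketch suggests.
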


\begin{proof}
For \(0\le s\le q\), extend
\eqref{eq:Jacobi-like-shifted-CB} by putting
\[
D_N^{(s)}
\coloneq
\det\mathcal F[
\{s,\ldots,s+N-1\},\{0,\ldots,N-1\}],
\qquad D_0^{(s)}\coloneq1.
\]
The upper Christoffel connection gives the exact cross-ratio
\begin{equation}
\label{eq:Jacobi-like-upper-factor-minor-cross-ratio}
(U_b^{\mathrm J})_{N,N}
=
\frac{D_N^{(b-1)}D_{N+1}^{(b)}}
{D_{N+1}^{(b-1)}D_N^{(b)}}.
\end{equation}

We first isolate the part of these minors which can contribute a power of
\(N\).  For \(1\le s<q\), the Cauchy--Binet expansion
\eqref{eq:Jacobi-like-shifted-CB} has a unique dominant Newton term
\(\mathcal M_{s,N}\): it is the term which omits the positions
\[
J_s^\star=\{q-s,\ldots,q-1\},
\]
corresponding to the largest \(s\) parameters
\(a_{q-s+1},\ldots,a_q\).  Its connection coefficient is \(E_s>0\).
Equations~\eqref{eq:Jacobi-like-node-ratio-asymptotic}--%
\eqref{eq:Jacobi-like-Newton-minor-ratio} give
\begin{equation}
\label{eq:Jacobi-like-dominant-shifted-minor}
D_N^{(s)}
=
\Gamma_{s,N}E_s\mathcal M_{s,N}
\left(1+\mathrm O(N^{-\eta_s})\right),
\qquad
\eta_s=(p+q)(a_{q-s+1}-a_{q-s})>0,
\end{equation}
where \(\Gamma_{s,N}>0\) is the common product of the complete-block
determinants and the final positive principal connection minor.

Compare \(\mathcal M_{s,N}\) with the solid Newton minor
\[
S_{s,N}
\coloneq
\det[\Phi_{s+i}(\zeta_j)]_{i,j=0}^{N-1}.
\]
Within the first \(N+s\) Newton functions, \(S_{s,N}\) omits
\(J_s^0=\{0,\ldots,s-1\}\).  Put \(d_s=\min\{s,q-s\}\).
Passing from \(J_s^0\) to \(J_s^\star\) requires the fixed replacements
\[
a_u\longmapsto a_{q-d_s+u},
\qquad 1\le u\le d_s.
\]
Put again
\(\delta_a=\min\{1,(p+q)\min_{1\le i<q}(a_{i+1}-a_i)\}\).
Applying \eqref{eq:Jacobi-like-Newton-minor-ratio} gives, on every fixed
residue class \(r\),
\begin{equation}
\label{eq:Jacobi-like-dominant-to-solid-ratio}
\frac{\mathcal M_{s,N}}{S_{s,N}}
=
\kappa_{s,r}N^{\lambda_s}
\bigl(1+\mathrm O(N^{-\delta_a})\bigr),
\qquad
\lambda_s
=(p+q)\sum_{u=1}^{d_s}
(a_{q-d_s+u}-a_u),
\end{equation}
with \(\kappa_{s,r}>0\).  For \(s=0\) and \(s=q\) there is a single
Newton term; define its positive connection factor by
\[
D_N^{(s)}=\Gamma_{s,N}S_{s,N},
\qquad \Gamma_{s,N}>0.
\]
For every \(0\le s\le q\), the adjacent quotients of
\(\Gamma_{s,N}\) are positive and periodic in \(N\bmod q\).
Put \(\lambda_0=\lambda_q=0\). Since, with \(n=b-1\),
\[
\frac{N^{\lambda_n}(N+1)^{\lambda_{n+1}}}
{(N+1)^{\lambda_n}N^{\lambda_{n+1}}}
=\left(1+\frac1N\right)^{\lambda_{n+1}-\lambda_n}
=1+\mathrm O(N^{-1}),
\]
equations \eqref{eq:Jacobi-like-dominant-shifted-minor} and
\eqref{eq:Jacobi-like-dominant-to-solid-ratio}, substituted into
\eqref{eq:Jacobi-like-upper-factor-minor-cross-ratio}, give
\[
X_{n,N}
\coloneq
\frac{S_{n,N}S_{n+1,N+1}}
{S_{n,N+1}S_{n+1,N}},
\qquad
(U_b^{\mathrm J})_{N,N}
=C_{b,r}^{\mathrm J}X_{n,N}\bigl(1+\mathrm o(1)\bigr),
\quad C_{b,r}^{\mathrm J}>0.
\]
Thus all connection and replacement factors affect only the positive
residue-class constant; the power of \(N\) comes from \(X_{n,N}\).

Apply the solid-minor formula
\eqref{eq:Jacobi-like-cyclic-Newton-solid-minor} to the four terms in
\(X_{n,N}\).  With the sequences \(A_m,B_m\) from the proof of
Lemma~\ref{lem:Jacobi-like-cyclic-Newton}, direct cancellation gives
\begin{equation}
\label{eq:Jacobi-like-solid-minor-cross-ratio}
\begin{aligned}
X_{n,N}
&=
\frac{\zeta_N+A_{n+1}}
{\zeta_N+B_{n+N+1}}
\prod_{j=0}^{N-1}
\frac{\zeta_j+B_{n+N}}
{\zeta_j+B_{n+N+1}}
\\
&\quad\times
\frac{
\prod_{h=2}^{N+1}(B_{n+N+1}-A_{n+h})
}{
\prod_{h=1}^{N}(B_{n+N}-A_{n+h})
}.
\end{aligned}
\end{equation}
Put \(m=n+N\) and
\(\theta=B_{m+1}-B_m\).  The last quotient in
\eqref{eq:Jacobi-like-solid-minor-cross-ratio} is
\[
\frac{B_{m+1}-A_{m+1}}{B_m-A_{n+1}}
\prod_{h=2}^{N}
\left(1+\frac{\theta}{B_m-A_{n+h}}\right).
\]
Since \(A_{j+q}=A_j+1\) and \(B_{j+q}=B_j+1\), grouping this product by
the \(q\) residue classes of \(h\) and using Gamma quotients yields
\[
c_{n,r}N^{q\theta-1}\bigl(1+\mathrm O(N^{-1})\bigr),
\qquad c_{n,r}>0.
\]
The first factor in
\eqref{eq:Jacobi-like-solid-minor-cross-ratio} has a finite positive
limit.  Grouping its middle product by the \(p\) residue classes of the
nodes \(\zeta_j\) shows in the same way that this product also has a
finite positive limit.  Finally,
\[
\theta=B_{N+b}-B_{N+b-1}
=\vartheta_{\rho(N,b)}^{\mathrm J}.
\]
This proves \eqref{eq:Jacobi-like-upper-factor-asymptotic}; the possibly
slower errors in \eqref{eq:Jacobi-like-dominant-shifted-minor} and
\eqref{eq:Jacobi-like-dominant-to-solid-ratio} are the reason for stating
the final remainder as \(\mathrm o(1)\).

For the lower factors, substitute
\(N=pu+\ell=qv+j-1\) in
\eqref{eq:Jacobi-like-left-normalizer} and
\eqref{eq:Jacobi-like-lower-factor-explicit}.  Rewriting
\[
(a_\lambda-b_j-v)_v
=(-1)^v
\frac{\Gamma(v+1+b_j-a_\lambda)}
{\Gamma(1+b_j-a_\lambda)}
\]
shows that the \(q\) numerator quotients contribute
\(N^{q\vartheta_j^{\mathrm J}}\).  The shifted alpha Pochhammer
quotient together with the final rational factor contributes \(N^{-1}\);
all remaining terms tend to positive constants on the fixed residue
class.  This proves \eqref{eq:Jacobi-like-lower-factor-asymptotic}.

The cyclic spacings sum to one.  If all of them equal \(1/q\), every
bidiagonal factor has uniformly bounded entries, and their finite product
is bounded on every \(\ell^\sigma\).  Otherwise some
\(\vartheta_j^{\mathrm J}>1/q\), and
\eqref{eq:Jacobi-like-lower-factor-asymptotic} gives an unbounded
subsequence in every lower factor.  Positivity and the unit diagonals give
\[
0<(L_k^{\mathrm J})_{N+1,N}
\le (T^{\mathrm J})_{N+1,N+q}
\]
along that subsequence.  Hence \(T^{\mathrm J}\) is unbounded, proving
\eqref{eq:Jacobi-like-boundedness-equivalence}.
\end{proof}

\subsection{The associated Markov chains}
\label{subsec:Jacobi-like-Markov-consequences}

Recall that
\(\delta_{\mathrm J}=\sum_{h=1}^{q}(b_h-a_h)\).
The beta-convolution formula and
\eqref{eq:Jacobi-like-weight-Mellin} give the common endpoint behavior
\[
w_j^{\mathrm J}(x)
\sim
\frac{(1-x)^{\delta_{\mathrm J}}}
{\Gamma(\delta_{\mathrm J}+1)},
\qquad x\uparrow1,
\qquad 1\le j\le q.
\]
\begin{proposition}[Positive endpoint harmonic vector]
\label{prop:Jacobi-like-positive-h}
Under the hypotheses of
Theorem~\ref{thm:Jacobi-like-PBF-region}, define
\begin{equation}
\label{eq:Jacobi-like-h-vector}
h_N^{\mathrm J}
\coloneq
\Gamma(\delta_{\mathrm J}+1)
\lim_{x\uparrow1}
\frac{B_N^{\mathrm J}(x)}
{(1-x)^{\delta_{\mathrm J}}}
=
\sum_{j=1}^{q}B_N^{(j),\mathrm J}(1).
\end{equation}
Then
\[
h_N^{\mathrm J}>0,\qquad N\ge0,
\qquad\text{and}\qquad
T^{\mathrm J}h^{\mathrm J}=h^{\mathrm J}.
\]
\end{proposition}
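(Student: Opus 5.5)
The plan splits into three parts: the limit identity, the eigenvector equation, and positivity, with positivity as the real work.

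\emph{Limit identity.} Since every \(w_j^{\mathrm J}\) satisfies \(w_j^{\mathrm J}(x)\sim(1-x)^{\delta_{\mathrm J}}/\Gamma(\delta_{\mathrm J}+1)\) as \(x\uparrow1\), and each \(B_N^{(j),\mathrm J}\) is a polynomial continuous at \(1\), the scalar form \(B_N^{\mathrm J}=\sum_jB_N^{(j),\mathrm J}w_j^{\mathrm J}\) satisfies
\[
\lim_{x\uparrow1}\frac{B_N^{\mathrm J}(x)}{(1-x)^{\delta_{\mathrm J}}}
=\frac{1}{\Gamma(\delta_{\mathrm J}+1)}\sum_jB_N^{(j),\mathrm J}(1).
\]
This gives the second equality in \eqref{eq:Jacobi-like-h-vector}. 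Under exact cancellations I would use the reduced weights, whose endpoint exponent is still \(\delta_{\mathrm J}\) because the cancelled pairs contribute zero to \(\sum_h(b_h-a_h)\).

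\emph{Eigenvector equation.} The recurrence \(x\mathbf B^{\mathrm J}=T^{\mathrm J}\mathbf B^{\mathrm J}\) follows from \(x\boldsymbol\chi_w=\Lambda_q\boldsymbol\chi_w\) and \eqref{eq:Jacobi-like-recurrence}. It is a finite band identity, so dividing by \((1-x)^{\delta_{\mathrm J}}\) and letting \(x\uparrow1\) gives \(T^{\mathrm J}h^{\mathrm J}=h^{\mathrm J}\) row by row. Equivalently, one can evaluate the componentwise recurrence \(xB_N^{(j)}=\sum_M T_{N,M}B_M^{(j)}\) at \(x=1\) and sum over \(j\).

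\emph{Positivity.} This is the main obstacle. I would imitate the proof of Proposition~\ref{pro:MP-Markov-enlarged-region} with a sign-change/interpolation argument carried out on the Mellin side, using the total-positivity statements already proved rather than Chebyshev properties of the weights. Write \(h_N^{\mathrm J}\) via \eqref{eq:Jacobi-like-finite-coefficient-formulas}:
\[
h_N=\boldsymbol b_N\cdot\bigl(\text{vector of }1\text{'s}\bigr)
=\frac{e_N^{\mathsf T}(\mathscr M^{[N]})^{-1}\mathbf 1}{e_N^{\mathsf T}(\mathscr M^{[N]})^{-1}e_N}.
\]
By Cramer's rule, the numerator is \(\det\widetilde{\mathscr M}/\det\mathscr M^{[N]}\), where \(\widetilde{\mathscr M}\) is \(\mathscr M^{[N]}\) with its last row replaced by ones. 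The denominator is \(\det\mathscr M^{[N-1]}/\det\mathscr M^{[N]}>0\) by the leading-minor positivity established in the proof of Theorem~\ref{thm:Jacobi-like-PBF-region}.

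So it suffices to show that \(\det\widetilde{\mathscr M}>0\). The key observation is that the row of ones is the limit \(z\to+\infty\) of \(z^{\delta_{\mathrm J}+1}F_{d,j}(z)\), i.e.\ of the Newton functions scaled at a node at infinity. Indeed, \(\int_0^1x^{s-1}w\,dx\,\cdot s^{\delta+1}\to\) the endpoint constant; more precisely, evaluating a coefficient vector against the ones-vector is the endpoint functional. By duality (transposing the collocation matrix, so that rows are functions and columns are nodes), \(\det\widetilde{\mathscr M}\) becomes a minor of the collocation matrix in which the last node is replaced by \(+\infty\) after multiplication by \(z^{\delta_{\mathrm J}+1}\). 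I need to recheck which index plays the role of the function and which the node: \(\mathscr M_{N,M}=F_{u,j}(\zeta_M)\), so the rows are functions. The ones-row is therefore a row of functions, not a node column. This suggests instead applying the mirror limit, in which the row of ones is \(\lim_{d\to\infty}\) of suitably normalized \(F_{d,1}\)-rows. The fallback is then the type-I analogue: the endpoint functional applied to the \(x^{\alpha_\ell+k}\) basis gives
\[
\int_0^1 x^{k+\alpha_\ell}\,\delta_1\,dx\to1,
\]
i.e.\ a node at \(+\infty\) scaled by \(z^{\delta+1}\), matching the final assertion of Lemma~\ref{lem:Jacobi-like-cyclic-Newton}. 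So I would pass to the transposed picture using \(h=\) the right eigenvector of \(T=U\Lambda_p^{\mathsf T}U^{-1}\), express \(h\) through \(U\) and the ones-limit column, and invoke that final assertion together with the Cauchy--Binet connection argument \(\mathcal F=\operatorname{diag}(C,\ldots)\mathcal G\) to obtain a strictly positive minor.

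If this bookkeeping fails, the alternative is an inductive argument. The cyclic Christoffel recursion \(h^{[r-1]}=M_rh^{[r]}\), with positive bidiagonal \(M_r\), propagates positivity provided one shows \(h_0=1>0\) and uses \(h_{N+q}\) expressed through \(U\)-factors with unit superdiagonal. This mirrors how \(v^{\mathrm P}>0\) was obtained.
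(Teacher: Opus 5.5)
Your limit identity and the eigenvector equation are correct and coincide with the paper's treatment, and your starting identity $h_N^{\mathrm J}=e_N^{\mathsf T}(\mathscr M^{\mathrm J,[N]})^{-1}\one\big/e_N^{\mathsf T}(\mathscr M^{\mathrm J,[N]})^{-1}e_N$ is also right. The gap is in the positivity step, and it begins with a Cramer's-rule slip. The quantity $e_N^{\mathsf T}A^{-1}\one$ is the last component of the solution of $Ay=\one$. So, with $A=\mathscr M^{\mathrm J,[N]}$, Cramer's rule replaces the last \emph{column} of $A$ by ones, not the last row. Because of this slip you read the ones as a row of functions and abandoned the reduction. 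You then ended with a ``mirror limit'' and an unspecified ``transposed picture'', neither of which is carried out.

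The inductive fallback does not work either. The recursion $h^{[r-1]}=M_rh^{[r]}$ starts from $h^{[p+q]}=h^{\mathrm J}$, so it transfers positivity of $h^{\mathrm J}$ to the intermediate vectors but cannot produce it. This is also how $v^{\mathrm P}>0$ was obtained: from $h^{\mathrm P}>0$. Solving $T^{\mathrm J}h=h$ forward, using $(T^{\mathrm J})_{N,N+q}=1$, gives $h_{N+q}=h_N-\sum_{M=N-p}^{N+q-1}(T^{\mathrm J})_{N,M}h_M$. That subtracts nonnegative terms and cannot propagate positivity.

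With the correction, your reduction closes at once and becomes the paper's argument. Since $\mathscr M^{\mathrm J}_{qu+j-1,M}=F_{u,j}(\zeta_M)$ and $z^{\delta_{\mathrm J}+1}F_{u,j}(z)\to1$ as $z\to+\infty$ for every row function, the column of ones is the scaled evaluation column at a node sent to infinity. Hence $\det\widetilde{\mathscr M}=\lim_{z\to+\infty}z^{\delta_{\mathrm J}+1}\mathcal D_N(z)$, where $\mathcal D_N(z)$ is the collocation determinant at the nodes $\zeta_0,\ldots,\zeta_{N-1},z$ with $z>\zeta_{N-1}$. Cauchy--Binet through $\mathcal F=\operatorname{diag}(C,C,\ldots)\mathcal G$, together with the final assertion of Lemma~\ref{lem:Jacobi-like-cyclic-Newton}, makes this limit strictly positive. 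Therefore $h_N^{\mathrm J}=\det\widetilde{\mathscr M}/\det\mathscr M^{\mathrm J,[N-1]}>0$.

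The paper reaches the same quotient analytically. It writes $(U_{\mathrm J})_{N,M_v}=\int_0^1B_N^{\mathrm J}(x)x^{v+\alpha_i}\dd x$ as a ratio of the minor with columns $\{0,\ldots,N-1,M_v\}$ to the leading minor. It then uses the beta asymptotic $h_N^{\mathrm J}=\lim_{v\to\infty}v^{\delta_{\mathrm J}+1}(U_{\mathrm J})_{N,M_v}$, in which the node $\zeta_{M_v}=v+\alpha_i+1$ goes to infinity. Your corrected Cramer formula is the purely algebraic form of that same limit.
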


\begin{proof}
The equality in
\eqref{eq:Jacobi-like-h-vector} follows from the common endpoint
asymptotic of the \(q\) weights and the component expansion in
\eqref{eq:Jacobi-like-explicit-components}. Its positivity follows
directly from the moment matrix. Fix \(i\in\{1,\ldots,p\}\), put
\(M_v=pv+i-1\), and take \(v\) sufficiently large that \(M_v\ge N\).
Gaussian elimination gives
\begin{align}
\label{eq:Jacobi-like-h-minor}
(U_{\mathrm J})_{N,M_v}
&=
\int_0^1B_N^{\mathrm J}(x)x^{v+\alpha_i}\,\dd x
=
\frac{
\det\mathscr M^{\mathrm J}
[\{0,\ldots,N\},\{0,\ldots,N-1,M_v\}]
}{
\det\mathscr M^{\mathrm J}
[\{0,\ldots,N-1\},\{0,\ldots,N-1\}]
}.
\end{align}
For \(N=0\), the denominator in the last quotient is understood as \(1\).
The elementary beta asymptotic and
\eqref{eq:Jacobi-like-h-vector} give
\begin{equation}
\label{eq:Jacobi-like-h-limit}
h_N^{\mathrm J}
=
\lim_{v\to\infty}
v^{\delta_{\mathrm J}+1}(U_{\mathrm J})_{N,M_v}.
\end{equation}

This limit is strictly positive. Indeed, replace
the last collocation node in the numerator of
\eqref{eq:Jacobi-like-h-minor} by a variable \(z>\zeta_{N-1}\), and
denote the resulting determinant by \(\mathcal D_N(z)\).
Apply Cauchy--Binet to
\(\mathcal F=\mathcal C\mathcal G\); the sum is finite for the
truncations under consideration. Every contributing connection minor
is nonnegative, and at least one is positive; every corresponding
Newton minor is positive by
Lemma~\ref{lem:Jacobi-like-cyclic-Newton}. The same lemma, with its last
node sent to infinity, shows that after multiplication by
\(z^{\delta_{\mathrm J}+1}\) every corresponding Newton minor has a
strictly positive limit.
Consequently,
\[
\lim_{z\to\infty}
z^{\delta_{\mathrm J}+1}
\mathcal D_N(z)
>0,
\]
and exact Gamma cancellations delete common positive zero--pole
factors. The denominator in
\eqref{eq:Jacobi-like-h-minor} is positive, so formula
\eqref{eq:Jacobi-like-h-limit} therefore proves
\(h_N^{\mathrm J}>0\).

Finally, the \(N\)-th recurrence relation is
\[
xB_N^{\mathrm J}(x)
=
\sum_{M=N-p}^{N+q}
(T^{\mathrm J})_{N,M}B_M^{\mathrm J}(x),
\]
with negative indices omitted. Divide by
\((1-x)^{\delta_{\mathrm J}}/\Gamma(\delta_{\mathrm J}+1)\) and let
\(x\uparrow1\). This gives
\((T^{\mathrm J}h^{\mathrm J})_N=h_N^{\mathrm J}\).
\end{proof}

\begin{corollary}[Jacobi-like banded Markov kernels]
\label{cor:Jacobi-like-Markov-kernels}
Under the hypotheses of
Theorem~\ref{thm:Jacobi-like-PBF-region}, let
\(H^{\mathrm J}=\diag(h_N^{\mathrm J})\). Then
\begin{equation}
\label{eq:Jacobi-like-Markov-kernel}
P^{\mathrm J}
\coloneq
(H^{\mathrm J})^{-1}T^{\mathrm J}H^{\mathrm J}
\end{equation}
is the discrete Doob \(h\)-transform of the nonnegative kernel
\(T^{\mathrm J}\) \cite{Doob1957,Doob1984}. It is a row-stochastic
\((p,q)\)-band matrix with a positive stochastic bidiagonal
factorization. Its spectral matrix is the positive matrix of measures in
\eqref{eq:Jacobi-like-matrix-measure}. The two infinite vectors of
scalar forms are normalized explicitly by
\[
\widetilde{\mathbf B}^{\mathrm J}
=(H^{\mathrm J})^{-1}\mathbf B^{\mathrm J},
\qquad
\widetilde{\mathbf A}^{\mathrm J}
=H^{\mathrm J}\mathbf A^{\mathrm J}.
\]
\end{corollary}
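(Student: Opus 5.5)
The plan is to copy the Pi\~neiro argument of Subsection~\ref{subsec:MP-stochastic-normalization} almost verbatim, replacing the Pi\~neiro ingredients by Theorem~\ref{thm:Jacobi-like-PBF-region} and Proposition~\ref{prop:Jacobi-like-positive-h}. We proceed in five steps.

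\textbf{Step 1 (stochasticity).} Theorem~\ref{thm:Jacobi-like-PBF-region} gives
\(T^{\mathrm J}=L_1^{\mathrm J}\cdots L_p^{\mathrm J}U_q^{\mathrm J}\cdots U_1^{\mathrm J}\) with every nontrivial factor entry positive. Hence \(T^{\mathrm J}\) is entrywise nonnegative and strictly positive on its structural band, since every allowed position carries a positive path. Proposition~\ref{prop:Jacobi-like-positive-h} supplies \(h^{\mathrm J}>0\) with \(T^{\mathrm J}h^{\mathrm J}=h^{\mathrm J}\). Conjugation by the positive diagonal \(H^{\mathrm J}\) preserves signs and the \((p,q)\)-band support. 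Moreover,
\[
P^{\mathrm J}\one=(H^{\mathrm J})^{-1}T^{\mathrm J}h^{\mathrm J}=\one,
\]
so \(P^{\mathrm J}\) is row-stochastic and is precisely the Doob \(h\)-transform.

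\textbf{Step 2 (stochastic bidiagonal factorization).} Write \(M_1,\dots,M_{p+q}\) for the ordered factors. Set \(h^{[p+q]}=h^{\mathrm J}\) and \(h^{[r-1]}=M_rh^{[r]}\). By induction each \(h^{[r]}\) is positive, because each \(M_r\) has positive diagonal and nonnegative off-diagonal entries. The telescoping product gives \(h^{[0]}=T^{\mathrm J}h^{\mathrm J}=h^{\mathrm J}\). Put
\[
\widehat M_r=H_{r-1}^{-1}M_rH_r .
\]
Exactly as in the proof of Theorem~\ref{thm:finite-urn-correspondence}, each \(\widehat M_r\) is a positive stochastic bidiagonal matrix, and their product equals \((H^{\mathrm J})^{-1}T^{\mathrm J}H^{\mathrm J}=P^{\mathrm J}\). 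This is \cite[Proposition~2.3]{BFM6}.

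\textbf{Step 3 (spectral data).} Starting from \eqref{eq:Jacobi-like-step-line-forms} and the biorthogonality \eqref{eq:Jacobi-like-biorthogonality}, the multiplication recurrences are \(x\mathbf B^{\mathrm J}=T^{\mathrm J}\mathbf B^{\mathrm J}\) and \(x\mathbf A^{\mathrm J}=(T^{\mathrm J})^{\mathsf T}\mathbf A^{\mathrm J}\). Iterating them gives
\[
(T^{\mathrm J})^n_{N,M}=\int_0^1 x^nB_N^{\mathrm J}A_M^{\mathrm J}\,\dd x .
\]
Expanding in components, this is a sum over \(a,b\) against \(\dd\mu^{\mathrm J}_{b,a}=w_b^{\mathrm J}x^{\alpha_a}\,\dd x\). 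Conjugating by \(H^{\mathrm J}\) rescales the two families to \(\widetilde{\mathbf B}^{\mathrm J}=(H^{\mathrm J})^{-1}\mathbf B^{\mathrm J}\) and \(\widetilde{\mathbf A}^{\mathrm J}=H^{\mathrm J}\mathbf A^{\mathrm J}\). Biorthonormality is preserved, and \((P^{\mathrm J})^n\) acquires the same representation.

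\textbf{Step 4 (positivity of the measures).} Each \(w_j^{\mathrm J}\) is a positive density on \((0,1)\). When \(a_h<b_h\) strictly, this follows from the beta-convolution description. In the cancelled cases one uses the reduced convolution, which is again a convolution of beta densities, or a weak limit of such. Multiplying by \(x^{\alpha_i}\) keeps the density positive, so the matrix of measures is entrywise positive.

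\textbf{Main obstacle.} The step needing care is the well-definedness of the iterated spectral identity: it requires the biorthogonality to hold with these normalizations for all indices. We take this from the Gauss--Borel factorization guaranteed by Theorem~\ref{thm:Jacobi-like-PBF-region}, since
\[
L_{\mathrm J}^{-1}\mathscr M^{\mathrm J}U_{\mathrm J}^{-1}=I .
\]
A further check is that the PBF Favard measure of \cite{BFM6} agrees with \(\mu^{\mathrm J}\). This holds by uniqueness of the moments \(\int x^nB_0\,A_0\), since the moment problem on \([0,1]\) is determinate.
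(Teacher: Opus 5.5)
Your Steps~1--2 are the paper's proof: nonnegativity comes from Theorem~\ref{thm:Jacobi-like-PBF-region}, the positive harmonic vector from Proposition~\ref{prop:Jacobi-like-positive-h}, and the stochastic factorization from the intermediate diagonal normalization of \cite[Proposition~2.3]{BFM6} (the paper only reads the factors from right to left). Your Step~3 is also correct and directly derives the representation of \((P^{\mathrm J})^n\) against \(\dd\boldsymbol\mu^{\mathrm J}\), which the paper leaves implicit.

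You should drop the closing ``further check'', because it is unnecessary and not sound as stated. The moments \(\int_0^1x^nB_0^{\mathrm J}A_0^{\mathrm J}\,\dd x\) determine only the \((1,1)\) entry of the matrix of measures. Moreover, the Jacobi-like initial forms need not agree with the normalization used in a Favard reconstruction; for instance, \(B_1^{\mathrm J}=-\tfrac56w_1^{\mathrm J}+w_2^{\mathrm J}\) in the \((3,2)\) example.
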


\begin{proof}
Theorem~\ref{thm:Jacobi-like-PBF-region} makes
\(T^{\mathrm J}\) nonnegative, Proposition
\ref{prop:Jacobi-like-positive-h} supplies the strictly positive harmonic
vector, and
\[
P^{\mathrm J}\one
=(H^{\mathrm J})^{-1}T^{\mathrm J}h^{\mathrm J}
=\one.
\]
Thus the Doob transform is well defined and row-stochastic.

The stochastic factorization can also be checked directly from the same
positive factors. Read the factors in
\eqref{eq:Jacobi-like-PBF} from right to left:
\[
F_1=U_1^{\mathrm J},\ldots,F_q=U_q^{\mathrm J},
\quad
F_{q+1}=L_p^{\mathrm J},\ldots,F_{p+q}=L_1^{\mathrm J}.
\]
Starting with \(h^{[0]}=h^{\mathrm J}\), set
\(h^{[r]}=F_rh^{[r-1]}\) and
\(H^{[r]}=\diag(h^{[r]})\). Every intermediate vector is strictly
positive because every \(F_r\) is nonnegative with positive diagonal,
and \(h^{[p+q]}=T^{\mathrm J}h^{\mathrm J}=h^{\mathrm J}\). Therefore
\[
\widehat F_r
\coloneq
(H^{[r]})^{-1}F_rH^{[r-1]}
\]
is a positive row-stochastic bidiagonal matrix, and the diagonal factors
cancel successively:
\[
P^{\mathrm J}
=
\widehat F_{p+q}\cdots\widehat F_1.
\]
This is the required stochastic bidiagonal factorization; it is also the
intermediate diagonal normalization of
\cite[Proposition~2.3]{BFM6}.
\end{proof}

Consequently, the results of the preceding sections apply. The factors give \(p\)
death-or-stay and \(q\) birth-or-stay experiments, rational parameters
give finite urns, global uniformization gives bounded continuous-time
processes, local clocks give unbounded processes, and grouping consecutive
states in sets of \(r_0=\max\{p,q\}\) gives an \(r_0\)-phase QBD. The
continued fraction built from the individual bidiagonal factors in
Subsection~\ref{subsec:factor-resolved-return-CF} gives the complete scalar
first-return law, while the block fraction of
Section~\ref{sec:QBD-reblocking} gives return time together with return
phase. These are applications of the results already proved, not separate
Jacobi-like constructions.

\subsection{A rational \texorpdfstring{\((3,2)\)}{(3,2)} example}
\label{subsec:Jacobi-like-32-example}

Take
\begin{equation}
\label{eq:Jacobi-like-32-parameters}
\boldsymbol\alpha=
\begin{bNiceMatrix}-\frac13&0&\frac13\end{bNiceMatrix},
\qquad
\boldsymbol a=
\begin{bNiceMatrix}0&\frac13\end{bNiceMatrix},
\qquad
\boldsymbol b=
\begin{bNiceMatrix}1&\frac43\end{bNiceMatrix}.
\end{equation}
There is no Gamma cancellation, and
\(a_1<b_1\), \(a_2<b_1+1\), so
Theorem~\ref{thm:Jacobi-like-PBF-region} applies.  Its cyclic
\(b\)-spacings are \(1/3\) and \(2/3\), so
Proposition~\ref{prop:Jacobi-like-boundedness-criterion} shows that
these parameters lie in the strict generic PBF chamber and
\(T^{\mathrm J}\) is unbounded.  If
\(N=2u+j-1\), \(M=3v+i-1\), and
\(s=u+v+\alpha_i+1\), then
\begin{equation}
\label{eq:Jacobi-like-32-moments}
\mathscr M^{\mathrm J}_{N,M}
=
\begin{cases}
\frac{1}{s(s+1)(s+\frac13)},&j=1,\\[2mm]
\frac{1}{s(s+\frac13)(s+\frac43)},&j=2.
\end{cases}
\end{equation}
Exact Gauss--Borel elimination gives the first three polynomial vectors
\begin{equation}
\label{eq:Jacobi-like-32-forms}
\begin{aligned}
\begin{bNiceMatrix}
B_0^{(1),\mathrm J}&B_0^{(2),\mathrm J}\\
B_1^{(1),\mathrm J}&B_1^{(2),\mathrm J}\\
B_2^{(1),\mathrm J}&B_2^{(2),\mathrm J}
\end{bNiceMatrix}
&=
\begin{bNiceMatrix}[cell-space-limits=2pt]
1&0\\
-\frac56&1\\
x+\frac{13}{6}&-\frac{11}{4}
\end{bNiceMatrix},
\\[2mm]
\begin{bNiceMatrix}[cell-space-limits=2pt]
A_0^{(1),\mathrm J}&A_0^{(2),\mathrm J}&A_0^{(3),\mathrm J}\\
A_1^{(1),\mathrm J}&A_1^{(2),\mathrm J}&A_1^{(3),\mathrm J}\\
A_2^{(1),\mathrm J}&A_2^{(2),\mathrm J}&A_2^{(3),\mathrm J}
\end{bNiceMatrix}
&=
\begin{bNiceMatrix}[cell-space-limits=2pt]
\frac{10}{9}&0&0\\
-\frac{140}{3}&112&0\\
80&-448&\frac{4480}{9}
\end{bNiceMatrix}.
\end{aligned}
\end{equation}
Thus
\(B_N^{\mathrm J}=\sum_{j=1}^2
B_N^{(j),\mathrm J}w_j^{\mathrm J}\) and
\(A_N^{\mathrm J}=\sum_{i=1}^3
A_N^{(i),\mathrm J}x^{\alpha_i}\).

In the convention of
\eqref{eq:Jacobi-like-PBF}, the first five subdiagonal entries of the
three lower factors are
\begin{equation}
\label{eq:Jacobi-like-32-lower-factors}
\begin{aligned}
\operatorname{subdiag}L_1^{\mathrm J}
&=
\begin{bNiceMatrix}
\frac1{42}&\frac7{12}&\frac7{220}&\frac{480}{637}&\frac{35}{1088}
\end{bNiceMatrix},\\
\operatorname{subdiag}L_2^{\mathrm J}
&=
\begin{bNiceMatrix}
\frac1{56}&\frac{16}{33}&\frac7{260}&\frac{33}{49}&\frac{35}{1224}
\end{bNiceMatrix},\\
\operatorname{subdiag}L_3^{\mathrm J}
&=
\begin{bNiceMatrix}
\frac1{72}&\frac9{22}&\frac3{130}&\frac{72}{119}&\frac{35}{1368}
\end{bNiceMatrix}.
\end{aligned}
\end{equation}
The first six diagonal entries of the upper factors are
\begin{equation}
\label{eq:Jacobi-like-32-upper-factors}
\begin{aligned}
\operatorname{diag}U_1^{\mathrm J}
&=
\begin{bNiceMatrix}
\frac56&\frac{13}{5}&\frac{68}{715}&\frac{36525}{21658}&
\frac{10087}{155840}&\frac{191360}{126939}
\end{bNiceMatrix},\\
\operatorname{diag}U_2^{\mathrm J}
&=
\begin{bNiceMatrix}
\frac3{20}&\frac4{117}&\frac{150}{187}&\frac{187}{4870}&
\frac{6331}{6681}&\frac{6419}{174800}
\end{bNiceMatrix}.
\end{aligned}
\end{equation}
Every displayed entry is positive.

Here \(\delta_{\mathrm J}=2\), and
\eqref{eq:Jacobi-like-h-vector} begins with
\begin{equation}
\label{eq:Jacobi-like-32-h}
\begin{bNiceMatrix}
h_0^{\mathrm J}&h_1^{\mathrm J}&h_2^{\mathrm J}&h_3^{\mathrm J}&
h_4^{\mathrm J}&h_5^{\mathrm J}
\end{bNiceMatrix}
=
\begin{bNiceMatrix}
1&\frac16&\frac5{12}&\frac7{165}&\frac{90}{1001}&\frac3{416}
\end{bNiceMatrix}.
\end{equation}
The following boundary block of the stochastic kernel
\eqref{eq:Jacobi-like-Markov-kernel} contains the first two complete
\((3,2)\)-band rows:
\begin{equation}
\label{eq:Jacobi-like-32-Markov-kernel}
\left[P^{\mathrm J}\right]_{\{0,1,2,3,4\},\{0,\ldots,6\}}
=
\begin{bNiceMatrix}[cell-space-limits=2pt]
\frac18&\frac{11}{24}&\frac5{12}&0&0&0&0\\
\frac1{24}&\frac{29}{120}&\frac{61}{132}&\frac{14}{55}&0&0&0\\
\frac1{132}&\frac{53}{660}&\frac{251}{858}&\frac{2021}{5005}&
\frac{216}{1001}&0&0\\
\frac1{1584}&\frac{1657}{102960}&\frac{909}{8008}&
\frac{4622}{15015}&\frac{12555}{32032}&\frac{495}{2912}&0\\
0&\frac4{2925}&\frac{2143}{90090}&\frac{54666}{425425}&
\frac{24067}{74256}&\frac{149347}{414960}&\frac{1573}{9690}
\end{bNiceMatrix}.
\end{equation}
The full kernel \(P^{\mathrm J}\) is banded and row-stochastic; the
displayed rows sum to one.  Hence \(P^{\mathrm J}\) is bounded on every
\(\ell^s\).
Thus, if \((X_n)_{n\ge0}\) denotes the Markov chain with transition
kernel \(P^{\mathrm J}\), its first one-step law is
\begin{equation}
\label{eq:Jacobi-like-32-first-step-law}
\Pr\{X_{n+1}=m\mid X_n=0\}
=
\begin{cases}
\frac18,&m=0,\\
\frac{11}{24},&m=1,\\
\frac5{12},&m=2,\\
0,&m\ge3.
\end{cases}
\end{equation}
In particular, the chain may stay, move one state upward, or jump two
states upward in one complete step. More generally, the five displayed
rows sum to one and show explicitly the allowed jumps
\(-3,-2,-1,0,1,2\).

\paragraph{Explicit finite urns.}
Write an ordered pair as \((\text{blue},\text{red})\), with a blue draw
producing the indicated move and a red draw leaving the current state
unchanged.  In lowest terms, the three death-or-stay factors give the
following minimal urns in states \(1,2\):
\[
\begin{NiceArray}{c|cc}[cell-space-limits=2pt]
&1&2\\ \hline
L_1&(1,6)&(1,4)\\
L_2&(1,7)&(2,9)\\
L_3&(1,8)&(1,5)
\end{NiceArray}
\]
At state \(0\) these three stages are forced stays.  The two birth-or-stay
factors give the minimal urns
\[
\begin{NiceArray}{c|ccc}[cell-space-limits=2pt]
&0&1&2\\ \hline
U_2&(17,3)&(48,17)&(157,64)\\
U_1&(1,5)&(25,26)&(91,85).
\end{NiceArray}
\]
The urn used at each stage is selected by the state reached at the previous
stage.  For example, from state \(0\), the \(U_2\)-urn contains 17
blue and 3 red balls.  A blue draw moves the state to \(1\), where the
last urn contains 25 blue and 26 red balls; after a red draw the
state remains at \(0\), where the last urn contains 1 blue and 5
red balls. Consequently,
\[
\Pr_0\{X_1=0\}
=\frac3{20}\frac56=\frac18,
\qquad
\Pr_0\{X_1=2\}
=\frac{17}{20}\frac{25}{51}=\frac5{12},
\]
and the remaining probability is
\(\Pr_0\{X_1=1\}=11/24\), exactly as in
\eqref{eq:Jacobi-like-32-first-step-law}. Hence one step of
\eqref{eq:Jacobi-like-32-Markov-kernel} is an explicit sequence of five
finite two-color urn experiments.

\paragraph{An explicit continuous-time model.}
Let \(Y^{\mathrm J}(t)\) be the continuous-time chain with bounded
generator
\begin{equation}
\label{eq:Jacobi-like-32-continuous-generator}
Q^{\mathrm J}=P^{\mathrm J}-I.
\end{equation}
For \eqref{eq:Jacobi-like-32-continuous-generator}, the first three rows are
\begin{equation}
\label{eq:Jacobi-like-32-continuous-generator-boundary}
\left[Q^{\mathrm J}\right]_{\{0,1,2\},\{0,\ldots,4\}}
=
\begin{bNiceMatrix}[cell-space-limits=2pt]
-\frac78&\frac{11}{24}&\frac5{12}&0&0\\
\frac1{24}&-\frac{91}{120}&\frac{61}{132}&\frac{14}{55}&0\\
\frac1{132}&\frac{53}{660}&-\frac{607}{858}&
\frac{2021}{5005}&\frac{216}{1001}
\end{bNiceMatrix}.
\end{equation}
The first row of
\eqref{eq:Jacobi-like-32-continuous-generator-boundary} has a direct
probabilistic interpretation.  At the times of a rate-one Poisson process
one performs the
five urn experiments above; a complete cycle that leaves the state
unchanged is a virtual jump.  Starting from \(0\), the actual holding time
is exponential with rate \(7/8\).  At its end the chain jumps to \(1\)
with probability \(11/21\) and to \(2\) with probability \(10/21\).

\paragraph{First returns from the continued fraction.}
Let \(g_0^{\mathrm J}(z)\) and \(f_0^{\mathrm J}(z)\) be, respectively,
the Green generating function at \(0\) and the generating function of the
first positive return to \(0\).  Direct multiplication of the displayed
rational kernel gives
\[
\bigl(P^{\mathrm J}\bigr)_{0,0}=\frac18,
\qquad
\bigl((P^{\mathrm J})^2\bigr)_{0,0}=\frac5{132},
\qquad
\bigl((P^{\mathrm J})^3\bigr)_{0,0}=\frac5{308}.
\]
Since \(p+q=5\), let \(\Phi_0^{\mathrm J}(\tau)\) denote the boundary
continued fraction of Subsection~\ref{subsec:factor-resolved-return-CF}
built from the five stochastic factors whose initial urns are displayed
above.  Its explicit boundary evaluation begins with
\begin{equation}
\label{eq:Jacobi-like-32-factor-CF-expansion}
e_1^{\mathsf T}\Phi_0^{\mathrm J}(\tau)e_1
=g_0^{\mathrm J}(\tau^5)
=1+\frac18\tau^5+\frac5{132}\tau^{10}
+\frac5{308}\tau^{15}+O(\tau^{20}).
\end{equation}
The renewal identity
\(f_0^{\mathrm J}(z)=1-1/g_0^{\mathrm J}(z)\) therefore
yields
\begin{equation}
\label{eq:Jacobi-like-32-first-return-expansion}
f_0^{\mathrm J}(z)
=\frac18z+\frac{47}{2112}z^2
+\frac{1031}{118272}z^3+O(z^4).
\end{equation}
Thus the first three displayed coefficients are the exact probabilities of
returning to \(0\) for the first time after one, two, and three complete
five-urn cycles. Grouping three consecutive states also gives the
three-phase QBD associated with this same kernel. All numbers in
\eqref{eq:Jacobi-like-32-forms}--\eqref{eq:Jacobi-like-32-Markov-kernel}
were reconstructed from \eqref{eq:Jacobi-like-32-moments} and checked in
exact rational arithmetic; the coefficients in
\eqref{eq:Jacobi-like-32-factor-CF-expansion} and
\eqref{eq:Jacobi-like-32-first-return-expansion} were independently checked
from powers of \(P^{\mathrm J}\) and the renewal identity.

\section{Conclusions and outlook}

The main conclusion is that an ordered PBF carries substantially more
information than the product matrix.  The product determines the
left--right mixed spectral data and the usual path quantities of the Markov
chain.  The ordered factors additionally determine elementary Markov
experiments, their finite-urn realizations, Darboux intertwinings, and the
continued fractions for return laws.  These descriptions remain scalar and
retain the original bandwidth, while grouping states gives a complementary
finite-phase QBD description.  Thus non-reversible banded kernels can be
treated spectrally without forcing scalar or block self-adjointness.

Two structural consequences clarify the scope of the theory.  In
continuous time, scalar PBFs of all shifted leading truncations are too
rigid for an unbounded conservative generator of bandwidth greater than
one.  State-dependent uniformization separates the two roles instead:
\(T\) fixes the spectral and hitting structure, while \(V\) changes the
holding times and the invariant measure without changing the bandwidth.
For the explicit mixed systems, the sign classifications show that
positivity of the recurrence band is weaker than positivity of every
bidiagonal factor.  The Pi\~neiro system exhibits both phenomena explicitly,
and the Jacobi-like extension shows how Gamma-factor cancellations move
between them and recover Pi\~neiro.  In the Pi\~neiro PBF chamber,
positivity and boundedness are independent: the cyclic-spacing criterion
identifies both bounded and unbounded recurrences.  In the strict generic,
no-cancellation Jacobi-like PBF chamber, boundedness is equivalent to
\(b_{j+1}-b_j=1/q\) cyclically.  In both families the
positive harmonic vector still produces a bounded stochastic band matrix
from an unbounded recurrence, but the diagonal conjugation is necessarily
unbounded.  Thus the explicit
families contribute simultaneously to total positivity, mixed multiple
orthogonality, and non-reversible Markov theory.

Several problems remain.  The open PBF chambers and all lower-dimensional
cancellation strata of the Jacobi-like system have not been classified for
\(q\ge5\), and boundedness on the remaining partial-cancellation strata is
not characterized here.  The maximal Pi\~neiro region in which every entry
permitted by the bandwidth is positive is also unknown.  Further questions concern
block symmetrizability when \(p=q\), the small-\(\lambda\) behavior of the
killed resolvents, and positive spectral resolutions for unbounded
non-reversible generators.  Each problem asks how much of the factor-level
structure survives after one of the positivity, symmetry, or boundedness
hypotheses is weakened.

\appendix

\section{Derivation of the mixed Pi\~neiro bidiagonal coefficients}
\label{app:MP-bidiagonal-derivation}

This appendix derives the factor entries
\(\ell_{a,N}^{\mathrm P}\) and \(u_{b,N}^{\mathrm P}\) used in
\eqref{eq:MP-PBF} directly from the componentwise Cauchy formula
\eqref{eq:MP-explicit-A-coefficients}. The argument has two parts. First,
the elementary Christoffel connections express the bidiagonal entries as
quotients of active-component coefficients. Second, the step-line indices
reduce those quotients to the Pochhammer products displayed in
\eqref{eq:MP-lower-Pochhammer-simplified} and
\eqref{eq:MP-upper-Pochhammer-simplified}.

For \(r,s\in\Nzero\), write \(A_N^{[r,s]}\) for the type-I step-line form
obtained by replacing
\((\boldsymbol\alpha,\boldsymbol\beta)\) with
\((\boldsymbol\alpha^{[r]},\boldsymbol\beta^{[s]})\), in the notation of
Definition~\ref{def:MP-affine-cyclic-shifts}. Its active component is the
\(i_N\)-th component and has degree \(e_N\). Define its leading
coefficient by
\begin{equation}
\label{eq:app-MP-shifted-leading-coefficient-definition}
\lambda_N^{[r,s]}
\coloneq
[x^{e_N}]\left(A_N^{[r,s]}\right)^{(i_N)}(x).
\end{equation}

\begin{lemma}[Shifted active-component coefficient]
\label{lem:app-MP-shifted-leading-coefficient}
For every \(N,r,s\ge0\),
\begin{equation}
\label{eq:app-MP-shifted-leading-coefficient}
\begin{aligned}
\lambda_N^{[r,s]}
&=
\frac{
\prod_{i=1}^{p}
(\alpha_{r+i}+\beta_{s+s_N}+d_N+1)_{n_{N,i}}
}{
\prod_{j=1}^{q}
(\beta_{s+j}-\beta_{s+s_N}-d_N)_{m_{N,j}}
}
\\
&\quad\times
\frac{
(-1)^{e_N}
\prod_{j=1}^{q}
(\alpha_{r+i_N}+\beta_{s+j}+e_N+1)_{m_{N,j}}
}{
e_N!
\prod_{\substack{1\le i\le p\\i\ne i_N}}
(\alpha_{r+i}-\alpha_{r+i_N}-e_N)_{n_{N,i}}
}.
\end{aligned}
\end{equation}
\end{lemma}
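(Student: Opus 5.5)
The plan is to obtain \eqref{eq:app-MP-shifted-leading-coefficient} as a direct specialization of the componentwise Cauchy formula \eqref{eq:MP-explicit-A-coefficients} of Corollary~\ref{coro:MP-componentwise-Cauchy-formulas}, applied to the shifted parameter pair \((\boldsymbol\alpha^{[r]},\boldsymbol\beta^{[s]})\) in place of \((\boldsymbol\alpha,\boldsymbol\beta)\).

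First I would check that this substitution is legitimate. Corollary~\ref{coro:MP-componentwise-Cauchy-formulas} is stated on \(\mathcal A^{\mathrm P}_{p,q}\). On \(\mathcal R^{\mathrm P,+}_{p,q}\), Lemma~\ref{lem:MP-affine-shift-stability} shows that every affine cyclic shift again satisfies the ordered cyclic inequalities, so it lies in the admissible domain. More generally, the Cauchy construction of Proposition~\ref{pro:MP-Cauchy-step-line-construction} only needs distinct exponents and positive sums. Both properties persist under the shifts, because shifting adds nonnegative integers to the sums and integers to the differences.

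The essential observation is that the step-line data \(e_N,i_N,d_N,s_N,n_{N,i},m_{N,j}\) depend only on \(N,p,q\). They are therefore the same for the shifted system, and only the parameter labels change: \(\alpha_i\mapsto\alpha_{r+i}\) and \(\beta_j\mapsto\beta_{s+j}\), read through the affine extensions.

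Next I would take \(i=i_N\) and \(k=e_N\) in \eqref{eq:MP-explicit-A-coefficients}. This index is admissible because \(n_{N,i_N}=e_N+1\), so \(e_N\le n_{N,i_N}-1\) and the component is active. With this choice the factorial \((n_{N,i}-k-1)!\) equals \(0!=1\), the sign \((-1)^k\) becomes \((-1)^{e_N}\), and \(k!\) becomes \(e_N!\). This is precisely the computation already recorded as \eqref{eq:MP-dual-leading-coefficient-explicit} in Lemma~\ref{lem:MP-dual-diagonal-sign}, now with shifted labels.

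The prefactor \(C_N^{A}\) becomes the first quotient in \eqref{eq:app-MP-shifted-leading-coefficient}, once its base parameter \(\beta_{s_N}\) is replaced by \(\beta_{s+s_N}\) and the \(\alpha_i\) by \(\alpha_{r+i}\). The remaining factors give the second quotient. Since \((-1)^{e_N}\) is its own inverse, moving it from the denominator to the numerator changes nothing.

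I expect the only real obstacle to be bookkeeping of the affine extension inside the shifted vector. The \(s_N\)-th entry of \(\boldsymbol\beta^{[s]}\) is \(\beta_{s+s_N}\) in the extended sense, even when \(s+s_N>q\). I would verify this directly from Definition~\ref{def:MP-affine-cyclic-shifts}, which defines \((\boldsymbol\beta^{[s]})_j=\beta_{s+j}\), and do the same for the \(\alpha\)'s.

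No cancellation or simplification is required beyond this substitution. Finally, I would note that no denominator vanishes, by the same argument as in Lemma~\ref{lem:MP-affine-shift-stability}.
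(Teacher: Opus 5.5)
Your proposal is correct and follows the paper's proof: substitute \((\boldsymbol\alpha^{[r]},\boldsymbol\beta^{[s]})\) into \eqref{eq:MP-explicit-A-coefficients}, take \(i=i_N\) and \(k=e_N\), and use \(n_{N,i_N}=e_N+1\), so that \((n_{N,i_N}-e_N-1)!=0!=1\). Your extra checks, that the shifted pair stays admissible and that the step-line indices are unchanged by the shift, are points the paper leaves implicit.
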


\begin{proof}
Apply \eqref{eq:MP-explicit-A-coefficients} to the shifted parameters and
choose the active component \(i=i_N\). By
\eqref{eq:MP-type-I-step-indices}, one has
\(n_{N,i_N}=e_N+1\), so the coefficient of the highest power \(x^{e_N}\)
is obtained by setting \(k=e_N\). The factorial
\((n_{N,i_N}-e_N-1)!\) is then \(0!=1\). Substitution of the shifted
normalization constant \(C_N^A\) gives
\eqref{eq:app-MP-shifted-leading-coefficient}.
\end{proof}

\begin{lemma}[Elementary Christoffel quotients]
\label{lem:app-MP-Christoffel-quotients}
The nontrivial entries of the elementary bidiagonal factors satisfy
\begin{equation}
\label{eq:app-MP-Christoffel-quotients}
\ell_{a,N}^{\mathrm P}
=
\frac{\lambda_N^{[a,0]}}{\lambda_{N+1}^{[a-1,0]}},
\qquad
u_{b,N}^{\mathrm P}
=
\frac{\lambda_N^{[0,b-1]}}{\lambda_N^{[0,b]}},
\end{equation}
for \(1\le a\le p\), \(1\le b\le q\), and \(N\ge0\).
Iterating these elementary connections through one complete alpha cycle
and one complete beta cycle gives
\[
T^{\mathrm P}
=L_1^{\mathrm P}\cdots L_p^{\mathrm P}
 U_q^{\mathrm P}\cdots U_1^{\mathrm P}.
\]
\end{lemma}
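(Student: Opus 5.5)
The plan is to derive both quotients from the Christoffel transformations that relate the shifted type-I systems, and then to read off the factor entries by comparing leading coefficients of the active components. The ingredients are the step-line Cauchy construction of Proposition~\ref{pro:MP-Cauchy-step-line-construction}, the uniqueness of normalized type-I forms (the leading Cauchy minors do not vanish), and the affine cyclic conventions of Definition~\ref{def:MP-affine-cyclic-shifts}.

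\emph{Lower factors.} Fix \(a\) and multiply the shifted type-I form \(A_{N+1}^{[a-1,0]}\) by the first alpha monomial in its ordering. Since \(\alpha_{a-1+1}=\alpha_a\) and \(\alpha_{a+p}=\alpha_a+1\), multiplying the component attached to \(x^{\alpha_a}\) by \(x\) carries it into the last slot of \(\boldsymbol\alpha^{[a]}\). Concretely, the index list \(\boldsymbol\nu_p(N+2)\) for \(\boldsymbol\alpha^{[a-1]}\) becomes, after this cyclic relabelling, a multi-index whose components differ by at most one and whose total is \(N+2\).

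The relabelled form \(x\,A_{N+1}^{[a-1,0]}\) is then orthogonal, with respect to the \(\boldsymbol\beta\)-row weights, to the same first \(N\) row moments as \(A_{N}^{[a,0]}\) and \(A_{N+1}^{[a,0]}\). By uniqueness it is a combination of these two forms only. This gives the two-term relation
\[
x\,A_{N+1}^{[a-1,0]}=c_N\,A_{N+1}^{[a,0]}+c'_N\,A_N^{[a,0]},
\]
which is exactly one column of a lower bidiagonal connection. I will normalize the factor so that its diagonal is \(1\). Comparing the coefficients of the top power in the active component of index \(i_N\) for \(\boldsymbol\alpha^{[a]}\) then isolates \(c'_N\) and yields \(\ell_{a,N}^{\mathrm P}=\lambda_N^{[a,0]}/\lambda_{N+1}^{[a-1,0]}\). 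For this I must check that the left-hand side and \(A_{N+1}^{[a,0]}\) do not contribute at that degree. This is the index-bookkeeping step: \(n_{N,i}\) shifts between the two cyclic labellings.

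\emph{Upper factors.} The argument is the same on the beta side. Shifting \(\boldsymbol\beta^{[b-1]}\to\boldsymbol\beta^{[b]}\) replaces the first row weight \(x^{\beta_b}\) by \(x^{\beta_b+1}\), so the type-I form for \(\boldsymbol\beta^{[b-1]}\) satisfies one fewer leading orthogonality condition relative to the new system. Uniqueness gives \(A_N^{[0,b-1]}=u\,A_N^{[0,b]}+A_{N-1}^{[0,b]}\)-type relations, that is, an upper bidiagonal connection with unit superdiagonal. The coefficient \(u_{b,N}^{\mathrm P}\) is then the ratio of the leading active coefficients, \(\lambda_N^{[0,b-1]}/\lambda_N^{[0,b]}\); the active component and the degree \(e_N\) are unchanged because the alpha list is not shifted.

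\emph{Composition.} After \(p\) lower shifts, \(\boldsymbol\alpha^{[p]}=\boldsymbol\alpha+1\), which is multiplication of the alpha basis by \(x\). After \(q\) upper shifts, \(\boldsymbol\beta^{[q]}=\boldsymbol\beta+1\), which corresponds to \(\Lambda_q\). Composing the connections therefore reproduces the left recurrence \(x\mathbf A^{\mathrm P}=(T^{\mathrm P})^{\mathsf T}\mathbf A^{\mathrm P}\) from \eqref{eq:MP-Cauchy-direct-recurrence-matrix}. To get \(T^{\mathrm P}=L_1\cdots L_pU_q\cdots U_1\) in that order I will transpose the chain; matching the order is routine but fiddly.

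I expect the main obstacle to be the bookkeeping of the normalization. Type-I forms are normalized through the last row moment, not by being monic, so I have to confirm that each connection has exactly a unit diagonal (in \(L\)) or a unit superdiagonal (in \(U\)). This requires checking that the normalizing moment of \(xA\) matches that of the shifted form, which I will verify directly with the Cauchy moments \(1/(\xi_N+\eta_M)\).
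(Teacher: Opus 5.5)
\paragraph{Upper factors.} This step is sound. With the alpha list fixed, \(A_N^{[0,b-1]}\) lies in the span of \(A_0^{[0,b]},\ldots,A_N^{[0,b]}\) and satisfies all but the last of the new orthogonality conditions. Its own normalization at \(x^{\xi_{N+b-1}}\) is the new normalization of index \(N-1\). Hence \(A_N^{[0,b-1]}=u_{b,N}^{\mathrm P}A_N^{[0,b]}+A_{N-1}^{[0,b]}\), with \(u_{b,N}^{\mathrm P}\) the quotient of leading coefficients.

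\paragraph{Lower factors: the gap.} The relation \(x\,A_{N+1}^{[a-1,0]}=c_NA_{N+1}^{[a,0]}+c'_NA_N^{[a,0]}\) is false as soon as \(p\ge2\) or \(q\ge2\). Multiplication by \(x\) acts on every component, not only the one attached to \(x^{\alpha_a}\), and it turns the row conditions \(x^{\xi_K}\) into \(x^{\xi_{K+q}}\). So \(xA_{N+1}^{[a-1,0]}\) is orthogonal only to \(x^{\xi_0},\ldots,x^{\xi_{N-q}}\). Write \(\boldsymbol\chi^{[a]}\) for the ordered alpha basis of \(\boldsymbol\alpha^{[a]}\), so that \(\boldsymbol\chi^{[a]}_M=\boldsymbol\chi^{[a-1]}_{M+1}\). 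Then \(xA_{N+1}^{[a-1,0]}\) lies in the span of \(\boldsymbol\chi^{[a]}_{p-1},\ldots,\boldsymbol\chi^{[a]}_{N+p}\). Its expansion in the \(A_M^{[a,0]}\) therefore runs over \(N+1-q\le M\le N+p\). For \(N\ge q-1\) the extreme coefficients are nonzero: \(1\) at the bottom and \(\lambda_{N+1}^{[a-1,0]}/\lambda_{N+p}^{[a,0]}\) at the top. This is a \((p+q)\)-term recurrence, not a bidiagonal connection. Two further symptoms:
\begin{itemize}
\item Composing \(p\) such steps would produce \(x^p\) rather than \(x\).
\item Reading off the coefficient of \(A_N^{[a,0]}\) by comparing top coefficients divides by \(\lambda_N^{[a,0]}\), whereas the target formula has it in the numerator. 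This signals that \(A_N^{[a,0]}\) should be the form being expanded, not one of the forms it is expanded in.
\end{itemize}

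\paragraph{The missing idea.} A left Christoffel step is a restriction, not a multiplication. Let \(S\) be the unilateral shift. Since \(\boldsymbol\chi^{[a]}=S\boldsymbol\chi^{[a-1]}\), the new alpha space is the old one with \(x^{\alpha_a}\) removed, while the row weights are unchanged. So \(A_N^{[a,0]}\) is a combination of \(A_0^{[a-1,0]},\ldots,A_{N+1}^{[a-1,0]}\). Test against \(x^{\xi_0},\ldots,x^{\xi_N}\); their moments against the old forms form a lower unitriangular matrix. This kills the coefficients of index \(<N\) and forces coefficient \(1\) at index \(N\). Writing \(\mathbf A^{[r,s]}=(A_N^{[r,s]})_{N\ge0}\), this gives
\(A_N^{[a,0]}=A_N^{[a-1,0]}+\ell_{a,N}^{\mathrm P}A_{N+1}^{[a-1,0]}\), i.e. \(\mathbf A^{[a,0]}=(L_a^{\mathrm P})^{\mathsf T}\mathbf A^{[a-1,0]}\). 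Comparing coefficients of \(\boldsymbol\chi^{[a]}_N=\boldsymbol\chi^{[a-1]}_{N+1}\) gives \(\lambda_N^{[a,0]}=\ell_{a,N}^{\mathrm P}\lambda_{N+1}^{[a-1,0]}\). The unit entries come straight from the normalizations, so the Cauchy check you anticipated is unnecessary.

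\paragraph{Composition.} Multiplication by \(x\) enters only once per cycle. The systems \((\boldsymbol\alpha+1,\boldsymbol\beta)\) and \((\boldsymbol\alpha,\boldsymbol\beta+1)\) have the same matrix of measures, so uniqueness gives \(\mathbf A^{[p,0]}=x\mathbf A^{[0,q]}\); this is the form-level version of the paper's \(\mathscr M_L^{[p]}=\mathscr M_R^{[q]}\). Chaining the two connections gives
\(x\mathbf A^{\mathrm P}=(U_1^{\mathrm P})^{\mathsf T}\cdots(U_q^{\mathrm P})^{\mathsf T}(L_p^{\mathrm P})^{\mathsf T}\cdots(L_1^{\mathrm P})^{\mathsf T}\mathbf A^{\mathrm P}\).
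Compare with \(x\mathbf A^{\mathrm P}=(T^{\mathrm P})^{\mathsf T}\mathbf A^{\mathrm P}\) and use linear independence of the forms. This yields \(T^{\mathrm P}=L_1^{\mathrm P}\cdots L_p^{\mathrm P}U_q^{\mathrm P}\cdots U_1^{\mathrm P}\) in the stated order, with no extra transposition bookkeeping. This is the content of the paper's Gauss--Borel identities \(\mathscr M_L^{[a]}=\mathscr M_L^{[a-1]}S^{\mathsf T}\) and \(\mathscr M_R^{[b]}=S\mathscr M_R^{[b-1]}\), rewritten in terms of forms.
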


\begin{proof}
Let \(S\) denote the unilateral shift,
\(S_{N,M}=\delta_{M,N+1}\). Under one affine alpha shift, the ordered
column of weights changes from
\[
(x^{\alpha_1},x^{\alpha_2},\ldots,x^{\alpha_p})
\quad\hbox{to}\quad
(x^{\alpha_2},\ldots,x^{\alpha_p},x^{\alpha_1+1}),
\]
and the beta shift acts analogously on the ordered row of beta weights.
Consequently, the scalar moment matrices of the shifted systems satisfy
\[
\mathscr M_L^{[r]}=\mathscr M(S^r)^{\mathsf T},
\qquad
\mathscr M_R^{[s]}=S^s\mathscr M.
\]
The shifted exponent sequences are affine subsequences of the original
ones. The Cauchy determinant argument in
\eqref{eq:MP-Cauchy-determinant} therefore applies without change and shows
that all the following factorizations exist uniquely.
Use the normalized Gauss--Borel factorizations
\[
\mathscr M_L^{[r]}
=(\mathscr L_L^{[r]})^{-1}(\mathscr U_L^{[r]})^{-1},
\qquad
\mathscr M_R^{[s]}
=(\mathscr L_R^{[s]})^{-1}(\mathscr U_R^{[s]})^{-1},
\]
where the \(\mathscr L\)'s are lower unitriangular and the
\(\mathscr U\)'s are upper triangular.

The identity
\(\mathscr M_L^{[a]}=\mathscr M_L^{[a-1]}S^{\mathsf T}\), together with
uniqueness of Gauss--Borel factorization, gives
\begin{equation}
\label{eq:app-MP-left-Christoffel-connection}
(\mathscr U_L^{[a-1]})^{-1}S^{\mathsf T}
=L_a^{\mathrm P}(\mathscr U_L^{[a]})^{-1}.
\end{equation}
The matrix on the left is upper Hessenberg. Its normalized
Gauss--Borel factorization therefore has a lower bidiagonal factor with
unit diagonal, which is \(L_a^{\mathrm P}\). Comparing the
\((N+1,N)\) entry in
\eqref{eq:app-MP-left-Christoffel-connection} gives
\[
(L_a^{\mathrm P})_{N+1,N}
=
\frac{(\mathscr U_L^{[a]})_{N,N}}
     {(\mathscr U_L^{[a-1]})_{N+1,N+1}}.
\]
The diagonal entry \((\mathscr U_L^{[a]})_{N,N}\) is the leading
coefficient of the active component of the \(N\)-th type-I form for the
alpha-shifted system. It is therefore \(\lambda_N^{[a,0]}\), which proves
the first quotient in \eqref{eq:app-MP-Christoffel-quotients}.

Similarly,
\(\mathscr M_R^{[b]}=S\mathscr M_R^{[b-1]}\) gives
\begin{equation}
\label{eq:app-MP-right-Christoffel-connection}
S(\mathscr L_R^{[b-1]})^{-1}
=(\mathscr L_R^{[b]})^{-1}U_b^{\mathrm P},
\qquad
\mathscr U_R^{[b]}
=\mathscr U_R^{[b-1]}(U_b^{\mathrm P})^{-1}.
\end{equation}
Here \(U_b^{\mathrm P}\) is upper bidiagonal with unit superdiagonal.
Taking diagonal entries in the second identity gives
\[
(U_b^{\mathrm P})_{N,N}
=
\frac{(\mathscr U_R^{[b-1]})_{N,N}}
     {(\mathscr U_R^{[b]})_{N,N}}
=
\frac{\lambda_N^{[0,b-1]}}{\lambda_N^{[0,b]}},
\]
which proves the second quotient.

Finally, multiplication by \(x\) in the alpha basis gives
\[
T^{\mathrm P}
=(\mathscr U_L^{[0]})^{-1}(S^p)^{\mathsf T}
\mathscr U_L^{[0]}.
\]
Iterating \eqref{eq:app-MP-left-Christoffel-connection} through \(p\)
steps yields
\[
T^{\mathrm P}
=L_1^{\mathrm P}\cdots L_p^{\mathrm P}
(\mathscr U_L^{[p]})^{-1}\mathscr U_L^{[0]}.
\]
A complete alpha cycle and a complete beta cycle both multiply the matrix
of measures by \(x\). Hence
\(\mathscr M_L^{[p]}=\mathscr M_R^{[q]}\), and uniqueness gives
\(\mathscr U_L^{[p]}=\mathscr U_R^{[q]}\). Iterating the second identity
in \eqref{eq:app-MP-right-Christoffel-connection}, and using
\(\mathscr U_R^{[0]}=\mathscr U_L^{[0]}\), gives
\[
(\mathscr U_L^{[p]})^{-1}\mathscr U_L^{[0]}
=U_q^{\mathrm P}\cdots U_1^{\mathrm P}.
\]
This proves the stated bidiagonal factorization.
\end{proof}

To evaluate the quotients, it is convenient to isolate the products
containing only beta differences. For \(s\ge0\), put
\begin{equation}
\label{eq:app-MP-beta-difference-product}
E_N^{[s]}
\coloneq
\prod_{j=1}^{q}
(\beta_{s+j}-\beta_{s+s_N}-d_N)_{m_{N,j}}.
\end{equation}
Since \(m_{N,j}=d_N+1\) for \(j<s_N\) and \(m_{N,j}=d_N\) for
\(j\ge s_N\), this product can also be written as
\begin{equation}
\label{eq:app-MP-beta-difference-product-expanded}
E_N^{[s]}
=
\prod_{j=1}^{q}
(\beta_{s+j}-\beta_{s+s_N}-d_N)_{d_N}
\prod_{j=1}^{s_N-1}
(\beta_{s+j}-\beta_{s+s_N}).
\end{equation}

\begin{proposition}[Evaluation of the Christoffel quotients]
\label{pro:app-MP-Christoffel-quotient-evaluation}
Substitution of \eqref{eq:app-MP-shifted-leading-coefficient} into
\eqref{eq:app-MP-Christoffel-quotients} gives precisely
\eqref{eq:MP-lower-Pochhammer-simplified} and
\eqref{eq:MP-upper-Pochhammer-simplified}.
\end{proposition}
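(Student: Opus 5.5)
The plan is a direct substitution, organized so that each group of factors in \eqref{eq:app-MP-shifted-leading-coefficient} is handled separately. Write \(\lambda_N^{[r,s]}=\Pi_1\,\Pi_2^{-1}\,\Pi_3\,\Pi_4^{-1}\,(-1)^{e_N}/e_N!\). Here \(\Pi_1\) is the alpha--beta sum product with lengths \(n_{N,i}\), and \(\Pi_2=E_N^{[s]}\) is the beta-difference product of \eqref{eq:app-MP-beta-difference-product}. The factor \(\Pi_3\) is the product \(\prod_j(\alpha_{r+i_N}+\beta_{s+j}+e_N+1)_{m_{N,j}}\), and \(\Pi_4\) is the alpha-difference product. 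Each quotient in \eqref{eq:app-MP-Christoffel-quotients} is then evaluated group by group.

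\textbf{Upper factor.} In \(u_{b,N}^{\mathrm P}=\lambda_N^{[0,b-1]}/\lambda_N^{[0,b]}\) the index \(N\) is fixed, so \(e_N\), \(i_N\), \(n_{N,i}\), \(m_{N,j}\) and \(d_N\) all agree in numerator and denominator. Since \(r=0\), the alpha-difference products \(\Pi_4\) cancel identically, and so do the signs \((-1)^{e_N}\) and the factorials \(e_N!\). The quotient of the \(\Pi_1\)'s is exactly the first product in \eqref{eq:MP-upper-Pochhammer-simplified}. The quotient of the \(\Pi_2\)'s is \(E_N^{[b]}/E_N^{[b-1]}\), which is the middle product. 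In \(\Pi_3\) we use \(\alpha_{i_N}+e_N+1=\alpha_{i_N}+n_{N,i_N}\), which turns it into the third product. This part is mechanical.

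\textbf{Lower factor.} Here \(\ell_{a,N}^{\mathrm P}=\lambda_N^{[a,0]}/\lambda_{N+1}^{[a-1,0]}\) compares different step-line indices. The beta-difference part gives \(E_{N+1}^{[0]}/E_N^{[0]}\) inverted. Using \eqref{eq:app-MP-beta-difference-product-expanded}, this is the first displayed quotient in \eqref{eq:MP-lower-Pochhammer-simplified}. The \(\Pi_1\)'s give the second quotient verbatim: the numerator uses \(\alpha_{a+i}\) with \((s_N,d_N,n_{N,i})\), and the denominator uses \(\alpha_{a-1+i}\) with \((s_{N+1},d_{N+1},n_{N+1,i})\).

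The remaining work, and the main obstacle, is the combination
\[
\frac{(-1)^{e_N}\Pi_3^{[a]}(N)\,e_{N+1}!\,\Pi_4^{[a-1]}(N+1)}{(-1)^{e_{N+1}}\Pi_3^{[a-1]}(N+1)\,e_N!\,\Pi_4^{[a]}(N)},
\]
which must collapse to the single factor \(-(\alpha_{a+i_N}-\alpha_a+e_N)/(\alpha_{a+i_N}+e_N+\beta_{s_N}+d_N+1)\). To evaluate it, I will split into the cases \(i_N<p\) and \(i_N=p\). In the first case \(e_{N+1}=e_N\) and \(i_{N+1}=i_N+1\). In the second case \(i_{N+1}=1\) and \(e_{N+1}=e_N+1\), where the affine rule \(\alpha_{a-1+p+1}=\alpha_a+1\) is needed.

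In both cases the key observation is that the active alpha parameter is the same on both sides, since \(\alpha_{a+i_N}=\alpha_{(a-1)+i_{N+1}}\) after the affine extension. The \(\Pi_3\) quotient therefore involves the same base \(\alpha_{a+i_N}+e_N+1+\beta_j\) with multi-indices \(\boldsymbol m_N\) versus \(\boldsymbol m_{N+1}\). These differ only in component \(s_N\), where the length increases by one, leaving exactly the denominator factor \(\alpha_{a+i_N}+e_N+\beta_{s_N}+d_N+1\).

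For \(\Pi_4\), the shifted alpha lists \(\boldsymbol\alpha^{[a]}\) and \(\boldsymbol\alpha^{[a-1]}\) differ by a cyclic rotation. After reindexing \(h\mapsto h+1\), all factors match except the one coming from \(\alpha_a\) (respectively \(\alpha_{a+p}=\alpha_a+1\)). The length \(n_{N+1,h}\) versus \(n_{N,h}\) adjusts so that a single linear factor \(\alpha_a-\alpha_{a+i_N}-e_N\) survives. Together with the sign and factorial bookkeeping, where \(e_{N+1}!/e_N!\) equals \(1\) or \(e_N+1\) and the latter cancels against a Pochhammer endpoint in the case \(i_N=p\), this yields the stated numerator with its minus sign. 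I would verify each case by writing the Pochhammer lengths explicitly from \eqref{eq:MP-type-I-step-indices}.
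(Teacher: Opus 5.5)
Your route is the paper's: cancel the groups of factors that agree, read off the upper quotient directly, and for the lower quotient reduce everything to one residual factor (the paper's $\mathcal R_{a,N}$), evaluated separately for $i_N<p$ and $i_N=p$. The upper factor, the beta-difference and alpha--beta-sum quotients, and the case $i_N<p$ are correct as you describe them.

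The case $i_N=p$ is misdescribed, and the bookkeeping you sketch for it would not produce the stated factor.

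First, the identity $\alpha_{a+i_N}=\alpha_{a-1+i_{N+1}}$ fails there. The left side is $\alpha_{a+p}=\alpha_a+1$, while the right side is $\alpha_a$. What is invariant is the active exponent $g_{a,N}=\alpha_{a+i_N}+e_N=\alpha_{a-1+i_{N+1}}+e_{N+1}$, because $e_{N+1}=e_N+1$ compensates the shift. This invariance, not equality of the parameters, is what makes the bases of your two $\Pi_3$ products coincide.

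Second, for $i_N=p$ one has $n_{N,i}=e_N+1$ for all $i$ and $n_{N+1,i}=e_N+1$ for $i\ge2$, with excluded indices $i_N=p$ and $i_{N+1}=1$. After $h\mapsto h+1$, both alpha-difference products equal $\prod_{i=1}^{p-1}(\alpha_{a+i}-g_{a,N})_{e_N+1}$. So nothing survives from $\Pi_4$.

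The surviving factor is exactly the sign and factorial ratio you propose to cancel: $(-1)^{e_N-e_{N+1}}e_{N+1}!/e_N!=-(e_N+1)=-(\alpha_{a+p}-\alpha_a+e_N)$. Your version, read literally, takes a factor $\alpha_a-\alpha_{a+p}-e_N=-(e_N+1)$ from $\Pi_4$, the sign $-1$, and cancels the factorial. That gives $+(e_N+1)$, which has the wrong sign.

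With $g_{a,N}$ as the invariant and this corrected accounting, both cases give $-(g_{a,N}-\alpha_a)$. Combined with $1/(g_{a,N}+\beta_{s_N}+d_N+1)$ from $\Pi_3$, the argument then closes exactly as in the paper.
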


\begin{proof}
First, the upper quotient is evaluated. All factors in
\eqref{eq:app-MP-shifted-leading-coefficient} that contain only alpha
differences, factorials, or signs are independent of the beta shift and
cancel. The remaining factors give
\begin{equation}
\label{eq:app-MP-upper-quotient-evaluation}
\begin{aligned}
\frac{\lambda_N^{[0,b-1]}}{\lambda_N^{[0,b]}}
&=
\prod_{i=1}^{p}
\frac{
(\alpha_i+\beta_{b-1+s_N}+d_N+1)_{n_{N,i}}
}{
(\alpha_i+\beta_{b+s_N}+d_N+1)_{n_{N,i}}
}
\frac{E_N^{[b]}}{E_N^{[b-1]}}
\\
&\quad\times
\prod_{j=1}^{q}
\frac{
(\alpha_{i_N}+e_N+\beta_{b-1+j}+1)_{m_{N,j}}
}{
(\alpha_{i_N}+e_N+\beta_{b+j}+1)_{m_{N,j}}
}.
\end{aligned}
\end{equation}
Because \(n_{N,i_N}=e_N+1\), the last product is the third product in
\eqref{eq:MP-upper-Pochhammer-simplified}. Expanding the quotient of the
two \(E_N\)'s by \eqref{eq:app-MP-beta-difference-product} gives its second
product. Hence \eqref{eq:app-MP-upper-quotient-evaluation} is exactly
\eqref{eq:MP-upper-Pochhammer-simplified}.

For the lower quotient, direct substitution gives
\begin{equation}
\label{eq:app-MP-lower-quotient-before-cancellation}
\frac{\lambda_N^{[a,0]}}{\lambda_{N+1}^{[a-1,0]}}
=
\frac{E_{N+1}^{[0]}}{E_N^{[0]}}
\mathcal R_{a,N}
\frac{
\prod_{i=1}^{p}
(\alpha_{a+i}+\beta_{s_N}+d_N+1)_{n_{N,i}}
}{
\prod_{i=1}^{p}
(\alpha_{a-1+i}+\beta_{s_{N+1}}+d_{N+1}+1)_{n_{N+1,i}}
}.
\end{equation}
where \(\mathcal R_{a,N}\) is the quotient of the second fractions in
\eqref{eq:app-MP-shifted-leading-coefficient}. This last quotient is now
evaluated explicitly.

Advancing from \(N\) to \(N+1\) increases exactly the \(s_N\)-th entry of
the beta multi-index:
\[
m_{N+1,j}=m_{N,j}+\delta_{j,s_N}.
\]
At the same time, the active alpha exponent is unchanged by the compensating
cyclic shift:
\[
g_{a,N}
\coloneq\alpha_{a+i_N}+e_N
=\alpha_{a-1+i_{N+1}}+e_{N+1}.
\]
Consequently, the quotient of the products involving alpha--beta sums in
\(\mathcal R_{a,N}\) is
\begin{equation}
\label{eq:app-MP-lower-sum-cancellation}
\frac{1}{g_{a,N}+\beta_{s_N}+d_N+1}.
\end{equation}

It remains to evaluate the quotient of the signs, factorials, and
alpha-difference products. Put
\[
D_N^{[a]}
\coloneq
\prod_{\substack{1\le i\le p\\i\ne i_N}}
(\alpha_{a+i}-g_{a,N})_{n_{N,i}}.
\]
If \(i_N<p\), then \(i_{N+1}=i_N+1\) and \(e_{N+1}=e_N\). All common
Pochhammer factors cancel, and the only uncancelled ratio is
\[
\frac{D_{N+1}^{[a-1]}}{D_N^{[a]}}
=
\frac{(\alpha_a-g_{a,N})_{e_N+1}}
     {(\alpha_{a+p}-g_{a,N})_{e_N}}
=
\frac{(\alpha_a-g_{a,N})_{e_N+1}}
     {(\alpha_a+1-g_{a,N})_{e_N}}
=\alpha_a-g_{a,N}.
\]
The signs and factorials are unchanged in this case. If \(i_N=p\), then
\(i_{N+1}=1\), \(e_{N+1}=e_N+1\), and the two alpha-difference products
are identical. The sign changes once and the factorial quotient contributes
\(e_N+1=g_{a,N}-\alpha_a\). Thus both cases give the same factor
\[
-(g_{a,N}-\alpha_a).
\]
Combining it with \eqref{eq:app-MP-lower-sum-cancellation} yields
\begin{equation}
\label{eq:app-MP-lower-residual-factor}
\mathcal R_{a,N}
=
-
\frac{\alpha_{a+i_N}-\alpha_a+e_N}
{\alpha_{a+i_N}+e_N+\beta_{s_N}+d_N+1}.
\end{equation}

Finally, insert \eqref{eq:app-MP-lower-residual-factor} into
\eqref{eq:app-MP-lower-quotient-before-cancellation} and expand
\(E_{N+1}^{[0]}/E_N^{[0]}\) by
\eqref{eq:app-MP-beta-difference-product-expanded}. The result is exactly
\eqref{eq:MP-lower-Pochhammer-simplified}.
\end{proof}

The two propositions show that the factorization
\eqref{eq:MP-PBF} and all the factor entries used in the positivity and urn
arguments follow from the Cauchy construction contained in this paper. The
general multi-index and hypergeometric developments in
\cite{PineiroMixed2026} provide a broader companion theory but are not
needed for the derivation above.

\section*{Declarations}

\begin{itemize}
	\item \textbf{Competing interests.}
	The author declares that he has no competing interests.

	\item \textbf{Author contributions.}
	The author is solely responsible for the conception of the work, the
	development and verification of the mathematical results, and the writing
	of the manuscript.

	\item \textbf{Use of artificial intelligence.}
	OpenAI Codex was used to assist with adversarial review of arguments,
	exact symbolic verification of selected recurrence formulas and rational
	identities, checks of internal consistency and citations, and language
	editing. Every AI-assisted suggestion, mathematical step, and bibliographic
	item used in the manuscript was independently checked by the author against
	the derivations, exact computations, or cited source records. The author
	remains fully responsible for the content of the manuscript.

	\item \textbf{Data availability.}
	No datasets were generated or analysed during the current study.

	\item \textbf{Code availability.}
	Exact-arithmetic code was used to verify the finite Pochhammer identities
	and the rational mixed Pi\~neiro urn example. The verification scripts are
	available from the author.

	\item \textbf{Ethics approval.}
	Not applicable.

	\item \textbf{Consent to participate.}
	Not applicable.

	\item \textbf{Consent for publication.}
	Not applicable.
\end{itemize}
	
\section*{Acknowledgments}

The author is grateful to Manuel D. de la Iglesia for pointing out, during
a group meeting in May 2026, that a scalar finite-band stochastic matrix can
be reorganized as a finite-phase QBD, and for illustrating the construction
with bandwidths \((2,2)\) and \((2,3)\).

The author acknowledges support from the Spanish Agencia Estatal de
Investigación through research projects PID2021-122154NB-I00,
\emph{Ortogonalidad y Aproximación con Aplicaciones en Machine Learning y
Teoría de la Probabilidad}, and PID2024-155133NB-I00,
\emph{Ortogonalidad, aproximación e integrabilidad: aplicaciones en procesos
estocásticos clásicos y cuánticos}.

	\printbibliography
	
\end{document}